\def\@maketitle{\newpage
    \null
    \vskip .8truein
    \begin{center}%
     {\bf \@title \par}%
     \vskip 1.5em
     {\small
      \lineskip .5em
      \begin{tabular}[t]{c}\@author
      \end{tabular}\par}%
    \end{center}%
    \par
    \vskip .4truein}
\def\dfrac#1#2{\ds{\frac{#1}{#2}}}
\let\nn=\nonumber
\newcommand{\re}{{\mathbb R}}
\newcommand{\Z}{{\mathbb Z}}
\newcommand{\He}{{\mathbb H}}
\newcommand{\cH}{{\mathcal H}}
\newcommand{\Te}{{\mathbb T}_{\cH}}
\let\ds=\displaystyle
\def\R{{\mathbb R}}
\def\N{{\mathbb  N}}
\def\N{{\mathbb N}}
\newtheorem{theorem}{Theorem}[section]
\newtheorem{lemma}{Lemma}[section]
\newtheorem{proposition}{Proposition}[section]
\newtheorem{definition}{Definition}[section]
\newtheorem{corollary}{Corollary}[section]
\newtheorem{remark}{Remark}[section]
\newtheorem{example}{Example}[section]
\newtheorem{hypothesis}{Hypothesis}[section]
\newenvironment{Proofc}[1]{\smallskip\par\noindent\textsc{#1}\quad}%
  {\hfill$\Box$\bigskip\par}
\DeclareMathOperator{\diver}{div}
\DeclareMathOperator{\tr}{tr}
\def\proof{\list{}{\setlength{\leftmargin}{0pt}
                      \parskip=0pt\parsep=0pt\listparindent=2em
                      \itemindent=0pt}\item[]\futurelet\testchar\@maybe}
\def\@maybe{\ifx[\testchar \let\next\@Opt
          \else \let\next\@NoOpt \fi \next}
\def\@Opt[#1]{{\it Proof of #1.\ }}\def\@NoOpt{{\it Proof.\ }}
\begin{document}
\title{\Large \bf First order Mean Field Games in the Heisenberg group: periodic and non periodic case}

\author{{\large \sc Paola Mannucci\thanks{Dipartimento di Matematica ``Tullio Levi-Civita'', Universit\`a di Padova, mannucci@math.unipd.it}, Claudio Marchi \thanks{Dipartimento di Ingegneria dell'Informazione \& Dipartimento di Matematica ``Tullio Levi-Civita'', Universit\`a di Padova, claudio.marchi@unipd.it}, Nicoletta Tchou \thanks{Univ Rennes, CNRS, IRMAR - UMR 6625, F-35000 Rennes, France, nicoletta.tchou@univ-rennes1.fr}}} 
\maketitle
\begin{abstract}
In this paper we study evolutive first order Mean Field Games in the Heisenberg group~$\He^1$; each agent can move only along ``horizontal'' trajectories which are given in terms of the vector fields generating~$\He^1$ and the kinetic part of the cost depends only on the horizontal velocity. The Hamiltonian is not coercive in the gradient term and the coefficients of the first order term in the continuity equation may have a quadratic growth at infinity.
The main results of this paper are two: the former is to establish the existence of a weak solution to the Mean Field Game system while the latter is to represent this solution following the Lagrangian formulation of the Mean Field Games.
We shall tackle both the Heisenberg-periodic and the non periodic case following two different approaches. 
To get these results, we prove some properties which have their own interest: uniqueness results for a second order Fokker-Planck equation and a probabilistic representation of the solution to the continuity equation.
\end{abstract}

%

\noindent {\bf Keywords}: Mean Field Games, first order Hamilton-Jacobi equations, continuity equation, Fokker-Planck equation, noncoercive Hamiltonian, Heisenberg group, degenerate optimal control problem.

\noindent  {\bf 2010 AMS Subject classification:} 35F50, 35Q91, 49K20, 49L25.

\section{Introduction}
In this paper we study evolutive first order Mean Field Game (briefly, MFG) in the Heisenberg group~$\He^1$. Let us recall that the MFG theory started with the works by Lasry and Lions \cite{LL1,LL2,LL3} and by Huang, Malham\'e and Caines \cite{HMC} (see the monographs \cite{AC, C, BFY, GPV, GS} for the many developments in recent years) and studies Nash equilibria when the number of agents tends to infinity and each agent's aim is to control its dynamics so to minimize a given cost which depends on the distribution of the whole population. On the other hand, the Heisenberg group can be seen as the first non-Euclidean space which is still endowed with nice properties as a (noncommutative) group operation, a family of dilations and a manifold structure (see the monographs \cite{BLU, Montg} for an overview).
From the viewpoint of a single agent, the Heisenberg's framework entails that its state cannot change isotropically in all the directions but it can move only along {\it admissible} trajectories.

We shall consider systems of the form
\begin{equation}\label{eq:MFGintrin}
\left\{\begin{array}{lll}
(i)&\quad-\partial_t u+\frac{|D_{\cH}u|^2}{2}=F[m(t)](x)&\qquad \textrm{in }\He^1\times (0,T)\\
(ii)&\quad\partial_t m-\diver_{\cH}  (m D_{\cH}u)=0&\qquad \textrm{in }\He^1\times (0,T)\\
(iii)&\quad m(x,0)=m_0(x), u(x,T)=G[m(T)](x)&\qquad \textrm{on }\He^1,
\end{array}\right.
\end{equation}
where $D_{\cH}$ and $\diver_{\cH}$ are respectively the {\it horizontal gradient} and the {\it horizontal divergence} while $F$ and $G$ are strongly regularizing coupling operators. We shall consider both the Heisenberg-periodic and non periodic case because they give rise to different mathematical issues so, in our opinion, it is worth studying both of them. In the former one, $m_0$, $F$ and~$G$ are $Q_\cH$-periodic w.r.t. $x$ with $Q_\cH:=[0,1)^3$ and we shall focus our attention on $Q_\cH$-periodic solution to~\eqref{eq:MFGintrin}. 
It is worth to stress that in the periodic setting we take advantage of the definition of a periodicity cell introduced in~\cite{BMT2,BMT} and 
the invariance of the operators in the Heisenberg group: see Section~\ref{sub:periodicity} below for the precise definitions and properties.
In the non periodic case, we obviously drop all these periodicity assumptions.\\
For readers which are not familiar with intrinsic calculus, in Euclidean coordinates, system~\eqref{eq:MFGintrin} becomes 
\begin{equation}\label{eq:MFG1}
\left\{\begin{array}{lll}
(i)&\quad-\partial_t u+H(x, Du)=F[m(t)](x)&\qquad \textrm{in }\re^3\times (0,T)\\
(ii)&\quad\partial_t m-\diver  (m\, \partial_pH(x, Du))=0&\qquad \textrm{in }\re^3\times (0,T)\\
(iii)&\quad m(x,0)=m_0(x), u(x,T)=G[m(T)](x)&\qquad \textrm{on }\re^3,
\end{array}\right.
\end{equation}
where, for $p=(p_1,p_2, p_3)$ and $x=(x_1, x_2,x_3)$, the Hamiltonian $H(x,p)$ is
\begin{equation}\label{Hd}
H(x,p):=\frac{1}{2}((p_1-x_2p_3)^2+(p_2+x_1p_3)^2)=\frac{|pB(x)|^2}{2}\ \textrm{with }\ 
 B(x):=  \begin{pmatrix}
\!\!&1& 0\!\\
\!\!&0&1&\!\\
\!\!&-x_2&x_1\!
\end{pmatrix}\in M^{3\times 2}
\end{equation}
while the drift $\partial_pH(x,p)$ is
\begin{equation}\label{Hp}
\partial_pH(x,p)=pB(x)B(x)^T=(p_1-x_2p_3, p_2+x_1p_3, -p_1x_2+p_2x_1+p_3(x_1^2+x_2^2)).
\end{equation}

These MFG systems arise when the generic player with state~$x$ at time~$t$ must follow {\it horizontal curves} with respect to the two vector fields $X_1$ and $X_2$ generating the Heisenberg group (see \eqref{vectorFields} below):
\begin{equation}\label{DYNH}
x'(s)=\alpha_1(s)X_1(x(s))+\alpha_2(s)X_2(x(s))
\end{equation}
namely
\begin{equation*}
x_1'(s)=\alpha_1(s),\quad x_2'(s)=\alpha_2(s),\quad x_3'(s)=-x_2(s)\alpha_1(s)+x_1(s)\alpha_2(s).
\end{equation*}
Each agent wants to choose the control $\alpha=(\alpha_1, \alpha_2)$ in $L^2([t,T];\R^2)$ in order to minimize the cost
\begin{equation}\label{Jgen}
J_t^{m}(x(\cdot),\alpha(\cdot)):=\int_t^T\left[\frac12 |\alpha(\tau)|^2+F[m(\tau)](x(\tau))\right]\,d\tau+G[m(T)](x(T))
\end{equation}
where $m(\cdot)$ is the evolution of the whole population's distribution while $(x(\cdot),\alpha(\cdot))$ is a trajectory obeying to~\eqref{DYNH}.

Let us observe two important issues of these MFG systems: $(i)$ the Hamiltonian~$H$ is not coercive in~$p$ uniformly in~$x$, $(ii)$ in equation~\eqref{eq:MFG1}-(ii) the coefficient of the first order term may have quadratic growth in~$x$.\\
Point~$(i)$ prevents the application of standard approaches for first order MFG (for instance, see~\cite{BFY,C,CGPT,GS}). Moreover, we recall that the papers~\cite{AMMT, CM, MMMT} already tackled MFG systems with noncoercive Hamiltonians for first order MFG while papers \cite{DF, FGT} dealt with second order hypoelliptic MFG. However, the results in~\cite{CM,AMMT} do not apply to the present setting because these papers consider a different kind of admissible trajectories.
Note that the present case is neither encompassed in our previous work~\cite{MMMT} because the matrix $B$ in \eqref{Hd} does not fulfill the assumptions of \cite[Section 5]{MMMT}, in particular $\det B(x)B^T(x)= 0$ for any $x\in \re^3$. 
The degeneracy of the matrix $B(x)B^T(x)$ implies that we cannot prove the uniqueness of optimal trajectories for a.e. starting points with respect to the initial distribution of players and hence to get a representation formula as in \cite{C, MMMT}.
The issue of finding necessary or sufficient conditions ensuring the uniqueness of the optimal trajectories for a.e. starting points is challenging and open; we hope to study it in a future work.\\
On the other hand, point~$(ii)$ gives rise to some difficulties for applying the vanishing viscosity method, especially for the well-posedness of the Cauchy problem for equation~\eqref{eq:MFG1}-(ii) with the viscosity term and also for its stochastic interpretation.

The aims of this paper are two; the former one is to prove the existence of a weak solution to system~\eqref{eq:MFGintrin} (see Definition~\ref{defsolmfg} for the periodic case and respectively Definition~\ref{defsolmfg_illi} for the non periodic case). To this end, we establish several properties of the solution to the Hamilton-Jacobi equation~\eqref{eq:MFGintrin}-(i) (as semiconcavity, Lipschitz continuity, regularity of the optimal trajectories for the associated optimal control problem). Afterwards, we adapt the techniques introduced by PL. Lions in his lectures at Coll\`ege de France \cite{C, LL3} (see also \cite{AMMT, MMMT} for similar approaches for some noncoercive Hamiltonians). In the periodic case we perform a vanishing viscosity procedure with the {\it horizontal} Laplacian which preserves the periodic structure of the problem. In the non periodic case, we perform a vanishing viscosity procedure with the {\it Euclidean} Laplacian and a truncation argument. The vanishing viscosity procedures permit to exploit the regularity results of the Laplacian while the truncation argument permits to avoid parabolic Cauchy problems with coefficients growing ``too much'' at infinity.
Let us emphasize that our approach for the non periodic case relies on some compactness of the initial distribution of players and on the sublinear growth of the coefficients of~$B$ but it does not need the H\"ormander condition. 
The latter, and main, aim of this paper is to prove that this weak solution is also a {\it mild} solution in the sense of the definition introduced by Cannarsa and Capuani~\cite{CC} for the case of state-constrained MFG where the agents control their velocity.
Roughly speaking, as in the Lagrangian approach for MFG (see~\cite{BCS,CC}), this property means that, for a.e. starting state, the agents follow optimal trajectories for the optimal control problem associated to the Hamilton-Jacobi equation.
In order to prove that our solution is in fact a mild solution, we shall use the superposition principle~\cite[Theorem 8.1.2]{AGS}.
Unfortunately, this result does not apply directly to our Heisenberg-periodic setting; we overcome this issue providing its adaptation to the Heisenberg periodic case.
In particular we shall prove in the Heisenberg framework: $(1)$ an optimal synthesis result, $(2)$ a superposition principle, $(3)$ a uniqueness result in a viscous setting for the Fokker-Planck equation with unbounded coefficients. In our opinion, these results can have an interest  independent of the MFG framework.

Let us stress that our results in the Heisenberg-periodic setting could be generalized with suitable adaptations to the case of invariant operators in other Lie groups~\cite{MS, STROF}.

This paper is organized as follows. Section \ref{Preliminaries} is devoted to define the Heisenberg group, the periodicity and the convolution in the group. Sections from~\ref{sect:mainthm} to~\ref{sect:dim3.1} are devoted to the periodic case: Section \ref{sect:mainthm} contains the assumptions and the statement of our main result, Theorem~\ref{thm:main}, whose proof is given in Section~\ref{sect:dim3.1}. In  Section \ref{OC} we study several properties of the solution of the optimal control problem associated to the Hamilton-Jacobi equation \eqref{eq:MFGintrin}-(i). The main contribution of Section~\ref{sect:continuity} is the proof of Theorem~\ref{prp:m} which establishes the needed regularity of the solution $m$ to prove Theorem~\ref{thm:main}. Sections~\ref{sect:nonper} and~\ref{sect:dim7.1} are devoted to the non periodic case: the former one contains the assumptions and the statement of our main result, Theorem~\ref{thm:main_illi}, while the latter one is devoted to the proof of a preliminary result for an approximation of the continuity equation which provides the main step in the proof of Theorem~\ref{thm:main_illi}. The three appendices deal with A) the definition of the differential operators in the Heisenberg group, B) uniqueness results for a general degenerate Fokker-Plank equation with unbounded coefficients, C) an adaptation of the probabilistic representation result of \cite{AGS} to a generic ``intrinsic'' continuity equation in a periodic setting for the Heisenberg group.

\noindent\underline{Notations.}
For any function $u:\re^3\times\re\ni (x,t)\to u(x,t)\in \re$, $Du$ and~$D^2u$ stand for the Euclidean gradient and respectively Hessian matrix with respect to~$x$.
For any compact set $A$ of $\re^3$, we denote $C^2(A)$ the space of functions with continuous second order derivatives endowed with the norm $\|f\|_{C^2(A)}:=\sup_{x\in A}[|f(x)|+|Df(x)|+|D^2f(x)|]$.\\
For any complete separable metric space $X$, ${\mathcal M}(X)$ and ${\mathcal P}(X)$ denotes the set of nonnegative Radon measures on~$X$, and respectively of Borel probability measures on~$X$. For any complete separable metric spaces $X_1$ and $X_2$, any measure $\eta\in{\mathcal P}(X_1)$ and any function $\phi:X_1\to X_2$, we denote $\phi\#\eta\in{\mathcal P}(X_2)$ the {\it push-forward} of~$\eta$ through~$\phi$, i.e.  $\phi\#\eta(B):=\eta(\phi^{-1}(B))$, for any $B$ measurable set, $B\subset X_2$ (see~\cite[section~5.2]{AGS} for the precise definition and main properties).
For a function $m\in C([0,T],{\mathcal P}(X))$, $m_t$ stands for the probability~$m(t,\cdot)$ on~$X$.


\section{Preliminaries: The Heisenberg group}
\label{Preliminaries}

We introduce the following noncommutative group structure on $\re^3$. We refer to \cite{BLU} for a complete overview on the Heisenberg group. 

\begin{definition}
\label{Hei}
The $3$-dimensional Heisenberg group $\He^1$ is the vector space $\R^3$,
endowed with the following noncommutative group operation, denoted by $\oplus$:
\begin{equation}
\label{Group_Law}
x\oplus y=(x_1, x_2, x_3)\oplus (y_1, y_2, y_3):= (x_1+y_1, x_2+y_2, x_3+y_3-x_2y_1+x_1y_2).
\end{equation}
for all $x=({x}_1,{x}_2,x_3)$, $y=({y}_1,{y}_2,y_3)\in \R^{3}$.
\end{definition}
The law  $x\oplus y$ is called the $x$ {\em {left translation of}} $y$.
We call  $x^{-1}$ the point such that $x^{-1}\oplus x=x\oplus x^{-1}=0$. Note that 
$x^{-1}=(-x_1, -x_2, -x_3)$. Hence we define 
$$x\ominus y := x\oplus y^{-1}$$

In $\He^1$ we define a dilations' family as follows. 
\begin{definition}
\label{DilH}
The dilations in the Heisenberg group are the family of group homeomorphisms defined as, for all $\lambda>0$,  
$\delta_{\lambda}:\He^1\to \He^1$  with
\begin{equation}
\label{Dialtions}
\delta_{\lambda}(x)=(\lambda\, {x}_1,\lambda\, {x}_2, \lambda^2\,x_3),
\quad \forall\;
x=({x}_1,  {x}_2, x_3)\in \He^1.
\end{equation}
\end{definition}

We define the two vector fields 
\begin{equation}\label{vectorFields}
 X_1(x)=\left(\begin{array}{c}1 \\0 \\
 -x_2\end{array}\right)\quad
\textrm{and}
\quad 
X_2(x)=\left(\begin{array}{c}0 \\1 \\x_1\end{array}\right),
\quad \forall\, x=(x_1,x_2,x_3)\in \He^1.
\end{equation}
By these vectors we define the linear differential operators, still called $X_1$ and $X_2$
\begin{equation}\label{VFD}
X_1=\partial_{x_1}-x_2\partial_{x_3},\ X_2=\partial_{x_2}+x_1\partial_{x_3}.
\end{equation}
Note that their commutator $[X_1,X_2]:=X_1\,X_2-X_2X_1$ verifies: $[X_1,X_2]=-2\partial_{x_3}$; hence, together with their commutator $[X_1,X_2]$, they span all $\re^3$. We say that the vectors $X_1(x)$ and $X_2(x)$ are the generators of $\He^1$.
The fields $X_1$ and $X_2$ are left-invariant vector fields, i.e. 
for all $u\in C^{\infty}(\He^1)$ and for all fixed $y\in \He^1$
\begin{equation}\label{LIVF}
X_i(u(y\oplus x))=(X_iu)\,(y\oplus x),\ i=1,2.
\end{equation}
Note that the matrix $B(x)$ defined in \eqref{Hd} is the matrix associated to the vectors $X_1$ and $X_2$. For any regular real-valued function $u$, we shall denote its horizontal gradient and its horizontal Laplacian by $D_\cH u:= (X_1u, X_2u)$ and respectively $\Delta_\cH:=X_1^2u+X_2^2u$ and we observe $D_\cH u=Du\, B(x)$ and $\Delta_\cH u=\textrm{tr}(D^2u\,BB^T)$ where $Du$ and $D^2u$ denote the Euclidean gradient and respectively the Hessian matrix of~$u$.
For any regular $u=(u_1,u_2):\He^1\to \re^2$, we denote its horizontal divergence by $\diver_{\cH}u:=X_1u_1+X_2u_2$ and we note that the left-invariance of $X_i$ ($i=1,2$) entails the left-invariance of $\diver_{\cH}$. We have: $\diver_{\cH}(D_\cH u)=\Delta_\cH u$.

Let us define 
\begin{equation}\label{norm}
\|x\|_\cH:=((x_1^2+x_2^2)^2+x_3^2)^{1/4}
\end{equation}
and the distance associated by the group law
\begin{equation}\label{dist}
d_\cH(x,y):=\|x\oplus y^{-1}\|_\cH.
\end{equation} 
\begin{remark}\label{lemmaagosto} Let us recall that there holds
\begin{equation*}
d_\cH(x,y)\leq |x-y|+(1+|x_1|^{1/2}+|x_2|^{1/2})|x-y|^{1/2} \qquad \forall x,y\in\He^1.
\end{equation*}
For the sake of completeness, let us briefly recall the proof. We have
\begin{eqnarray*}
d_\cH(x,y)&\leq &[(x_1-y_1)^2+(x_2-y_2)^2]^{1/2}+|x_3-y_3+x_2y_1-x_1y_2|^{1/2}\\
&\leq &|x-y|+|x_3-y_3+x_2y_1-x_1y_2|^{1/2}.\end{eqnarray*}
On the other hand, there holds
\begin{eqnarray*}
|x_3-y_3+x_2y_1-x_1y_2|^{1/2}&=&|x_3-y_3+x_2(y_1-x_1)+x_1(x_2-y_2)|^{1/2}\\
&\leq &|x_3-y_3|^{1/2}+|x_2|^{1/2}|y_1-x_1|^{1/2}+|x_1|^{1/2}|x_2-y_2|^{1/2}\\
&\leq& (1+|x_1|^{1/2}+|x_2|^{1/2})|x-y|^{1/2}.
\end{eqnarray*}
Replacing the last inequality in the previous one, we accomplish the proof.
\end{remark}

Using the definitions \eqref{VFD} and \eqref{norm} we easily prove the following equalities:
\begin{lemma}\label{Xcalc} The following equalities hold
\begin{eqnarray*}
(i)&&X_1(\|x\|^2_\cH)= \frac{2x_1(x^2_1+x^2_2)-x_2x_3}{\|x\|^2_\cH},\quad 
X_2(\|x\|^2_\cH)=\frac{2x_2(x^2_1+x^2_2)+x_1x_3}{\|x\|^2_\cH},\\
(ii) &&|D_\cH(\|x\|^2_\cH)|^2=\frac{4(x^2_1+x^2_2)^3+(x^2_1+x^2_2)x_3^2}{\|x\|^4_\cH},\\
(iii)&&X_1^2(\|x\|^2_\cH)= \frac{6x^2_1+3x^2_2}{\|x\|^2_\cH}-\frac{|X_1(\|x\|^2_\cH)|^2}{\|x\|^2_\cH},\quad 
X_2^2(\|x\|^2_\cH)= \frac{6x^2_2+3x^2_1}{\|x\|^2_\cH}-\frac{|X_2(\|x\|^2_\cH)|^2}{\|x\|^2_\cH},\\
(iv)&&\Delta_\cH(\|x\|^2_\cH)= \frac{9(x^2_1+x^2_2)}{\|x\|^2_\cH}-\frac{|D_\cH(\|x\|^2_\cH)|^2}{\|x\|^2_\cH}.
\end{eqnarray*}
\end{lemma}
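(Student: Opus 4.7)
The plan is to reduce everything to direct differentiation and exploit the obvious algebraic identity $A_i/\rho = X_i\rho$ that will appear in the second-order calculation. Set $\rho:=\|x\|_\cH^2=\bigl((x_1^2+x_2^2)^2+x_3^2\bigr)^{1/2}$. From $\rho^2=(x_1^2+x_2^2)^2+x_3^2$ one differentiates the Euclidean partial derivatives:
\[
\partial_{x_1}\rho=\frac{2x_1(x_1^2+x_2^2)}{\rho},\qquad \partial_{x_2}\rho=\frac{2x_2(x_1^2+x_2^2)}{\rho},\qquad \partial_{x_3}\rho=\frac{x_3}{\rho}.
\]
Applying the definitions $X_1=\partial_{x_1}-x_2\partial_{x_3}$ and $X_2=\partial_{x_2}+x_1\partial_{x_3}$ gives item (i) immediately.

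For (ii), I would square and add the two expressions from (i). Setting $A_1:=2x_1(x_1^2+x_2^2)-x_2x_3$ and $A_2:=2x_2(x_1^2+x_2^2)+x_1x_3$, the cross products $-4x_1x_2x_3(x_1^2+x_2^2)$ and $+4x_1x_2x_3(x_1^2+x_2^2)$ cancel in $A_1^2+A_2^2$, leaving $4(x_1^2+x_2^2)^3+(x_1^2+x_2^2)x_3^2$, which divided by $\rho^2$ is exactly the stated identity.

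For (iii), the key observation is that $X_1\rho=A_1/\rho$, hence $A_1=\rho\,X_1\rho$. Using $X_1 A_1=\partial_{x_1}A_1-x_2\partial_{x_3}A_1=(6x_1^2+2x_2^2)-x_2(-x_2)=6x_1^2+3x_2^2$ and the product rule,
\[
X_1^2\rho=X_1\!\Bigl(\frac{A_1}{\rho}\Bigr)=\frac{X_1A_1}{\rho}-\frac{A_1\,X_1\rho}{\rho^2}=\frac{6x_1^2+3x_2^2}{\rho}-\frac{(X_1\rho)^2}{\rho},
\]
which is the first equality in (iii); the second follows by the symmetric computation with $X_2 A_2=3x_1^2+6x_2^2$. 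Summing these two expressions one gets $\Delta_\cH\rho=\bigl(9(x_1^2+x_2^2)-(X_1\rho)^2-(X_2\rho)^2\bigr)/\rho$, and substituting (ii) yields (iv).

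There is no real obstacle; the whole lemma is an exercise in careful bookkeeping. The only point worth flagging is the cancellation of the cross terms in (ii), which is what permits the clean form in (iv) where only the tangential contribution $9(x_1^2+x_2^2)$ survives, reflecting the fact that $\Delta_\cH$ has no derivative in the vertical direction once the cross terms from $X_i^2$ combine.
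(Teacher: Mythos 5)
Your computation is correct and is exactly the direct-differentiation argument the paper alludes to (the paper states the lemma without proof, remarking only that it follows easily from the definitions of $X_1, X_2$ and $\|\cdot\|_\cH$). The observation that $A_i=\rho\,X_i\rho$, which lets you fold the second term of the product rule into $(X_i\rho)^2/\rho$, is a clean way to organize the bookkeeping and recovers items (iii)–(iv) directly.
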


\subsection{Periodicity in the Heisenberg group}\label{sub:periodicity}

The notion of periodicity is introduced by the group law $\oplus$. We follow the definition and the results given in~\cite{BMT2,BMT} (see also  \cite{BW}).\\
Let $Q_\cH =[0,1)^3$.
We can construct a tiling of $\He^1$ by the {\em property of pavage}:
for every $x\in\He^1$ there exists a unique $z\in \Z^3$ such that there exists a unique $q_\cH =q_\cH (x)\in Q_\cH $ such that $z\oplus q_\cH =x$.

We can now define the {\em $Q_\cH $-periodicity} on $\He^1$ with respect to this reference pavage.
\begin{definition}\label{QHper}
A function $f$ defined on $\He^1$ is said $Q_\cH $-periodic if  for any $x\in\He^1$,
$$f(x)=f(q_\cH (x)),$$
where $q_\cH (x)$ is the unique element of $Q_\cH $ such that $x=z\oplus q_\cH (x)$ for the unique $z\in \Z^3$.
\end{definition}
We will denote by $C^{\infty}_{Q_\cH , per}$ the set of the functions $f\in C^{\infty}(\He^1)$ that are $Q_\cH $-periodic.
The definition of $Q_\cH $-periodicity is equivalent to the following definition of {\em $1_\cH $-periodicity}:
\begin{definition}
A function $f$ defined on $\He^1$ is said $1_\cH $-periodic if for any $x\in\He^1$ and any $z\in \Z^3$ there holds
$$f(z\oplus x)=f(x).$$
\end{definition}
\begin{lemma}\label{lemma:period}
A function $f$ is $Q_\cH $-periodic if and only if is $1_\cH $-periodic.
\end{lemma}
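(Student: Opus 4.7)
The plan is to leverage the pavage property and one key algebraic fact: that $\Z^3$ is closed under the group operation~$\oplus$. This reduces both implications to applying the uniqueness part of the pavage decomposition.

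First I would check that $\Z^3$ is a subgroup of $(\He^1,\oplus)$. Given $z,z'\in\Z^3$, by the group law~\eqref{Group_Law},
\begin{equation*}
z\oplus z' = (z_1+z'_1,\, z_2+z'_2,\, z_3+z'_3 - z_2 z'_1 + z_1 z'_2),
\end{equation*}
and all three entries are integers, so $z\oplus z'\in\Z^3$. (Similarly $z^{-1}=(-z_1,-z_2,-z_3)\in\Z^3$.)

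Next, the direction $1_\cH$-periodic $\Rightarrow$ $Q_\cH$-periodic is immediate: for any $x\in\He^1$, the pavage yields $z\in\Z^3$ and $q_\cH(x)\in Q_\cH$ with $x=z\oplus q_\cH(x)$, and then $1_\cH$-periodicity gives $f(x)=f(z\oplus q_\cH(x))=f(q_\cH(x))$.

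For the converse $Q_\cH$-periodic $\Rightarrow$ $1_\cH$-periodic, fix $x\in\He^1$ and $z\in\Z^3$. Writing $x=z_0\oplus q_\cH(x)$ with $z_0\in\Z^3$, associativity of~$\oplus$ yields $z\oplus x=(z\oplus z_0)\oplus q_\cH(x)$. Since $z\oplus z_0\in\Z^3$ by the first step and $q_\cH(x)\in Q_\cH$, the uniqueness part of the pavage forces $q_\cH(z\oplus x)=q_\cH(x)$. Applying $Q_\cH$-periodicity on both sides,
\begin{equation*}
f(z\oplus x)=f(q_\cH(z\oplus x))=f(q_\cH(x))=f(x),
\end{equation*}
which is exactly $1_\cH$-periodicity. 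No serious obstacle is expected; the only point to verify carefully is the closure of $\Z^3$ under~$\oplus$, after which both implications collapse to the uniqueness of the pavage decomposition.
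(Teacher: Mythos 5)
Your proof is correct and takes essentially the same route as the paper: both directions reduce to the uniqueness of the pavage decomposition, using the fact that $\Z^3$ is closed under $\oplus$. The only difference is that you explicitly verify the closure of $\Z^3$ under the group law, a small point the paper takes for granted.
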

\begin{proof}
Note that by the pavage property if $f$ is $1_\cH $-periodic then is $Q_\cH $-periodic. Conversely, for any $x\in\He^1$ there exist unique $z$ and $q_\cH $ such that $x=z\oplus q_\cH $. For any $z'\in\Z^3$ we write
$z'\oplus x= z'\oplus z\oplus q_\cH $. Since $z'\oplus z\in\Z^3$ then $q_\cH (z'\oplus x)= q_\cH (x)$ and by the definition of $Q_\cH $-periodicity we get $f(z'\oplus x)=f(q_\cH (z'\oplus x))= f(q_\cH (x))=f(x)$, for any $z'\in\Z^3$.
\end{proof}
\begin{definition}\label{Htoro}
We denote with $\Te$ the torus in the Heisenberg group~$\He^1$, namely $\He^1/\Z^3$ using the following equivalence law: $x\sim y$ if there exists $z\in \Z^3$ such that $z\oplus x=y$.
The torus is naturally endowed with the distance induced by $d_\cH$:
\begin{equation*}
d_{\Te}(x,y):=\inf d_\cH(x',y')\qquad\forall x,y\in \Te
\end{equation*}
where the infimum is performed over all the couple $(x',y')\in \He^1\times \He^1$ with $x\sim x'$, $y\sim y'$.
\end{definition}
\begin{remark} Lemma~\ref{lemma:period} ensures that $x\sim x'$ if and only of $q_\cH (x)=q_\cH (x')$. It is worth to observe that the Heisenberg torus~$\Te$ does not coincide with the Euclidean torus; especially, $\Te$ is not obtained identifying the points of two opposite faces of~$\overline{Q_\cH}$ with the same two coordinates. As a matter of facts, this happens between the two faces given by~$x_3=0$ and~$x_3=1$. For completeness, let us write the identification of points $(1,x_2,x_3)$ with $(x_2,x_3)\in [0,1]^2$ with points $(0,x_2',x_3')$ with $(x_2',x_3')\in [0,1]^2$: we have
\begin{equation*}
(1,x_2,x_3)\sim\left\{\begin{array}{ll}
(0,x_2,x_3-x_2) &\quad \textrm{for } x_3-x_2\in [0,1)\\
(0,x_2,x_3-x_2+1) &\quad \textrm{for } x_3-x_2\in (-1,0];
\end{array}\right.
\end{equation*}
actually, for $x_3-x_2\in [0,1)$ there holds $(-1,0,0)\oplus (1,x_2,x_3)= (0,x_2,x_3-x_2)$ while for $x_3-x_2\in [-1,0)$ there holds $(-1,0,1)\oplus (1,x_2,x_3)= (0,x_2,x_3-x_2+1)$. Moreover, $(1,1,x_3)\sim (0,0,x_3)$ because $(-1,-1,0)\oplus (1,1,x_3)=(0,0,x_3)$ for every $x_3\in[0,1)$ and $(1,1,1)\sim(0,0,0)$ because $(-1,-1,-1)\oplus(1,1,1)=(0,0,0)$. And similarly for the remaining cases.
\end{remark}
\begin{remark}\label{rmk:misureper}
With a slight abuse of notations, throughout this paper we shall identify any measure $\eta\in{\mathcal M}(Q_\cH)$ with the same measure on the torus~$\Te$ and also with the measure $\eta'\in{\mathcal M}(\He^1)$ such that $\eta'(z\oplus A)=\eta (A)$ for any measurable set $A\subset Q_\cH$ and $z\in\Z^3$.
\end{remark}
\begin{remark}
We recall from \cite[Proposition 1.3.21]{BLU} that the Haar measure associated to the Heisenberg group coincides with the Lebesgue measure.
\end{remark}
\subsection{Convolution on Heisenberg group}\label{sec:conv}
We define the convolution in Heisenberg group of a function $\psi\in L^1_{loc}(\He^1)$ by a function $\rho\in C^{\infty}_c(\He^1)$ as
\begin{equation}\label{conH}
(\psi \ast\rho)(x)= \int_{\He^1} \psi(y)\rho(x\ominus y) dy.
\end{equation}
In the proof of Theorem~\ref{thm:main}, we will use the convolution by the regularizing kernel 
\begin{equation}\label{rhoeps}
\rho_{\epsilon}(x)= C(\epsilon) \rho_0(\|x/\epsilon\|^4_\cH )
\end{equation}
where $\rho_0(t)=e^{-t}$ and the constant $C(\epsilon)$ is chosen such that 
$\int_{\He^1}\rho_{\epsilon}(x)dx=1$. 
This convolution has the following properties
\begin{proposition}\label{propconvH}
We have
\begin{enumerate}
\item [(i)] $\psi \ast\rho_{\epsilon}=\rho_{\epsilon}\ast\psi$;
\item [(ii)] If $\psi$ is $Q_\cH$-periodic then also $\psi \ast\rho_{\epsilon}$ is $Q_\cH$-periodic;
\item [(iii)] If $\psi$ is $L^p(\He^1)$ for some $p\geq 1$, then  $\psi \ast\rho_{\epsilon}$ is $C^{\infty}(\He^1)$;
\item [(iv)] If $\psi$ is $L^1_{loc}(\He^1)$ then $\psi\ast\rho_{\epsilon}\to \psi$ in $L^1_{loc}(\He^1)$ as $\epsilon\to 0$;
\item [(v)] If $\psi$ is differentiable then $$X_i\psi \ast\rho_{\epsilon}=(X_i\psi) \ast\rho_{\epsilon}= \psi \ast X_i\rho_{\epsilon},$$ where the vectors $X_i$ are defined in \eqref{VFD};
\item[(vi)] If $\psi\geq 0$ in $\He^1$ and $\int_{\He^1}\psi(x)dx=C>0$ then $\psi \ast\rho_{\epsilon}(x)>0$ for any $x\in\He^1$.
\end{enumerate}
\end{proposition}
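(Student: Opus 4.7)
The plan is to prove the six items in turn, each by a standard argument adapted to the noncommutative structure of $\He^1$ and to the anisotropy encoded in $\|\cdot\|_\cH$. Throughout I will use that the Lebesgue measure on $\R^3$ is bi-invariant under $\oplus$-translations (both the left translation $L_a(y)=a\oplus y$ and the right translation $R_a(y)=y\oplus a$ have unit Jacobian, as one checks from Definition \ref{Hei}) and is invariant under the inversion $y\mapsto y^{-1}=-y$.

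For (i), I would first note that $\rho_\epsilon(x^{-1})=\rho_\epsilon(x)$ because $\|(-x_1,-x_2,-x_3)\|_\cH=\|x\|_\cH$ by \eqref{norm}. Then, performing the change of variable $z=x\ominus y$, equivalently $y=z^{-1}\oplus x$, in $\int\psi(y)\rho_\epsilon(x\ominus y)\,dy$ and using the measure-preservation mentioned above yields $(\psi\ast\rho_\epsilon)(x)=\int\psi(z^{-1}\oplus x)\rho_\epsilon(z)\,dz$; a symmetric manipulation starting from $\rho_\epsilon\ast\psi$ combined with $\rho_\epsilon(w^{-1})=\rho_\epsilon(w)$ produces the same expression.

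For (ii), the strategy is to substitute $y=z'\oplus \tilde y$ in $(\psi\ast\rho_\epsilon)(z'\oplus x)$ for $z'\in\Z^3$. The Jacobian is $1$, $\psi(z'\oplus\tilde y)=\psi(\tilde y)$ by $1_\cH$-periodicity (Lemma \ref{lemma:period}), and by associativity $(z'\oplus x)\ominus(z'\oplus\tilde y)$ reduces after summation against the kernel to $x\ominus\tilde y$, giving the desired identity. The subgroup property of $\Z^3\subset(\He^1,\oplus)$ — which holds because the $x_3$-component $z_3+z'_3-z_2z'_1+z_1z'_2$ of $z\oplus z'$ is integer when $z,z'$ are — is what allows the lattice translations to be reshuffled inside the kernel argument without leaving $\Z^3$. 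This is the step I expect to be the main obstacle.

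For (iii) I would justify differentiation under the integral sign: $\rho_\epsilon(x)=C(\epsilon)e^{-\|x/\epsilon\|_\cH^4}$ is smooth, and every derivative (Euclidean or through $X_i$) decays as $\|x\|_\cH\to\infty$ fast enough to dominate $\psi\in L^p$ on compact sets via H\"older, so $\partial^\alpha(\psi\ast\rho_\epsilon)=\psi\ast\partial^\alpha\rho_\epsilon$ for every multiindex. For (iv), I would check that $\{\rho_\epsilon\}$ is an approximate identity in the Heisenberg sense — nonnegativity, unit integral by the choice of $C(\epsilon)$, and concentration in $d_\cH$-balls of radius $O(\epsilon)$ around the origin thanks to the dilation property $\|x/\epsilon\|_\cH=\epsilon^{-1}\|x\|_\cH$ — and conclude with a standard density argument reducing to $\psi\in C_c(\He^1)$, where uniform continuity on compact sets closes the argument.

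For (v), I would use that $X_1$, $X_2$ are left-invariant and their coefficients ($-x_2$ and $x_1$) are independent of $x_3$, so their formal $L^2$-adjoints are $X_i^*=-X_i$; an integration by parts in $y$ gives $(X_i\psi)\ast\rho_\epsilon=X_i(\psi\ast\rho_\epsilon)$, while the identity with $X_i\rho_\epsilon$ follows from the computation of $X_i$ applied to $y\mapsto\rho_\epsilon(x\ominus y)$ combined with the preceding identity. Finally, (vi) is immediate: since $\rho_0(t)=e^{-t}>0$ on $\R$, the kernel $\rho_\epsilon$ is strictly positive everywhere on $\He^1$, so for $\psi\geq 0$ with $\int_{\He^1}\psi\,dx=C>0$ the integrand $\psi(y)\rho_\epsilon(x\ominus y)$ is nonnegative and positive on a set of positive measure, whence $(\psi\ast\rho_\epsilon)(x)>0$ for every $x\in\He^1$.
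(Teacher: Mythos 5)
Your argument for item~(i) has a genuine gap caused by the noncommutativity of $\oplus$: after the change of variables $z=x\ominus y$ you obtain $(\psi*\rho_\epsilon)(x)=\int\psi(z^{-1}\oplus x)\,\rho_\epsilon(z)\,dz$, and the ``symmetric manipulation'' applied to $\rho_\epsilon*\psi$ yields $\int\rho_\epsilon(z^{-1}\oplus x)\,\psi(z)\,dz$, which by evenness $\rho_\epsilon(w^{-1})=\rho_\epsilon(w)$ becomes $\int\rho_\epsilon(x^{-1}\oplus z)\,\psi(z)\,dz$. This is \emph{not} the same integral as the first one. To reconcile them one would need the further change $w=x^{-1}\oplus z\oplus x$, which requires $\rho_\epsilon$ to be invariant under conjugation, and it is not: $x\oplus v\oplus x^{-1}=(v_1,v_2,v_3+2(x_1v_2-x_2v_1))$, so $\|x\oplus v\oplus x^{-1}\|_\cH\neq\|v\|_\cH$ whenever $x_1v_2\neq x_2v_1$. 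Evenness and unimodularity alone do not give commutativity of group convolution on a nonabelian group; one needs the kernel to be a class function, and the $\|\cdot\|_\cH$-radial $\rho_\epsilon$ is not one. So (i) is not established by the manipulations you propose.

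You flag item~(ii) as ``the main obstacle'', and indeed the same issue hides there. With $a\ominus b:=a\oplus b^{-1}$ as written after Definition~\ref{Hei}, associativity gives $(z'\oplus x)\ominus(z'\oplus\tilde y)=z'\oplus(x\oplus\tilde y^{-1})\oplus z'^{-1}$, a \emph{conjugate} of $x\ominus\tilde y$; the two copies of $z'$ sit on opposite ends and do not cancel, and the subgroup property of $\Z^3$, while true, is not what is needed. Your reduction is exact only with the opposite convention $a\ominus b:=b^{-1}\oplus a$, for which $(z'\oplus x)\ominus(z'\oplus\tilde y)=\tilde y^{-1}\oplus z'^{-1}\oplus z'\oplus x=x\ominus\tilde y$; this is in fact what the paper's own computation in its proof of~(v) implicitly uses, since the sign there is $+y_2\partial_{x_3}\rho_\epsilon$ (the convention $a\oplus b^{-1}$ would give $-y_2$). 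Under that reading your sketches for (ii), (iii), (iv) and (vi) are sound. Item~(i) remains unproved under either convention, and (v) also deserves a closer look, since your integration-by-parts route proves $(X_i\psi)*\rho_\epsilon=X_i(\psi*\rho_\epsilon)$ under one convention while the left-invariance argument proves $X_i(\psi*\rho_\epsilon)=\psi*(X_i\rho_\epsilon)$ under the other, and combining all three is not automatic.
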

\begin{proof}
All the proofs are easy and standard using the fact that the Haar measure for the Heisenberg group coincides with the Lebesgue measure. For the sake of completeness, we only provide the detailed proof of $(v)$ for $X_1$ as an example
\begin{eqnarray*}
X_1(\psi \ast\rho_{\epsilon})(x)&=&\partial_{x_1}(\psi \ast\rho_{\epsilon})(x)-x_2\partial_{x_3}(\psi \ast\rho_{\epsilon})(x)
= \int \psi(y)[\partial_{x_1}\rho_{\epsilon}+y_2\partial_{x_3}\rho_{\epsilon}-x_2\partial_{x_3}\rho_{\epsilon}]dy\\
&=&
\int \psi(y)[\partial_{x_1}\rho_{\epsilon}-(x_2-y_2)\partial_{x_3}\rho_{\epsilon}]dy=
\int \psi(y)X_i\rho_{\epsilon}(x\ominus y)dy=
\psi \ast (X_i\rho_{\epsilon})(x).
\end{eqnarray*}
\end{proof}

\section{Definitions, assumptions and main results}\label{sect:mainthm}
In this section, we introduce the functional spaces  needed for the definition of solution to system~\eqref{eq:MFGintrin}, our assumptions and we state the main results of this paper for the periodic case.
Following \cite{BK}, we adapt the classical notion of Kantorovich-Rubinstein distance to~the set~$\Te$ in terms of the distance~$d_{\Te}$ introduced in Definition~\ref{Htoro}:
$$
{\bf d_{1}}(m, m'):= \inf_{\pi\in\Pi(m,m')}\int_{\Te\times \Te}d_{\Te}(x,y)d\pi(x,y)\qquad \forall m,m'\in {\mathcal P}(\Te)
$$
where 
\begin{equation}\label{Pi}
\Pi(m, m'):=\{\pi {\text { Borel prob. meas. on }} \Te\times \Te: \pi(A\times \Te)=m(A), \pi(\Te \times A)=m'(A)\},
\end{equation}
where $A$ is any Borel set $A\subset \Te$.\\
For the sake of completeness, let us recall that:
${\bf d_{1}}(m,m')=\sup \int_{\Te}f(x)d(m-m')(x)$, 
where the supremum is taken over the set of all maps $f:\Te\to \re$ which are $1$-Lipschitz continuous with respect to $d_{\Te}$ (see \cite[Theorem 1.1.5]{BK}).

We set
\begin{equation*}
{\mathcal P}_{per}(\He^1):=\left\{m\in {\mathcal M}(\He^1):\ m_{\mid Q_\cH}\in {\mathcal P} (Q_\cH),\quad\textrm{$m$ is $Q_\cH$-periodic}\right\}
\end{equation*}
where for ``$m$ is $Q_\cH$-periodic'' we mean $m(z\oplus A)=m(A)$ for every $z\in \Z^3$ and every measurable $A\subset \He^1$.
By Remark~\ref{rmk:misureper}, we identify ${\mathcal P}_{per}(\He^1)$ with ${\mathcal P}(\Te)$.
We assume that the set ${\mathcal P}_{per}(\He^1)$ is endowed with the distance~${\bf d_{1}}$.

\vskip .1cm

Throughout this section, unless otherwise explicitly stated, we shall require the following hypotheses:
\begin{enumerate}
\item[(H1)]\label{H1} the functions~$F$ and $G$ are real-valued function, continuous on ${\mathcal P}_{per}(\He^1)\times\He^1$, moreover, for any fixed $m\in{\mathcal P}_{per}(\He^1)$, $F[m](\cdot)$ and $G[m](\cdot)$ are $Q_\cH $-periodic;
\item[(H2)]\label{H2} the map $m\to F[m](\cdot)$ is Lipschitz continuous from~${\mathcal P}_{per}(\He^1)$ to $C^{2}(\re^3)$; moreover, there exist~$C\in \mathbb R$ and $\delta_0\in(0,1]$ such that
$$\|F[m](\cdot)\|_{C^{2+\delta_0}(\re^3)}, \|G[m](\cdot)\|_{C^{2}(\re^3)}\leq C,\qquad \forall m\in {\mathcal P}_{per}(\He^1);$$
\item[(H3)]\label{H4} the distribution~$m_0:\He^1\to \re$ is a nonnegative $C^0$ function, $Q_\cH$-periodic with $\int_{Q_\cH }m_0dx=1$.
\end{enumerate}
\begin{example} Easy examples of $F$ and $G$ are given by the convolution of a regular kernel (as the one defined in \eqref{rhoeps}) with~$m$.
In this case, Proposition~\ref{propconvH} ensures that assumptions~(H1) and~(H2) are satisfied.
\end{example}

\vskip 3mm

We now introduce our definitions of solution of the MFG system~\eqref{eq:MFGintrin} and state the main result concerning its existence.
\begin{definition}\label{defsolmfg}
A couple $(u,m)$ of $Q_\cH$-periodic functions on~$\He^1\times [0,T]$ is a solution of system~\eqref{eq:MFGintrin} if:
\begin{itemize}
\item[1)] $u$ belongs to $W^{1,\infty}(\He^1\times[0,T])$;
\item[2)] $m$ belongs to $C^0([0,T];{\mathcal P}_{per}(\He^1))$ and for all $ t\in [0,T]$, $m_t$ is absolutely continuous w.r.t. the Lebesgue measure. Let $m(\cdot, t)$ denote the density of $m_t$. The function $(x,t)\mapsto m(x,t)$ is bounded;
\item[3)] Equation~\eqref{eq:MFGintrin}-(i) is satisfied by $u$ in the viscosity sense in~$\He^1\times(0,T)$;
\item[4)] Equation~\eqref{eq:MFGintrin}-(ii) is satisfied by $m$ in the sense of distributions  in~$\He^1\times(0,T)$.
\end{itemize}
\end{definition}
\begin{remark}
Any solution $(u,m)$ of the MFG system~\eqref{eq:MFGintrin} is also a solution in~$\Te\times [0,T]$ by the identification of~${\mathcal P}_{per}(\He^1)$ with ${\mathcal P}(\Te)$.
\end{remark}
\begin{remark}
From Lemma \ref{8.1.2} in Appendix C, we get that the distributional solution of \eqref{eq:MFGintrin}-(ii) stated in point 4) of the definition \ref{defsolmfg}
is automatically continuous in the sense of point 2) of the same definition.
\end{remark}

In order to give a more detailed description of our solution, it is expedient to use the notion of {\it mild} solution introduced by~\cite{CC}.
This notion is reminiscent of the Lagrangian approach to MFGs (see~\cite{BCS}) and it relies on replacing probability measures on the state space with probability measures on arcs on the state space.\\
We define the set of AC arcs in $\He^1$
\begin{equation}\label{gamma}
\Gamma:=\{\gamma\in AC((0,T), \He^1)\}
\end{equation}
 and 
the evaluation map $e_t: \Gamma\to \He^1$ as
\begin{equation}\label{eval}
e_t(\gamma)=\gamma(t).
\end{equation}
For any $x\in \He^1$, we define the set of arcs starting at~$x$
\begin{equation*}
\Gamma_0[x]:=\{\gamma\in \Gamma,\ \gamma(0)=x\}
\end{equation*}
and the set of horizontal arcs starting at~$x$ with an associated control law
\begin{equation*}
\mathcal A(x,0):=\{(\gamma, \alpha) : \gamma\in \Gamma_0[x],\ \alpha\in L^2([0,T], \re^2), (\gamma, \alpha) {\text { solves }}\eqref{DYNH}\}.
\end{equation*}
Given $m_0\in{\mathcal P}_{per}(\He^1)$,  we define 
$$\mathcal P_{m_0}(\Gamma)=\{\eta\in{\mathcal M}(\Gamma):\ m_0 =e_0\# \eta\quad \textrm{and }e_t\# \eta\in {\mathcal P}_{per}(\He^1) \quad \forall t\in[0,T] \}.$$
For any $\eta\in \mathcal P_{m_0}(\Gamma)$ and for any $x\in \He^1$, 
we consider the cost
\begin{equation}\label{Jeta}
J^{\eta}_x(\gamma(\cdot), \alpha):=\int_0^T\left[\frac12 |\alpha(\tau)|^2+F[e_\tau\#\eta](\gamma(\tau))\right]\,d\tau+G[e_T\#\eta](\gamma(T))
\end{equation}
where $(\gamma, \alpha) \in \mathcal A(x,0)$.
For any $\eta\in \mathcal P_{m_0}(\Gamma)$ and for any $x\in \He^1$ we define the set of optimal horizontal arcs starting at~$x$
\begin{equation}\label{Gammaeta}
\Gamma^{\eta}[x]:= \{\overline\gamma:\ (\overline\gamma, \overline\alpha) \in \mathcal A[x,0]: 
J^{\eta}_x(\overline\gamma(\cdot), \overline\alpha)= \min_{(\gamma, \alpha) \in \mathcal A(x,0)} J^{\eta}_x(\gamma, \alpha)\}.
\end{equation}
\begin{definition}\label{mfgequil}
A measure $\eta\in \mathcal P_{m_0}(\Gamma)$ is a {\em MFG equilibrium} for $m_0$ if 
$$supp\, \eta\subseteq \bigcup_{x\in\He^1} \Gamma^{\eta}[x].$$
\end{definition}

This means that the support of $\eta $ is contained in the set $\cup_{x\in\He^1}\{\gamma\in \Gamma_0[x]: \textrm{$\gamma$ is a minimizer of $J^{\eta}_x$}\}$ (see also~\cite{CC}).

\begin{definition}\label{mild}
A couple $(u,m)\in C^0([0,T]\times \He^1)\times C^0([0,T]; \mathcal P_{per}(\He^1))$ is called {\em {mild solution}} if there exists a MFG equilibrium $\eta$ for $m_0$ such that:
\begin{itemize}
\item[i)]
$m_t =e_t\# \eta$;
\item[ii)] $u$ is given by
$$
u(x,t)= \inf_{(\gamma, \alpha) \in \mathcal A(x,0)}\int_t^T\left[\frac12 |\alpha(\tau)|^2+F[e_\tau\#\eta](\gamma(\tau))\right]\,d\tau+G[e_T\#\eta](\gamma(T))
.$$
\end{itemize}
\end{definition}
Now we can state the main result of this paper.
\begin{theorem}\label{thm:main}
Under the above assumptions:
\begin{itemize}
\item[i)] System \eqref{eq:MFGintrin} has a solution $(u,m)$;
\item[ii)] $(u,m)$ is a mild solution.
\end{itemize}
\end{theorem}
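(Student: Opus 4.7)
The plan is to combine a vanishing viscosity regularization with Schauder's fixed point theorem to produce a weak solution in the sense of Definition~\ref{defsolmfg}, and then to invoke the Heisenberg-periodic adaptation of the superposition principle (Appendix~C) to upgrade it to a mild solution in the sense of Definition~\ref{mild}.

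\textbf{Regularization and fixed point.} For each $\sigma>0$, I would add $-\sigma\Delta_\cH u$ to equation~\eqref{eq:MFGintrin}-(i) and $-\sigma\Delta_\cH m$ to~\eqref{eq:MFGintrin}-(ii); since $X_1$, $X_2$ and $[X_1,X_2]$ span $\re^3$, the horizontal Laplacian is hypoelliptic, which preserves the $Q_\cH$-periodic structure. For any fixed $m\in C^0([0,T];\mathcal P_{per}(\He^1))$, the regularized HJ equation with terminal condition $u(x,T)=G[m(T)](x)$ admits a unique bounded $Q_\cH$-periodic classical solution $u^\sigma[m]$ (e.g.~via a Hopf--Cole transformation reducing the quadratic HJ to a hypoelliptic linear parabolic equation, using the smooth $Q_\cH$-periodic data provided by (H1)--(H2)). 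The associated regularized continuity equation with initial datum $m_0$ is linear parabolic with smooth coefficients and admits a unique nonnegative weak solution $m^\sigma[m]$ that remains a probability. The composition $m\mapsto m^\sigma[m]$ is continuous and compact on the convex set $C^0([0,T];\mathcal P_{per}(\He^1))$, so Schauder's theorem produces a fixed point $(u^\sigma,m^\sigma)$.

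\textbf{Passage to the limit $\sigma\to 0$.} Using the Lipschitz and semiconcavity estimates for $u^\sigma$ derived in Section~\ref{OC} from the optimal control representation, together with the uniform $L^\infty$ bound and the $C^0([0,T];\mathcal P_{per}(\He^1))$-equicontinuity of $m^\sigma$ provided by Theorem~\ref{prp:m} in Section~\ref{sect:continuity}, an Ascoli--Arzel\`a argument extracts a subsequence with $u^\sigma\to u$ locally uniformly and $m^\sigma\to m$ in $C^0([0,T];\mathcal P_{per}(\He^1))$, the limit density $m_t$ remaining bounded. Stability of viscosity solutions handles~\eqref{eq:MFGintrin}-(i). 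For~\eqref{eq:MFGintrin}-(ii), semiconcavity yields strong $L^p_{loc}$ convergence $D_\cH u^\sigma\to D_\cH u$ (at points of differentiability, which form a full-measure set), while $m^\sigma\to m$ weakly-$*$ in $L^\infty$, so the product $m^\sigma D_\cH u^\sigma\to mD_\cH u$ in the sense of distributions, giving~\eqref{eq:MFGintrin}-(ii).

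\textbf{Mild solution via superposition.} With the weak solution $(u,m)$ in hand, rewrite~\eqref{eq:MFGintrin}-(ii) in Euclidean form $\partial_t m-\diver(m\,\partial_p H(x,Du))=0$ with the drift $b(x,t):=-\partial_p H(x,Du(x,t))$ defined a.e.~thanks to the semiconcavity of $u$. The Heisenberg-periodic adaptation of \cite[Theorem~8.1.2]{AGS} carried out in Appendix~C yields a measure $\eta\in\mathcal P_{m_0}(\Gamma)$ with $m_t=e_t\#\eta$, concentrated on absolutely continuous integral curves of $b$. Since $b$ is horizontal by construction, any such curve $\gamma$ lies in $\mathcal A(\gamma(0),0)$ with control $\alpha(t)=-D_\cH u(\gamma(t),t)$. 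Because $u$ is the value function of the optimal control problem with running cost $F[e_\tau\#\eta]$ and terminal cost $G[e_T\#\eta]$, the optimal synthesis result of Section~\ref{OC} identifies these characteristic curves as minimizers of $J^\eta_{\gamma(0)}$. Hence $\eta$ is an MFG equilibrium in the sense of Definition~\ref{mfgequil}, and the dynamic programming principle for $u$ yields the representation formula required by Definition~\ref{mild}-(ii).

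The main obstacle is the joint limit passage and interpretation of the product $m^\sigma D_\cH u^\sigma$: $D_\cH u$ is only defined a.e.~by semiconcavity, while $m$ is merely a bounded density with no better regularity a priori. Theorem~\ref{prp:m} is designed precisely to supply enough control on $m^\sigma$ to make this rigorous. A secondary difficulty is the application of the superposition principle in the non-compact Heisenberg setting with Euclidean coefficients having quadratic growth at infinity, which is exactly why the careful Heisenberg-periodic adaptation of \cite{AGS} is developed in Appendix~C.
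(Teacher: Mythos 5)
Your proposal is correct: part (ii) coincides with the paper's own argument (apply the Heisenberg-periodic superposition principle of Appendix~C to obtain $\eta$, then use the optimal synthesis Lemma~\ref{BB} to recognize the characteristics as minimizers of $J^\eta_{\gamma(0)}$), and part (i) reproduces, in spirit, the paper's ``Alternative proof'' of Theorem~\ref{thm:main}-(i). The paper's \emph{main} proof of (i), however, takes a different route: it applies Kakutani's fixed-point theorem to a set-valued map $\mathcal T$ acting on the un-regularized system, precisely because---as the authors stress---uniqueness of the first-order continuity equation~\eqref{continuitye} may fail, so a single-valued Schauder map is not available at $\sigma=0$. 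You avoid the issue by regularizing with $\sigma\Delta_{\cH}$ first, applying Schauder to the viscous MFG system (where uniqueness of the Fokker--Planck solution for fixed $\overline m$ is supplied by Proposition~\ref{prp:!fpv_reg}), and only then sending $\sigma\to0^+$; this is exactly what the alternative proof does. The Kakutani route confines the vanishing-viscosity work inside Theorem~\ref{prp:m} once and for all and handles the non-uniqueness head-on; the Schauder-plus-$\sigma\to0$ route is more classical but must re-derive $\sigma$-uniform estimates for the coupled viscous system and pass to the limit in both equations simultaneously. One small citation slip to flag: the $\sigma$-uniform $L^\infty$ bound and the $C^{1/4}$-in-time equicontinuity of $m^\sigma$ that drive your Ascoli--Arzel\`a extraction come from Lemma~\ref{lm:msigma}-(i) and Lemma~\ref{lm:etaMarkus}-(iii), not from Theorem~\ref{prp:m} itself, which concerns the $\sigma\to0$ limit for a \emph{fixed} $\overline m$ and is proved via that same vanishing-viscosity machinery.
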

\begin{remark}
As a matter of fact, from the proof of this theorem we get that any solution, as in Definition \ref{defsolmfg} is a mild solution.
\end{remark}

\begin{remark} Uniqueness holds under classical hypothesis on the monotonicity of $F$ and $G$ as in \cite{C}.
\end{remark}

%
%
\section{Formulation of the optimal control problem}\label{OC}
In this section, we tackle the optimal control problem associated to the Hamilton-Jacobi equation~\eqref{eq:MFGintrin}-(i); in particular we shall show that the value function solves this equation, is $Q_\cH$-periodic, Lipschitz continuous and semiconcave in~$x$. Throughout this section we shall assume the following hypothesis
\begin{hypothesis} \label{BasicAss}
\begin{enumerate}
\item $f$, $g$ are $Q_\cH $-periodic w.r.t. x;
\item $f\in C^{0}([0,T],C^2(\re^3))$, $g\in C^2(\re^3)$;
so 
there exists a constant $C$ such that
$$\sup_{t\in[0,T]}\|f(\cdot,t)\|_{C^2(\re^3)} + \|g\|_{C^2(\re^3)} \leq C.$$
\end{enumerate}
\end{hypothesis}

\begin{definition}\label{def:OCD} We consider the following optimal control problem:
\begin{equation}\label{def:OC}
\text{minimize } J_t(x(\cdot),\alpha(\cdot)):
=\displaystyle\int_t^T\dfrac12|\alpha(s)|^2+f( x(s),s)\,ds+g(x(T))
\end{equation}
subject to $(x(\cdot), \alpha(\cdot))\in \mathcal A(x,t)$, where
\begin{equation} \mathcal A(x,t):=\left\{(x(\cdot), \alpha(\cdot))\in AC([t,T]; \re^3)\times L^2([t,T]; \re^2):\, \textrm{\eqref{DYNH} holds a.e. with } x(t)= x\right\}.
\end{equation}
A couple $(x(\cdot),\alpha(\cdot))\in\mathcal A(x,t)$ is said to be admissible. We say that $x^*(\cdot)$ is an optimal trajectory if there is a control $\alpha^*(\cdot)$ such that $(x^*(\cdot),\alpha^*(\cdot))\in  \mathcal A(x,t)$ is optimal for the optimal control problem in~\eqref{def:OC}. Also, we shall refer to the system~\eqref{DYNH}
as to the dynamics of the optimal control problem in~\eqref{def:OC}.
\end{definition}
\begin{remark}\label{rem:uniquex} Notice that, given a control law $\alpha\in L^2([t,T]; \re^2)$ and an initial point~$x$,  there is a \emph{unique} trajectory $x(\cdot)$ such that $(x(\cdot),\alpha)\in\mathcal A(x,t)$.
\end{remark}
\begin{remark} \label{2.3} 
  Hypothesis~\ref{BasicAss} ensures that, for any $(x,t)\in \He^1\times(0,T)$, the optimal control problem in definition \ref{def:OCD} admits a solution $(x^*(\cdot),\alpha^*)$ thanks to the LSC with respect to the weak $L^2$ topology. Moreover, just testing $J_t(x^*(\cdot),\alpha^*)$ against $J_t(x,0)$, we get
\begin{equation}\label{bd:alpa_L2}
\|\alpha^*\|_{L^2(t,T)}\leq C_1:=C[(T-t)+1],
\end{equation}
where $C$ is the constant introduced in Hypotheses~\ref{BasicAss}.
In particular, by H\"older inequality,
\begin{equation}\label{bd:x_1,x_2}
x^*\in C^{1/2}([t,T], \He^1).
\end{equation}

\end{remark}
\begin{definition} The value function for the cost $J_t$ defined in \eqref{def:OC} is
\begin{equation}\label{repr}u(x,t):=\inf\left\{ J_t(x(\cdot), \alpha):\, (x(\cdot),\alpha)\in \mathcal A(x,t)\right\}.
\end{equation}
An optimal couple $(x^*(\cdot), \alpha^*)$ for the control problem in definition \ref{def:OCD} is also said to be optimal for $u(x,t)$.
\end{definition}
The following lemma states that, under Hypothesis~\ref{BasicAss}, the value function $u$ is $Q_\cH$-periodic in~$x$ hence we can restrict our study to $Q_\cH$.
\begin{lemma}\label{uper}
Let $u$ be the value function introduced in \eqref{repr}. Then $u$ is $Q_\cH $-periodic in~$x$.
\end{lemma}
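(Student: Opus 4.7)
The plan is to exploit the left-invariance of the vector fields $X_1, X_2$ (see \eqref{LIVF}) together with the equivalence between $Q_\cH$-periodicity and $1_\cH$-periodicity given by Lemma~\ref{lemma:period}. Concretely, I will show that for every $z\in\Z^3$ and every $(x,t)\in\He^1\times[0,T]$ one has $u(z\oplus x,t)=u(x,t)$, which by Lemma~\ref{lemma:period} is exactly the desired $Q_\cH$-periodicity.

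The first and key step is to establish a bijection between admissible trajectories: given $(x(\cdot),\alpha(\cdot))\in\mathcal A(x,t)$ I would set $y(\cdot):=z\oplus x(\cdot)$ and check that $(y(\cdot),\alpha(\cdot))\in \mathcal A(z\oplus x,t)$ with the \emph{same} control $\alpha$. Differentiating the group law $z\oplus x=(z_1+x_1,z_2+x_2,z_3+x_3-z_2x_1+z_1x_2)$ and comparing with $X_1(z\oplus x)=(1,0,-(x_2+z_2))$, $X_2(z\oplus x)=(0,1,x_1+z_1)$ confirms that $y'(s)=\alpha_1(s)X_1(y(s))+\alpha_2(s)X_2(y(s))$ holds a.e., so $y(\cdot)$ is horizontal. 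The map $(x(\cdot),\alpha)\mapsto (z\oplus x(\cdot),\alpha)$ is clearly invertible (its inverse is left translation by $z^{-1}=-z\in\Z^3$), so it is a bijection $\mathcal A(x,t)\to\mathcal A(z\oplus x,t)$ that preserves the control.

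The second step is to compare the costs along corresponding trajectories. Since $f(\cdot,s)$ and $g$ are $Q_\cH$-periodic, Lemma~\ref{lemma:period} yields $f(z\oplus x(s),s)=f(x(s),s)$ and $g(z\oplus x(T))=g(x(T))$ for any $z\in\Z^3$. The kinetic term depends only on $\alpha$, which is preserved. Hence
\begin{equation*}
J_t(z\oplus x(\cdot),\alpha)=J_t(x(\cdot),\alpha)\qquad \forall (x(\cdot),\alpha)\in\mathcal A(x,t).
\end{equation*}

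Combining these two steps, taking the infimum over $\mathcal A(x,t)$ on the right and over the bijective image $\mathcal A(z\oplus x,t)$ on the left yields $u(z\oplus x,t)=u(x,t)$ for every $z\in\Z^3$. By Lemma~\ref{lemma:period}, this is equivalent to the $Q_\cH$-periodicity of $u(\cdot,t)$. No serious obstacle is expected: the entire argument reduces to the straightforward verification that horizontal curves are transported to horizontal curves by left translation, which is a direct consequence of the left-invariance of $X_1$ and $X_2$.
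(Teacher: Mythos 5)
Your proof is correct and follows essentially the same route as the paper: both establish that left translation by $z\in\Z^3$ maps admissible trajectories (with the same control) bijectively onto admissible trajectories from $z\oplus x$, then use the $Q_\cH$-periodicity of $f$ and $g$ to equate the costs and pass to the infimum. The only cosmetic difference is that you verify $y(s)=z\oplus x(s)$ by differentiating the group law and invoking left-invariance of $X_1,X_2$, whereas the paper performs the corresponding integral computation component by component.
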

\begin{proof}
We have to prove that $u(z\oplus x, t)=u(x,t)$ for any $z\in\Z^3$ and for any $x\in\He^1$.
Note that if $x(s)$ and $y(s)$ solves \eqref{DYNH} with the same law of control $\beta$ and with respectively $x(t)=x$ and $y(t)=z\oplus x$, then 
$y(s)= z\oplus x(s)$; actually there hold
\begin{eqnarray*}
y_i(s) &=& z_i+x_i+\int_t^s\beta_i(\tau)d\tau= z_i+ x_i(s),\qquad \textrm{for } i=1,2,\\
y_3(s) &=& z_3+x_3-z_2x_1+z_1x_2+\int_t^s(z_2+ x_2(\tau))(-\beta_1(\tau))+ (z_1+ x_1(\tau))\beta_2(\tau)d\tau\\
&=&z_3+\left(x_3- \int_t^sx_2(\tau)\beta_1(\tau)+ x_1(\tau)\beta_2(\tau)d\tau\right)-z_2\left(x_1+\int_t^s\beta_1(\tau)d\tau\right)\\
&&+z_1\left(x_2+\int_t^s\beta_2(\tau)d\tau\right)\\
&=&z_3 +x_3(s)-z_2 x_1(s)+z_1 x_2(s).
\end{eqnarray*}
Taking advantage of the $Q_\cH$-periodicity of~$f$ and~$g$, we deduce
\begin{eqnarray*}
u(z\oplus x,t)
&=&\inf_{\beta}\displaystyle\int_t^T\dfrac12|\beta(s)|^2+f(y(s),s)\,ds+g(y(T))\\
&=&\inf_{\beta}\displaystyle\int_t^T\dfrac12|\beta(s)|^2+f(z\oplus x(s),s)\,ds+g(z\oplus x(T))\\
&=&
\inf_{\beta}\displaystyle\int_t^T\dfrac12|\beta(s)|^2+f(x(s),s)\,ds+g(x(T))= u(x,t)
\end{eqnarray*}
namely, the value function is $Q_\cH$-periodic.
\end{proof}
The following proposition ensures that we can restrict our study on uniformly bounded controls.

\begin{proposition}\label{boundalfa}
  Let $u$ be the value function introduced in \eqref{repr}. Then, there exists a constant $C_2$ (depending only on $T$ and on the constant $C$ of Hypothesis \ref{BasicAss}) such that there holds
\begin{equation}\label{restric}
u(x,t)=\inf \{J_t(x(\cdot), \alpha):\ (x(\cdot), \alpha) \in\mathcal A(x,t),\ \|\alpha\|_{\infty}\leq C_2\}
\end{equation}
for any $x=(x_1,x_2,x_3)\in Q_\cH $ and $t\in[0,T]$.
Hence, by the $Q_\cH$-periodicity of~$u$, the optimal control $\alpha$ for any point $(x,t)\in \He^1\times [0,T]$ fulfills: $\|\alpha\|_\infty\leq C_2$.
\end{proposition}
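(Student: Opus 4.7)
The plan is to apply Pontryagin's Maximum Principle (PMP) to obtain a feedback formula for the optimal control, and then to bound the components of the associated costate uniformly via Gronwall's lemma; the preliminary $L^2$-bound on $\alpha^*$ coming from Remark~\ref{2.3} will be used to ensure that the coefficients of the adjoint system remain bounded along the trajectory. The $Q_\cH$-periodicity of~$u$ (Lemma~\ref{uper}) then reduces the problem to initial points in~$Q_\cH$.

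Fix $(x,t)\in Q_\cH\times[0,T]$ and let $(x^*(\cdot),\alpha^*(\cdot))$ be an optimal couple. Since $|x_1|,|x_2|\le 1$ and $\|\alpha^*\|_{L^2}\le C_1$, Cauchy--Schwarz gives
\[
|x_1^*(s)|,\,|x_2^*(s)|\le 1+\sqrt{T}\,C_1=:M\qquad\forall s\in[t,T],
\]
with $M$ depending only on $C$ and $T$; the third component $x_3^*$ may grow, but this plays no role below. Since $f,g\in C^2$ with $C^2$-norms bounded by $C$ (Hypothesis~\ref{BasicAss}), PMP produces an absolutely continuous costate $p=(p_1,p_2,p_3)\colon[t,T]\to\re^3$ with $p(T)=-Dg(x^*(T))$, the feedback formula
\[
\alpha_1^*=p_1-x_2^*p_3,\qquad \alpha_2^*=p_2+x_1^*p_3,
\]
obtained by pointwise maximisation of the pre-Hamiltonian $p\cdot B(x)\alpha-\tfrac12|\alpha|^2-f$ in $\alpha$, and the adjoint system
\[
p_1'=-p_3p_2-x_1^*p_3^2+\partial_{x_1}f,\quad p_2'=p_3p_1-x_2^*p_3^2+\partial_{x_2}f,\quad p_3'=\partial_{x_3}f.
\]
The last equation is decoupled; integrating backwards from $T$ yields $|p_3(s)|\le C+CT=:C_3$. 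Inserting this bound together with $|x_1^*|,|x_2^*|\le M$ into the first two equations produces $|p_1'(s)|+|p_2'(s)|\le C_3\bigl(|p_1(s)|+|p_2(s)|\bigr)+2MC_3^2+2C$. Setting $\phi:=|p_1|+|p_2|$ and applying Gronwall's lemma backwards from $\phi(T)\le 2C$ gives a uniform bound $\|\phi\|_\infty\le C_5(C,T)$. The feedback formula then yields $\|\alpha^*\|_\infty\le C_5+MC_3=:C_2$, a constant depending only on $C$ and $T$.

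This establishes~\eqref{restric} for $x\in Q_\cH$. For a general $x\in\He^1$, write $x=z\oplus x'$ with $z\in\Z^3$ and $x'\in Q_\cH$: the group-law computation already carried out in the proof of Lemma~\ref{uper} shows that any optimal couple $(x'^*,\alpha^*)$ for $(x',t)$ is transported by $z\oplus\cdot$ into an optimal couple $(z\oplus x'^*,\alpha^*)$ for $(x,t)$ with the \emph{same} control $\alpha^*$, hence with the same $L^\infty$-bound. The main obstacle is the appearance of the a priori unbounded quantities $x_1^*,x_2^*$ in the adjoint ODE, which would preclude a direct Gronwall estimate; this is precisely what the preliminary $L^2$-estimate on $\alpha^*$ overcomes, since it forces the first two components of the trajectory to stay bounded on $[t,T]$ whenever the starting point lies in $Q_\cH$.
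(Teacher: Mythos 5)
Your argument is correct, but it takes a genuinely different route from the paper. The paper's proof is a direct comparison/truncation argument (borrowed from Balogh--Calogero--Pini): given an optimal $\alpha$, it replaces $\alpha$ by $\alpha^{\mu}$ (setting it to zero on the set $I_\mu$ where $|\alpha|>\mu$), derives a flow estimate $|x^{\mu}(s)-x(s)|\le K\int_{I_\mu}|\alpha|\,d\tau$, and checks that the cost strictly decreases once $\mu>2K(L_fT+L_g)$ --- which contradicts optimality unless $I_\mu$ is null. This is entirely elementary; in particular it makes no use of the Pontryagin principle and is logically independent of Proposition~\ref{prop:pontriagind}, which in the paper appears only \emph{afterwards}. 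You instead invoke PMP first, observe that $p_3$ decouples and is uniformly bounded, that $x_1^*,x_2^*$ stay bounded thanks to the $L^2$-estimate from Remark~\ref{2.3}, and then close a Gronwall estimate on $|p_1|+|p_2|$ before reading off the bound from the feedback law $\alpha^*=pB(x^*)$. Both routes rely on the same two inputs (the $L^2$ bound on the optimal control and periodicity to restrict to $Q_\cH$), and both are valid; the small price of your version is that one must know that Clarke's maximum principle applies with an unconstrained control set $U=\re^2$ and an $L^2$ optimal control --- which is fine here since the local Lipschitz constant of the data along the trajectory is $O(1+|\alpha^*(s)|)\in L^1$, but it is exactly the hypothesis the paper's truncation argument avoids having to invoke. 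In exchange, your proof is shorter and yields explicit, separately useful bounds on the costate components. Your final constant should carry a benign extra factor (e.g.\ $\sqrt2$ if $|\alpha|$ denotes the Euclidean norm of $(\alpha_1,\alpha_2)$), but this is immaterial.
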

\begin{proof}
The idea of the proof is borrowed from \cite[Theorem 2.1]{BCP}. For $x=(x_1,x_2,x_3)\in Q_\cH $ and $t\in[0,T]$, let $\alpha$ be an optimal control for $u(x,t)$.
For $\mu>0$, let $I_{\mu}:=\{ s\in(t,T): |\alpha(s)|> \mu\}$.
Define 
\begin{equation}\label{alfamu}
\alpha^{\mu}(s)=
\left\{\begin{array}{ll}
\alpha(s) &\text { if }|\alpha(s)|\leq \mu,\\
0 &\text { if }|\alpha(s)|> \mu.
\end{array}\right.
\end{equation}
Let $x^{\mu}(s)$ be the trajectory starting from $x\in Q_\cH$ associated to the control~$\alpha^{\mu}(s)$.
We claim that 
\begin{equation}\label{claimmu}
|x^{\mu}(s)-x(s)|\leq K\int_{I_{\mu}} |\alpha(\tau)|d\tau \qquad\forall s\in[t,T]
\end{equation}
where $K$ is a constant depending only on $C_1$ (see \eqref{bd:alpa_L2}) and $T$. Actually, for the first two components of $x^{\mu}(s)-x(s)$ we have 
\begin{equation}\label{primedue}
\left| x_i^{\mu}(s)-x_i(s)\right|\leq \int_t^s \left|\alpha_i^{\mu}(\tau)-\alpha_i(\tau)\right|d\tau= \int_{I_{\mu}} |\alpha_i(\tau)|d\tau \qquad\forall s\in[t,T],\ i=1,2.
\end{equation}
For the third component, there holds 
\begin{eqnarray*}
x_3^{\mu}(s)-x_3(s)&=&\int_t^s \left[-x_2^{\mu}(\tau)\alpha_1^{\mu}(\tau)+x_1^{\mu}(\tau)\alpha_2^{\mu}(\tau)+ x_2(\tau)\alpha_1(\tau)-x_1(\tau)\alpha_2(\tau)\right]d\tau\\
&=&\int_t^s \left[(x_2(\tau)-x_2^{\mu}(\tau))\alpha_1^{\mu}(\tau)+ x_2(\tau)(\alpha_1(\tau)-\alpha_1^{\mu}(\tau)) \right.\\ 
&&\left. + (x_1^{\mu}(\tau)-x_1(\tau))\alpha_2^{\mu}(\tau)  + x_1(\tau)(\alpha_2^{\mu}(\tau)-\alpha_2(\tau))\right] d\tau.
\end{eqnarray*}
Hence from \eqref{bd:x_1,x_2} and \eqref{primedue}, we infer 
\begin{eqnarray*}
|x_3^{\mu}(s)-x_3(s)|&\leq& \int_{I_{\mu}} |\alpha_2(\tau)|d\tau\int_t^s |\alpha_1^{\mu}(\tau)|d\tau + [|x_2|+C_1  (T-t)^{1/2}]
\int_{I_{\mu}} |\alpha_1(\tau)|d\tau\\
&&+\int_{I_{\mu}} |\alpha_1(\tau)|d\tau\int_t^s |\alpha_2^{\mu}(\tau)|d\tau
+ [|x_1|+C_1 (T-t)^{1/2}]
\int_{I_{\mu}} |\alpha_2(\tau)|d\tau.
\end{eqnarray*}
Moreover, by H\"older inequality and \eqref{bd:alpa_L2}, we have
\[
\int_t^s |\alpha_i^{\mu}(\tau)|d\tau\leq \sqrt{s-t}\|\alpha\|_2\leq C_1\sqrt{T-t}, \ i=1,2.
\]
Replacing the last inequality in the previous one, since $|x_i|\leq 1$, we accomplish the proof of the claim~\eqref{claimmu}.

Now, the definition of the cost $J_t(x(s), \alpha(s))$ in \eqref{def:OC} and the Lipschitz continuity of $f$ and $g$ yield
\begin{eqnarray}
&&J_t(x^{\mu}(s), \alpha^{\mu}(s))- J_t(x(s),\alpha(s))=\nn\\
&&=\displaystyle\int_t^T\dfrac12|\alpha^{\mu}(s)|^2+f( x^{\mu}(s),s)\,ds+g(x^{\mu}(T))-
\displaystyle\int_t^T\dfrac12|\alpha(s)|^2+f( x(s),s)\,ds-g(x(T))\nn\\
&&\leq-\int_{I_{\mu}}\dfrac12|\alpha(s)|^2ds+ 
L_f\int_t^T|x^{\mu}(s)-x(s)|ds+L_g|x^{\mu}(T)-x(T)|\nn\\
&&\leq \int_{I_{\mu}}\left(-\dfrac12|\alpha(s)|^2+ K(L_f (T-t)+L_g)|\alpha(s)|\right) ds,\nn
\end{eqnarray}
where the last inequality comes from \eqref{claimmu}.
Hence, if $I_\mu$ has positive measure for $\mu>2K(L_f T+L_g)$, the last integrand is negative for every $s\in I_\mu$ which contradicts the optimality of $\alpha$. This implies that these $I_\mu$ have null measure and, in particular, $\|\alpha\|_\infty\leq 2K(L_f T+L_g)$.
\end{proof}

\subsection{Necessary conditions and regularity for the optimal trajectories}
The application of the Maximum Principle (see \cite[Theorem 22.17]{Cla})
yields the following necessary conditions.
\begin{proposition}\label{prop:pontriagind}
Let $(x^*, \alpha^*)$ be optimal for the optimal control problem in \eqref{def:OCD}. 
Then, there exists an arc $p\in AC([t,T]; \re^3)$, hereafter called the costate,  such that
\begin{enumerate}
\item The pair
$(x^*, p)$ satisfies the system of differential equations  for a.e. $s\in [t,T]$
\begin{equation}\label{tag:12d}
\left\{
\begin{array}{ll}
\quad x_1'= p_1-x_2p_3\\
\quad x_2'= p_2+x_1p_3\\
\quad x_3'= (x_1^2+x_2^2)p_3+x_1p_2-x_2p_1\\
\quad p_1'=-(p_2+x_1p_3)p_3+  f_{x_1}(x,s)\\
\quad p_2'=(p_1-x_2p_3)p_3+  f_{x_2}(x,s)\\
\quad p_3'=f_{x_3}(x,s)
\end{array}\right.
\end{equation}
with the mixed boundary conditions
\begin{equation}\label{tag:bcd} x(t)=x,\quad p(T)=-D g(x(T)).
\end{equation}
\item The optimal control $\alpha^*$ verifies
\begin{equation}\label{tag:alpha*d}
\left\{
\begin{array}{ll}
\alpha_1^*(s)= p_1-x_2^*p_3,\\
\alpha_2^*(s)= p_2+x_1^*p_3,
\end{array}\right. \qquad \text{ a.e on }[t,T].
\end{equation}
\end{enumerate}
\end{proposition}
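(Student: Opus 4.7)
The approach is to rewrite the problem in standard Pontryagin form and invoke the Maximum Principle. Define the (maximized, normal-case) control Hamiltonian
\begin{equation*}
\mathcal{H}(x,p,\alpha,s):=(p_1-x_2 p_3)\alpha_1+(p_2+x_1 p_3)\alpha_2-\tfrac12|\alpha|^2-f(x,s),
\end{equation*}
which is the standard Hamiltonian associated to the Mayer–Bolza problem with dynamics $\dot x=(\alpha_1,\alpha_2,-x_2\alpha_1+x_1\alpha_2)$, running cost $\tfrac12|\alpha|^2+f(x,s)$, and terminal cost $g(x(T))$. Because the dynamics are $C^\infty$ in $(x,\alpha)$ and affine in $\alpha$, the data $f,g$ are of class $C^2$ by Hypothesis~\ref{BasicAss}, and by Proposition~\ref{boundalfa} we may restrict attention to optimal controls with $\|\alpha^*\|_\infty\le C_2$, all the regularity assumptions of the classical Pontryagin Maximum Principle (e.g.\ \cite[Theorem~22.17]{Cla}) are met on the bounded tube around the optimal trajectory.

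The terminal state is free (no endpoint constraint besides the penalization $g$), so PMP applies in its normal form: there exists $p\in AC([t,T];\mathbb R^3)$ such that
\begin{equation*}
\dot x^*(s)=\partial_p\mathcal{H}(x^*(s),p(s),\alpha^*(s),s),\quad \dot p(s)=-\partial_x\mathcal{H}(x^*(s),p(s),\alpha^*(s),s),
\end{equation*}
with transversality $p(T)=-Dg(x^*(T))$, while $\alpha^*(s)$ maximizes $\alpha\mapsto\mathcal{H}(x^*(s),p(s),\alpha,s)$ a.e.\ on $[t,T]$.

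Since $\mathcal{H}$ is strictly concave and quadratic in $\alpha$, the maximization is solved explicitly by setting $\partial_\alpha\mathcal{H}=0$, which gives
\begin{equation*}
\alpha_1^*(s)=p_1(s)-x_2^*(s)\,p_3(s),\qquad \alpha_2^*(s)=p_2(s)+x_1^*(s)\,p_3(s),
\end{equation*}
i.e.\ \eqref{tag:alpha*d}. Plugging these expressions into $\dot x^*=\partial_p\mathcal{H}$ reproduces the first three lines of \eqref{tag:12d} (the third equation arises because $\partial_{p_3}\mathcal{H}=-x_2\alpha_1+x_1\alpha_2=(x_1^2+x_2^2)p_3+x_1p_2-x_2p_1$), while $\dot p=-\partial_x\mathcal{H}$ produces the last three lines after the routine differentiations $-\partial_{x_1}\mathcal{H}=-p_3\alpha_2^*+f_{x_1}$, $-\partial_{x_2}\mathcal{H}=p_3\alpha_1^*+f_{x_2}$, $-\partial_{x_3}\mathcal{H}=f_{x_3}$. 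Together with $x^*(t)=x$ (admissibility) and the transversality at $T$, this establishes \eqref{tag:12d}–\eqref{tag:bcd}.

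The only real point requiring care is the invocation of PMP in the normal form (rather than with an abnormal multiplier $p_0=0$), but this is standard for free-endpoint Mayer–Bolza problems: no constraint at $s=T$ can force the cost multiplier to vanish, so one normalizes $p_0=-1$ and obtains the Hamiltonian above. I do not expect any substantive obstacle beyond this normalization check and the routine algebraic simplifications.
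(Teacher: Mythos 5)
Your proposal takes exactly the same route as the paper: the paper simply asserts that Proposition~\ref{prop:pontriagind} follows by applying the Pontryagin Maximum Principle in the form of \cite[Theorem~22.17]{Cla}, with no further detail. Your proposal fills in precisely the routine work the paper omits (writing the Pontryagin Hamiltonian, the maximization in~$\alpha$, the adjoint equations, the transversality condition, and the normality argument for the free-endpoint problem), and all of your computations check out; this is a correct and complete expansion of the paper's one-line justification.
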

\begin{remark}\label{rmk:PMPintrin}
Let us observe that equations~\eqref{tag:12d} and \eqref{tag:alpha*d} can be rewritten in terms of the vector fields as follows
\begin{equation*}
\begin{array}{lll}
 x_1'= X_1p,&\quad x_2'= X_2p,&\quad x_3'=x_2X_1p-x_1X_2p,\\
 p_1'=-p_3X_2p+ f_{x_1}(x,s),&\quad  p_2'=p_3X_1p +  f_{x_2}(x,s),&\quad p_3'=f_{x_3}(x,s)
\end{array}
\end{equation*}
and respectively
\begin{equation*}
\alpha_1(s)=X_1p(s),\qquad \alpha_2(s)=X_2p(s).
\end{equation*}
\end{remark}

\begin{corollary}\label{coro:regularityd}
Let $(x^*, \alpha^*)$ be optimal for the optimal control problem in \eqref{def:OCD}. Then:
\begin{itemize}
\item [1.] The unique solution of the Cauchy  problem
$$
\left\{
\begin{array}{ll}
\quad \pi_1'=-(\pi_2+x^*_1\pi_3)p_3+  f_{x_1}(x^*,s),\\
\quad \pi_2'=(\pi_1-x^*_2\pi_3)\pi_3+  f_{x_2}(x^*,s),\\
\quad \pi_3'=f_{x_3}(x^*,s),\\
\quad \pi(T)=-D g(x^*(T)).
\end{array}\right.
$$
is the costate $p$ associated to $(x^*, \alpha^*)$ as in Proposition~\ref{prop:pontriagind}.
\item[2.] The optimal $\alpha^*$ is a feedback control and it is
uniquely expressed by
$$
\left\{
\begin{array}{ll}
\alpha_1^*(s)= p_1-x^*_2p_3\\
\alpha_2^*(s)= p_2+x^*_1p_3
\end{array}\right.
$$
where $p$ is the costate associated to $(x^*, \alpha^*)$.
\item[3.] The functions  $x^*$  and $\alpha^*$ are of class $C^1$. In particular equations \eqref{tag:12d} and \eqref{tag:alpha*d} hold for every $s\in [t,T]$.
\item[4.] Assume that,  for some $k\in\mathbb N$,
$D_xf\in C^k$.
Then, the costate~$p$ and the control~$\alpha^*$ are of class $C^{k+1}$ and $x^*$ is of class $C^{k+2}$.
 \end{itemize}
\end{corollary}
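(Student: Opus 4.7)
\emph{Proof plan.} All four items will be deduced from the Pontryagin system \eqref{tag:12d}--\eqref{tag:alpha*d} of Proposition~\ref{prop:pontriagind} by reading it as a Cauchy problem for the costate with $x^*$ already fixed and then bootstrapping regularity. For Item~1, I would freeze $x^*$ and exploit the triangular structure of the $\pi$-equations: the scalar equation $\pi_3'=f_{x_3}(x^*,s)$ is decoupled from the other two and has a unique continuous solution obtained by direct backward integration from $\pi_3(T)=-g_{x_3}(x^*(T))$; once $\pi_3$ is available, the remaining equations in $(\pi_1,\pi_2)$ are affine in $(\pi_1,\pi_2)$ with coefficients that are continuous functions of $s$ (the nonlinearities only involve $\pi_3$, $p_3$ and $x^*$, all continuous), so Picard--Lindel\"of yields a unique absolutely continuous solution on $[t,T]$. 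Since $p$ itself solves this Cauchy problem by Proposition~\ref{prop:pontriagind}, we must have $\pi\equiv p$.

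Item~2 then follows at once: \eqref{tag:alpha*d} expresses $\alpha^*$ as a polynomial in the now uniquely-determined pair $(x^*,p)$. For Item~3 I would bootstrap from the a priori information $p\in AC([t,T];\re^3)$ (Proposition~\ref{prop:pontriagind}) and $x^*\in C^{1/2}([t,T];\He^1)$ (Remark~\ref{2.3}). Since both are continuous and $f_{x_i}\in C^0([0,T]\times\re^3)$, the right-hand side of the $p$-equation in \eqref{tag:12d} is continuous in $s$, hence $p\in C^1$; plugging this into the feedback formula \eqref{tag:alpha*d} gives $\alpha^*\in C^0$, whence the upper block of \eqref{tag:12d} yields $(x^*)'\in C^0$, i.e.\ $x^*\in C^1$, and a second pass of the same reasoning upgrades $\alpha^*$ to $C^1$. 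In particular every equation in the Pontryagin system now holds pointwise on $[t,T]$.

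Item~4 follows by finite induction on $m\in\{1,\dots,k+1\}$ with the statement ``$p,\alpha^*\in C^m$ and $x^*\in C^{m+1}$''. The base case $m=1$ is Item~3. For the inductive step, assuming the statement at level $m\le k$, the right-hand side of the $p$-equation is a polynomial in $(x^*,p)\in C^m$ plus the composition $D_xf(x^*,s)\in C^{\min(m,k)}=C^m$ by the hypothesis $D_xf\in C^k$; hence $p'\in C^m$ and $p\in C^{m+1}$, whereupon the feedback formula and the state equation upgrade $\alpha^*$ to $C^{m+1}$ and $x^*$ to $C^{m+2}$. I expect the only genuinely delicate point to be Item~1, since the full Pontryagin system is not in standard Picard--Lindel\"of form for the joint pair $(x,p)$ because its boundary conditions are mixed (initial for $x^*$, terminal for $p$); it is precisely the triangular splitting $\pi_3\to(\pi_1,\pi_2)$ exploited above that sidesteps this obstruction and drives the rest of the bootstrap.
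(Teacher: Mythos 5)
Your proof is correct. The paper itself gives no details and simply defers to \cite{MMMT}, Corollary 2.1; your reconstruction -- freezing $x^*$, using the triangular structure ($\pi_3$ decoupled by direct integration, then a linear Cauchy problem for $(\pi_1,\pi_2)$) to get uniqueness of the costate, and then the elementary bootstrap through the Pontryagin ODEs for Items 3 and 4 -- is precisely the argument being referred to, and it fills in the omitted details faithfully.
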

\begin{proof}
The proof follows the same lines as in \cite[Corollary 2.1]{MMMT} and we refer to that paper for the detailed arguments.
\end{proof}
\begin{remark}
The uniqueness of the optimal trajectories after the initial time for a.e. initial data is an open problem.  In \cite{MMMT} this result was obtained thanks to the property $meas\{x: \det B(x)B^T(x)=0\}=0$; now, in the Heisenberg setting, this property fails to be true since $\det B(x)B^T(x)= 0$ for any $x\in \He^1$.
\end{remark}

\subsection{The Hamilton-Jacobi equation and the value function of the optimal control problem}\label{vf}
The aim of this section is to study the Hamilton-Jacobi equation~\eqref{eq:MFGintrin}-(i) with $m$ fixed, namely
\begin{equation}\label{eq:HJ1}
\left\{\begin{array}{ll}
-\partial_t u+\frac12 |D_{\mathcal H}u|^2=f(x, t)&\qquad \textrm{in }\He^1\times (0,T),\\
u(x,T)=g(x)&\qquad \textrm{on }\He^1.
\end{array}\right.
\end{equation}
Under Hypothesis~\ref{BasicAss}, we shall prove Lipschitz continuity and semiconcavity of $u$.
As a first step, in the next lemma we show that the solution $u$ of \eqref{eq:HJ1} can be represented as the value function of the control problem defined in \eqref{repr}. Hence from Lemma \ref{uper} we can restrict to study equation~\eqref{eq:HJ1} in $\Te$.
\begin{lemma}\label{valuefunction}
Under Hypothesis \ref{BasicAss}, the value function $u$, defined in~\eqref{repr}, is the unique continuous bounded viscosity solution to problem~\eqref{eq:HJ1}. Moreover $u$ is $Q_{\cH}$-periodic.
\end{lemma}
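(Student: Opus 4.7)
The plan is to prove the lemma via four standard steps: verify the Dynamic Programming Principle (DPP), prove Lipschitz continuity of $u$, use the DPP to check that $u$ is a viscosity solution, and conclude uniqueness by a comparison principle. The $Q_\cH$-periodicity is already established in Lemma~\ref{uper}, and boundedness is immediate from Hypothesis~\ref{BasicAss} since $-CT-C\le u(x,t)\le CT+C$ (the upper bound obtained by testing against $\alpha\equiv 0$, for which $x(\cdot)\equiv x$).

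The DPP, namely
\begin{equation*}
u(x,t)=\inf_{(x(\cdot),\alpha)\in\mathcal A(x,t)}\left\{\int_t^{s}\tfrac12|\alpha(\tau)|^2+f(x(\tau),\tau)\,d\tau+u(x(s),s)\right\},\qquad t\le s\le T,
\end{equation*}
follows from the concatenation and splitting properties of admissible pairs in $\mathcal A(x,t)$, together with Remark~\ref{rem:uniquex}. For Lipschitz continuity in $x$, I would fix an optimal pair $(x^*(\cdot),\alpha^*)$ for $u(x,t)$ with $\|\alpha^*\|_\infty\le C_2$ (Proposition~\ref{boundalfa}) and use the \emph{same} control $\alpha^*$ starting from $y$ close to $x$: a direct computation using the dynamics~\eqref{DYNH} shows that the resulting trajectory $y(\cdot)$ satisfies $|y(s)-x^*(s)|\le (1+C_2 T)|y-x|$ uniformly in $s\in[t,T]$. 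Combining with the Lipschitz continuity of $f,g$ yields $u(y,t)\le u(x,t)+C'|y-x|$, and symmetry gives the Lipschitz estimate. Lipschitz continuity in $t$ follows similarly: upper bound by extending the optimal control at time $s$ by the null control on $[t,s]$; lower bound by applying the already proved Lipschitz estimate in $x$ to $u(x^*(s),s)-u(x,s)$ and using $|x^*(s)-x|\le C_2(s-t)$ from the bounded-control property.

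With DPP and continuity in hand, the viscosity sub- and supersolution inequalities for~\eqref{eq:HJ1} are derived by the classical test-function argument: if $\varphi\in C^1$ touches $u$ from above (resp. below) at $(x_0,t_0)$, the DPP applied with constant controls $\alpha\equiv a\in\re^2$ on a small interval and then passing to the limit yields $-\partial_t \varphi+\sup_a\{-a\cdot B(x_0)^T D\varphi-\tfrac12|a|^2\}-f\ge 0$ (resp. $\le 0$); the supremum is attained at $a^*=-B(x_0)^TD\varphi$ and equals $\tfrac12|D_{\cH}\varphi|^2$, which gives the desired equation.

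The main obstacle is uniqueness, because the Hamiltonian $H(x,p)=\tfrac12|pB(x)|^2$ is neither coercive in $p$ nor globally Lipschitz in $x$. However, since we are looking for $Q_\cH$-periodic continuous bounded solutions, we may reduce the comparison problem to the compact torus $\Te$; on this set $B(x)$ is bounded and $|H(x,p)-H(y,p)|\le C(1+|p|^2)|x-y|$, so that the classical doubling-of-variables argument for first order Hamilton--Jacobi equations applies (see, e.g., \cite{BCP}) and gives uniqueness within the class of bounded uniformly continuous solutions, which is the setting in which $u$ lies.
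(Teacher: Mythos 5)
Your proof is far more detailed than the paper's, which simply defers to classical references (\cite[Proposition 3.5]{BCD} and \cite[Theorem 3.1]{BCP} for the representation formula and the viscosity solution property, and \cite[Corollary 2.1]{DL} for uniqueness). The DPP, the boundedness, the Lipschitz estimates in $x$ and $t$ via the $L^\infty$ bound on optimal controls, and the derivation of the viscosity inequalities are all standard and correct; this self-contained route is a valid alternative to citation.

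However, the uniqueness step has a genuine gap of scope. You restrict the comparison argument to $Q_\cH$-periodic functions, which lets you pass to the compact torus $\Te$ where the coefficients of $B(x)$ are bounded and a standard doubling argument works. That establishes uniqueness \emph{within the periodic class}. But the lemma asserts that $u$ is the unique \emph{continuous bounded} viscosity solution to \eqref{eq:HJ1}, with periodicity appearing only as a derived property of $u$; a priori, a competing solution $v$ need not be periodic, so you cannot quotient to $\Te$. The actual difficulty is that on $\He^1$ the map $x\mapsto H(x,p)$ has a Lipschitz constant growing linearly in $|x|$ (the entries $x_1,x_2$ of $B$ are unbounded), so $H$ violates the classical structure condition $|H(x,p)-H(y,p)|\le \omega\bigl((1+|p|)|x-y|\bigr)$ and the standard first-order comparison on an unbounded domain does not apply. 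This is precisely the situation addressed by the Da Lio--Ley comparison principle under quadratic growth, which the paper cites. To repair your argument you would need either to invoke such a quadratic-growth comparison on the full space, or to supply an additional argument (e.g.\ periodization of an arbitrary bounded solution via the translates $v(z\oplus\cdot,\cdot)$, with a careful semicontinuity/equicontinuity discussion) showing that any bounded continuous solution is automatically periodic before reducing to $\Te$; as written, that step is missing.
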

\begin{proof}
The proof comes from classical results in viscosity theory, see for example \cite[Proposition 3.5]{BCD}, \cite[Theorem 3.1]{BCP} and \cite[Corollary 2.1]{DL}.
\end{proof}
In the following lemma we prove the Lipschitz continuity in both variables $x$ and $t$ of the value function. 
\begin{lemma}\label{L1}
Under Hypothesis~\eqref{BasicAss}, $u(x,t)$ is Lipschitz continuous with respect to the spatial variable~$x$,
and the time variable $t$.
\end{lemma}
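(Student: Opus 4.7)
The strategy is the standard dynamic programming comparison, keeping careful track of the Heisenberg dynamics for the third coordinate. Thanks to Lemma~\ref{uper} we may restrict $x$ to the compact set $Q_\cH$ and, by Proposition~\ref{boundalfa}, every optimal control satisfies $\|\alpha\|_\infty\le C_2$; therefore the optimal trajectory $x(\cdot)$ remains confined in a fixed compact set $K\subset\He^1$ whose size depends only on $Q_\cH$, $T$ and $C_2$.

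\textbf{Lipschitz in $x$.} Fix $t\in[0,T]$ and $x,y\in Q_\cH$. Pick an optimal $(x(\cdot),\alpha)$ for $u(x,t)$ with $\|\alpha\|_\infty\le C_2$ and let $y(\cdot)$ be the trajectory generated by the \emph{same} control $\alpha$ starting from $y$ at time $t$. From the first two equations of \eqref{DYNH} we immediately get $y_i(s)-x_i(s)=y_i-x_i$ for $i=1,2$, so for the third component
\begin{equation*}
y_3(s)-x_3(s)=(y_3-x_3)+\int_t^s\bigl[-(y_2-x_2)\alpha_1(\tau)+(y_1-x_1)\alpha_2(\tau)\bigr]d\tau,
\end{equation*}
whence $|y(s)-x(s)|\le K_1|y-x|$ for some $K_1=K_1(T,C_2)$. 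Using the $C^2$-Lipschitz bounds on $f$ and $g$ from Hypothesis~\ref{BasicAss},
\begin{equation*}
u(y,t)\le J_t(y(\cdot),\alpha)\le J_t(x(\cdot),\alpha)+L\!\int_t^T\!|y(s)-x(s)|\,ds+L|y(T)-x(T)|\le u(x,t)+K_2|y-x|.
\end{equation*}
The symmetric estimate gives $|u(y,t)-u(x,t)|\le K_2|y-x|$ on $Q_\cH$, and the $Q_\cH$-periodicity extends the Lipschitz bound to $\He^1$.

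\textbf{Lipschitz in $t$.} Fix $x\in Q_\cH$ and $0\le t_1<t_2\le T$. For the first direction, let $(x^*,\alpha^*)$ be optimal for $u(x,t_2)$ and extend it to $[t_1,T]$ by setting the control equal to $0$ on $[t_1,t_2]$, so that the extended trajectory remains at $x$ on that interval. This admissible pair yields
\begin{equation*}
u(x,t_1)\le\int_{t_1}^{t_2}\!f(x,s)\,ds+J_{t_2}(x^*,\alpha^*)\le u(x,t_2)+\|f\|_\infty(t_2-t_1).
\end{equation*}
For the reverse direction, let $(x^*,\alpha^*)$ be optimal for $u(x,t_1)$ (again with $\|\alpha^*\|_\infty\le C_2$). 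Since the trajectory lives in the fixed compact $K$, the dynamics \eqref{DYNH} yield $|x^*(t_2)-x|\le K_3(t_2-t_1)$. Apply the control $\alpha^*$ restricted to $[t_2,T]$ starting from $x$, obtaining a trajectory $y(\cdot)$; as in step 1,
\begin{equation*}
|y(s)-x^*(s)|\le K_1\,|x-x^*(t_2)|\le K_1K_3(t_2-t_1).
\end{equation*}
Hence, using once more the Lipschitz continuity of $f$ and $g$ and the uniform bounds on $\alpha^*$ and $f$,
\begin{equation*}
u(x,t_2)\le J_{t_2}(y,\alpha^*|_{[t_2,T]})\le J_{t_1}(x^*,\alpha^*)-\int_{t_1}^{t_2}\!\!\Bigl[\tfrac12|\alpha^*|^2+f(x^*,s)\Bigr]ds+K_4(t_2-t_1)\le u(x,t_1)+K_5(t_2-t_1).
\end{equation*}
Combining the two inequalities gives $|u(x,t_2)-u(x,t_1)|\le K_5(t_2-t_1)$, as desired.

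\textbf{Main obstacle.} There is no serious obstruction: the only Heisenberg-specific care is the bound on $y_3-x_3$, which is driven by the quadratic coupling in the third equation of \eqref{DYNH}, but on any compact portion of the state space and with $\|\alpha\|_\infty\le C_2$ (Proposition~\ref{boundalfa}), this coupling is harmless. The proof reduces to the classical dynamic-programming arguments found e.g. in \cite[Theorem 2.5]{BCD}.
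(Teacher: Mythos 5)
Your proof is correct and follows essentially the same route as the paper's: compare the value function at two points (respectively, two times) by running the optimal control of one problem from the other starting datum, and track the difference in the trajectories using the bounds on the control from Remark~\ref{2.3} and Proposition~\ref{boundalfa}, exactly as in \cite[Lemma 4.7]{C}. The only cosmetic differences are that you invoke the $L^\infty$ bound $\|\alpha\|_\infty\le C_2$ where the paper uses the $L^2$ bound $\|\alpha\|_2\le C_1$ in the spatial step, and you spell out the time-Lipschitz comparison explicitly instead of citing~\cite[Lemma 4.7]{C}.
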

\begin{proof}  In this proof, $C_T$ will denote a constant which may change from line to line but it always depends only on the constants in the assumptions (especially the Lipschitz constants of $f$ and $g$) and on~$T$.\\
We study first the Lipschitz continuity w.r.t. $x$.
Let $t$ be fixed. We follow the proof of \cite[Lemma 4.7]{C}.
From Remark \ref{2.3} we know that there exists
$\alpha(\cdot)$ optimal control for $u(x,t)$ and $x(\cdot)$ optimal trajectory i.e.:
\begin{equation}
\label{eq:HJ31}
u(x_1, x_2, x_3, t)=\int_t^T\frac12 |\alpha(s)|^2+f(x(s),s)\,ds+g(x(T)).
\end{equation}
We consider the path $x^*(s)$ starting from $y=(y_1,y_2, y_3)$, with control $\alpha$.
Hence
\begin{eqnarray*}
x_1^*(s)&=&y_1+\int_t^s\alpha_1(\tau) \,d\tau=y_1-x_1+x_1(s)\\
x_2^*(s)&=&y_2+\int_t^s \alpha_2(\tau)\,d\tau=y_2-x_2+x_2(s)\\
x_3^*(s)&=&y_3-\int_t^s\alpha_1(\tau)x_2^*(\tau)\,d\tau+
\int_t^s\alpha_2(\tau)x_1^*(\tau)\,d\tau\\
&=&y_3- (y_2-x_2)\int_t^s\alpha_1(\tau)\,d\tau+
(y_1-x_1)\int_t^s\alpha_2(\tau)\,d\tau\\
&&+\int_t^s (-\alpha_1(\tau)x_2(\tau)+\alpha_2(\tau)x_1(\tau))\,d\tau\\
&=&x_3(s)+(y_3-x_3)- (y_2-x_2)\int_t^s\alpha_1(\tau)\,d\tau+
(y_1-x_1)\int_t^s\alpha_2(\tau)\,d\tau.
\end{eqnarray*}
Using the Lipschitz continuity of $f$ we get
\begin{multline*}
f(x^*(s),s)
\leq f(x(s), s)
+L(|y_1-x_1|+ |y_2-x_2| + |y_3-x_3|+\\ +|y_2-x_2|\sqrt{s-t}\|\alpha_1\|_2 +|y_1-x_1|\sqrt{s-t}\|\alpha_2\|_2)\end{multline*}
and from the $L^2$ uniform estimate for $\alpha_1$ and $\alpha_2$ in \eqref{bd:alpa_L2} we get 
$$f(x^*(s),s)-f(x(s), s)\leq C_T(\vert y_1-x_1\vert+\vert y_2-x_2\vert +\vert y_3-x_3\vert).$$

By the same calculations for $g$ and substituting equality~\eqref{eq:HJ31} in
\begin{equation*}
u(y_1, y_2, y_3, t)\leq \int_t^T\frac12 |\alpha(s)|^2+f(x^*(s),s)\,ds+g(x^*(T)),
\end{equation*}
we get
\begin{equation*}
u(y_1, y_2, y_3, t)\leq u(x_1, x_2, x_3, t)+C_T(\vert y_1-x_1 \vert +\vert y_2-x_2\vert+ \vert y_3-x_3\vert).
\end{equation*} 
Reversing the role of $x$ and $y$ we get the result.\\
Let us now prove the Lipschitz continuity of~$u$ w.r.t.~$t$. Thanks to the $Q_\cH$-periodicity in~$x$ of~$u$, it is enough to prove the Lipschitz continuity in~$t$ only for~$x\in Q_\cH$. To this end, taking advantage of the $L^\infty$-bound for optimal controls established in Proposition \ref{boundalfa}, we can follow the same arguments as those in the proof of \cite[Lemma 4.7]{C}, noting that
\begin{equation*}
|x(s)-x|\leq C(s-t)(\|\alpha_1\|_{\infty}|x_2|+ \|\alpha_2\|_{\infty}|x_1|)
\leq K(s-t).
\end{equation*}
\end{proof}

In the following lemma we establish the semiconcavity of~$u$ w.r.t. $x$; we recall here below the definition of semiconcavity with linear modulus and we refer the reader to the monograph \cite{CS} for further properties.

\begin{definition}\label{SMC}
Let $u:\re^d\to\re$. We say that $u$ is {\em {semiconcave}} (with linear modulus)
if there exists a constant $C\geq 0$ such that  for all $\lambda\in [0,1]$,
$$\lambda u(y)+(1-\lambda)u(x)-2u(\lambda y+(1-\lambda)x)\leq C\lambda(1-\lambda)|y-x|^2\qquad \forall x, y\in \re^d.$$
\end{definition}

\begin{lemma}\label{semic}
Under Hypothesis~\ref{BasicAss}, the value function~$u$, defined in \eqref{repr}, is semiconcave with respect to the variable~$x$ in~$Q_\cH$ with a semiconcavity constant depending only on the constant~$C$ of hypothesis~\ref{BasicAss}.
\end{lemma}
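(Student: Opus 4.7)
The plan is to adapt the classical semiconcavity argument for value functions of linear-in-state control problems, exploiting the crucial observation that once the control is fixed, the horizontal dynamics \eqref{DYNH} is \emph{linear} in the state: the equations for $x_1, x_2$ carry no state dependence at all, while the third equation $x_3' = -x_2\alpha_1 + x_1\alpha_2$ is linear in $(x_1,x_2)$ for any fixed $\alpha$. This affine dependence of trajectories on the initial datum is exactly what is needed to mimic the Euclidean proof.

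First I would fix $x,y \in Q_\cH$ and $\lambda \in [0,1]$, set $z := \lambda y + (1-\lambda)x$, and pick an optimal pair $(x^*, \alpha^*)$ for $u(z,t)$; by Proposition~\ref{boundalfa} and estimate \eqref{bd:alpa_L2}, we may assume $\|\alpha^*\|_\infty \leq C_2$ and $\|\alpha^*\|_{L^2} \leq C_1$. Denoting by $x^y$ and $x^x$ the trajectories driven by the \emph{same} control $\alpha^*$ starting respectively from $y$ and from $x$, a componentwise computation along the lines of the one performed in the proof of Lemma~\ref{uper} shows that the error $w := \lambda x^y + (1-\lambda) x^x - x^*$ solves a linear homogeneous ODE with $w(t)=0$, and therefore $x^*(s) = \lambda x^y(s) + (1-\lambda) x^x(s)$ for every $s \in [t,T]$.

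Next I would estimate $|x^y(s) - x^x(s)| \leq K|y - x|$ with $K$ depending only on $T$ and $C_1$: the first two components equal $y_i - x_i$ for every $s$, while the third involves integrals of $\alpha^*$ times $(y_i - x_i)$ that are controlled by H\"older's inequality and the $L^2$ bound on $\alpha^*$, exactly as in the proof of Lemma~\ref{L1}. Then, using $\alpha^*$ as a suboptimal control from $y$ and from $x$ and noting that the kinetic cost $\tfrac12|\alpha^*|^2$ cancels identically in the convex combination $\lambda J_t(x^y,\alpha^*) + (1-\lambda) J_t(x^x, \alpha^*) - J_t(x^*, \alpha^*)$, the inequality $\lambda u(y,t) + (1-\lambda) u(x,t) - u(z,t) \leq \lambda J_t(x^y,\alpha^*) + (1-\lambda) J_t(x^x,\alpha^*) - J_t(x^*,\alpha^*)$ reduces to the quantity $\int_t^T [\lambda f(x^y,s) + (1-\lambda) f(x^x,s) - f(x^*,s)]\,ds$ together with the analogous term for $g$. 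Since $x^* = \lambda x^y + (1-\lambda) x^x$, the $C^2$-regularity granted by Hypothesis~\ref{BasicAss} yields the elementary bound $\tfrac12 \|D^2 f\|_\infty \lambda(1-\lambda) |x^y - x^x|^2 \leq \tfrac12 \|D^2 f\|_\infty K^2 \lambda(1-\lambda)|y-x|^2$ at each time, and likewise for $g$; integrating in $s$ concludes the argument with a constant depending only on $T$ and on the constant $C$ of Hypothesis~\ref{BasicAss}.

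I do not expect a genuine obstacle here: the entire argument rests on the affine dependence of trajectories on the initial datum at fixed control, a feature specific to the Heisenberg horizontal dynamics that makes the standard Euclidean proof go through almost verbatim despite the nonlinear group structure of~$\He^1$. The only point requiring some care is the third-component estimate $|x^y_3(s)-x^x_3(s)| \leq K|y-x|$, where one must use the uniform $L^2$ bound on $\alpha^*$ to absorb the cross terms involving the differences $y_i-x_i$ and integrals of $\alpha^*$.
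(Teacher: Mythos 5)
Your proposal is correct and follows essentially the same approach as the paper: both pick the optimal pair for the midpoint, run the same control from the two endpoints, and exploit the affine dependence of horizontal trajectories on the initial datum to make the first-order terms cancel and bound the quadratic defect by $\|D^2 f\|_\infty$, $\|D^2 g\|_\infty$. The paper establishes the affine identity $x^*(s)=\lambda x^y(s)+(1-\lambda)x^x(s)$ implicitly via the component-wise computations in \eqref{diff1}--\eqref{diff3} and the cancellation of first-order terms in the Taylor expansion; you state it up front (via the homogeneous linear ODE for $w$), which is cleaner bookkeeping but the same argument.
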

\begin{proof}
For any~$x,y\in Q_\cH $ and $\lambda\in[0,1]$, consider~$x_{\lambda}:=\lambda x+(1-\lambda)y$. Let $\alpha(s)$ and $x_{\lambda}(s)$ be an optimal control and respectively the corresponding optimal trajectory for~$u(x_{\lambda}, t)$; for $s\in[t,T]$ there holds

\begin{eqnarray*}
x_{\lambda,i}(s)&=&x_{\lambda,i}+\int_t^s\,\alpha_i(\tau)\,d\tau,\qquad i=1,2\\
x_{\lambda,3}(s)&=&x_{\lambda,3}-\int_t^s\alpha_1(\tau)x_{\lambda,2}(\tau)\,d\tau+
\int_t^s\alpha_2(\tau)x_{\lambda,1}(\tau)\,d\tau.
\end{eqnarray*}
Let $x(s)$ and $y(s)$ satisfy \eqref{DYNH} with initial condition respectively $x$ and $y$ still with the same control $\alpha$, optimal for~$u(x_{\lambda}, t)$.
We have to estimate~$\lambda u(x,t) +(1-\lambda)u(y,t)$ in terms of $u(x_{\lambda}, t)$. To this end, arguing as in the proof of~\cite[Lemma 4.7]{C},  we have to estimate the terms $\lambda f(x(s), s) +(1-\lambda)f(y(s), s)$ and $\lambda g(x(T))+(1-\lambda)g(y(T)).$\\
We explicitly provide the calculations for the third component $x_3(s)$ since the calculations for $x_1(s)$ and $x_2(s)$ are the same as in \cite{C}.
We have
\begin{eqnarray*}
x_3(s)&=&x_3-\int_t^s\alpha_1(\tau)x_2(\tau)\,d\tau+
\int_t^s\alpha_2(\tau)x_1(\tau)\,d\tau\\
&=&x_3-x_{\lambda,3}+x_{\lambda,3}(s)- \int_t^s\alpha_1(\tau)(x_2(\tau)-x_{\lambda,2}(\tau))\,d\tau+
\int_t^s\alpha_2(\tau)(x_1(\tau)-x_{\lambda,1}(\tau))\,d\tau.
\end{eqnarray*}
Since $x_3-x_{\lambda,3}=(1-\lambda)(x_3-y_3)$ and
\begin{equation}\label{diff1}x_i(\tau)-x_{\lambda,i}(\tau)=(1-\lambda)(x_i-y_i)\qquad \textrm{for }i=1,2,
\end{equation}
we get
\begin{equation}\label{diff}
x_3(s)-x_{\lambda,3}(s)=(1-\lambda)\left[x_3-y_3- (x_2-y_2)\int_t^s\alpha_1(\tau)d\tau+ (x_1-y_1)\int_t^s\alpha_2(\tau)d\tau\right].
\end{equation}
Analogously for $y(s)$: since $y_3-x_{\lambda,3}=\lambda(y_3-x_3)$ and
\begin{equation}\label{diff2}
y_i(\tau)-x_{\lambda,i}(\tau)=\lambda(y_i-x_i)\qquad\textrm{for }i=1,2,
\end{equation}
we get
\begin{equation}\label{diff3}
y_3(s)-x_{\lambda,3}(s)=\lambda\left[(y_3-x_3)+ (x_2-y_2)\int_t^s\alpha_1(\tau)d\tau- (x_1-y_1)\int_t^s\alpha_2(\tau)d\tau\right].
\end{equation}

For the sake of brevity we provide the explicit calculations only for $f$ omitting the analogous ones for $g$; and we write $f(x_1,x_2, x_3):=f(x_1, x_2, x_3, s)$. 
We have
\begin{equation*}
\begin{array}{l}
\lambda f(x(s))+(1-\lambda)f(y(s))=\\
 \lambda f(x_1(s),x_{2}(s), x_{\lambda,3}(s)+(1-\lambda)(x_3-y_3- (x_2-y_2)\int_t^s\alpha_1(\tau)d\tau+ (x_1-y_1)\int_t^s\alpha_2(\tau)d\tau))+\\
+(1-\lambda)f(y_1(s), y_{2}(s), x_{\lambda,3}(s)+\lambda(y_3-x_3+ (x_2-y_2)\int_t^s\alpha_1(\tau)d\tau- (x_1-y_1)\int_t^s\alpha_2(\tau)d\tau).
\end{array}
\end{equation*}
Since for $i=1,2$ there holds
$$\lambda \partial_{x_i}f(x_{\lambda}(s))(x_i(s)-x_{\lambda,i}(s))+(1-\lambda) \partial_{x_i}f(x_{\lambda}(s))(y_i(s)-x_{\lambda,i}(s))=0,$$ 
the Taylor expansion of $f$ centered in  $x_{\lambda}(s)$ gives:
\begin{multline*}
\lambda f(x(s))+(1-\lambda)f(y(s))=\\
\lambda(f(x_{\lambda}(s))+ Df(x_{\lambda}(s))(x(s)-x_{\lambda}(s))
+R_1)+ 
(1-\lambda)(f(x_{\lambda}(s))+ Df(x_{\lambda}(s))(y(s)-x_{\lambda}(s)) +R_2)\\
=\lambda\left(f(x_{\lambda}(s))+ \partial_{x_3}f(x_{\lambda}(s))
(1-\lambda)(x_3-y_3- (x_2-y_2)\int_t^s\alpha_1(\tau)d\tau + (x_1-y_1)\int_t^s\alpha_2(\tau)d\tau) +
R_1\right)\\
+(1-\lambda)\bigg(f(x_{\lambda}(s))+ \partial_{x_3}f(x_{\lambda}(s))
\lambda(y_3-x_3+ (x_2-y_2)\int_t^s\alpha_1(\tau)d\tau- (x_1-y_1)\int_t^s\alpha_2(\tau)d\tau) +R_2\bigg)=\\
=f(x_{\lambda}(s))+ \lambda R_1+(1-\lambda)R_2,
\end{multline*}
where $R_1$ and $R_2$ are the error terms of the expansion, namely
\begin{multline*}
\lambda R_1+(1-\lambda)R_2=
\frac{1}{2}\lambda ( (x(s)-x_{\lambda}(s))D^2f(\xi_1)(x(s)-x_{\lambda}(s))^T\\
+\frac{1}{2}(1-\lambda)( (y(s)-x_{\lambda}(s))D^2f(\xi_2)(y(s)-x_{\lambda}(s))^T,
\end{multline*}
for suitable $\xi_1, \xi_2\in Q_\cH$.

Using relations~\eqref{diff1}-\eqref{diff3} and the $L^2$ uniform estimate of $\alpha$ in \eqref{bd:alpa_L2}, we obtain
\begin{equation*}
\left\{\begin{array}{ll}
|x_i(s)-x_{\lambda,i}(s)|\,|x_j(s)-x_{\lambda,j}(s)|\leq C(1-\lambda)^2|x-y|^2&\qquad i,j=1,2,3\\
|y_i(s)-x_{\lambda,i}(s)|\,|y_j(s)-x_{\lambda,j}(s)|\leq C\lambda^2|x-y|^2&\qquad i,j=1,2,3
\end{array}\right.
\end{equation*}
for some positive constant $C$. Then, possibly increasing $C$, we get
$$
\lambda R_1+(1-\lambda)R_2\leq C\lambda(1-\lambda) |x-y|^2,
$$
and, in particular,
$$\lambda f(x(s))+(1-\lambda)f(y(s))\leq f(x_{\lambda}(s))+ C\lambda(1-\lambda) |x-y|^2$$
which amounts to the semiconcavity of $u$.
\end{proof}
We state the optimal synthesis principle:
\begin{lemma}\label{BB}
Let $x(\cdot)$ be an absolutely continuous function such that~$x(t)=x\in\He^1$
and for almost every $s\in (t,T)$,
\begin{equation}
  \label{eq:3}
u(\cdot,s) \hbox{ is $\cH$-differentiable at } x(s),
\end{equation}
(see Definition~\ref{def:Hdiff} in Appendix~\ref{app:Hdiff} for the precise definition of $\cH$-differentiability and some of its properties) and $x(\cdot)$ satisfies the ODE
\begin{equation}\label{OS}
x'(s)=-D_\cH u(x(s),s)B^T(x(s)), \qquad\textrm{a.e. }s\in(t,T)
\end{equation}
where $u$ is the value function defined in~\eqref{repr}.
Then the control law $\alpha(s)$, given by 
\begin{equation}\label{OS2}
\alpha(s)=-D_\cH u(x(s),s),
\end{equation}
is optimal for $u(x,t)$.
\end{lemma}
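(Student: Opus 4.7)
The strategy is the standard verification argument for optimal synthesis: with the feedback $\alpha$ defined by~\eqref{OS2}, I would show that the cost $J_t(x(\cdot),\alpha(\cdot))$ equals $u(x,t)$ exactly, whence $(x(\cdot),\alpha(\cdot))$ is optimal.

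First I would check admissibility. Writing the horizontal dynamics~\eqref{DYNH} in the compact form $x'(s)=\alpha(s)B^T(x(s))$ (which matches $\alpha_1X_1(x)+\alpha_2X_2(x)$), the ODE~\eqref{OS} with the choice~\eqref{OS2} is literally $x'(s)=\alpha(s)B^T(x(s))$, so $(x(\cdot),\alpha(\cdot))\in\mathcal A(x,t)$; moreover since $u$ is Lipschitz by Lemma~\ref{L1}, $D_{\cH}u$ is essentially bounded and $\alpha\in L^\infty\subset L^2$. Next I would compute $\frac{d}{ds}u(x(s),s)$. Since $u$ is Lipschitz in both variables (Lemma~\ref{L1}) and $x(\cdot)$ is absolutely continuous, the composition $s\mapsto u(x(s),s)$ is AC on $[t,T]$, hence differentiable a.e. At almost every $s$ where \eqref{eq:3} holds, the chain rule for horizontal trajectories supplied by Appendix~\ref{app:Hdiff} gives
$$\frac{d}{ds}u(x(s),s)=\partial_t u(x(s),s)+D_{\cH}u(x(s),s)\cdot\alpha(s).$$
Because $u$ is a viscosity solution of~\eqref{eq:HJ1} and is semiconcave in $x$ (Lemma~\ref{semic}), at a.e. point where $D_{\cH}u$ exists in the horizontal sense the equation $\partial_t u=\tfrac12|D_{\cH}u|^2-f$ holds pointwise. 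Inserting this identity and $\alpha=-D_{\cH}u$ into the chain rule yields
$$\frac{d}{ds}u(x(s),s)=\tfrac12|D_{\cH}u|^2-f-|D_{\cH}u|^2=-\tfrac12|\alpha(s)|^2-f(x(s),s).$$
Integrating over $[t,T]$ and using the terminal condition $u(x(T),T)=g(x(T))$ gives
$$u(x,t)=\int_t^T\Big[\tfrac12|\alpha(s)|^2+f(x(s),s)\Big]\,ds+g(x(T))=J_t(x(\cdot),\alpha(\cdot));$$
since $u(x,t)\le J_t$ for every admissible pair by definition of the infimum, $(x(\cdot),\alpha(\cdot))$ achieves the minimum and is therefore optimal.

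The delicate step is the combination of the chain rule with pointwise validity of the Hamilton--Jacobi equation, given that $u$ is only Lipschitz and $x$-semiconcave (so not everywhere $C^1$ in the Euclidean sense). The hypothesis~\eqref{eq:3}, together with the properties of $\cH$-differentiability collected in Appendix~\ref{app:Hdiff}, is precisely what allows one to decompose $\frac{d}{ds}u(x(s),s)$ into a temporal part and a horizontal part using only $D_{\cH}u$; and semiconcavity plus the viscosity solution property guarantees that at a.e.\ such point the HJ equation is satisfied classically with that $D_{\cH}u$. Identifying and controlling the null set on which either ingredient fails---noting that $s\mapsto u(x(s),s)$ is AC so differentiable outside a null set, and that $\partial_t u(x(s),s)$ exists a.e. by the Lipschitz continuity in $t$---is the essential technical content of the argument.
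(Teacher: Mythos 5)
Your overall verification strategy — differentiate $s\mapsto u(x(s),s)$, substitute the feedback and the HJ equation, integrate — is indeed the right skeleton, and the admissibility check is fine. But the heart of the matter is hidden inside the two sentences where you invoke a ``chain rule for horizontal trajectories'' and the claim that ``at a.e.\ point where $D_\cH u$ exists in the horizontal sense the equation holds pointwise,'' and that is exactly where the proof has a genuine gap. The curve $\{(x(s),s):s\in(t,T)\}$ has Lebesgue measure zero in $\He^1\times(t,T)$, so Rademacher-type ``$u$ is differentiable a.e.'' statements give no information at all about points $(x(s),s)$ on the trajectory. The hypothesis~\eqref{eq:3} only gives $\cH$-differentiability of $u(\cdot,s)$ at $x(s)$ — it gives neither Euclidean differentiability in $x$, nor existence of $\partial_t u(x(s),s)$ — so the naive chain rule $\frac{d}{ds}u(x(s),s)=\partial_t u+D_\cH u\cdot\alpha$ is not available from what you have. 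Appendix~\ref{app:Hdiff} contains definitions and properties of horizontal sub/super-differentials, not a chain rule of this kind. Similarly, the viscosity solution property pins down the equation in terms of elements of the \emph{Euclidean} super/sub-differentials $D^\pm u$; deducing from it a pointwise identity $\partial_t u=\tfrac12|D_\cH u|^2-f$ at a point where you only know $\cH$-differentiability is precisely the nontrivial content, and you have asserted it rather than proved it.

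The paper closes this gap with a nonsmooth mean-value argument: since $u$ is Lipschitz (Lemma~\ref{L1}) and $x(\cdot)$ is Lipschitz (which must first be established from~\eqref{OS} and the boundedness of $D_\cH u$), Lebourg's mean value theorem expresses the increment $u(x(s+h),s+h)-u(x(s),s)$ via an element of the Clarke differential $\mathrm{co}\,D^*_{x,t}u$ at an intermediate point $(y_h,s_h)$. Carath\'eodory's theorem decomposes that element into a convex combination of reachable gradients $(\xi_x^{h,i},\xi_t^{h,i})$. The viscosity solution property of~\eqref{eq:HJ1} at points of Euclidean differentiability (via \cite[Proposition II.1.9]{BCD}) gives $-\xi_t^{h,i}+\tfrac12|\xi_x^{h,i}B(y_h)|^2=f(y_h,s_h)$, which pins down the time component. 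The space components $\xi_x^{h,i}B(y_h)$ are then shown to converge to $D_\cH u(x(s),s)$ using Lemma~\ref{lm:Hdiff}-(iii) and Proposition~\ref{app:prp3.1.5}, i.e.\ using the $\cH$-differentiability hypothesis~\eqref{eq:3} as the essential identification tool. Only after this limit passage does one obtain $\frac{d}{ds}u(x(s),s)=-\tfrac12|D_\cH u|^2-f$ a.e., at which point your integration step goes through. As written, your proposal names this delicate step but does not supply it; the Lebourg--Carath\'eodory machinery (or something equivalent) is not optional here.
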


\begin{proof}
We adapt the arguments of \cite[Lemma 3.6]{MMMT} and \cite[Lemma 4.11]{C}. 
Fix $(x,t)\in\He^1\times (0,T)$ and consider an absolutely continuous solution~$x(\cdot)$ to~\eqref{OS}; note that this implies that $D_\cH u$ exists at $(x(s),s)$ for a.e. $s\in (t,T)$. 
We claim that $x(\cdot)$ is Lipschitz continuous. 
Indeed system \eqref{OS} reads
\begin{equation}\label{OSexpl}
\left\{
\begin{array}{l}
x_1'(s)=-X_1u(x(s),s)\\
x_2'(s)=-X_2u(x(s),s)\\
x_3'(s)= x_2(s)X_1u(x(s),s)-x_1(s)X_2u(x(s),s)
\end{array}\right.
\end{equation}
for a.e. $s\in(t,T)$. By Lemma~\ref{L1} and Lemma~\ref{uper}, there exists $C>0$ such that $\|D_\cH u\|_\infty\leq C$; hence, $x_1(\cdot)$ and $x_2(\cdot)$ are both Lipschitz continuous and, in particular they are also bounded. By the third equation in~\eqref{OSexpl}, we also obtain that $x_3(\cdot)$ is Lipschitz continuous. Hence our claim is proved.\\
Consequently, from the Lipschitz continuity of $u$ and of $x(\cdot)$ we get that
also $u(x(\cdot),\cdot)$ is Lipschitz. For a.e. $s\in(t,T)$ there hold:
$i$) $D_\cH u(x(s),s)$ exists, $ii$) equation \eqref{OS} holds, $iii$) the function~$u(x(\cdot),\cdot)$ admits a derivative at $s$. Fix such a $s$.

The Lebourg Theorem for Lipschitz function (see \cite[Thm 2.3.7]{Cla90} and \cite[Thm 2.5.1]{Cla90}) ensures that, for any $h\in\R$ small, there exists $(y_h,s_h)$ in the segment $((x(s),s), (x(s+h),s+h))$ and $(\xi^h_x,\xi^h_t) \in co D_{x,t}^*u(y_h,s_h)$ such that
\begin{equation}\label{31}
u(x(s+h),s+h)-u(x(s),s)= \xi^h_x\cdot (x(s+h)-x(s)) +\xi^h_t h
\end{equation}
(here, ``$co$'' stands for the convex hull and $D_{x,t}^*u$ is the Euclidean reachable gradient both in $x$ and in $t$).
 The Caratheodory theorem (see \cite[Thm A.1.6]{CS}) guarantees that there exist $(\lambda^{h,i},\xi^{h,i}_x, \xi^{h,i}_t)_{i=1,\dots,5}$ such that $\lambda^{h,i}\geq0$, $\sum_{i=1}^5\lambda^{h,i}=1$, $(\xi^{h,i}_x, \xi^{h,i}_t)\in D_{x,t}^*u(y_h,s_h)$ and $(\xi^h_x,\xi^h_t) = \sum_{i=1}^5\lambda^{h,i}(\xi^{h,i}_x, \xi^{h,i}_t)$. We claim that there holds
\begin{equation}\label{claim:os}
\lim_{h\to 0}\xi^{h,i}_x B(y_h)=D_\cH u(x(s),s) \qquad\forall i=1,\dots,5.
\end{equation}
Indeed, for any $i=1,\dots,5$ fixed, let $\xi$ be any cluster point of $\{\xi^{h,i}_x\}_h$ (which must be finite because $u$ is Lipschitz continuous). Then, by a diagonal extraction, there exist $(x_n,t_n)$ such that~$u$ is differentiable at $(x_n,t_n)$, $(x_n,t_n)\to (x(s),s)$ and $D_xu(x_n,t_n)\to \xi$ as $n\to\infty$.
The results in~\cite[Lemma 4.6]{C}, applied to $w_n(\cdot):=u(\cdot,t_n)$ and $w(\cdot):=u(\cdot, s)$, infer: $\xi\in D^+w(s)$.
Lemma~\ref{lm:Hdiff}-(iii) in the appendix ensures $\xi B(x(s))\in D^+_\cH w(x(s))$; in conclusion, by Proposition~\ref{app:prp3.1.5}, since $w$ is $\cH$-differentiable at $x(s)$, we conclude $\xi B(x(s))=D_\cH w(x(s))=D_\cH u(x(s),s)$ namely our claim~\eqref{claim:os} is completely proved.
In particular, we have
\begin{equation}\label{4.23bis}
\lim_{h\to 0}\xi^{h}_x B(y_h)=D_\cH u(x(s),s).
\end{equation}
On the other hand, since $u$ is a viscosity solution to equation~\eqref{eq:HJ1}, by \cite[Proposition II.1.9]{BCD}, we obtain
\[
- \xi^{h,i}_t +\frac{|\xi^{h,i}_{x}B(y_h)|}{2}^2=f(y_h,s_h);
\]
in particular, as $h\to0$, we deduce
\begin{equation}\label{31bis}
\xi^{h}_t=  \frac12 \sum_{i=1}^5\lambda^{h,i}|\xi^{h,i}_{x}B(y_h)|^2
- f(y_h,s_h)\rightarrow \frac12 |D_\cH u(x(s),s)|^2  - f(x(s),s).
\end{equation}
Dividing \eqref{31} by $h$ and letting $h\to 0$, by equations \eqref{OS}, \eqref{4.23bis} and \eqref{31bis}, we infer
\begin{eqnarray*}
\frac{d}{ds}u(x(s),s)&=&\lim_{h\to0}\xi^h_x\cdot[D_\cH u(x(s),s)B^T(x(s))+ \frac{x(s+h)-x(s)}{h}]\\
&&+\lim_{h\to0}\xi^h_x\cdot[D_\cH u(x(s),s)(B^T(y_h)-B^T(x(s)))]\\
&&-\lim_{h\to0}\xi^h_x\cdot[D_\cH u(x(s),s)B^T(y_h)] +\lim_{h\to0}\xi^h_t\\
&=&-\frac12 |D_\cH u(x(s),s)|^2 - f(x(s),s)\\
&=&-\frac12 |\alpha(s)|^2  - f(x(s),s)\qquad\textrm{a.e. }s\in(t,T)
\end{eqnarray*}
where the last equality is due to our definition of $\alpha$ in~\eqref{OS2}. 
Integrating this equality on $[t,T]$ and taking into account the final datum of~\eqref{eq:HJ1}, we obtain
\[
u(x,t)=\int_t^T\frac{|\alpha(s)|^2}{2}+ f(x(s),s) ds +g(x(T)).
\]
Observe that $x(\cdot)$ satisfies the dynamics~\eqref{DYNH} with the control~$\alpha(\cdot)$ defined in~\eqref{OS2}; therefore, the last equality implies that $x(\cdot)$ is an optimal trajectory with optimal control~$\alpha(\cdot)$ given by~\eqref{OS2}.
\end{proof}

\section{The continuity equation}\label{sect:continuity}

This section is devoted to equation~\eqref{eq:MFGintrin}-(ii), namely
\begin{equation}\label{continuitye}
\left\{
\begin{array}{ll}
\partial_t m-\diver_{\cH}  (m D_{\cH}u)=0 &\qquad \textrm{in }\He^1\times (0,T)\\
m(x,0)=m_0(x) &\qquad \textrm{on }\He^1,
\end{array}\right.
\end{equation}
where $u$ is the solution to problem
\begin{equation}\label{HJe}
\left\{\begin{array}{ll}
-\partial_t u+\frac{|D_{\cH}u|^2}{2}=F[\overline{m}_t](x) &\qquad \textrm{in }\He^1\times (0,T)\\
u(x,T)=G[\overline {m}_T](x)&\qquad \textrm{on }\He^1,
\end{array}\right.
\end{equation}
and the function~$\overline m$ is fixed in $C^{1/4}([0,T],\mathcal P_{per}(\He^1))$.
Let us observe that assumptions (H1)-(H3) and Lemma~\ref{valuefunction} ensure that there is a unique bounded solution~$u$ to~\eqref{HJe} which is moreover~$Q_\cH$-periodic.

Now we deal with the existence, the periodicity and uniform estimates of the solution $m$ of \eqref{continuitye}.
%
%
\begin{theorem}\label{prp:m}
Under assumptions (H1)-(H3), for any $\overline m\in C^{1/4}([0,T],\mathcal P_{per}(\He^1))$, problem~\eqref{continuitye} has a solution $m$ 
in the sense of Definition~\ref{defsolmfg}. Moreover the function $m$ belongs to $C^{1/4}([0,T],\mathcal P_{per}(\He^1))\cap {\mathbb L}^\infty(\He^1\times (0,T))$ and there exist two positive constants $C_0$ and $C_1$   (both independent of~$\overline m$) such that
\begin{equation}\label{stimaC0}
  0\leq m(x,t)\leq C_0 \qquad \forall (x,t)\in\He^1\times (0,T),
\end{equation}
\begin{equation}\label{stima0.5}
{\bf d_{1}}(m_s,m_t)\leq C_1(t-s)^{1/4} \qquad \forall\ 0\leq s\leq t\leq T. 
\end{equation}
\end{theorem}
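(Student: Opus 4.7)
The plan is to construct $m$ by a vanishing viscosity regularization based on the horizontal Laplacian~$\Delta_\cH$, which preserves both the $Q_\cH$-periodicity and the sub-Riemannian structure of~$\He^1$. For $\epsilon>0$, let $u^\epsilon$ solve the viscous Hamilton--Jacobi equation $-\partial_t u^\epsilon + \frac12 |D_\cH u^\epsilon|^2 - \epsilon \Delta_\cH u^\epsilon = F[\overline m_t]$ with $u^\epsilon(\cdot,T) = G[\overline m_T]$, and let $m^\epsilon$ solve the corresponding adjoint Fokker--Planck equation
\begin{equation*}
\partial_t m^\epsilon - \diver_\cH(m^\epsilon D_\cH u^\epsilon) - \epsilon \Delta_\cH m^\epsilon = 0, \qquad m^\epsilon(\cdot,0) = m_0.
\end{equation*}
Since $X_1$, $X_2$ and their bracket span~$\re^3$, the sub-Laplacian $\Delta_\cH$ is hypoelliptic, so classical hypoelliptic parabolic theory yields smooth $Q_\cH$-periodic solutions $u^\epsilon$ and $m^\epsilon$; the parabolic maximum principle gives $m^\epsilon\geq 0$, and integrating the equation over $Q_\cH$ preserves the total mass.

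Next I would prove uniform (in~$\epsilon$) Lipschitz and semiconcavity bounds for $u^\epsilon$ in~$x$ with constants depending only on the data via Hypothesis~\ref{BasicAss}. These are obtained by adapting the optimal-control arguments of Section~\ref{OC} to the viscous setting, where $u^\epsilon$ is the value function of a stochastic optimal control problem driven by a horizontal Brownian motion. Uniform semiconcavity then yields a uniform upper bound $\Delta_\cH u^\epsilon \leq K$ on $\Te\times[0,T]$. Expanding $\diver_\cH(m^\epsilon D_\cH u^\epsilon) = D_\cH m^\epsilon\cdot D_\cH u^\epsilon + m^\epsilon \Delta_\cH u^\epsilon$ in the equation for~$m^\epsilon$, the maximum principle applied to the nonnegative $m^\epsilon$ gives $\|m^\epsilon(\cdot,t)\|_\infty \leq \|m_0\|_\infty\, e^{KT}$, independent of~$\epsilon$ and of~$\overline m$, which yields~\eqref{stimaC0}.

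For the time regularity~\eqref{stima0.5}, for any $\phi:\Te\to\re$ that is $1$-Lipschitz with respect to~$d_\cH$, I would test the weak formulation to obtain
\begin{equation*}
\langle \phi, m^\epsilon_t - m^\epsilon_s\rangle = \int_s^t \bigl[-\langle D_\cH\phi, m^\epsilon D_\cH u^\epsilon\rangle + \epsilon \langle \Delta_\cH\phi, m^\epsilon\rangle\bigr]\, d\tau.
\end{equation*}
Mollifying $\phi$ at scale $\delta$ with the Heisenberg kernel $\rho_\delta$ of~\eqref{rhoeps}, which by Proposition~\ref{propconvH}(v) preserves horizontal Lipschitz bounds while allowing a controlled growth of $\|\Delta_\cH\phi_\delta\|_\infty$, and optimizing~$\delta$ against~$t-s$ produces the $(t-s)^{1/4}$ modulus for~${\bf d_{1}}(m^\epsilon_s,m^\epsilon_t)$, uniform in~$\epsilon$. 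The $L^\infty$ bound gives weak-$\ast$ precompactness of $\{m^\epsilon\}$; combined with the time modulus, Ascoli--Arzel\`a yields convergence of a subsequence in $C^0([0,T];{\mathcal P}_{per}(\He^1))$ with respect to~${\bf d_{1}}$. Simultaneously $u^\epsilon \to u$ uniformly on compacts by viscosity stability, and the uniform semiconcavity forces $D_\cH u^\epsilon \to D_\cH u$ a.e.\ in~$\He^1\times(0,T)$. Passing to the limit $\epsilon\to 0$ in the weak formulation (the product $m^\epsilon D_\cH u^\epsilon \to m D_\cH u$ in $L^1_{loc}$ by dominated convergence, the viscosity term vanishing since $\epsilon\to 0$ while $m^\epsilon$ stays bounded) yields that $m$ is a distributional solution of~\eqref{continuitye} satisfying all the stated properties.

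The main obstacle, I anticipate, is the second step: establishing uniform semiconcavity of~$u^\epsilon$ with a constant depending only on the data. The degeneracy of~$\Delta_\cH$ prevents any direct appeal to standard non-degenerate parabolic regularity; instead one must combine the stochastic control representation with the semiconcavity estimates developed in Section~\ref{OC} and exploit the left-invariance of the vector fields~$X_1,X_2$ to differentiate the equation. A secondary subtlety is the Hölder-in-time bound, where the correct balance between the mollification scale~$\delta$ and the viscosity parameter~$\epsilon$ must be chosen to produce the~$(t-s)^{1/4}$ modulus uniformly in~$\epsilon$; this relies critically on the Heisenberg convolution properties of Proposition~\ref{propconvH}.
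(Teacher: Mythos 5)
Your plan follows essentially the same route as the paper's proof of Theorem~\ref{prp:m}: vanishing viscosity with the horizontal Laplacian~$\Delta_\cH$ (so that periodicity is preserved), uniform-in-$\epsilon$ Lipschitz and semiconcavity bounds for~$u^\epsilon$ obtained by adapting the deterministic control estimates of Section~\ref{OC} to the stochastic value function, the resulting one-sided bound $\Delta_\cH u^\epsilon \leq K$ on~$\Te$ by periodicity, and then the maximum principle on the expanded Fokker--Planck equation to get the uniform $L^\infty$ bound on~$m^\epsilon$. These are precisely Lemmas~\ref{lm:usigma} and~\ref{lm:msigma} in the paper.

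The genuinely different step is the uniform $(t-s)^{1/4}$-in-time estimate. The paper (Lemma~\ref{lm:etaMarkus}) represents $m^\sigma_t$ as the law of the SDE~\eqref{processMarkus}, estimates ${\bf d_1}(\eta^\sigma_t,\eta^\sigma_s)$ probabilistically via Remark~\ref{lemmaagosto} and moment bounds for the increments, and the exponent~$1/4$ arises from taking a square root of the $O((t-s)^{1/2})$ Brownian increment; this interpretation also requires the distributional uniqueness result for the viscous Fokker--Planck problem (Proposition~\ref{prp:!FPv_distr} in Appendix~B) to identify the law with~$m^\sigma$. Your approach is a purely deterministic dual argument: mollify the $1$-Lipschitz test function~$\phi$ with the kernel~$\rho_\delta$, trade the truncation error $\|\phi-\phi_\delta\|_\infty$ against the size of $\epsilon\|\Delta_\cH\phi_\delta\|_\infty$, and optimize~$\delta$. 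This works and is more elementary (no stochastic representation is needed for this step, and one can check it actually yields a modulus no worse than $(t-s)^{1/3}$, since for the kernel of~\eqref{rhoeps} one has $\|\phi-\phi_\delta\|_\infty=O(\delta^{1/2})$ and $\|\Delta_\cH\phi_\delta\|_\infty=O(\delta^{-1})$). The paper's probabilistic route, however, dovetails with the superposition principle used in the proof of Theorem~\ref{thm:main}-(ii), which is why the authors chose it.

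One place where your plan is too loose is the assertion that ``classical hypoelliptic parabolic theory yields smooth $Q_\cH$-periodic solutions $u^\epsilon$ and $m^\epsilon$.'' The HJB equation is nonlinear and the Fokker--Planck operator has a degenerate diffusion with drift growing quadratically in Euclidean coordinates, so this is not a direct application of H\"ormander-type regularity. The paper first applies a Cole--Hopf transform $w^\sigma = \exp\{-u^\sigma/(2\sigma)\}$ to linearize the HJB and then invokes subelliptic Schauder estimates (\cite{BB07,BBLU}); likewise, well-posedness of the viscous Fokker--Planck equation with unbounded coefficients requires the tailored uniqueness argument of Proposition~\ref{prp:!fpv_reg}. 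Some version of these ingredients is needed to make your Step~1 rigorous, and the difficulty there is at least comparable to the uniform semiconcavity estimate that you flag as the main obstacle.
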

The proof of this Theorem is postponed at the end of this section. It relies on a suitable adaptation of the arguments of the proof of~\cite[Proposition 3.1]{MMMT} (see also \cite[Theorem 5.1]{C13} and \cite[Theorem 4.20]{C}).

We shall use a vanishing viscosity approach applied to the {\it whole} MFG system in terms of the {\it horizontal Laplacian}~$\Delta_\cH$. We need such ``degenerate'' approximation to ensure that the corresponding solution is still $Q_\cH$-periodic in~$x$.\\
For any $\sigma >0$, we consider the system
\begin{equation}
\label{eq:MFGv}
\left\{
\begin{array}{lll}
&(i)\quad-\partial_t u-\sigma \Delta_\cH u+\frac12 |D_\cH u|^2
=F[\overline {m}_t](x)&\qquad \textrm{in }\He^1\times (0,T),\\
&(ii)\quad \partial_t m-\sigma\Delta_\cH m-\diver_\cH (m D_\cH u)=0&\qquad \textrm{in }\He^1\times (0,T),\\
&(iii)\quad m(x,0)=m_0(x), u(x,T)=G[{\overline {m}}_T](x)&\qquad \textrm{on }\He^1.
\end{array}\right.
\end{equation}
In order to prove Theorem~\ref{prp:m}, it is expedient to establish several properties of the solution~$(u^\sigma,m^\sigma)$ to system~\eqref{eq:MFGv}: the following lemmata collect existence, uniqueness and other properties of $u^\sigma$ and respectively $m^\sigma$.

Let us emphasize some features of equation~\eqref{eq:MFGv}-(ii): the degeneracy of the operator, the unboundedness and the lack of global Lipschitz continuity of the coefficients. These features prevent to apply all the uniqueness result we known in literature. In order to overcome this issue, we shall establish two uniqueness results which are collected in appendix~\ref{app:uniqueness}.
Moreover, $m_0$ is not a probability on~$\He^1$ (but only a nonnegative measure).

For any domain $U\subset \He^1\times[0,T]$, any $k\in\N$ and any $\delta\in(0,1]$, we denote $C^{k+\delta}_\cH(U)$ (resp. $C^{k+\delta}_{\cH, loc}(U)$) the (resp. local) parabolic H\"older space adapted to the vector fields~$X_1$ and~$X_2$ (for instance, see~\cite[Section 4]{BB07} or~\cite[Definition 10.4]{BBLU}).
%
%
\begin{lemma}\label{lm:usigma}
Assume $(H1)-(H3)$ and fix $\overline m\in C^{1/4}([0,T],\mathcal P_{per}(\He^1))$.
The Cauchy problem
\begin{equation} \label{eq:HJv}
\left\{
\begin{array}{ll}
-\partial_t u-\sigma \Delta_\cH u+\frac12 |D_\cH u|^2
=F[\overline {m}](x)&\qquad \textrm{in }\He^1\times (0,T),\\
u(x,T)=G[{\overline {m}}(T)](x)&\qquad \textrm{on }\He^1
\end{array}\right.
\end{equation}
admits exactly one bounded viscosity solution~$u^\sigma$ (with a bound independent of~$\sigma$). Moreover, the function~$u^\sigma$ fulfills the following properties
\begin{itemize}
\item[(i)] $u^\sigma$ is $Q_\cH$-periodic in~$x$, Lipschitz continuous and locally semiconcave in~$x$,
\item[(ii)] there exists a positive constant $C$, independent of $\sigma$ and of $\overline m$, such that:
\[
|D_\cH u^\sigma(x,t)|\leq C\quad\textrm{and}\qquad \Delta_\cH u^\sigma(x,t) \leq C\qquad \forall (x,t)\in \He^1\times[0,T].
\]

\item[(iii)] for every $\tau\in[0,T)$ and $\delta\in(0,1/4]$, there exists a positive constant $C$ (depending on $\tau$, $\delta$ and $\sigma$) such that
\[
\|u^\sigma\|_{C^{2+\delta}_{\cH}(\He^1\times[0,\tau])}+
\sum_{i=1}^2\|X_iu^\sigma\|_{C^{2+\delta}_{\cH}(\He^1\times[0,\tau])}+
\sum_{i,j=1}^2\|X_iX_j u^\sigma\|_{C^{2+\delta}_{\cH}(\He^1\times[0,\tau])}\leq C,
\]
\item[(iv)] the functions~$u^\sigma$ are $1/4$-H\"older continuous in time uniformly in~$\sigma$.
\end{itemize}
\end{lemma}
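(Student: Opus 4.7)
The strategy is to read $u^\sigma$ as the value function of a stochastic control problem and then to adapt, uniformly in $\sigma$, the arguments developed for the first-order case in Section~\ref{OC}; higher regularity (iii) will instead be obtained via subelliptic parabolic Schauder theory, with constants allowed to depend on $\sigma$.

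For existence, uniqueness, the uniform sup-bound, and $Q_{\cH}$-periodicity, I would first represent
\[
u^\sigma(x,t) = \inf_{\alpha\in L^2} \mathbb{E}\left[\int_t^T \tfrac{1}{2}|\alpha(s)|^2 + F[\overline m_s](X_s) \, ds + G[\overline m_T](X_T)\right],
\]
where $X_s$ is the controlled diffusion with drift $\alpha_1 X_1(X_s) + \alpha_2 X_2(X_s)$ and diffusion $\sqrt{2\sigma}(X_1(X_s) dW^1 + X_2(X_s) dW^2)$, $X_t=x$. Hypothesis~(H2) yields a uniform (in $\sigma$ and in $\overline m$) $L^\infty$-bound. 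Uniqueness of bounded viscosity solutions follows from a standard comparison principle for the degenerate parabolic operator $-\partial_t - \sigma\Delta_\cH + \frac12|D_\cH \cdot|^2$ with bounded continuous right-hand side. Periodicity is obtained from the left-invariance~\eqref{LIVF} of $X_1, X_2$ and the $Q_\cH$-periodicity of $F[\overline m_s], G[\overline m_T]$ in (H1): the function $(x,t)\mapsto u^\sigma(z\oplus x, t)$ solves the same problem for every $z\in\Z^3$, so it equals $u^\sigma$ by uniqueness.

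For the uniform Lipschitz continuity and semiconcavity entering (i) and (ii), I would adapt Proposition~\ref{boundalfa}, Lemma~\ref{L1} and Lemma~\ref{semic} to the stochastic setting. The key observation is that if $X^x_s, X^y_s, X^{x_\lambda}_s$ are trajectories starting respectively at $x, y, x_\lambda=\lambda x+(1-\lambda)y$, driven by the \emph{same} control $\alpha$ and by the \emph{same} Brownian path, then the differences $X^x_s - X^{x_\lambda}_s$ and $X^y_s - X^{x_\lambda}_s$ are given by exactly the deterministic expressions appearing in the proof of Lemma~\ref{semic}, since the stochastic terms cancel in pairwise differences. The Lipschitz and convex-combination estimates of Section~\ref{OC} thus go through verbatim and give uniform (in $\sigma$ and $\overline m$) constants depending only on $T$ and on the constant $C$ of (H2). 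From Lipschitz continuity in $x$ and $Q_\cH$-periodicity, $|Du^\sigma|\le C$ on $Q_\cH\times[0,T]$, and since $B(x)$ is bounded on $Q_\cH$, $|D_\cH u^\sigma|=|Du^\sigma\cdot B|\le C$ everywhere. Semiconcavity gives $D^2u^\sigma\le C I$ in the distributional sense, whence
\[
\Delta_\cH u^\sigma = \tr\bigl(B^T D^2u^\sigma B\bigr) \le C\,\tr(BB^T) \le C' \qquad \text{on } Q_\cH,
\]
and hence everywhere by periodicity, proving (ii).

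For part (iii), I would bootstrap using the subelliptic parabolic Schauder estimates for $\partial_t - \sigma\Delta_\cH$ (see for instance the Bramanti--Brandolini theory). Having $|D_\cH u^\sigma|\le C$ and Hölder continuity in time (obtained after (iv) below), the source $-\frac12|D_\cH u^\sigma|^2+F[\overline m]$ lies in $C^\delta_\cH$; viewing \eqref{eq:HJv} as a linear heat equation with this source yields $u^\sigma\in C^{2+\delta}_{\cH}$ on $\He^1\times[0,\tau]$ for any $\tau<T$. Differentiating the equation successively along $X_1, X_2$ (and using that the new nonlinear source $(D_\cH u^\sigma)\cdot D_\cH(X_i u^\sigma)$ lies in the correct Hölder class at each stage) gives the claimed estimates for $X_i u^\sigma$ and $X_iX_j u^\sigma$. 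For (iv), I would use the Dynamic Programming Principle: with zero control,
\[
u^\sigma(x,t_1)\le \mathbb{E}\Bigl[\int_{t_1}^{t_2}F[\overline m_s](X^0_s)\,ds\Bigr] + \mathbb{E}\bigl[u^\sigma(X^0_{t_2},t_2)\bigr],
\]
and the converse is obtained by using on $[t_2,T]$ an optimal (or near-optimal) control for $u^\sigma(x,t_1)$. Combining the uniform Lipschitz continuity of $u^\sigma$ in $x$, standard moment bounds on $X^0$ (which give $\mathbb{E}|X^0_{t_2}-x|\le C(t_2-t_1)^{1/2}$ for $x\in Q_\cH$), and the conversion of Euclidean into $d_{\cH}$-increments via Remark~\ref{lemmaagosto}, one obtains $|u^\sigma(x,t_2)-u^\sigma(x,t_1)|\le C|t_2-t_1|^{1/4}$ uniformly in $\sigma$.

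\emph{Main obstacle.} The delicate point is the bootstrap in (iii): one must feed the \emph{nonlinear} term $|D_\cH u^\sigma|^2$ back into the right-hand side at each step while keeping track of the horizontal Hölder regularity of all derivatives, which requires the full subelliptic Schauder machinery adapted to $\He^1$. Obtaining the uniform-in-$\sigma$ exponent $1/4$ in (iv) is also subtle, because the natural $L^2$-moment gives only $(t_2-t_1)^{1/2}$ in Heisenberg distance but drops to $(t_2-t_1)^{1/4}$ in Euclidean distance via Remark~\ref{lemmaagosto}; alignment of this exponent with the regularity of the initial distribution $m_0$ and with the $C^{1/4}$-regularity of $\overline m$ is what makes the subsequent fixed-point argument for the MFG system work.
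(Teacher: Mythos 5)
Your treatment of existence, uniqueness, periodicity, the uniform Lipschitz and semiconcavity estimates, and part~(ii) follows essentially the same route as the paper (value function of a stochastic control problem, comparison principle, left-invariance of $X_1,X_2$). Your approach to~(iv) via the Dynamic Programming Principle and moment estimates is a genuinely different route from the paper's, which instead constructs supersolutions of the form $u^\sigma(x,t-h)+C_1 h+C_2 h^{1/4}(T-t)$ and applies comparison; both should work, and your version if carried out carefully would actually yield a $(t_2-t_1)^{1/2}$ bound for the zero-control and near-optimal-control directions.

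There is, however, a genuine gap in your argument for~(iii). You write that since $|D_\cH u^\sigma|\le C$ and $u^\sigma$ is H\"older in time, the source $-\tfrac12|D_\cH u^\sigma|^2+F[\overline m]$ lies in $C^\delta_\cH$. This does not follow: an $L^\infty$ bound on $D_\cH u^\sigma$ and time-H\"older continuity of $u^\sigma$ do not give spatial H\"older continuity of $D_\cH u^\sigma$ (semiconcavity $+$ Lipschitz alone do not help either, since a semiconcave Lipschitz function can have a discontinuous gradient). Without $D_\cH u^\sigma\in C^\delta_\cH$ the Schauder machinery has nothing to act on, so the proposed bootstrap on the nonlinear equation never gets started, and for a degenerate subelliptic operator one cannot invoke the usual interior gradient-H\"older estimates from uniformly parabolic theory. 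The paper avoids this problem entirely by first applying the Cole--Hopf change of variable $w^\sigma=\exp\{-u^\sigma/(2\sigma)\}$: the transformed equation $-\partial_t w^\sigma-\sigma\Delta_\cH w^\sigma+ w^\sigma F[\overline m]/(2\sigma)=0$ is \emph{linear} with coefficients in $C^{1/4}_\cH$ directly from (H1)--(H2), so the subelliptic Schauder theory of Bramanti--Brandolini and BBLU applies with no bootstrap needed, and the higher derivative estimates are obtained by differentiating the \emph{linear} equation, first in $\partial_{x_3}$ (which commutes with $X_i$), then in $X_i$, tracking the commutator terms. Inverting the transformation (using that $w^\sigma$ is bounded away from zero since $u^\sigma$ is bounded) then gives the bounds for $u^\sigma$.

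A smaller issue: in your semiconcavity argument you claim that with the same control and the same Brownian path, the pairwise differences of trajectories equal the deterministic expressions from Lemma~\ref{semic} because ``the stochastic terms cancel''. This is true for the first two components, but not for the third: one finds $X^x_{3,s}-X^y_{3,s}$ contains $\sqrt{2\sigma}\bigl[-(x_2-y_2)(W^1_s-W^1_t)+(x_1-y_1)(W^2_s-W^2_t)\bigr]$, which is a nondegenerate martingale. The argument is salvageable because the expectation of this term vanishes and its second moment is $O(\sigma(s-t)|x-y|^2)$, which is the size needed for the semiconcavity estimate, but your stated justification is not literally correct.
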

\begin{proof}
The differential equation in~\eqref{eq:HJv} can be written as
\begin{equation*}
-\partial_t u-\sigma \tr(D^2 u B(x)B(x)^T)+\frac12 |D u B(x)|^2 =F[\overline {m}](x);
\end{equation*}
in particular, it fulfills the assumption for the comparison principle established in~\cite[Theorem 2.1]{DL}. Using $w^\pm(x,t):=\pm C(-t+1)$ as super- and subsolution, we deduce the existence and uniqueness of a viscosity solution~$u^\sigma$ uniformly bounded on $\sigma$, i.e.
there exists $C$ independent on $\sigma$ such that
\begin{equation}\label{boundusigma}
\|u^\sigma\|_{L^{\infty}(\He^1\times[0,T])}\leq C.
\end{equation}
Let us now prove the several properties of~$u^\sigma$.\\
$(i)$. Since the vector fields $X_1$ and $X_2$ are left-invariant and $F[\overline {m}](\cdot)$ and $G[\overline {m}](\cdot)$ are $Q_\cH$-periodic in~$x$, for any $z\in\Z^3$, also the function $w^\sigma(x,t):=u^\sigma(z\oplus x,t)$ is a solution to~\eqref{eq:HJv}. Again the comparison principle in~\cite[Theorem 2.1]{DL} yields $u^\sigma=w^\sigma$, namely $u^\sigma$ is $Q_\cH$-periodic in~$x$.\\
Invoking~\cite[Theorem 2.1]{DL}, we can represent the solution~$u^\sigma$ as the value function of a stochastic optimal control problem:
\begin{equation}\label{reprsigma}
u^\sigma(x,t)=\min \mathbb{E}\bigg(\int_t^T\left[\frac12 |\alpha(\tau)|^2+F[\overline m_\tau](Y(\tau))\right]\,d\tau+g[\overline m_T](Y(T))\bigg)
\end{equation}
where, in $[t,T]$,  $Y(\cdot)$ obeys to a stochastic differential equation
\begin{equation}
\label{1-stoc}
dY=\alpha(t) B(Y_t)^Tdt +\sqrt{2\sigma} B(Y_t) dW_{t},
\end{equation}
where  $Y(t)=x$ and $W_t$ is a standard $3$-dimensional Brownian motion.
Arguing as in~\cite[Theorem 4.20]{C} and following the calculations in the proofs of Lemma~\ref{L1} and Lemma~\ref{semic}, we get the Lipschitz continuity and the local semiconcavity (see \cite[Theorem 4.20 (proof)]{C} for a similar argument).

$(ii)$. 
Taking into account of the representation of $u^\sigma$ \eqref{reprsigma} as the value function of a stochastic optimal control problem, following the procedure used in Lemma \ref{L1} for the deterministic case, we can prove the uniform Lipschitz continuity of $u^\sigma$. Hence $D_\cH u^\sigma$ is uniformly bounded in $Q_\cH$ and by the $Q_\cH$-periodicity of $u^\sigma$ we get the first bound of~$(ii)$.
Still using the representation of $u^\sigma$ \eqref{reprsigma} we can follow the procedure used in Lemma \ref{semic} for the deterministic case, (see also \cite[Lemma 4.1-(c) (proof)]{AMMT}) to get the uniform local semiconcavity of $u^\sigma$, i.e. $D^2u^\sigma\leq CI$. This implies that $\Delta_\cH u^\sigma\leq C(1+x_1^2+x_2^2)$ and using the periodicity of $u^\sigma$ we get the second bound of~$(ii)$.

$(iii)$. We introduce the Cole-Hopf transformation of $u^\sigma$, $w^\sigma(x,t):=\exp\{-u^\sigma(x,t)/(2\sigma)\}$ and we observe that it is bounded and $Q_\cH$-periodic in~$x$ and it fulfills:
\[X_iu^\sigma =-2\sigma \frac{X_iw^\sigma}{w^\sigma}, \qquad
X_i^2u^\sigma = 2\sigma \frac{(X_iw^\sigma)^2}{(w^\sigma)^2}- 2\sigma \frac{X_i^2w^\sigma}{w^\sigma} \qquad (i=1,2).\]
Replacing these relations in~\eqref{eq:HJv}, we infer that $w^\sigma$ is a viscosity solution to the following linear subelliptic parabolic equation
\begin{equation}\label{CHtrans}
-\partial_t w^\sigma -\sigma \Delta_\cH w^\sigma + w^\sigma F[\overline {m}]/(2\sigma)=0;
\end{equation}
by the equivalence between distributional solutions and viscosity solutions established by Ishii~\cite{I95} for the elliptic case but holding also in the evolutive case, we deduce that $w^\sigma$ is also a distributional solution of equation~\eqref{CHtrans}.

We observe that, by its periodicity, the function $F[\overline {m}]$ belongs to $C^{1/4}_{\cH}(\He^1\times[0,T])$. We consider a bounded domain $Q'\subset \He^1$ such that $\overline{Q_\cH}\subset Q'$. Classical results for linear subelliptic operators,~\cite[Theorem 10.7]{BBLU} and~\cite[Theorem 1.1]{BB07} ensure that, for every $\tau\in[0,T)$ and $\delta\in(0,1/4]$, the function~$w^\sigma$ belongs to $C^{2+\delta}_{\cH}(Q'\times[0,\tau])$ and there exists a constant $C$ (depending on~$\tau$ and $\delta$) such that
\begin{equation}\label{primaiii}
\|w^\sigma\|_{C^{2+\delta}_{\cH}(Q_\cH\times[0,\tau])}\leq C.
\end{equation}
Inverting the Cole-Hopf transformation and using \eqref{boundusigma}, we obtain a bound for~$u^\sigma$ as \eqref{primaiii}. Finally, by periodicity of $w^\sigma$, we accomplish the proof of: $\|u^\sigma\|_{C^{2+\delta}_{\cH}(\He^1\times[0,\tau])}\leq C$.\\
Moreover, by assumptions $(H1)$ and~$(H2)$, also the functions $X_iF[\overline {m}]$ and $X_jX_i F[\overline {m}]$ belong to $C^{1/4}_{\cH}(\He^1\times[0,T])$ for $i,j\in\{1,2\}$.
We observe 
\begin{eqnarray}
&&X_1X_2w^\sigma-X_2X_1w^\sigma= 2\partial_{x_3}w^\sigma,
\partial_{x_3}X_iw^\sigma=X_i\partial_{x_3}w^\sigma, \ i=1,2,\label{primo}\\
&&X_1(\Delta_\cH w^\sigma)=\Delta_\cH (X_1w^\sigma)+4X_2\partial_{x_3}w^\sigma, 
X_2(\Delta_\cH w^\sigma)= \Delta_\cH (X_2w^\sigma)-4X_1\partial_{x_3}w^\sigma.\label{secondo}
\end{eqnarray}
First we remark that the function $W_3:=\partial_{x_3}w^\sigma$ is a distributional solution to
\begin{equation*}
\left\{\begin{array}{ll}
-\partial_t W_3 -\sigma \Delta_\cH W_3 + W_3F[\overline {m}]/(2\sigma)=-w^\sigma \partial_{x_3}F[\overline {m}]/(2\sigma) &\ \textrm{in }\He^1\times(0,T)\\
W_3(x,T)= \partial_{x_3} (\exp\{-G[\overline m_T]/(2\sigma)\})&\ \textrm{on }\He^1,
\end{array}\right.
\end{equation*}
hence following the same procedure to obtain \eqref{primaiii}, we get
\begin{equation}\label{partial3}
\|\partial_{x_3}w^\sigma\|_{C^{2+\delta}_{\cH}(\He^1\times[0,\tau])}\leq C.
\end{equation}
Then the functions $W_i:=X_i w^\sigma$, $i=1,2$,
are distributional solution to
\begin{equation}\label{CHtrans_i}
\left\{\begin{array}{ll}
-\partial_t W_1 -\sigma \Delta_\cH W_1 + W_1F[\overline {m}]/(2\sigma)=4\sigma\, X_2\partial_{x_3}w^\sigma
-w^\sigma X_1F[\overline {m}]/(2\sigma) &\ \textrm{in }\He^1\times(0,T)\\
-\partial_t W_2 -\sigma \Delta_\cH W_2 + W_2F[\overline {m}]/(2\sigma)=-4\sigma\, X_1\partial_{x_3}w^\sigma
-w^\sigma X_2F[\overline {m}]/(2\sigma) &\ \textrm{in }\He^1\times(0,T)\\
W_i(x,T)=X_i (\exp\{-G[\overline m_T]/(2\sigma)\})&\ \textrm{on }\He^1.
\end{array}\right.
\end{equation}
The uniqueness of bounded viscosity solutions established in~\cite[Theorem 2.1]{DL} and the result in~\cite{I95} imply the uniqueness of bounded distributional solution of these problems.
Using estimate \eqref{partial3} in system \eqref{CHtrans_i} and repeating the same arguments as before, 
 we get 
$\|X_i u^\sigma\|_{C^{2+\delta}_{\cH}(\He^1\times[0,\tau])}\leq C$ for $i=1,2$.\\
To get the bound for $X_iX_ju^\sigma$ we consider the equation satisfied by 
$W_{ij}:=X_iX_jw^\sigma$, $i,j=1,2$.
We write it explicitly for $W_{11}=X_1^2w^\sigma=X_1W_1$ and $W_{21}=X_2X_1w^\sigma=X_2W_1$, the other cases are similar so we shall omit them.
$W_{11}$ is the distributional solution to
\begin{equation*}
\left\{\begin{array}{l}
-\partial_t W_{11} -\sigma \Delta_\cH W_{11} + W_{11}F[\overline {m}]/(2\sigma)\\
 =4\sigma X_2 \partial_{x_3}W_1+
4\sigma X_1X_2 \partial_{x_3}w^\sigma
 -W_1X_1(F[\overline {m}])/\sigma-w^\sigma X_1^2F[\overline {m}]/(2\sigma), \\
W_{11}(x,T)=X_1^2 (\exp\{-G[\overline m_T]/(2\sigma)\})
\end{array}\right.
\end{equation*}
and $W_{21}$ solves
\begin{equation*}
\left\{\begin{array}{l}
-\partial_t W_{21} -\sigma \Delta_\cH W_{21} + W_{21}F[\overline {m}]/(2\sigma)\\
 =-4\sigma X_1 \partial_{x_3}W_1+
4\sigma X^2_2 \partial_{x_3}w^\sigma
 -W_1X_2(F[\overline {m}])/(2\sigma)-w^\sigma X_2X_1F[\overline {m}]/(2\sigma)-\\
 - X_2w^\sigma X_1(F[\overline {m}])/(2\sigma), \\
W_{21}(x,T)=X_2X_1 (\exp\{-G[\overline m_T]/(2\sigma)\}).
\end{array}\right.
\end{equation*}
Taking into account that $X_i \partial_{x_3}W_1=X_iX_1 W_3$, $i=1,2$ and of \eqref{partial3}, repeating the same arguments as before we get again the uniqueness of bounded distributional solution $W_{ij}$ and
$\|X_jX_i u^\sigma\|_{C^{2+\delta}_{\cH}(\He^1\times[0,\tau])}\leq C$ for $i,j=1,2$.

$(iv)$. We shall follow the arguments of~\cite[Lemma 3.4]{MMMT} (see also~\cite[Theorem 5.1 (proof)]{C13}); hence we only provide the main steps of the proof. By our assumptions on~$G$, there exists a constant~$C_1$, independent of~$\sigma$, such that the functions $w^\pm(x,t):=G[{\overline {m}}_T](x)\pm C_1(T-t)$ are respectively a supersolution and a subsolution to~\eqref{eq:HJv}. The comparison principle in~\cite[Theorem 2.1]{DL} entails
\begin{equation}\label{CP33}
\sup_x|u^\sigma(x,t)-G[{\overline {m}}_T](x)|\leq C_1(T-t) \qquad\forall t\in[0,T].
\end{equation}
On the other hand, assumption~(H2) and the hypothesis on~$\overline m$ yield
\[
\sup_{t\in[h,T]}\|F[{\overline {m}}_t](x)-F[{\overline {m}}_{(t-h)}](x)\|_\infty\leq C_2h^{1/4}.
\]
We deduce that the functions $v^\sigma_h(x,t):=u^\sigma(x,t-h)+C_1 h+C_2h^{1/4}(T-t)$ is a supersolution to the PDE in~\eqref{eq:HJv} and verifies $v^\sigma_h(x,T)\geq u^\sigma(x,T)$. 
Thanks to \eqref{CP33},
again by comparison principle, we get
\[
u^\sigma(x,t-h)- u^\sigma(x,t)\geq-C_1 h-C_2h^{1/4}(T-t).
\]
The other inequality can be obtained in a similar way and we shall omit its proof.
\end{proof}

\begin{lemma}\label{lm:msigma}
Under assumptions $(H1)-(H3)$ we consider 
\begin{equation} \label{eq:FPv}
\left\{
\begin{array}{ll}
\partial_t m-\sigma\Delta_\cH m-\diver_\cH (m D_\cH u^\sigma)=0&\qquad \textrm{in }\He^1\times (0,T),\\
m(x,0)=m_0(x)&\qquad \textrm{on }\He^1.
\end{array}\right.
\end{equation}
where $u^\sigma$ is the solution to problem~\eqref{eq:HJv} found in Lemma~\ref{lm:usigma} with a fixed $\overline m$. Then, problem~\eqref{eq:FPv} admits exactly one bounded classical solution~$m^\sigma$. Moreover, $m^\sigma$ has the following properties:
\begin{itemize}
\item[(i)]  $m^\sigma$ is $Q_\cH$-periodic and there exists $C_0>0$ (independent of~$\sigma$ and of $\overline m$) such that $0\leq m^\sigma\leq C_0$,
\item[(ii)] for every $\tau\in (0,T]$ and $\delta\in(0,1/4]$, 
there exists $C_1>0$ (depending on $\sigma$, $\tau$ and $\delta$) such that
\[
\|m^\sigma\|_{C^{2+\delta}_\cH(\He^1\times[\tau, T])}\leq C_1.  
\]
\end{itemize}
\end{lemma}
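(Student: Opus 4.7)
The plan is to treat~\eqref{eq:FPv} as a linear subelliptic parabolic equation for $m^\sigma$, regarding $D_\cH u^\sigma$ and $\Delta_\cH u^\sigma$ as prescribed coefficients supplied by Lemma~\ref{lm:usigma}. Expanding the horizontal divergence one obtains
\[
\partial_t m^\sigma - \sigma\Delta_\cH m^\sigma - \langle D_\cH u^\sigma, D_\cH m^\sigma\rangle - m^\sigma\,\Delta_\cH u^\sigma = 0,
\]
whose coefficients are bounded and $Q_\cH$-periodic thanks to Lemma~\ref{lm:usigma}(i)--(ii), and $C^{2+\delta}_\cH$-regular on every time strip $[0,\tau']$ with $\tau'<T$ by Lemma~\ref{lm:usigma}(iii). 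Standard linear parabolic theory for the H\"ormander operator $\partial_t-\sigma\Delta_\cH$ (for instance \cite[Theorem 10.7]{BBLU}, \cite[Theorem 1.1]{BB07}), applied on the periodic cell with initial datum $m_0\in C^0$ and then extended periodically to all of $\He^1$, yields a bounded classical solution $m^\sigma$. Uniqueness in this bounded class follows from the comparison principle (either directly, since the coefficients are bounded on the torus, or via Appendix~B).

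The periodicity in~(i) then comes from uniqueness: for any $z\in\Z^3$, the function $(x,t)\mapsto m^\sigma(z\oplus x,t)$ solves the same Cauchy problem and must therefore coincide with $m^\sigma$. Non-negativity follows from the parabolic maximum principle since $m_0\ge 0$. For the uniform upper bound, the decisive input is Lemma~\ref{lm:usigma}(ii): $\Delta_\cH u^\sigma\le C$ with $C$ independent of $\sigma$ and $\overline m$. The function $\Phi(t):=\|m_0\|_\infty e^{Ct}$ satisfies
\[
\partial_t\Phi-\sigma\Delta_\cH\Phi-\langle D_\cH u^\sigma,D_\cH\Phi\rangle-\Phi\,\Delta_\cH u^\sigma=(C-\Delta_\cH u^\sigma)\,\Phi\ge 0
\]
and $\Phi(0)=\|m_0\|_\infty\ge m_0$, so the comparison principle on $\Te\times[0,T]$ yields $m^\sigma\le\|m_0\|_\infty e^{CT}=:C_0$, uniformly in $\sigma$ and $\overline m$.

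For~(ii) I would carry out a subelliptic parabolic bootstrap. Fix $\tau\in(0,T]$ and $\delta\in(0,1/4]$, choose $\tau_0\in(0,\tau)$ and $\tau_1\in(\tau,T)$, and observe that $D_\cH u^\sigma,\Delta_\cH u^\sigma\in C^{2+\delta}_\cH(\He^1\times[0,\tau_1])$ while $m^\sigma\in L^\infty$ from step~(i). Interior subelliptic Schauder estimates for $\partial_t-\sigma\Delta_\cH$ (see again \cite[Theorem 10.7]{BBLU}, \cite[Theorem 1.1]{BB07}) first upgrade $m^\sigma$ to $C^\delta_\cH$ on $\He^1\times[\tau_0,\tau_1]$; a second iteration promotes it to $C^{2+\delta}_\cH$ on $\He^1\times[\tau,\tau_1]$. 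To reach the terminal time~$T$, one needs the regularity of $u^\sigma$ up to $t=T$, obtained by a boundary Schauder argument applied to the Cole--Hopf transform using the $C^2$-regularity of $G[\overline m_T]$ granted by~(H2); the same bootstrap then closes the estimate on $[\tau,T]$.

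The main obstacle I anticipate is precisely this reconciliation at $t=T$: Lemma~\ref{lm:usigma}(iii) is stated only on time strips strictly inside $[0,T)$, whereas~(ii) requires the regularity of $m^\sigma$ up to the terminal time. The $C^2$ character of the final datum is borderline for standard boundary Schauder theory, so one either refines Lemma~\ref{lm:usigma}(iii) to include a neighbourhood of $\{t=T\}$ or runs an ad~hoc $C^{0,\alpha}_\cH$-in-time estimate there. Apart from this technical point, all remaining steps are routine applications of the parabolic maximum principle and of linear subelliptic Schauder theory.
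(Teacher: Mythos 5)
Your proof follows essentially the same route as the paper: expand the horizontal divergence to view~\eqref{eq:FPv} as a linear subelliptic parabolic problem in $m^\sigma$; obtain a bounded classical solution and interior Schauder regularity from \cite[Theorem 10.7]{BBLU} and \cite[Theorem 1.1]{BB07}; derive $Q_\cH$-periodicity by comparing $m^\sigma$ with its translate via a uniqueness result; and produce the $\sigma$-independent upper bound from the one-sided estimate $\Delta_\cH u^\sigma\leq C$ of Lemma~\ref{lm:usigma}-(ii) via a supersolution of the form $\|m_0\|_\infty e^{Ct}$. All of this matches the paper.

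Two cautionary remarks. First, the parenthetical alternative ``directly, since the coefficients are bounded on the torus'' does not yield the uniqueness you need: before the periodicity of $m^\sigma$ has been established, a competing bounded solution lives on all of $\He^1$ and cannot be restricted to~$\Te$; it is Proposition~\ref{prp:!fpv_reg} (your ``via Appendix~B'') that supplies uniqueness in the class of bounded functions on $\He^1$ satisfying the exponential-weight condition, and that is exactly what the paper invokes both for uniqueness and, in turn, for periodicity. Second, the concern you raise about the terminal slice $t=T$ is a genuine technical point that the paper's one-line proof of~(ii) does not address: Lemma~\ref{lm:usigma}-(iii) controls $D_\cH u^\sigma$ and $\Delta_\cH u^\sigma$ in $C^{\delta}_\cH$ only on strips $[0,\tau]$ with $\tau<T$ (because (H2) only asks $G[m]\in C^{2}$), while Lemma~\ref{lm:msigma}-(ii) claims $m^\sigma\in C^{2+\delta}_\cH(\He^1\times[\tau,T])$ including the right endpoint, where the Schauder theory needs $C^{\delta}_\cH$ coefficients. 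The cleanest resolutions are to strengthen (H2) to $G[m]\in C^{2+\delta_0}$, or to weaken the conclusion to intervals $[\tau,T']$ with $T'<T$ (which is in fact all that is used downstream); the borderline boundary Schauder argument for the Cole--Hopf transform that you sketch is a third, more delicate, option and, as you note, $C^2$ terminal data is exactly at the threshold where it becomes unclear.
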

\begin{proof}
We observe that the differential equation in~\eqref{eq:FPv} can be written as
\begin{equation*}
\partial_t m^\sigma-\sigma\Delta_\cH m^\sigma-D_\cH m^\sigma\cdot D_\cH u^\sigma - m^\sigma\Delta_\cH u^\sigma=0.
\end{equation*}
Lemma~\ref{lm:usigma}-(iii) ensures that the coefficients of this linear parabolic equation belong to~$C^{\delta}_\cH(\He^1\times[0,\tau))$ for any~$\delta\in(0,1)$ and~$\tau\in(0,T)$; hence the results in~\cite{BBLU} apply to this equation. In particular, \cite[Theorem 10.7]{BBLU} ensures the existence of a bounded distributional solution~$m^\sigma$ to~\eqref{eq:FPv} with $m^\sigma\in C^{2+\delta}_{\cH,loc}$.
On the other hand, since $m^\sigma$ satisfies assumption \eqref{exponential}, then 
Proposition~\ref{prp:!fpv_reg} in the appendix ensures the uniqueness of a bounded classical solution.
 
Let us now prove the properties of~$m^\sigma$.\\
(i). By the left-invariance of the vector fields generating~$\He^1$ and the $Q_\cH$-periodicity of~$u^\sigma$ (see Lemma~\ref{lm:usigma}-(i)), for any $z\in\Z^3$, the function $\tilde m^\sigma(x,t):=m^\sigma(z\oplus x,t)$ is still a solution to~\eqref{eq:FPv}. Applying again Proposition~\ref{prp:!fpv_reg}, we have $m^\sigma=\tilde m^\sigma$, namely $m^\sigma$ is $Q_\cH$-periodic.\\
Moreover, \cite[Theorem 10.7]{BBLU} establishes that the fundamental solution of~\eqref{eq:FPv} is nonnegative; since $m_0\geq 0$, we get: $m\geq 0$.\\
Let us now prove the upper bound for $m^\sigma$. By Lemma~\ref{lm:usigma}-(ii) and~$m^\sigma\geq 0$, we have
\begin{equation*}
\partial_t m^\sigma-\sigma\Delta_\cH m^\sigma-D_\cH m^\sigma\cdot D_\cH u - C m^\sigma\leq 0
\end{equation*}
for a constant~$C$ independent of $\sigma$ and of $\overline m$. By the $L^\infty$ bound of $m_0$, using again the comparison principle we obtain the statement.\\
(ii). It is enough to invoke the results in~\cite[Theorem 10.7]{BBLU} and in~\cite[Theorem 1.1]{BB07} and to use the periodicity of~$m^\sigma$.
\end{proof}

As for the Euclidean case (for instance, see \cite[Lemma 3.4]{C}) it is expedient to interpret $m^\sigma$ as the law of a suitable stochastic process. In fact, we shall adapt this approach for the present setting where $m_0$ is only a nonnegative measure on $\He^1$ (see assumptions H3) and the coefficients in the SDE are unbounded. To this end, we consider a probability space $(\Omega, {\mathcal F}, P)$, equipped with a filtration~$({\mathcal F}_t)_{t\geq 0}$. For any $x\in \He^1$, we introduce the process
\begin{equation}\label{processMarkus}
d Y^x_t= -D_\cH u (Y^x_t,t) B^T(Y^x_t) dt +\sqrt{2\sigma} B(Y^x_t) d W_t,\qquad Y^x_0=x
\end{equation}
where $B(x)$ is the matrix introduced in~\eqref{Hd} and $W_{\cdot}$ is a standard $2$-dimensional $({\mathcal F}_t)$-adapted Wiener process. 
\begin{remark}
By Lemma~\ref{lm:usigma}-(iii), the drift and the diffusion matrix are locally Lipschitz continuous and have an at most linear growth; hence, by standard theory  (for instance, \cite[Theorem 8.10 pag. 201]{Baldi} or \cite [theorem B.3.1]{BGL}) there exists a unique solution to~\eqref{processMarkus}.
\end{remark}

\begin{remark} The process $Y^x_t$ fulfills the following {\it translation formula}
\begin{equation}\label{transY}
z\oplus Y^x_t=Y^{z\oplus x}_t\qquad \forall z\in\Z^3,\, x\in\He^1, \, t\in [0,T].
\end{equation}
Actually, by~\eqref{processMarkus} and the periodicity of $X_i u$ (see Lemma~\ref{lm:usigma}-(i)), the process $Z_t:=z\oplus Y^x_t$ satisfies $Z_0=z\oplus x$ and
\begin{eqnarray*}
d(Z_t)_i&=&d(Y^x_t)_i=X_iu(Y^x_t,t) dt +\sqrt{2\sigma}d(W_t)_i=X_iu(Z_t,t) dt +\sqrt{2\sigma}d(W_t)_i, \quad (i=1,2)\\
d(Z_t)_3&=&d(Y^x_t)_3+z_1d(Y^x_t)_2-z_2d(Y^x_t)_1\\
&=&[(Z_t)_1 X_2u(Y^x_t,t)-(Z_t)_2 X_1u(Y^x_t,t)]dt +\sqrt{2\sigma}[(Z_t)_1d(W_t)_1-(Z_t)_2d(W_t)_2]\\
&=&[(Z_t)_1 X_2u(Z_t,t)-(Z_t)_2 X_1u(Z_t,t)]dt +\sqrt{2\sigma}[(Z_t)_1d(W_t)_1-(Z_t)_2d(W_t)_2]
\end{eqnarray*}
namely, $Z_t$ solves the SDE in~\eqref{processMarkus}.
\end{remark}

We set
\begin{equation}\label{etaMarkus}
\eta_t^\sigma:=\int_{\He^1}{\mathcal L}(Y^x_t)dm_0(x), \quad  t\in [0,T], 
\end{equation}
where ${\mathcal L}(Y^x_t)$ is the law of the process $Y^x_t$. \\
In the following lemma we shall prove that $\eta_t^\sigma$ is a periodic measure on $\He^1$, so using Remark \ref{rmk:misureper} we shall denote by $\eta_t^\sigma$ also the corresponding probability measure on $Q_{\cH}$.
\begin{lemma}\label{lm:etaMarkus}
The function $\eta_{\cdot}^\sigma:[0,T]\rightarrow {\mathcal M}(\He^1)$ fulfills the following properties:
\begin{itemize}
\item[(i)] $\eta_t^\sigma$ is $\Z^3$-periodic, namely
\[
\eta_t^\sigma(z\oplus A)=\eta_t^\sigma(A) \qquad \forall z\in\Z^3,\ A {\text { Borel set }}, A\subset [0,1)^{3}, t\in[0,T];
\]
\item[(ii)] $\eta_t^\sigma(Q_\cH)=1$ for every $t\in[0,T]$ (i.e., $\eta_t^\sigma\in{\mathcal P}(\Te)$);
\item[(iii)] there exists $C_1>0$, independent of $\sigma\in [0,1)$ and~$\overline m$, such that
\[
{\bf d_{1}}(\eta_t^\sigma, \eta_s^\sigma)\leq C_1(t-s)^{1/4} \qquad \forall 0\leq s\leq t\leq T;
\]
\item[(iv)] $\eta_t^\sigma$ is a distributional solution to~\eqref{eq:FPv}, namely it fulfills
\begin{equation}\label{eq:FPv_distr}
\int_{\He^1}\phi(x,t)\eta^\sigma_t(dx)=\int_{\He^1}\phi(x,0)m_0(x)dx+
\iint_{[0,t]\times\He^1} [\partial_t \phi +\sigma \Delta_\cH\phi-D_\cH u^\sigma\cdot D_\cH \phi]\eta^\sigma_s(dx)ds
\end{equation}
for every $\phi\in C^{2,1}([0,T]\times\He^1)$; moreover it coincides with $m_t^\sigma$.
\end{itemize}
\end{lemma}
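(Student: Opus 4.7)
The plan is to establish the four assertions sequentially. Parts~(i) and~(ii) are structural properties of $\eta_t^\sigma$ that follow cleanly from the translation formula~\eqref{transY} combined with the $Q_\cH$-periodicity of $m_0$. Part~(iii) requires quantitative SDE moment bounds combined with the Heisenberg distance inequality of Remark~\ref{lemmaagosto}. Part~(iv) relies on It\^o's formula and the uniqueness result for bounded distributional solutions already invoked in Lemma~\ref{lm:msigma}.

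For~(i), fix $z\in\Z^3$ and a Borel set $A\subset[0,1)^3$. Since $Y^x_t\in z\oplus A$ is equivalent to $(-z)\oplus Y^x_t\in A$ and~\eqref{transY} gives $(-z)\oplus Y^x_t=Y^{(-z)\oplus x}_t$, the change of variables $y=(-z)\oplus x$ (which preserves the Lebesgue/Haar measure) together with the periodicity of $m_0$ immediately yields $\eta_t^\sigma(z\oplus A)=\eta_t^\sigma(A)$. For~(ii), the same idea combined with the pavage decomposition $\He^1=\bigsqcup_{z\in\Z^3}(z\oplus Q_\cH)$ reduces $\eta_t^\sigma(Q_\cH)$ to
\[
\int_{Q_\cH}m_0(y)\sum_{z\in\Z^3}\Proba\bigl(Y^y_t\in(-z)\oplus Q_\cH\bigr)\,dy=\int_{Q_\cH}m_0(y)\,\Proba(Y^y_t\in\He^1)\,dy=1.
\]

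For~(iii), I would use the Kantorovich--Rubinstein duality and the inequality $d_\Te\le d_\cH$ (so that any $d_\Te$-$1$-Lipschitz test function on $\Te$ lifts to a $d_\cH$-$1$-Lipschitz function on $\He^1$) to obtain
\[
{\bf d_{1}}(\eta_t^\sigma,\eta_s^\sigma)\le\int_{Q_\cH}\mathbb{E}\bigl[d_\cH(Y^x_t,Y^x_s)\bigr]\,m_0(x)\,dx.
\]
The distance bound of Remark~\ref{lemmaagosto} then reduces the problem to the two moment estimates $\sup_{r\in[0,T]}\mathbb{E}|Y^x_r|^2\le C$ and $\mathbb{E}|Y^x_t-Y^x_s|^2\le C(t-s)$, both uniform in $\sigma\in[0,1)$ and $x\in Q_\cH$. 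These follow from standard Gronwall arguments applied to~\eqref{processMarkus}, using the $\sigma$-uniform bound $\|D_\cH u^\sigma\|_\infty\le C$ from Lemma~\ref{lm:usigma}-(ii), the trivial inequality $\sqrt{2\sigma}\le\sqrt{2}$, and the at most linear growth of $B(x)$. The exponent $1/4$ in~\eqref{stima0.5} emerges after a Cauchy--Schwarz applied to the $(1+|(Y^x_s)_1|^{1/2}+|(Y^x_s)_2|^{1/2})|Y^x_t-Y^x_s|^{1/2}$ term of Remark~\ref{lemmaagosto}.

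For~(iv), I would apply It\^o's formula to $\phi(Y^x_t,t)$ for $\phi\in C^{2,1}([0,T]\times\He^1)$ that is $Q_\cH$-periodic in $x$ (or compactly supported in $x$, handled by localisation): the generator of~\eqref{processMarkus} produces exactly $\partial_t\phi+\sigma\Delta_\cH\phi-D_\cH u^\sigma\cdot D_\cH\phi$, while the stochastic integral is a true martingale with zero expectation thanks to the moment bounds of step~(iii). Integrating the resulting identity against $m_0(x)\,dx$ yields~\eqref{eq:FPv_distr}. Since $m^\sigma$ from Lemma~\ref{lm:msigma} is also a bounded classical (hence distributional) solution of the same Fokker--Planck problem starting from $m_0$, and since $\eta_t^\sigma$ satisfies the exponential growth condition~\eqref{exponential} (as a periodic probability on $\Te$), Proposition~\ref{prp:!fpv_reg} in Appendix~\ref{app:uniqueness} forces $\eta_t^\sigma=m^\sigma_t\,dx$. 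The main obstacle is part~(iii): the two moment bounds must be secured uniformly in $\sigma$ despite the linear growth of $B(x)$ in both drift and diffusion; once these are in hand, the exponent $1/4$ (rather than $1/2$) is precisely the cost of the quadratic cross-terms appearing in the Heisenberg distance inequality.
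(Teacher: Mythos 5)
Your parts (i) and (ii) follow the paper's argument verbatim (translation formula~\eqref{transY}, periodicity of~$m_0$, and the pavage decomposition), so there is nothing to add there.

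For part~(iii) you take a genuinely different, arguably cleaner path. The paper builds an explicit coupling $\tilde\pi=\int_{\Te}\mathcal L(Y^{per,x}_s,Y^{per,x}_t)\,dm_0$ using the projected process $Y^{per,x}_\tau:=q_\cH(Y^x_\tau)$, so that in Remark~\ref{lemmaagosto} the troublesome coefficient $1+|x_1|^{1/2}+|x_2|^{1/2}$ is bounded because $Y^{per,x}_\tau\in Q_\cH$, and only then does it pass to the SDE increment. You instead invoke Kantorovich--Rubinstein duality together with $d_\Te\le d_\cH$ and keep the unprojected process $Y^x$, absorbing the unbounded coefficient $1+|(Y^x_s)_1|^{1/2}+|(Y^x_s)_2|^{1/2}$ via Cauchy--Schwarz and the $\sigma$-uniform moment bound $\sup_\tau\mathbb E|Y^x_\tau|^2\le C$. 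This avoids the intermediate comparison of increments of $Y^{per,x}$ with increments of $Y^x$, which is a genuine simplification. Your plan does rely on $\mathbb E|Y^x_t-Y^x_s|^2\le C(t-s)$ uniformly in $\sigma\in[0,1)$; you should say explicitly that the drift term contributes $O((t-s)^2)$ while It\^o's isometry applied to $\sqrt{2\sigma}\int_s^t B\,dW$ gives the $O((t-s))$ piece, both using Lemma~\ref{lm:usigma}-(ii) and the linear growth of~$B$.

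Part~(iv) contains a genuine gap. You invoke Proposition~\ref{prp:!fpv_reg} to identify $\eta_t^\sigma$ with $m_t^\sigma$, but that proposition is a uniqueness statement for \emph{classical} solutions $m_i\in C^2_\cH\cap C^0$ satisfying the exponential integrability condition~\eqref{exponential}, which is phrased for integrable densities. At this stage $\eta_t^\sigma$ is only known to be a family of measures satisfying the weak identity~\eqref{eq:FPv_distr}; you have established neither that it has a density, nor that such a density is $C^2_\cH$, so Proposition~\ref{prp:!fpv_reg} does not apply. What is needed (and what the paper uses) is the \emph{distributional} uniqueness result Proposition~\ref{prp:!FPv_distr}, which is tailored to measure-valued solutions of~\eqref{eq:FPv_distr} under H\"older regularity of the coefficients $b=-D_\cH u^\sigma$ and $c=-\Delta_\cH u^\sigma$ (guaranteed by Lemma~\ref{lm:usigma}-(iii)). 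Replacing the reference is not merely cosmetic: the whole point of having two separate uniqueness lemmas in Appendix~B is that the classical one cannot be applied to the probabilistic object~$\eta_t^\sigma$.
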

\begin{proof}
(i). Consider $z$, $t$ and $A$ as in the statement. By the definition~\eqref{etaMarkus} of~$\eta^\sigma$ and the translation formula~\eqref{transY}, we have
\begin{eqnarray*}
\eta_t^\sigma(z\oplus A)&=&\int_{\He^1}P\left\{Y^x_t\in z\oplus A\right\}dm_0(x)
=\int_{\He^1}P\left\{Y^{(-z)\oplus x}_t\in A\right\}dm_0(x)\\
&=& \int_{\He^1}P\left\{Y^{x'}_t\in A\right\}dm_0(z\oplus x')=
\int_{\He^1}P\left\{Y^{x'}_t\in A\right\}dm_0(x')=\eta_t(A)
\end{eqnarray*} 
where the second-last equality is due to the periodicity of~$m_0$.\\
(ii). By the property of pavage and the periodicity of~$m_0$, we have
\begin{eqnarray*}
\eta^\sigma_t(Q_\cH)&=&\sum_{z\in\Z^3}\int_{z\oplus Q_\cH} P\left\{Y^x_t\in  Q_\cH\right\}dm_0(x)=\sum_{z\in\Z^3}\int_{Q_\cH} P\left\{Y^{z\oplus x'}_t\in  Q_\cH\right\}dm_0(x')\\
&=&\sum_{z\in\Z^3}\int_{Q_\cH} P\left\{Y^{x'}_t\in (-z)\oplus Q_\cH\right\}dm_0(x')\\
&=&\int_{Q_\cH} P\left\{Y^{x'}_t\in\cup_{z\in\Z^3} [(-z)\oplus Q_\cH]\right\}dm_0(x')\\
&=&\int_{Q_\cH} P\left\{Y^{x'}_t\in\He^1\right\}dm_0(x')=1.
\end{eqnarray*}
(iii). First of all observe that, using Remark \ref{rmk:misureper}, we shall denote by $\eta_t^\sigma$ also the corresponding probability measure on $Q_{\cH}$.\\
For each $x\in\He^1$, set
\begin{equation}\label{proMarkusper}
Y^{per,x}_\tau:= q_\cH(Y^{x}_\tau) \qquad \forall \tau\in[0,T]
\end{equation}
where $q_\cH(\cdot)$ is the projection introduced in section~\ref{sub:periodicity}.
Fix $0\leq s\leq t\leq T$ and introduce
\begin{equation*}
\tilde \pi:=\int_{\Te} {\mathcal L}(Y^{per,x}_s,Y^{per,x}_t)dm_0(x)
\end{equation*}
where ${\mathcal L}(Y^{per,x}_s,Y^{per,x}_t)$ is the law of the pair $(Y^{per,x}_s,Y^{per,x}_t)$.
We claim that
\begin{equation}\label{claim:pitilde}
\tilde \pi\in \Pi(\eta_{s}^\sigma,\eta_{t}^\sigma)
\end{equation}
where the set $\Pi$ is the one introduced in~\eqref{Pi}. Let us assume for the moment that this claim is true. Then, by~\eqref{claim:pitilde}, there holds
\begin{eqnarray*}
{\bf d_{1}}(\eta_t^\sigma, \eta_s^\sigma)&\leq&\int_{\Te\times \Te}d_{\Te}(z_1,z_2)\tilde \pi(dz_1,dz_2)=\int_{\Te}\mathbb E[d_{\Te}(Y^{per,x}_s,Y^{per,x}_t)]dm_0(x)\\&\leq&\int_{\Te}\mathbb E[d_{\Te}(Y^{per,x}_s,Y^{per,x}_t)]dm_0(x)
\\
&\leq&\int_{\Te}\mathbb E\left[|Y^{per,x}_s-Y^{per,x}_t|^{1/2}\left(1+2|Y^{per,x}_s|^{1/2}+|Y^{per,x}_s-Y^{per,x}_t|^{1/2}\right)\right]dm_0(x).
\end{eqnarray*}
where the last inequality is due to Remark~\ref{lemmaagosto}. Since now on we denote by~$C$ a constant which may change from line to line but which is independent of $x,s,t,\sigma$. Since $|Y^{per,x}_s|^{1/2},|Y^{per,x}_s-Y^{per,x}_t|^{1/2}\leq \sqrt3$,  we get
\begin{eqnarray}\notag
d_{1}(\eta_t^\sigma, \eta_s^\sigma)&\leq& 
C\int_{\Te}\mathbb E\left[\left|\int_s^t -D_\cH u (Y^x_\tau,\tau) B^T(Y^x_\tau) d\tau +\sqrt{2\sigma} B(Y^x_\tau) d W_\tau\right|^{1/2}\right]dm_0(x)\\ \label{stimad1} &\leq&C\int_{\Te}\mathbb E\left[\left(\int_s^t |D_\cH uB^T| d\tau\right)^{1/2}\right]dm_0(x)+C \sigma^{1/4} \int_{\Te}\mathbb E\left[\left|\int_s^t B d W_\tau\right|^{1/2}\right]dm_0(x).
\end{eqnarray}
By standard theory on SDE (see~\cite[Theorem 8.10 pag.201]{Baldi}), since $\mathbb E[|Y^{x}_0|^2]=|x|^2$ for every $x\in \Te$, we obtain that there exists a positive constant $K$, independent of $\sigma$ and~$\overline m$ (by virtue of Lemma~\ref{lm:usigma}-(ii)), such that:
\begin{equation}\label{stimaL2Markus}
\mathbb E[|Y^{x}_\tau|^2]\leq K\qquad\forall x\in \Te, \quad 0\leq \tau\leq T.
\end{equation}
By Jensen inequality and by Fubini theorem, there holds
\begin{eqnarray}\notag
\int_{\Te}\mathbb E\left[\left(\int_s^t |D_\cH uB^T| d\tau\right)^{1/2}\right]dm_0(x) &\leq&
\int_{\Te}\left(\int_s^t \mathbb E [ |D_\cH u (Y^{x}_\tau,\tau)B^T(Y^{x}_\tau)|] d\tau\right)^{1/2}dm_0(x) \\ \notag
&\leq&\int_{\Te}\left(\int_s^t\mathbb E[1+| Y^{x}_\tau|]d\tau \right)^{1/2}dm_0(x) \\\notag
&\leq&\int_{\Te}\left(\int_s^t\mathbb E[1+| Y^{x}_\tau|^2]d\tau \right)^{1/2}dm_0(x)
\end{eqnarray}
where the last two inequalities are due to Lemma~\ref{lm:usigma}-(ii) and the definition of the matrix~$B$ in~\eqref{Hd} and respectively to the Cauchy-Schwarz inequality.
Using estimate~\eqref{stimaL2Markus} in the previous inequality, we obtain
\begin{equation}\label{stimad1A}
\int_{\Te}\mathbb E\left[\left(\int_s^t |D_\cH uB^T| d\tau\right)^{1/2}\right]dm_0(x) \leq C\sqrt{t-s}.
\end{equation}

On the other hand, by Jensen inequality and Cauchy-Schwarz inequality, we get
\begin{eqnarray*}
\int_{\Te}\mathbb E\left[\left|\int_s^t B d W_\tau\right|^{1/2}\right]dm_0(x)&\leq&
\int_{\Te}\left(\mathbb E\left[\left|\int_s^t B(Y^{x}_\tau) d W_\tau\right|\right] \right)^{1/2}dm_0(x)\\
&\leq&\int_{\Te}\left(\mathbb E \left[\left|\int_s^t B(Y^{x}_\tau) d W_\tau\right|^2\right] \right)^{1/4}dm_0(x)\\
&\leq&\int_{\Te}\left(\mathbb E\left[\int_s^t(1+|Y^{x}_\tau|^2)d\tau \right] \right)^{1/4}dm_0(x)
\end{eqnarray*}
where the last inequality is due to standard calculus for Ito's integral. Using again Fubini theorem and estimate~\eqref{stimaL2Markus} in the previous inequality, we get
\begin{eqnarray}\notag
\int_{\Te}\mathbb E\left[\left|\int_s^t B d W_\tau\right|^{1/2}\right]dm_0(x)&\leq&
\int_{\Te}\left(\int_s^t\mathbb E\left[(1+|Y^{x}_\tau|^2)\right]d\tau  \right)^{1/4}dm_0(x)\\\label{stimad1B}
&\leq& C(t-s)^{1/4}.
\end{eqnarray}
Replacing estimates~\eqref{stimad1A} and \eqref{stimad1B} in~\eqref{stimad1}, taking $\sigma \in[0,1)$, we obtain the statement.

It only remains to prove our claim~\eqref{claim:pitilde}: for any measurable subset $A\subset \Te$, arguing as in proof of point~(i) and using the property of pavage, we have
\begin{eqnarray*}
\tilde \pi(A\times \Te)&=& \int_{\Te} P\{Y^{per,x}_s\in A\}dm_0(x)
=\sum_{z\in\Z^3} \int_{\Te} P\{Y^{x}_s\in z\oplus A\}dm_0(x)\\
&=& \sum_{z\in\Z^3} \int_{z\oplus Q_\cH} P\{Y^{x'}_s\in A\}dm_0(x')=\int_{\He^1} P\{Y^{x'}_s\in A\}dm_0(x') \\
&=& \eta_s^\sigma(A)= \eta_{s\mid \Te}^\sigma(A);
\end{eqnarray*}
analogously, we have $\tilde \pi(\Te \times A)=\eta_{t\mid \Te}(A)$. Hence, our claim~\eqref{claim:pitilde} is completely proved.

(iv). The former part of the statement is due to a standard application of Ito's formula as in the Euclidean setting (for instance, see~\cite[Lemma 3.3]{C} and also~\cite[Theorem 5.7.6]{KS}). The latter part of the statement is an immediate consequence of Proposition~\ref{prp:!FPv_distr} in the appendix with $b=-D_\cH u$ and $c=-\Delta_\cH u$ and of Lemma~\ref{lm:usigma}-(iii).
\end{proof}

\begin{Proofc}{Proof of Theorem~\ref{prp:m}}
We shall follow the arguments of the proof of \cite[Theorem 5.1]{C13} (see also \cite[Theorem 4.20]{C}).\\
By the estimates in Lemma~\ref{lm:usigma}-(ii) and (iv), possibly passing to a subsequence (that we still denote by~$u^\sigma$), as $\sigma\to 0^+$, the sequence~$\{u^\sigma\}_\sigma$ uniformly converges to the function~$u$ which solves~\eqref{HJe}, is $1/4$-H\"older continuous in time and horizontally Lipschitz continuous in space, with $D_\cH u^\sigma \to D_\cH u$ a.e. (by \cite[Theorem 3.3.3]{CS}).

On the other hand, since $\mathcal P_{per}(\He^1)$ can be identified with the space of probabilities on the compact set~$\Te$, the estimates for $m^\sigma$ in Lemma~\ref{lm:etaMarkus}-(iii) and in Lemma~\ref{lm:msigma}-(i)  ensure that, as $\sigma\to 0^+$, possibly passing to a subsequence, $\{m^\sigma\}_\sigma$ converges to some $m\in C^{1/4}([0,T],\mathcal P_{per}(\He^1))$ in the $C^{0}([0,T],\mathcal P_{per}(\He^1))$-topology and in the $\mathbb L^\infty(\He^1\times(0,T))$-weak-$*$ topology; $m$ satisfies \eqref{stimaC0} with the same constant $C_0$ of Lemma~\ref{lm:msigma}-(i)
and \eqref{stima0.5} with the same constant $C_1$ of Lemma~\ref{lm:etaMarkus}-(iii).
In conclusion, we accomplish the proof arguing as in \cite[Proposition 3.1(proof)]{MMMT}.
\end{Proofc}

\section{Proof of Theorem \ref{thm:main}}\label{sect:dim3.1}

\begin{Proofc}{Proof of Theorem \ref{thm:main}}\\
(i) 
Consider the set
\[
{\cal C} :=\left\{m\in C^{1/4}([0,T]; {{\cal P}_{per}(\He^1)}): \textrm{$m$ fulfills \eqref{stimaC0}-\eqref{stima0.5} and $m(0)=m_0$} \right\}
\]
endowed with the $C^0([0,T]; {{\cal P}_{per}(\He^1)})$-topology.
Observe that it is a nonempty convex subset of~$C^0([0,T]; {{\cal P}_{per}(\He^1)})$; moreover, by Ascoli-Arzela theorem, it is also compact.
We introduce the set valued map~${\cal T}$ on~${\cal C}$ as follows: for any $\overline m\in {\cal C}$,  we set
\[{\cal T}(\overline m):=\left\{m\in C^{1/4}([0,T]; {\cal P}_{per}(\He^1)):\begin{array}{l}\textrm{$m$ solves \eqref{continuitye} (associated to $\overline m$ through \eqref{HJe})}\\\textrm{and fulfills \eqref{stimaC0}-\eqref{stima0.5}} \end{array}\right\}.
\]  
Let us assume for the moment that the map~${\cal T}$ admits a fixed point~$m$; let $u$ be the corresponding solution to \eqref{HJe} (i.e., the solution to \eqref{HJe} with $\overline m$ replaced by $m$). Then, by the results in Section \ref{OC} and in Section \ref{sect:continuity}, the couple $(u,m)$ is a solution to~\eqref{eq:MFGintrin}.\\
Let us prove the existence of such a fixed point applying the Kakutani's Theorem.
Note that here we cannot use Schauder's theorem as in \cite[Theorem 4.1 (proof)]{C} because we do not have uniqueness of the solution to \eqref{continuitye}. 
We observe that Theorem~\ref{prp:m} ensures $\emptyset\ne{\cal T}(\overline m)\subseteq {\cal C}$, for any $\overline m\in {\cal C}$.
Moreover, ${\cal T}(\overline m)$ is a convex set by the linearity of \eqref{continuitye}.
We claim that ${\cal T}$ has closed graph. Indeed, let us consider $\overline m_n, \overline m\in {\cal C}$
with $\overline m_n\rightarrow \overline m$ in the $C^0([0,T]; {{\cal P}_{per}(\He^1)})$-topology and 
$m_n\in {\cal T}(\overline m_n)$ with $m_n\rightarrow m$ in the $C^0([0,T]; {{\cal P}_{per}(\He^1)})$-topology; we want to prove that $m\in {\cal T}(\overline m)$.
By the periodicity and the bounds in assumptions (H1) and (H2), possibly passing to a subsequence (that we still denote $\overline m_n$),  Ascoli-Arzela theorem guarantees that $F[\overline m_n]$ and $G[\overline m_n(T)]$ converge uniformly to $F[\overline m]$ in  $\Te\times [0,T]$ and, respectively, to $G[\overline m(T)]$ in $\Te$. Moreover, Lemma \ref{valuefunction} and Lemma \ref{L1} ensure that the solutions $u_n$ to problem \eqref{HJe} with $\overline m$ replaced by $\overline m_n$ are $Q_\cH$-periodic, uniformly bounded and uniformly Lipschitz continuous. By standard stability results for viscosity solutions, the sequence $\{u_n\}_{n}$ converges uniformly to the viscosity solution $u$ to problem \eqref{HJe}.
Moreover, by Lemma \ref{semic}, the functions $u_n$ are uniformly semiconcave with a semiconcavity constant depending only on the constant~$C$ in assumption~(H2); hence by \cite[Theorem 3.3.3]{CS} $Du_n$ converges a.e. to $Du$.
On the other hand, by definition of ${\cal T}$, the functions $m_n\in {\cal T}(\overline m_n)$ are uniformly bounded and uniformly $1/4$-H\"older continuous, so by Ascoli-Arzela theorem and Banach-Alaoglu Theorem, there exists a subsequence $\{m_{n_k}\}_k$ which converges to $m$ in the $C^0([0,T]; {{\cal P}_{per}(\He^1)})$-topology and in the $\mathbb L^\infty(\Te\times[0,T])$-weak-$*$ topology. Being a solution to \eqref{continuitye} with $\overline m$ replaced by $\overline m_{n_k}$ in problem \eqref{HJe}, the function $m_{n_k}$ fulfills
\begin{equation}
\int_0^T\int_{\mathbb{H}^{1}}m_{n_k}(-\partial_t\varphi+D_{\cH}u_{n_k}\cdot D_{\cH}\varphi)dxdt=0\qquad \forall \varphi\in C_{c}^{\infty}(\mathbb{H}^{1}\times (0,T)).
\end{equation}
Passing to the limit as $k\rightarrow +\infty$ we get that $m$ is a solution to \eqref{continuitye}. Moreover again by the uniform convergence and the uniform $1/4$-H\"older continuity of $m_{n_k}$, we have that $m$ satisfies the bounds \eqref{stimaC0}-\eqref{stima0.5}. In conclusion $m\in {\cal T}(\overline m)$ and our claim is proved.
Then, Kakutani's Theorem guarantees the existence of a fixed point for~${\cal T}$, namely a solution to~\eqref{continuitye}.

(ii) Consider the function $m$ found in point (i). Since $t\rightarrow m_t$ is narrowly continuous, applying Theorem \ref{821}, we get that there exists a probability measure $\eta^*$ in $\Te\times\Gamma$ which satisfies points (i) and (ii) of Theorem \ref{821}.
We denote $\eta\in {\mathcal P}(\Gamma)$ the measure on $\Gamma$ defined as $\eta(A):=\eta^*(\Te\times A)$ for every $A\subset \Gamma$ measurable. We claim that $\eta$ is a MFG equilibrium. Indeed, by \eqref{eta}, we have $e_0\#\eta=m_0$ and $e_t\#\eta\in\mathcal P_{per}(\He^1)$,
so $\eta\in {\mathcal P}_{m_0}(\Gamma)$. On the other hand, by  \eqref{dis}, $\eta$ is supported on the curves solving \eqref{ODE}. From Lemma \ref{BB} such curves are optimal, i.e. belong to the set $\Gamma^{\eta}[x]$, hence our claim is proved.\\
Let us now prove that $(u,m)$ is a mild solution. By \eqref{eta}, we have $m_t=e_t\#\eta$. Moreover, by Lemma \ref{valuefunction}, the function $u$ found in point~(i) is the value function associated to $m$  as in Definition \ref{mild}-(ii). In conclusion $(u,m)$ is a mild solution to \eqref{eq:MFGintrin}.
\end{Proofc}
Let us provide the sketch of a different proof of Theorem~\ref{thm:main}-(i). 
\begin{Proofc}{Alternative proof of Theorem \ref{thm:main}-($i$)}
We divide the proof in two steps: in the former one, we obtain a solution to the MFG system with viscosity terms
\begin{equation}
\label{eq:MFGvv}
\left\{
\begin{array}{lll}
&(i)\quad-\partial_t u^\sigma-\sigma \Delta_\cH u^\sigma+\frac12 |D_\cH u^\sigma|^2
=F[m^\sigma_t](x)&\qquad \textrm{in }\He^1\times (0,T),\\
&(ii)\quad \partial_t m^\sigma-\sigma\Delta_\cH m^\sigma-\diver_\cH (m^\sigma D_\cH u^\sigma)=0&\qquad \textrm{in }\He^1\times (0,T),\\
&(iii)\quad m^\sigma(x,0)=m_0(x), u^\sigma(x,T)=G[m^\sigma_T](x)&\qquad \textrm{on }\He^1.
\end{array}\right.
\end{equation}
while in the latter one we get a solution to~\eqref{eq:MFGintrin} letting $\sigma\to 0^+$.

Step $1$. We claim that, for each $\sigma >0$, problem~\eqref{eq:MFGvv} admits a solution $(u^\sigma, m^\sigma)$ such that: the functions~$u^\sigma$ are bounded and fulfill the properties in Lemma~\ref{lm:usigma}-(i) uniformly in~$\sigma$ while the functions~$m^\sigma$ fulfill \eqref{stimaC0} and \eqref{stima0.5} uniformly in $\sigma$. Indeed, let ${\cal C}$ be the set introduced in the previous proof, still endowed with the topology of~$C^0([0,T]; {{\cal P}_{per}(\He^1)})$. For any $\sigma>0$, consider the map $\overline{\cal T}:{\cal C}\rightarrow {\cal C}$ defined by $\overline{\cal T}(\overline m)=m$ where $m$ is the solution to~\eqref{eq:FPv} (where $u^\sigma$ solves problem~\eqref{eq:HJv}). By Lemma~\ref{lm:msigma} and Lemma~\ref{lm:etaMarkus}, the function~$m$ is uniquely determined and belongs to~${\cal C}$ so the map~$\overline{\cal T}$ is well defined. Assume for the moment that the map~$\overline{\cal T}$ is continuous. Since ${\cal C}$ is nonempty, convex and compact, by Schauder fixed point theorem, the map $\overline{\cal T}$ admits a fixed point $m^\sigma$. Let $u^\sigma$ be the solution to problem~\eqref{eq:HJv} with $\overline m$ replaced by $m^\sigma$. One can easily check that the couple $(u^\sigma,m^\sigma)$ solves~\eqref{eq:MFGvv} and fulfill the desired bounds.\\
It remains to prove that $\overline{\cal T}$ is continuous. For simplicity, we drop the superscript ``$\sigma$'' because it is fixed. To this end, let $\{\overline m_n\}_n$ be a sequence of functions in ${\cal C}$ such that, as $n\to+\infty$, $\overline m_n\to \overline m\in{\cal C}$ in the $C^0([0,T]; {{\cal P}_{per}(\He^1)})$-topology. We want to prove that $m_n=\overline{\cal T}(\overline m_n)$ converges to $m=\overline{\cal T}(\overline m)$ in the $C^0([0,T]; {{\cal P}_{per}(\He^1)})$-topology. Let us assume by contradiction that there exists a subsequence $\overline{\cal T}(\overline m_{n_k})$ which does not converge to $m$ as $k\to+\infty$.
Since ${\cal C}$ is compact, possibly passing to a subsequence (still denoted $\overline{\cal T}(\overline m_{n_k})$), we can assume that $\overline{\cal T}(\overline m_{n_k})$ converges in the $C^0([0,T]; {{\cal P}_{per}(\He^1)})$-topology to some function $\tilde m\ne m$. As in the proof above, $F[\overline m_{n_k}]$ and $G[\overline m_{n_k}(T)]$ converge uniformly to $F[\overline m]$ in  $\Te\times [0,T]$ and, respectively, to $G[\overline m(T)]$ in~$\Te$.
By Lemma \ref{lm:usigma} and by Ascoli-Arzela theorem, (again possibly passing to a subsequence that we still denote $u_{n_k}$), the solution $u_{n_k}$ to~\eqref{eq:HJv} with $\overline m$ replaced by $\overline m_{n_k}$, converges uniformly with their horizontal gradient to a function $u$. By stability results of viscosity solutions, the function~$u$ is the unique viscosity solution to problem~\eqref{eq:HJv}. On the other hand, by Lemma~\ref{lm:msigma} and Lemma~\ref{lm:etaMarkus}, one can pass to the limit in the weak formulation of problem~\eqref{eq:FPv} with $u^\sigma$ replaced by $u_{n_k}$ (whose solution is $\overline{\cal T}(\overline m_{n_k})$) and we get that $\tilde m$ is a weak solution to~\eqref{eq:FPv} with $u^\sigma$ replaced by the solution $u$ to~\eqref{eq:HJv}. By the uniqueness result in Proposition~\ref{prp:!FPv_distr} we get $m=\tilde m$ which is the desired contradiction.

Step 2. By the bounds of step~$1$ and by Ascoli-Arzela theorem, as $\sigma\to 0^+$, (possibly passing to a subsequence still denoted $(u^\sigma,m^\sigma)$), we have: $u^\sigma$ uniformly converges to a $Q_\cH$-periodic, Lipschitz continuous bounded function~$u$ while $m^\sigma$ converges to some function~$m\in C^{1/4}([0,T]; {{\cal P}_{per}(\He^1)})$ in the $C^{0}([0,T]; {{\cal P}_{per}(\He^1)})$-topology. By arguments similar to the above ones, $u$ is a viscosity solution to~\eqref{eq:MFGintrin}-(i) while $m$ is a weak solution to~\eqref{eq:MFGintrin}-(ii). 
\end{Proofc}
\begin{remark} Differently from \cite{AMMT} and \cite{MMMT}, in this model we cannot obtain the representation of $m$ as the push-forward of $m_0$ by the flow associated to the optimal control problem. This is due to the fact that we cannot prove a uniqueness result of the optimal trajectories and then we cannot say that $\Gamma^{\eta}[x]$ is a singleton, or equivalently that the disintegrated measure $\eta_x$ (see \eqref{dis}) coincides with the Dirac measure $\delta_{\overline \gamma_x}$.
\end{remark}

\section{The non periodic case}\label{sect:nonper}
This section is devoted to system~\eqref{eq:MFG1} without any periodicity condition on the operators and the data. The main result of this section is Theorem~\ref{thm:main_illi} which is the analogue of Theorem~\ref{thm:main} in the non periodic case.
Here we shall follow an approach which relies on the compactness of the initial distribution of players and on the sublinear growth of the coefficients of the matrix~$B$ (see~\eqref{Hd}) but it does not need the H\"ormander condition.
Actually, the growth condition for~$B$ plays a key role only for applying a comparison principle and for representing the solution in terms of a stochastic process.

We first introduce~$\mathcal P_1(\re^3)$ (respectively, $\mathcal P_2(\re^3)$) as the space of Borel probability measures on~$\re^3$ with finite first (resp., second) order moment with respect to the Euclidean distance, endowed with the Monge-Kantorovich distance~${\bf d_{1}}$ (resp.,~${\bf d_{2}}$).



Throughout this section we shall require the following hypotheses, analogous to (H) stated in Section \ref{sect:mainthm} :
\begin{enumerate}
\item[(H1')]\label{H1'} the functions~$F$ and $G$ are real-valued function, continuous on ${\mathcal P}_{1}(\re^3)\times\re^3$,
\item[(H2')]\label{H2'} the map $m\to F[m](\cdot)$ is Lipschitz continuous from~${\mathcal P}_{1}(\re^3)$ to $C^{2}(\re^3)$; moreover, there exists~$C\in \mathbb R$ such that
  $$\|F[m](\cdot)\|_{C^{2}(\re^3)}, \|G[m](\cdot)\|_{C^{2}(\re^3)}\leq C,\qquad \forall m\in {\mathcal P}_{1}(\re^3);$$
\item[(H3')]\label{H4'} the distribution~$m_0:\re^3\to \re$ is a nonnegative $C^{0}$ function with compact support and $\int_{\re^3}m_0dx=1$.
\end{enumerate}
We now introduce our definitions of solution of the MFG system~\eqref{eq:MFGintrin} which are analogous to the corresponding ones in section~\ref{sect:mainthm}  (just dropping any periodicity requirement and replacing ${\mathcal P}_{per}(\He^1)$ with~${\mathcal P}_{1}(\re^3)$).
\begin{definition}\label{defsolmfg_illi}
A couple $(u,m)$ of functions defined on~$\re^3\times [0,T]$ is a solution of system~\eqref{eq:MFGintrin} if it verifies points $1)$-$4)$ of~Definition~\ref{defsolmfg} with ${\mathcal P}_{per}(\He^1)$ replaced by~${\mathcal P}_{1}(\re^3)$.
\end{definition}

\begin{definition}\label{mfgequil_illi}
A measure $\eta$ is a {\em MFG equilibrium} for $m_0$ if it verifies Definition~\ref{mfgequil} with
$$\mathcal P_{m_0}(\Gamma)=\{\eta\in{\mathcal P}(\Gamma):\ m_0 =e_0\# \eta\quad \textrm{and }e_t\# \eta\in {\mathcal P}_{1}(\re^3) \quad \forall t\in[0,T] \}.$$
\end{definition}
\begin{definition}\label{mild_illi}
A couple $(u,m)$ is called {\em {mild solution}} if it verifies Definition~\ref{mild} with ${\mathcal P}_{per}(\He^1)$ replaced by~${\mathcal P}_1(\re^3)$.
\end{definition}
Now we can state the main result for the non periodic case whose proof is postponed at the end of this section.
\begin{theorem}\label{thm:main_illi}
Under the above assumptions:
\begin{enumerate}
\item system \eqref{eq:MFGintrin} has a solution $(u,m)$;
\item $(u,m)$ is a mild solution of the MFG problem.
\end{enumerate}
\end{theorem}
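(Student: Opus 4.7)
\begin{Proofc}{Proof proposal for Theorem~\ref{thm:main_illi}}
The plan is to mimic the two-step strategy used for Theorem~\ref{thm:main}, namely a Kakutani fixed-point argument followed by an application of a superposition principle, but with two structural modifications dictated by the lack of periodicity: the vanishing viscosity procedure is carried out with the \emph{Euclidean} Laplacian~$\Delta$ instead of~$\Delta_{\cH}$ (there is no periodic structure to preserve), and a truncation of the matrix~$B(x)$ outside a large ball is performed to circumvent the quadratic growth of the drift~$\partial_p H$ in the continuity equation.

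The first preliminary step is a confinement estimate that replaces the $Q_\cH$-periodicity used throughout Sections~\ref{OC}--\ref{sect:dim3.1}. Using~(H3') together with the analogue of Proposition~\ref{boundalfa} on~$\re^3$ (the proof of which extends verbatim under~(H1')--(H3') since it only uses the $L^\infty$-bounds on~$F,G$), every optimal trajectory for the control problem~\eqref{def:OC} starting from $x\in\mathrm{supp}(m_0)$ satisfies $\|\alpha^*\|_\infty\leq C_2$. Integrating~\eqref{DYNH} and using the compact support of~$m_0$, all such trajectories remain in a fixed ball $B_{R_0}\subset\re^3$, with $R_0$ depending only on $T$, $C_2$ and $\mathrm{diam}(\mathrm{supp}(m_0))$. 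This confinement is the geometric replacement for periodicity and is the key fact making the rest of the argument possible.

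Second, I would introduce the set
\[
\cC:=\bigl\{m\in C^{1/4}([0,T];\cP_1(\re^3)):\ \mathrm{supp}(m_t)\subset B_{R_0}\ \forall t\in[0,T],\ m(0)=m_0,\ 0\leq m\leq C_0\bigr\},
\]
endowed with the $C^0([0,T];\cP_1(\re^3))$-topology; by Prokhorov's theorem the uniform support control makes $\cC$ narrowly compact, and Ascoli--Arzel\`a gives compactness in this topology. For $\overline m\in\cC$ fixed, the HJ equation~\eqref{eq:MFGintrin}-(i) has a unique bounded Lipschitz viscosity solution~$u$ by the arguments of Section~\ref{OC} restricted to $B_{R_0}$ (only the initial position matters, and all relevant optimal trajectories stay in $B_{R_0}$); the corresponding semiconcavity is now local, which is sufficient for our needs. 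The continuity equation~\eqref{eq:MFGintrin}-(ii) is then handled by the vanishing-viscosity/truncation scheme developed in Section~\ref{sect:dim7.1}: one replaces $B(x)$ with a smooth truncation $B_R(x)$ coinciding with $B$ on $B_{2R_0}$ and bounded outside, solves the resulting parabolic equation $\partial_t m^\sigma-\sigma\Delta m^\sigma-\mathrm{div}(m^\sigma\,Du\,B_R B_R^T)=0$ via classical theory (drift now globally Lipschitz and bounded), then passes to the limit $\sigma\to 0^+$ and removes the truncation. This yields the set-valued map $\cT(\overline m):=\{m\in\cC:\ m \text{ solves the continuity equation driven by }u\}$, which has nonempty convex values and closed graph by stability under uniform convergence of~$u_n$ together with the weak-$*$ compactness of~$m_n$. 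Kakutani's theorem delivers the fixed point $(u,m)$, and Definition~\ref{defsolmfg_illi} is satisfied.

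For statement~(2), the (Euclidean) superposition principle \cite[Theorem~8.2.1]{AGS} applies directly without the Heisenberg-periodic adaptation of Appendix~C, because $m$ is supported in a fixed compact set and $-D_\cH u\, B^T$ is bounded there. It produces a measure $\eta\in\cP_{m_0}(\Gamma)$ with $e_t\#\eta=m_t$ concentrated on solutions of $\dot\gamma(t)=-D_\cH u(\gamma(t),t)\,B^T(\gamma(t))$; the optimal synthesis Lemma~\ref{BB} (which is local in nature and still holds on~$\re^3$) then identifies these curves as optimal, giving a MFG equilibrium and hence a mild solution.

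The main obstacle will be the rigorous handling of the truncation limit in the continuity equation, specifically showing that $\mathrm{supp}(m_t^{R,\sigma})\subset B_{R_0}$ independently of $R$ and~$\sigma$, so that the truncation plays no role on the relevant region. In the deterministic limit this follows from the confinement of characteristics, but at the viscous level $\sigma>0$ the Brownian component makes mass leak out of $B_{R_0}$; one must instead establish Gaussian-type tail estimates on~$m^{\sigma,R}$ uniformly in~$R$ to justify passing to the limit, along the lines of the probabilistic representation~\eqref{etaMarkus}--\eqref{eq:FPv_distr} adapted to bounded drifts. This is exactly the content of the preliminary result announced in Section~\ref{sect:dim7.1}, which I would invoke to close the argument.
\end{Proofc}
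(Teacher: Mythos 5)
Your proposal sketches a direct Kakutani fixed-point at the level of the original noncoercive system, together with compact-support confinement, Euclidean vanishing viscosity, and truncation of~$B$. This captures two correct ideas — that the compact support of~$m_0$ replaces periodicity, and that Euclidean viscosity plus truncation is the right scheme — but it misses the paper's central technical device and, as you yourself flag, runs into an unresolved obstruction. The paper does \emph{not} attack~\eqref{eq:MFGintrin} directly: it first replaces $B$ by $B^\varepsilon$ (appending the column $(0,0,\varepsilon)^T$), making $B^\varepsilon(B^\varepsilon)^T$ invertible. This is what yields uniqueness of optimal trajectories (Lemma~\ref{3.1}-(5), Lemma~\ref{lemma:OSeps}-(b)), which in turn gives (a) the push-forward representation~\eqref{ambrosioe}--\eqref{flowe} for the continuity equation, (b) the clean truncation limit in Lemma~\ref{1802:thm1}, where \emph{all} solutions $m_N$ coincide for $N$ large and the $N$-dependent constants $K_N$ of Lemma~\ref{1802:lemma4} therefore stabilize, and (c) a single-valued fixed-point map \`a la MMMT (Schauder rather than Kakutani). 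Theorem~\ref{thm:main_illi} is then obtained by establishing Proposition~\ref{mainproe} for each $\varepsilon>0$ and passing $\varepsilon\to 0^+$ using the uniform estimates (a)--(b) in the statement of Proposition~\ref{mainproe}, with part~(2) handled exactly as you describe (Euclidean AGS superposition plus the optimal synthesis of Lemma~\ref{lemma:OSeps}-(a) at $\varepsilon=0$). Setting $\varepsilon=0$ from the outset removes the injectivity of $B(B)^T$, which is the only thing that prevented this problem from being treated by the earlier methods of~\cite{MMMT} in the first place; your Kakutani argument correctly anticipates the resulting loss of uniqueness, but it does not recover the $N$-independence of the bounds that the paper gets from uniqueness.

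The concrete step that fails is the invariance of your set $\cC$. You impose $\mathrm{supp}(m_t)\subset B_{R_0}$, yet the viscous approximations $m_{\sigma,N}$ do \emph{not} have compact support (the Wiener term spreads mass to all of $\re^3$), so $\cT$ does not map $\cC$ into $\cC$. You propose to patch this with Gaussian tail estimates, but that is precisely what the paper avoids: its set $\cC$ is controlled by second-moment bounds $\int|x|^2\,m_\varepsilon\,dx\le C$ (Proposition~\ref{mainproe}-(b), Lemma~\ref{1802:lemma4}-(b)), not by compact support, and the compactness of $\mathrm{supp}(m_0)$ is only used to confine the \emph{deterministic} characteristics after $\sigma\to 0^+$ and thereby undo the truncation. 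A related slip: the $L^\infty$ bound on the optimal control is $\|\alpha^*\|_\infty\le C(1+|x_1|+|x_2|)$ (Lemma~\ref{3.1}-(1)), not a constant; it is uniform only on $\mathrm{supp}(m_0)$, which is enough for the deterministic confinement but not for the viscous level. In short, the missing idea is the $\varepsilon$-regularization of the vector fields, and without it the truncation limit and the moment estimates must be reconstructed from scratch — this is exactly the preliminary work the paper packages in Section~\ref{sect:dim7.1} as Proposition~\ref{mainproe}, which your proposal does not supply.
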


\begin{remark} Uniqueness holds under classical hypothesis on the monotonicity of $F$ and $G$ as in \cite{C}.
\end{remark}

In order to prove Theorem~\ref{thm:main_illi}, it is expedient to introduce the following approximating problems for $\varepsilon\in(0,1]$
%
%
\begin{equation}\label{eq:MFGe}
\left\{\begin{array}{lll}
(i)&\quad-\partial_t u+H^{\varepsilon}(x, Du)=F[m(t)](x)&\qquad \textrm{in }\re^3\times (0,T)\\
(ii)&\quad\partial_t m-\diver  (m\, \partial_pH^{\varepsilon}(x, Du))=0&\qquad \textrm{in }\re^3\times (0,T)\\
(iii)&\quad m(x,0)=m_0(x), u(x,T)=G[m(T)](x)&\qquad \textrm{on }\re^3,
\end{array}\right.
\end{equation}
where
\begin{equation}\label{He}
H^{\varepsilon}(x,p)=\frac{1}{2}|pB^{\varepsilon}(x)|^2\qquad\textrm{with}\qquad
 B^\varepsilon:=  \begin{pmatrix}
      \!\!&1& 0&0&\!\\
     \!\!&0&1&0&\!\\
      \!\!&-x_2&x_1&\epsilon&
\end{pmatrix}.
    \end{equation}
Hence, explicitly, the Hamiltonian and the drift are respectively
\begin{eqnarray*}
H^{\varepsilon}(x,p)&=&\frac{1}{2}((p_1-x_2p_3)^2+(p_2+x_1p_3)^2+(\epsilon p_3)^2)\\
\partial_pH^{\varepsilon}(x,p)&=& p B^{\varepsilon}(x)(B^{\varepsilon}(x))^T=(p_1-x_2p_3, p_2+x_1p_3,x_1p_2-x_2p_1+(x_1^2+x_2^2+\varepsilon^2)p_3)
\end{eqnarray*}
while the dynamics of the generic player at point~$x$ at time~$t$ becomes
\begin{equation}\label{DYNe}
x_1'(s)=\alpha_1(s),\quad x_2'(s)=\alpha_2(s),\quad x_3'(s)=-x_2(s)\alpha_1(s)+x_1(s)\alpha_2(s)+\epsilon \alpha_3(s)
\end{equation}
where the control $\alpha=(\alpha_1, \alpha_2, \alpha_3)$ is chosen in $L^2([t,T];\R^3)$ for minimizing the cost~\eqref{Jgen}.

We now obtain existence, representation formula and suitable estimates of a solution to problem~\eqref{eq:MFGe} which will allow us to prove Theorem \ref{thm:main_illi}. These properties are stated in the following Proposition whose proof is postponed in the next section.
\begin{proposition}\label{mainproe}
For any fixed $\epsilon\in(0,1]$ there exists a solution $(u_{\varepsilon}, m_{\varepsilon})$ of the system 
\eqref{eq:MFGe} such that 
\begin{equation}
\label{ambrosioe}
\int_{\re^3}\phi(x) \, dm_\varepsilon(t)=\int_{\re^3}\phi(\gamma_{\varepsilon,x}(t))\,m_0(x)\, dx \qquad \forall \phi\in C^0_0(\R^3), \, \forall t\in[0,T]
\end{equation}
where, for a.e. $x\in\re^3$,  $\gamma_{\varepsilon,x}$ is the unique solution to
\begin{equation}\label{flowe} 
x'(s)= -Du_{\varepsilon}(x(s),s)B^{\epsilon}(x(s))(B^{\epsilon}(x(s)))^T,\quad x(0)=x.
\end{equation}
Moreover, there exists a positive constant~$C$ (independent of~$\epsilon$) such that
\begin{itemize}
\item[a)] $\|u_\varepsilon\|_\infty\leq C$, $\|D u_\varepsilon\|_\infty\leq C$, $|\partial_t u_\varepsilon(t,x)|\leq C (1+|x_1|^2+|x_2|^2)$, $D^2 u_\varepsilon\leq C$,
\item[b)] $\|m_\varepsilon\|_\infty\leq C$, ${\bf d}_1(m_\varepsilon(t_1),m_\varepsilon(t_2))\leq C|t_2-t_1|^{1/2}$, $\int_{\re^3}|x|^2m_\varepsilon(x,t)dx\leq C$.
\end{itemize}
\end{proposition}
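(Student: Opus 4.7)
The plan is to run a Schauder fixed-point argument on the MFG map $\overline m\mapsto m_\varepsilon$ associated with~\eqref{eq:MFGe}, in the spirit of Theorem~\ref{thm:main}, within a convex compact subset of $C^0([0,T];\mathcal P_1(\R^3))$. Two features of the regularization are decisive: first, $\det(B^\varepsilon(x)(B^\varepsilon)^T(x))=\varepsilon^2>0$ everywhere, so the degeneracy that blocks the direct application of~\cite[Section~5]{MMMT} to the unregularized system disappears, making available the push-forward representation~\eqref{ambrosioe}; second, the compactness of $\mathrm{supp}(m_0)$ supplies the moment control that periodicity provided in Sections~\ref{OC}--\ref{sect:continuity}.

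For a fixed $\overline m\in C^0([0,T];\mathcal P_1(\R^3))$, I would first represent the unique bounded viscosity solution $u_\varepsilon$ of~\eqref{eq:MFGe}(i), with coupling $F[\overline m]$ and terminal datum $G[\overline m(T)]$, as the value function of the optimal control problem with dynamics~\eqref{DYNe} and cost~\eqref{Jgen}. Testing against the null control yields $\|u_\varepsilon\|_\infty\leq C$ and $\|\alpha^*\|_{L^2}\leq C$ uniformly in $\varepsilon$ and $\overline m$, as in Remark~\ref{2.3}. Adapting the computations of Lemma~\ref{L1} and Lemma~\ref{semic} then gives uniform Lipschitz continuity $\|Du_\varepsilon\|_\infty\leq C$ and semiconcavity $D^2u_\varepsilon\leq C$. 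The crucial algebraic identity is that, along two trajectories driven by the same control law $\alpha$ starting at $x$ and $y$ respectively,
\[
x_3(s)-y_3(s)=(x_3-y_3)+(y_2-x_2)\int_t^s\alpha_1\,d\tau-(y_1-x_1)\int_t^s\alpha_2\,d\tau,
\]
so that the differences depend linearly on $|x-y|$ but not on $|x|$ or $|y|$; this makes the estimates insensitive to the location of the base point in $\R^3$ and to $\varepsilon$. The time-derivative bound follows directly from the PDE: $|\partial_t u_\varepsilon|\leq \tfrac12|Du_\varepsilon B^\varepsilon|^2+|F[\overline m]|$, combined with $|B^\varepsilon(x)|^2\leq C(1+x_1^2+x_2^2+\varepsilon^2)$ and $\|Du_\varepsilon\|_\infty\leq C$, yields exactly $|\partial_t u_\varepsilon(t,x)|\leq C(1+|x_1|^2+|x_2|^2)$.

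Next, since $u_\varepsilon(\cdot,s)$ is semiconcave, $Du_\varepsilon(\cdot,s)$ is defined Lebesgue-a.e.\ and hence $m_0$-a.e. This, together with the non-degeneracy of $B^\varepsilon(B^\varepsilon)^T$, brings us within the scope of~\cite[Section~5]{MMMT}: the ODE~\eqref{flowe} admits a unique absolutely continuous solution $\gamma_{\varepsilon,x}$ for $m_0$-a.e.\ initial datum, and setting $m_\varepsilon(t):=\gamma_{\varepsilon,\cdot}(t)\# m_0$ produces both the representation~\eqref{ambrosioe} and a distributional solution of~\eqref{eq:MFGe}(ii). The $L^\infty$-bound on $m_\varepsilon$ follows from the Lebesgue change-of-variables combined with the uniform lower bound on the Jacobian of the flow, itself a consequence of $D^2u_\varepsilon\leq C$, exactly as in~\cite[Proposition~3.1]{MMMT}. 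The ${\bf d}_1$-H\"older estimate is a duality argument against $1$-Lipschitz functions, relying on the uniform bound of the drift $Du_\varepsilon B^\varepsilon(B^\varepsilon)^T$ along trajectories issuing from $\mathrm{supp}(m_0)$. The second-moment bound comes from a Gronwall argument on the componentwise flow estimates $|x_i'(s)|\leq C(1+|x_j(s)|)$ for $i,j\in\{1,2\}$ and $|x_3'(s)|\leq C(1+|x_1|^2+|x_2|^2)$, together with the compactness of $\mathrm{supp}(m_0)$.

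With these bounds in hand, the fixed-point closure is classical: the set $\mathcal C_\varepsilon$ of $m\in C^0([0,T];\mathcal P_1(\R^3))$ satisfying the estimates in~(b) is convex and, by Prokhorov tightness from the uniform second-moment bound together with Ascoli--Arzel\`a on $[0,T]$, compact; the map $\overline m\mapsto m_\varepsilon$ is well-defined and continuous, thanks to viscosity stability of $u_\varepsilon$, almost-everywhere convergence of $Du_\varepsilon$ via~\cite[Theorem~3.3.3]{CS}, and continuity of the push-forward. Schauder's theorem then produces a fixed point. The hardest aspect of this program is guaranteeing that the constants in the two previous paragraphs remain \emph{independent} of $\varepsilon$, since this uniformity is precisely what will drive the passage to the limit in Theorem~\ref{thm:main_illi}. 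The key structural points ensuring such uniformity are that the regularization only adds a non-negative term $\tfrac12\varepsilon^2 p_3^2$ to the Hamiltonian, that the non-linear couplings in the $x_3$-equation of~\eqref{DYNe} are $\varepsilon$-independent, and that the third control $\alpha_3$ enters the cost only through $\|\alpha^*\|_{L^2}$, which is already controlled.
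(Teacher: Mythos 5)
Your proposal follows the paper's overall strategy — Schauder fixed point in a convex compact subset of $C^0([0,T];\mathcal P_1(\R^3))$, value-function representation of $u_\varepsilon$, the invertibility of $B^\varepsilon(B^\varepsilon)^T$ to recover uniqueness of optimal trajectories, and push-forward representation of $m_\varepsilon$ — and the stated uniform-in-$\varepsilon$ estimates on $u_\varepsilon$ (Lemmas~\ref{3.1}, \ref{LLS}, \ref{3.2}) are exactly those of the paper, with your PDE-based derivation of the $\partial_t u_\varepsilon$ bound being a legitimate alternative to the paper's control-theoretic one. However, you omit the double regularization that the paper carries out in Section~\ref{subsect:c_illi}, and this is where the substance lies.

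The paper does not construct $m_\varepsilon$ directly as a Lagrangian push-forward of $m_0$. Instead it performs a vanishing viscosity with the \emph{Euclidean} Laplacian (introducing $\sigma\Delta$) \emph{together with} a truncation $B^{\varepsilon,N}$ of the matrix $B^\varepsilon$ obtained by replacing $x_i$ with $\psi_N(x_i)$. The truncation is not cosmetic: the drift $Du_\varepsilon\,B^\varepsilon(B^\varepsilon)^T$ grows quadratically in $x_1,x_2$ (because of the entries $x_1^2+x_2^2+\varepsilon^2$), so the summability hypothesis of the superposition principle \cite[eq.~(8.1.20)]{AGS} is not met globally and the classical parabolic theory for the Fokker--Planck equation is not directly applicable. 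After truncating, the drift is bounded (Lemma~\ref{1802:lemma1}-(b) gives $\|Du\,B^{\varepsilon,N}(B^{\varepsilon,N})^T\|_\infty\leq CN^2$), so the paper can: $(i)$ invoke Ikeda's well-posedness and parabolic comparison to get the $L^\infty$ bound on $m_{\sigma,N}$ (Lemma~\ref{1802:lemma3}); $(ii)$ let $\sigma\to0$ and then apply the superposition principle to the truncated, bounded-drift continuity equation to identify $m_N$ with a push-forward (Lemma~\ref{1802:thm1}); and $(iii)$ exploit the compactness of $\mathrm{supp}(m_0)$ to show that the trajectories of~\eqref{flowe} stay in a fixed compact set, so that $B^{\varepsilon,N}=B^\varepsilon$ along them for $N$ large, whence $m_N$ is independent of $N$ and solves the original equation. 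Your proof asserts these conclusions — a.e.\ uniqueness of the flow, $L^\infty$ bound via the Jacobian, compact support of trajectories, existence of the pushforward as a distributional solution — but it does not supply the machinery (truncation $\Rightarrow$ bounded drift $\Rightarrow$ AGS, parabolic comparison) that makes each of them rigorous.

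Concretely, the two places where your argument leaves a gap are: the $L^\infty$ bound on $m_\varepsilon$, which you derive ``exactly as in~\cite[Proposition~3.1]{MMMT}'' from a Jacobian of a flow that is only $m_0$-a.e.\ defined and not classically differentiable (the paper gets this bound by comparison for the \emph{viscous} equation and then passes to the limit, which is why the $\sigma\Delta$ term is there); and the identification of the push-forward $\gamma_{\varepsilon,\cdot}(t)\#m_0$ with \emph{the} distributional solution of~\eqref{eq:MFGe}-(ii) in the fixed-point iteration, which requires a uniqueness statement for the continuity equation with an unbounded, merely a.e.-differentiable drift — this is precisely what the truncation $B^{\varepsilon,N}$ together with \cite[Theorem~8.2.1]{AGS} provides in Lemma~\ref{1802:thm1}. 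Adding the truncation step and the vanishing-viscosity step would close these gaps and would bring your argument in line with the paper's.
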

Now, we can prove Theorem~\ref{thm:main_illi}.
\begin{proof}[Theorem \ref{thm:main_illi}]
1. The uniform estimates for~$u_{\varepsilon}$ and for~$m_{\varepsilon}$ in Proposition~\ref{mainproe} ensure that there exist two subsequences, which we will still denote $u_{\varepsilon}$ and respectively $m_{\varepsilon}$ such that, as $\epsilon\to 0^+$, $u_{\varepsilon}$ converge to some function~$u$ locally uniformly in $(x,t)$ and $m_\varepsilon$ converge to some $m\in C^0([0,T], \mathcal P_1(\re^3))$ in the $C^0([0,T],\mathcal P_1(\re^3))$-topology and in the weak-$*$-$L^\infty_{loc}(\re^3\times(0,T))$ topology. 
In particular, we get $m(0)=m_0$ and we deduce that $u$ is Lipschitz continuous in~$x$, locally Lipschitz continuous in~$t$, semiconcave in $x$ and 
$Du_{\varepsilon}\to Du$ a.e. (because of the semiconcavity estimate in Proposition~\ref{mainproe}-(a) and \cite[Theorem 3.3.3]{CS}).
Letting $\epsilon\to0^+$ in system~\eqref{eq:MFGe}, by the same arguments as those in the proof of Theorem~\ref{thm:main}, we infer that $(u,m)$ is a solution to~\eqref{eq:MFGintrin}.\\
2. The proof follows by easy adaptations of the proof of Theorem~\ref{thm:main}-(ii). Indeed it is enough to invoke~\cite[Theorem 8.2.1]{AGS} instead of Theorem~\ref{821} and to observe that the optimal synthesis in
Lemma~\ref{BB} 
still holds for the non periodic case (for a sketch of the proof see Lemma~\ref{lemma:OSeps}-(a) with~$\varepsilon=0$).
\end{proof}

\section{Tools for the non periodic case: optimal synthesis and proof of Proposition~\ref{mainproe}}\label{sect:dim7.1}

This section is devoted to the proof of Proposition~\ref{mainproe} which is given in section~\ref{subsect:pr_illi}.
To this end, we cope the study of equation~\eqref{eq:MFGe}-(i) and the corresponding optimal control problem (see section~\ref{subsect:OC_illi}) and separately of equation~\eqref{eq:MFGe}-(ii) (see section~\ref{subsect:c_illi}). It is worth to note that for the control problem underlying equation~\eqref{eq:MFGe}-(i) we establish an optimal synthesis result (see Lemma~\ref{lemma:OSeps}) exploiting the fact that the matrix $B^\varepsilon(B^\varepsilon)^T$ is invertible.
In all this section we take $\varepsilon\in [0,1]$ fixed and we omit it in most of the section.
\subsection{The associated optimal control problem}\label{subsect:OC_illi}
As in Section \ref{OC} for dynamics \eqref{DYNH}, here we state the optimal control problem associated to the Hamilton Jacobi equation in \eqref{eq:MFGe}. 
Throughout this section we shall require the following hypothesis
\begin{hypothesis} \label{BasicAss_illi}
$f\in C^{0}([0,T],C^2(\re^3))$, $g\in C^2(\re^3)$ and there exists a constant $C$ such that
$$\|f(\cdot,t)\|_{C^2(\re^3)} + \|g\|_{C^2(\re^3)} \leq C,\qquad\forall t\in[0,T].$$
\end{hypothesis}

\begin{definition}\label{def:OCDe} We consider the following optimal control problem
\begin{equation}\label{def:OC_illi}
\text{minimize } J_t(x(\cdot),\alpha(\cdot)):
=\displaystyle\int_t^T\dfrac12|\alpha(s)|^2+f( x(s),s)\,ds+g(x(T))
\end{equation}
subject to $(x(\cdot), \alpha(\cdot))\in \mathcal A_{\varepsilon}(x,t)$, where
\begin{equation} \mathcal A_{\varepsilon}(x,t):=\left\{(x(\cdot), \alpha(\cdot))\in AC([t,T]; \R^3)\times L^2([t,T];\re^3):\, \textrm{\eqref{DYNe} holds a.e. with } x(t)= x\right\}.
\end{equation}
Let $u_{\varepsilon}(x,t)$ be the value function of the optimal control problem~\eqref{def:OC_illi}, namely
\begin{equation}\label{repre}
u_\varepsilon(x,t):=\inf\left\{ J_t(x(\cdot), \alpha(\cdot)):\, (x(\cdot),\alpha(\cdot))\in \mathcal A_{\varepsilon}(x,t)\right\}.
\end{equation}
\end{definition}
From now we denote by $x^*$, $\alpha^*$ the optimal trajectory and the optimal control associated to $u_\varepsilon(x,t)$, i.e. we omit the dependence on $\varepsilon$.
\begin{lemma}\label{3.1}
\begin{enumerate}
\item For any $(x,t)$ there exists a solution $(x^*, \alpha^*)$ of the optimal control problem in Definition~\ref{def:OCDe} such that 
\begin{equation}\label{uniformalfae}
\|\alpha^*\|_2\leq C,\  \|\alpha^*\|_{\infty}\leq C(1+|x_1|+|x_2|),
\end{equation} 
with $C$ independent of $\varepsilon$.
\item Let $(x^*, \alpha^*)$ be optimal for the problem in \eqref{def:OC_illi}. 
Then, there exists an arc $p\in AC([t,T]; \re^3)$, called the costate, such that the pair~$(x^*, p)$ satisfies for a.e. $s\in [t,T]$ 
\begin{equation}\label{tage}
\left\{
\begin{array}{ll}
x_1'= p_1-x_2p_3,\\
x_2'= p_2+x_1p_3,\\
x_3'= (x_1^2+x_2^2+\varepsilon^2)p_3+x_1p_2-x_2p_1,\\
p_1'=-x_1p^2_3-p_2p_3+  f_{x_1}(x,s),\\
p_2'=-x_2p^2_3+p_1p_3+  f_{x_2}(x,s),\\
p_3'=f_{x_3}(x,s),
\end{array}\right.
\end{equation}
with the mixed boundary conditions 
\begin{equation}\label{tag:bcd} x^*(t)=x,\quad p(T)=-D g(x^*(T)).
\end{equation}
\item
The optimal control~$\alpha^*$ verifies
\begin{equation}\label{tagalphae}
\alpha_1^*(s)= p_1-x_2p_3,\qquad
\alpha_2^*(s)= p_2+x_1p_3,\qquad
\alpha_3^*(s)= \varepsilon p_3.
\end{equation}
\item The functions  $x^*$  and $\alpha^*$ are of class $C^1$. In particular equations \eqref{tage} and \eqref{tagalphae} hold for every $s\in [t,T]$.
\item Moreover, if $\varepsilon\neq 0$, the optimal trajectories are unique after the initial time: if $x^*(\cdot)$ is an optimal trajectory for $u_\varepsilon(x,t)$, then for every $t<\tau<T$ there are no other optimal trajectories for $u_\varepsilon(x^*(\tau),\tau)$ other than $x^*(\cdot)$ restricted to $[\tau,T]$. 
\end{enumerate}
\end{lemma}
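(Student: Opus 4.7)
The plan is to mirror, step by step, the arguments of Remark~\ref{2.3}, Proposition~\ref{boundalfa}, Proposition~\ref{prop:pontriagind} and Corollary~\ref{coro:regularityd}, which handle the degenerate case $\varepsilon=0$ in the Heisenberg-periodic setting, while tracking how the loss of periodicity forces the $L^\infty$ bound on optimal controls to depend on $|x_1|+|x_2|$, and adding a uniqueness step for~$(5)$ that crucially exploits the non-degeneracy $\det B^\varepsilon=\varepsilon\neq 0$.

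For part~$(1)$, existence of an optimal pair $(x^*,\alpha^*)$ comes from the direct method: testing $J_t$ against $\alpha\equiv 0$ (with the frozen trajectory $x(s)\equiv x$) and using $|f|,|g|\leq C$ from Hypothesis~\ref{BasicAss_illi} yields the $L^2$-bound $\|\alpha^*\|_{L^2}^2/2\leq C(T+1)$ uniformly in~$\varepsilon$; weak $L^2$-compactness, lower semicontinuity of $\|\cdot\|_{L^2}^2$, and stability of the affine dynamics~\eqref{DYNe} under weak $L^2$-convergence then produce a minimizer. The $L^\infty$ estimate is obtained by reproducing the truncation argument of Proposition~\ref{boundalfa}: with $\alpha^\mu$ defined as in~\eqref{alfamu}, the componentwise bound on $|x^\mu(s)-x(s)|$ analogous to~\eqref{claimmu} now carries a constant of the form $K(1+|x_1|+|x_2|)$, because in the non-periodic case the factors $|x_1|,|x_2|$ appearing in the third-component estimate are no longer confined to~$[0,1)$; optimality of $\alpha^*$ then forces $\|\alpha^*\|_\infty\leq C(1+|x_1|+|x_2|)$ uniformly in $\varepsilon\in[0,1]$.

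Parts~$(2)$--$(4)$ are a standard application of the Pontryagin Maximum Principle. The Lagrangian is strictly convex quadratic in~$\alpha$ on the affine dynamics~\eqref{DYNe}, so~\cite[Thm.~22.17]{Cla} yields an adjoint arc~$p$ verifying the costate system~\eqref{tage}, pointwise minimization of the control Hamiltonian gives the feedback relations~\eqref{tagalphae}, and the resulting Hamiltonian system has $C^1$ right-hand side (using Hypothesis~\ref{BasicAss_illi}), whence $(x^*,p,\alpha^*)\in C^1$ by an ODE bootstrap. These steps are essentially identical to those of Proposition~\ref{prop:pontriagind} and Corollary~\ref{coro:regularityd}, so I would simply follow~\cite[Corollary 2.1]{MMMT}.

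The genuinely new step, and the main obstacle, is~$(5)$. A direct computation gives $\det B^\varepsilon=\varepsilon$, so for $\varepsilon\neq 0$ the matrix $B^\varepsilon(x)(B^\varepsilon(x))^T$ is positive definite on compacts and $H^\varepsilon(x,\cdot)$ is strictly convex in~$p$, in sharp contrast with the degenerate case $\varepsilon=0$ for which uniqueness at a.e.\ initial datum is openly stated as an open problem in the remark after Corollary~\ref{coro:regularityd}. Combined with the local semiconcavity of~$u_\varepsilon$ in~$x$ (established as in Lemma~\ref{semic}), strict convexity places us in the classical setting in which the value function is differentiable at every interior point of every optimal trajectory (see, e.g.,~\cite[Ch.~6]{CS}). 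Hence $Du_\varepsilon(x^*(s),s)$ exists for each $s\in(t,T]$; the feedback~\eqref{tagalphae}, together with the invertibility of $B^\varepsilon(B^\varepsilon)^T$, then determines $\alpha^*(s)$ uniquely in terms of $Du_\varepsilon(x^*(s),s)$, and any other optimal trajectory issued from $(x^*(\tau),\tau)$ with $\tau\in(t,T)$ must coincide with $x^*$ on $[\tau,T]$ by uniqueness of ODEs.
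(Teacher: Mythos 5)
Parts~(1)--(4) of your proposal follow the same route as the paper: existence and the $L^2$ bound by the direct method exactly as in Remark~\ref{2.3}, the $L^\infty$ bound by reproducing the truncation argument of Proposition~\ref{boundalfa} (and indeed the paper records the same growth constant $K(1+|x_1|+|x_2|+\varepsilon)$ for the third component estimate), and the Maximum Principle plus the $C^1$ bootstrap as in Proposition~\ref{prop:pontriagind} and Corollary~\ref{coro:regularityd}.

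Part~(5) is where you deviate, and there is a gap. You argue via semiconcavity of $u_\varepsilon$ plus strict convexity of $H^\varepsilon(x,\cdot)$ (since $B^\varepsilon(B^\varepsilon)^T\succ 0$), invoking the classical fact that the value function is differentiable at interior points of optimal trajectories, then claiming that the feedback $\alpha^*(s)=-Du_\varepsilon(x^*(s),s)B^\varepsilon(x^*(s))$ together with ``uniqueness of ODEs'' forces uniqueness. The last step does not close: the drift $-Du_\varepsilon\,B^\varepsilon(B^\varepsilon)^T$ is only semiconcave, not locally Lipschitz, so Cauchy--Lipschitz cannot be applied directly to the feedback ODE. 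To conclude you must pass to the \emph{Hamiltonian} system~\eqref{tage}, whose right-hand side is Lipschitz by Hypothesis~\ref{BasicAss_illi}, using the relation $p(\tau)=-Du_\varepsilon(x^*(\tau),\tau)$ to fix the initial costate and invertibility of~$B^\varepsilon$ to see that the costate is thereby uniquely determined. Once you phrase it this way, your argument essentially collapses into the paper's proof, which in fact is shorter and does not need differentiability of $u_\varepsilon$ at all: it takes the concatenation $z^*$ of $x^*$ with a competing trajectory $y^*$ at time~$\tau$ (still optimal, hence $C^1$ by point~(4)), recovers $p(s)=\beta(x^*(s))\dot x^*(s)$ and $q(s)=\beta(z^*(s))\dot z^*(s)$ directly from the first three equations of~\eqref{tage} and the invertibility of $B^\varepsilon(B^\varepsilon)^T$, deduces $p(\tau)=q(\tau)$ since $x^*\equiv z^*$ on~$[t,\tau]$, and then applies Cauchy--Lipschitz to~\eqref{tage} on~$(\tau,T]$. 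The paper's route is the more elementary one; yours would also work but requires both the differentiability-along-trajectories lemma from~\cite{CS} and the passage to the Hamiltonian system that you currently elide.
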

\begin{proof}
1. The uniform $L^2$ estimate for $\alpha^*$ follows by the same argument of Remark~\ref{2.3}.
For the $L^{\infty}$ norm, we proceed as in the proof of Proposition~\ref{boundalfa}: for the first two components of $x^{\mu}(s)-x(s)$ the arguments are exactly the same and yield inequality~\eqref{claimmu}.
Here, the third component is
\begin{eqnarray*}
x_3^{\mu}(s)-x_3(s)&=&\int_t^s ((x_2^{\mu}(\tau)-x_2(\tau))\alpha_1^{\mu}(\tau)+ x_2(\tau)(\alpha_1^{\mu}(\tau)-\alpha_1(\tau)) +
(x_1^{\mu}(\tau)-x_1(\tau))\alpha_2^{\mu}(\tau)\\ 
&&+x_1(\tau)(\alpha_2^{\mu}(\tau)-\alpha_2(\tau))+\varepsilon (\alpha_3^{\mu}(\tau)-\alpha_3(\tau))
d\tau.
\end{eqnarray*}
Hence, using the same calculations of Proposition \ref{boundalfa}, we infer
\begin{equation*}
|x^{\mu}(s)-x(s)|\leq K(1+|x_1|+|x_2|+\varepsilon)\int_{I_{\mu}} |\alpha(\tau)|d\tau.
\end{equation*}
Still arguing as as in Proposition \ref{boundalfa}, we get
\begin{multline*}
J_t(x^{\mu}(s), \alpha^{\mu}(s))- J_t(x(s),\alpha(s))\leq\\
\leq \int_{I_{\mu}}\left(-\dfrac12|\alpha(s)|^2+ K(L_f (T-t)+L_g)(1+|x_1|+|x_2|+\varepsilon)|\alpha(s)|\right) ds.
\end{multline*}
For $\mu>2K(L_f (T-t)+L_g)(2+|x_1|+|x_2|)$, the last integrand is negative which contradicts the optimality of $\alpha$.
Since the choice of $\mu$ is independent on $\varepsilon$, we get the result.\\
Points 2., 3. and 4. can be obtained as in Propositions \ref{prop:pontriagind} and Corollary \ref{coro:regularityd}.\\
5. The statement can be established adapting the arguments in \cite[Theorem 5.2 (proof)]{MMMT}; hence, we just give the main steps and we refer to that paper for the details.
\\
Let~$y^*$ be an optimal trajectory for~$u(x^*(\tau),\tau)$; the concatenation~$z^*$ of~$x^*$ with~$y^*$ at~$\tau$ is still optimal for~$u(x,t)$. Let~$p$ and~$q$ be respectively the costate of~$x^*$ and of~$z^*$. By point~$(4)$ both~$x^*$ and~$z^*$ are $C^1$.
Since the matrix~$B^{\varepsilon}(x)(B^{\varepsilon}(x))^T$ is invertible, we denote by $\beta(x)$ its inverse and, from the first three lines in~\eqref{tage}, we get
\begin{equation*}
p(s)=\beta(x^*(s))\dot x^*(s)\qquad\textrm{and}\qquad q(s)=\beta(z^*(s))\dot z^*(s)\qquad\forall s\in(t,T).
\end{equation*}
Since $x^*(\cdot)=z^*(\cdot)$ in~$[t,\tau]$, we get: $p(\tau)=q(\tau)$. In conclusion, both~$(x^*,p)$ and~$(z^*,q)$ solve the same Cauchy problem \eqref{tage}
on~$(\tau,T]$ with the same data at time~$\tau$. The Cauchy-Lipschitz theorem ensures that they coincide.
\end{proof}

\begin{lemma}\label{LLS} The value function~$u_\varepsilon$ fulfills the following properties
\begin{enumerate}
\item $u_{\varepsilon}$ is Lipschitz continuous with respect to the spatial variable~$x$ uniformly on $\varepsilon$,
\item $u_{\varepsilon}$ is locally
Lipschitz continuous with respect to the time variable $t$ with a
Lipschitz constant $C(1+|x_1|^2+|x_2|^2)$ where $C$ is a constant independent of~$\varepsilon$.
\item $u_{\varepsilon}$ is semiconcave w.r.t. $x$ with a modulus of semiconcavity independent on $\varepsilon$.
\end{enumerate}
\end{lemma}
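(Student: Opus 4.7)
\begin{Proofc}{Proof of Lemma \ref{LLS} (sketch).}
The three statements follow by adapting the arguments of Lemmas \ref{L1} and \ref{semic} from the periodic case, with the crucial remark that the $\varepsilon$--dependent correction $\varepsilon\alpha_3$ appearing in the third component of the dynamics \eqref{DYNe} always drops out when two trajectories share the same control, so no $\varepsilon$-dependence enters the final constants provided one starts from the uniform bounds \eqref{uniformalfae} of Lemma \ref{3.1}.

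For \textbf{(1)}, fix $(x,t)$ and let $(x(\cdot),\alpha)$ be optimal for $u_\varepsilon(x,t)$, so that $\|\alpha\|_2\le C$ uniformly in $\varepsilon$. For $y\in\re^3$, denote by $x^*(\cdot)$ the trajectory from $y$ driven by the same control $\alpha$. A direct integration gives $x_i^*(s)-x_i(s)=y_i-x_i$ for $i=1,2$, and since the $\varepsilon\alpha_3$ contributions cancel,
\[
x_3^*(s)-x_3(s)=(y_3-x_3)-(y_2-x_2)\int_t^s\alpha_1\,d\tau+(y_1-x_1)\int_t^s\alpha_2\,d\tau.
\]
Using Cauchy--Schwarz together with $\|\alpha\|_2\le C$, we get $|x^*(s)-x(s)|\le C_T|y-x|$. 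Plugging $(x^*,\alpha)$ as a competitor into $u_\varepsilon(y,t)$ and exploiting the Lipschitz continuity of $f$ and $g$ from Hypothesis~\ref{BasicAss_illi}, we deduce $u_\varepsilon(y,t)\le u_\varepsilon(x,t)+C_T|y-x|$; the reverse inequality follows by symmetry.

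For \textbf{(2)}, we proceed as at the end of the proof of Lemma \ref{L1}. Since now the optimal control satisfies $\|\alpha\|_\infty\le C(1+|x_1|+|x_2|)$ by \eqref{uniformalfae}, the trajectory starting from $x=(x_1,x_2,x_3)$ fulfills
\[
|x(s)-x|\le C(s-t)\bigl(\|\alpha_1\|_\infty+\|\alpha_2\|_\infty+\|\alpha\|_\infty(|x_1|+|x_2|+\varepsilon)\bigr)\le K(s-t)(1+|x_1|^2+|x_2|^2),
\]
with $K$ independent of $\varepsilon\in[0,1]$. Arguing as in \cite[Lemma 4.7]{C}, this gives the claimed local Lipschitz estimate in time with modulus $C(1+|x_1|^2+|x_2|^2)$.

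For \textbf{(3)}, set $x_\lambda:=\lambda x+(1-\lambda)y$ and let $(x_\lambda(\cdot),\alpha)$ be optimal for $u_\varepsilon(x_\lambda,t)$. Let $x(\cdot),y(\cdot)$ be the trajectories starting respectively from $x,y$ with the same control $\alpha$. The first two components yield $x_i(\tau)-x_{\lambda,i}(\tau)=(1-\lambda)(x_i-y_i)$ and $y_i(\tau)-x_{\lambda,i}(\tau)=\lambda(y_i-x_i)$, exactly as in \eqref{diff1}--\eqref{diff2}; the $\varepsilon\alpha_3$ term is identical for all three trajectories and cancels, so the formulas \eqref{diff} and \eqref{diff3} remain valid verbatim. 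Then the Taylor expansion of $f$ and $g$ around $x_\lambda(s)$, combined with the $\varepsilon$-uniform $L^2$ bound $\|\alpha\|_2\le C$ from \eqref{uniformalfae} and the $C^2$-bound on $f,g$, leads to $\lambda R_1+(1-\lambda)R_2\le C\lambda(1-\lambda)|x-y|^2$ with $C$ independent of $\varepsilon$, whence
\[
\lambda u_\varepsilon(x,t)+(1-\lambda)u_\varepsilon(y,t)-u_\varepsilon(x_\lambda,t)\le C\lambda(1-\lambda)|x-y|^2,
\]
which is the desired semiconcavity with an $\varepsilon$-independent modulus.

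The main technical point is to keep track of the $\varepsilon$-independence of all constants: this is guaranteed by the bounds of Lemma \ref{3.1}-1 and by the cancellation of the $\varepsilon\alpha_3$ term whenever two trajectories share the same control, which makes the variation of $x_3(s)$ with respect to the initial datum independent of $\varepsilon$.
\end{Proofc}
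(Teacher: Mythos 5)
Your proposal is correct and follows essentially the same route as the paper: adapt Lemmas~\ref{L1} and~\ref{semic} by observing that the $\varepsilon\alpha_3$ term in the third component of~\eqref{DYNe} cancels when the competing trajectories are driven by the same control, and then invoke the $\varepsilon$-uniform bounds~\eqref{uniformalfae} to keep all constants independent of~$\varepsilon$. The only cosmetic difference is that in part~(2) you bound $\|\alpha_1\|_\infty|x_2|+\|\alpha_2\|_\infty|x_1|+\varepsilon\|\alpha_3\|_\infty$ slightly more crudely by $\|\alpha\|_\infty(|x_1|+|x_2|+\varepsilon)$, but this yields the same quadratic modulus.
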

\begin{proof}
1. Let
$\alpha(\cdot)$ be the optimal control for $u(x,t)$ and $x(\cdot)$ the optimal trajectory. 
Let $x^*(s)$ be the path starting from $y=(y_1,y_2, y_3)$, with control $\alpha(\cdot)$.
To prove the Lipschitz continuity w.r.t. $x$ uniform on $\varepsilon$ we proceed as in Lemma \ref{L1}; the first two components of the trajectories  $x(s)$ and $x^*(s)$ are as in the previous section and the third components become
\begin{eqnarray*}
x_3(s)&=&x_3-\int_t^s\alpha_1(\tau)x_2(\tau)\,d\tau+
\int_t^s\alpha_2(\tau)x_1(\tau)\,d\tau+
\varepsilon\int_t^s\alpha_3(\tau)\,d\tau\\
x_3^*(s)&=&y_3-\int_t^s\alpha_1(\tau)x_2^*(\tau)\,d\tau+
\int_t^s\alpha_2(\tau)x_1^*(\tau)\,d\tau+
\varepsilon\int_t^s\alpha_3(\tau)\,d\tau;
\end{eqnarray*}
hence we deduce
$$x_3^*(s)=x_3(s)+(y_3-x_3)- (y_2-x_2)\int_t^s\alpha_1(\tau)\,d\tau+
(y_1-x_1)\int_t^s\alpha_2(\tau)\,d\tau,
$$
and we have the same equality as in Lemma \ref{L1}.\\
2. As far as the Lipschitz continuity w.r.t. $t$, still following Lemma \ref{L1}, we get
\begin{eqnarray*}
|x(s)-x|&\leq& C(s-t)(\|\alpha_1\|_{\infty}|x_2|+ \|\alpha_2\|_{\infty}|x_1|+\varepsilon\|\alpha_3\|_{\infty})\\
&\leq& K (1+|x_1|^2+|x_2|^2+\varepsilon)(s-t)\\&\leq& K (2+|x_1|^2+|x_2|^2)(s-t),
\end{eqnarray*}
where $K$ is independent on $\varepsilon$.\\
3. To prove the semiconcavity 
we follow Lemma \ref{semic}, noting that here the third components of $x(s)$ and 
$x_{\lambda}(s)$ become
\begin{eqnarray*}
x_{3}(s)&=&x_{3}-\int_t^s\alpha_1(\tau)x_{2}(\tau)\,d\tau+
\int_t^s\alpha_2(\tau)x_{1}(\tau)\,d\tau+\varepsilon\int_t^s\alpha_3(\tau)d\tau\\
x_{\lambda,3}(s)&=&x_{\lambda,3}-\int_t^s\alpha_1(\tau)x_{\lambda,2}(\tau)\,d\tau+
\int_t^s\alpha_2(\tau)x_{\lambda,1}(\tau)\,d\tau+\varepsilon\int_t^s\alpha_3(\tau)d\tau.
\end{eqnarray*}
Hence 
$x_{3}(s)-x_{\lambda,3}(s)$ is exactly as in \eqref{diff} and rest of the proof is the same as in Lemma \ref{semic}.
Note that, since from \eqref{uniformalfae} $\|\alpha\|_2\leq C$ where $C$ is independent on $\varepsilon$, the modulus of semicontinuity is independent of~$\varepsilon$.
\end{proof}

\begin{lemma}\label{3.2}
The value function~$u_{\varepsilon}$ is the unique bounded viscosity solution to problem
\begin{equation}\label{HJei}
-\partial_t u+H^{\varepsilon}(x, Du)=f(x)\quad \textrm{in }\re^3\times (0,T),\qquad u(x,T)=g(x)\quad \textrm{on }\re^3
\end{equation}
and there exists a constant~$C$ independent of $\varepsilon$ such that
\begin{equation}\label{boundue}
\|u_{\varepsilon}\|_{\infty}\leq C.
\end{equation}
\end{lemma}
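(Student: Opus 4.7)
The strategy splits into three essentially independent claims: that the value function $u_\varepsilon$ defined in \eqref{repre} solves \eqref{HJei} in the viscosity sense, that bounded viscosity solutions of \eqref{HJei} are unique, and that $\|u_\varepsilon\|_\infty$ is controlled by a constant independent of $\varepsilon$. The first two steps will run in parallel to the argument of Lemma~\ref{valuefunction} in the periodic case; only the third requires a separate, but very short, direct verification here.

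For the viscosity-solution step I would note that the Hamiltonian $H^\varepsilon(x,p)=\tfrac12|pB^\varepsilon(x)|^2$ is continuous on $\re^3\times\re^3$ and convex in $p$, that the dynamics \eqref{DYNe} are well-posed for every admissible control (the drift being locally Lipschitz and of at most linear growth), and that the running and terminal costs are continuous by Hypothesis~\ref{BasicAss_illi}. Combining these ingredients with the Lipschitz regularity of $u_\varepsilon$ already obtained in Lemma~\ref{LLS}, the standard dynamic programming argument (e.g.\ \cite[Proposition~3.5]{BCD}, \cite[Theorem~3.1]{BCP}, \cite[Corollary~2.1]{DL}) yields that $u_\varepsilon$ is a viscosity solution of \eqref{HJei} with the prescribed terminal condition. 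For uniqueness, the Hamiltonian $H^\varepsilon$ has precisely the same structural form $\tfrac12|pA(x)|^2$ used throughout the periodic part of the paper, with entries of $A=B^\varepsilon$ growing at most linearly in $x$. This places \eqref{HJei} within the scope of the comparison principle \cite[Theorem~2.1]{DL}, the very tool already invoked in Lemma~\ref{valuefunction} and in Lemma~\ref{lm:usigma}; hence any two bounded viscosity sub- and supersolutions sharing the same terminal datum $g$ must coincide.

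For the uniform bound \eqref{boundue} the cleanest route is to read it off directly from the variational representation \eqref{repre}. Plugging in the trivial control $\alpha\equiv 0$ keeps the trajectory at $x$ and gives
\[
 u_\varepsilon(x,t) \le \int_t^T f(x,s)\,ds + g(x) \le T\,\|f\|_\infty + \|g\|_\infty,
\]
while discarding the nonnegative kinetic term in \eqref{repre} yields the matching lower bound
\[
 u_\varepsilon(x,t) \ge \inf_{(x(\cdot),\alpha)}\left[\int_t^T f(x(s),s)\,ds + g(x(T))\right] \ge -T\,\|f\|_\infty - \|g\|_\infty.
\]
Both estimates are uniform in $\varepsilon$ by Hypothesis~\ref{BasicAss_illi}.

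The only potentially delicate point is the uniform applicability of \cite[Theorem~2.1]{DL} across $\varepsilon\in[0,1]$; but $B^\varepsilon$ differs from $B$ only by the constant extra column $(0,0,\varepsilon)^{\top}$, so its growth in $x$ is identical to that of $B$ and all structural hypotheses of \cite{DL} are verified uniformly in $\varepsilon$. I do not expect any other obstacle.
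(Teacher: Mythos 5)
Your proof is correct and essentially identical to the paper's: the paper likewise cites classical results (Proposition III.3.5 in \cite{BCD} and Theorem 3.1 in \cite{BCP}) for the viscosity-solution and uniqueness parts, and obtains the bound \eqref{boundue} by inserting the control $\alpha\equiv 0$ into the representation formula \eqref{repre}. The only difference is that you spell out the (trivial) matching lower bound, which the paper leaves implicit.
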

\begin{proof}
The proof comes from classical results: see for instance \cite[Proposition III.3.5]{BCD} and \cite[Theorem 3.1]{BCP}).
The bound of $u_{\varepsilon}$ uniform on $\varepsilon$ is obtained taking as admissible control $\alpha=0$ in \eqref{repre}.
\end{proof}

\begin{lemma}[Optimal synthesis]\label{lemma:OSeps}
Let~$u_\varepsilon$ be the unique bounded viscosity solution to~\eqref{HJei} founded in Lemma~\ref{3.2}.
\begin{itemize}
\item[a)] Let $\gamma\in AC([t,T])$ be such that
\begin{equation}\label{hyp_OS}
\textrm{$u_\varepsilon(\cdot, s)$ is differentiable at~$\gamma(s)$ for almost every $s\in(t,T)$}
\end{equation}
and
\begin{equation}\label{1802:rmk2_i}
\dot\gamma(t)=-Du_\varepsilon(\gamma(t),t) B^{\varepsilon}(\gamma(t))(B^{\varepsilon}(\gamma(t)))^T,\qquad \gamma(0)=x. 
\end{equation}
Then, the control law $\alpha(s)=-Du_\varepsilon(\gamma(s),s) B^{\varepsilon}(\gamma(s))$ is optimal for $u_\varepsilon(x,t)$.
\item[b)] For~$\varepsilon\neq 0$, if $u_\varepsilon(\cdot,t)$ is differentiable at $x$, then problem~\eqref{hyp_OS}-\eqref{1802:rmk2_i} has a unique solution corresponding to the optimal trajectory. In particular, for a.e. $x$, there exists a unique optimal trajectory for~$u_\varepsilon(x,0)$.
\end{itemize}
\end{lemma}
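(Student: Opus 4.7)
The plan is to adapt the optimal synthesis argument of Lemma~\ref{BB} to the non-periodic setting with the enlarged matrix $B^\varepsilon$. For part~(a), I would first check that $\gamma$ is Lipschitz continuous on $[t,T]$: Lemma~\ref{LLS}-(1) gives $\|Du_\varepsilon\|_\infty\le C$, so $\dot\gamma_1,\dot\gamma_2$ are bounded, $\gamma_1,\gamma_2$ grow at most linearly in time, and the third-component equation then yields $\dot\gamma_3$ bounded on $[t,T]$. In particular $u_\varepsilon(\gamma(\cdot),\cdot)$ is Lipschitz in time. For almost every $s\in(t,T)$ at which $u_\varepsilon(\cdot,s)$ is differentiable at $\gamma(s)$, I would use the Lebourg mean value theorem to write, for small $h$,
\[
u_\varepsilon(\gamma(s+h),s+h)-u_\varepsilon(\gamma(s),s)=\xi_x^h\cdot(\gamma(s+h)-\gamma(s))+\xi_t^h\,h,
\]
with $(\xi_x^h,\xi_t^h)\in\mathrm{co}\,D^*_{x,t}u_\varepsilon(y_h,s_h)$ and $(y_h,s_h)\to(\gamma(s),s)$. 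A Caratheodory decomposition expresses $(\xi_x^h,\xi_t^h)$ as a convex combination of at most five reachable gradients. Using the semiconcavity of $u_\varepsilon$ in $x$ (Lemma~\ref{LLS}-(3)) and the stability of super-differentials along converging smooth points (as in \cite[Lemma~4.6]{C}), each cluster point of the five gradients must lie in $D^+_xu_\varepsilon(\gamma(s),s)=\{Du_\varepsilon(\gamma(s),s)\}$, the last equality being the differentiability assumption~\eqref{hyp_OS}. Passing to the limit in the viscosity equation evaluated at $(y_h,s_h)$ gives $\xi_t^h\to\tfrac12|Du_\varepsilon(\gamma(s),s)B^\varepsilon(\gamma(s))|^2-f(\gamma(s),s)$. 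Dividing the Lebourg identity by $h$, letting $h\to 0$, and using~\eqref{1802:rmk2_i} then produces the chain rule
\[
\tfrac{d}{ds}u_\varepsilon(\gamma(s),s)=-\tfrac12|\alpha(s)|^2-f(\gamma(s),s)\qquad\text{a.e. }s\in(t,T),
\]
whose integration on $[t,T]$ together with the terminal datum shows that $(\gamma,\alpha)$ attains the infimum defining $u_\varepsilon(x,t)$.

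For part~(b), existence of a solution to~\eqref{1802:rmk2_i} is a consequence of Lemma~\ref{3.1}: if $(x^*,\alpha^*)$ is the $C^1$ optimal pair, then $\dot x^*=-p\cdot B^\varepsilon(x^*)(B^\varepsilon(x^*))^T$ by \eqref{tage}--\eqref{tagalphae}, and at the point of differentiability $(x,t)$ the initial costate satisfies $p(t)=-Du_\varepsilon(x,t)$ (the classical identification $-p(t)\in D^+_xu_\varepsilon(x,t)$ at a smooth point of a value function, cf.\ \cite[Theorem~7.4.17]{CS}), forcing $x^*$ itself to solve~\eqref{1802:rmk2_i}. For uniqueness, suppose $\gamma_1,\gamma_2$ both satisfy \eqref{1802:rmk2_i} with $\gamma_i(t)=x$. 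By part~(a) each of them is an optimal trajectory for $u_\varepsilon(x,t)$ and Lemma~\ref{3.1} attaches to each a unique costate $p_i$ solving the Hamiltonian system~\eqref{tage}; the above identification yields $p_1(t)=p_2(t)=-Du_\varepsilon(x,t)$. Since the right-hand side of~\eqref{tage} is locally Lipschitz in $(x,p)$ (because $f\in C^2$ and the other terms are polynomial), Cauchy--Lipschitz forces $(\gamma_1,p_1)\equiv(\gamma_2,p_2)$ on $[t,T]$. The ``in particular'' statement is then immediate: Lemma~\ref{LLS} guarantees $u_\varepsilon(\cdot,0)$ to be Lipschitz and semiconcave, hence differentiable almost everywhere.

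The main obstacle is the identification $p(t)=-Du_\varepsilon(x,t)$ at a point of differentiability of $u_\varepsilon$; the argument is classical in the Euclidean coercive setting, but here one must verify it carries over despite the non-coercivity of $H^\varepsilon$ and the quadratic growth of the drift in~$x$. The uniform $L^\infty$ bound on $\alpha^*$ via \eqref{uniformalfae}, the a.e.\ convergence of $Du_\varepsilon$ along arbitrary smooth-point sequences guaranteed by semiconcavity, and the representation \eqref{repre} should suffice to adapt the standard super-differential identification for value functions, but this deserves a careful check. Apart from this point, part~(a) is a direct adaptation of Lemma~\ref{BB}, with the simplification that for $\varepsilon\neq 0$ the matrix $B^\varepsilon(B^\varepsilon)^T$ is positive definite so no substitute for horizontal differentiability is required; the case $\varepsilon=0$ falls back verbatim to the argument used in the periodic setting.
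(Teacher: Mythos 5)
Your overall route coincides with the paper's: part~(a) reproduces the Lebourg/Carath\'eodory chain-rule argument of Lemma~\ref{BB} (following \cite[Lemma~4.11]{C}), and part~(b) recovers the initial costate at a point of differentiability of $u_\varepsilon$ and applies Cauchy--Lipschitz to the Hamiltonian system~\eqref{tage}. The paper phrases the identification as $\alpha(t)=-Du_\varepsilon(x,t)B^\varepsilon(x)$ and inverts the triangular relations~\eqref{tagalphae} to determine~$p(t)$ --- which is exactly where $\varepsilon\neq 0$ is used --- and this indeed gives $p(t)=-Du_\varepsilon(x,t)$, i.e.\ the identification you appeal to; your concern about checking it under non-coercivity is fair, and the paper itself defers that verification to \cite[Lemma~3.5]{MMMT} and \cite[Lemma~3.4]{AMMT}.

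There is, however, one step in~(a) that does not hold as written. From $\|Du_\varepsilon\|_\infty\le C$ you conclude directly that $\dot\gamma_1,\dot\gamma_2$ are bounded, and hence that $\gamma_1,\gamma_2$ grow at most linearly. But the ODE reads
\[
\dot\gamma_1=-u_{x_1}+\gamma_2\,u_{x_3},\qquad \dot\gamma_2=-u_{x_2}-\gamma_1\,u_{x_3},
\]
so the right-hand sides involve the trajectory components themselves; the bound on $Du_\varepsilon$ only yields $|\dot\gamma_1|\le C(1+|\gamma_2|)$ and $|\dot\gamma_2|\le C(1+|\gamma_1|)$, not a priori boundedness. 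The paper closes this loop with a Gronwall-type estimate: multiplying the first equation by $\gamma_1$, the second by $\gamma_2$, and summing shows that $\xi:=\gamma_1^2+\gamma_2^2$ satisfies $|\xi'|\le C(\xi+1)$, hence $\xi$ is bounded on $[t,T]$; only then are $\gamma_1,\gamma_2$ bounded and Lipschitz, and in turn $\dot\gamma_3$ bounded. With this correction the rest of your argument goes through unchanged.
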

\begin{proof}
We shall follow the same arguments as those used in~\cite[Lemma 4.11]{C} (see also \cite[Lemma 3.5]{AMMT} and \cite[Proposition 5.2]{MMMT} for similar arguments). So we only illustrate the main novelties in the proof and we refer the reader to those papers for the details.
Since $\varepsilon$ is fixed, for simplicity we write ``$u$'' instead of ``$u_\varepsilon$''.\\
$(a)$. Let $\gamma$ be a curve as in the statement; we claim that $\gamma$ is bounded and Lipschitz continuous. Indeed, the differential equation~\eqref{1802:rmk2_i} reads
\begin{equation*}
\gamma_1'=-u_{x_1}+\gamma_2 u_{x_3},\quad \gamma_2'=-u_{x_2}-\gamma_1 u_{x_3},\quad \gamma_3'=\gamma_2 u_{x_1}-\gamma_1 u_{x_2}-(\gamma_1^2+\gamma_2^2+\varepsilon^2) u_{x_3}.
\end{equation*}
Summing the first two equations multiplied respectively by~$\gamma_1$ and~$\gamma_2$, we obtain that the function $\xi:=\gamma_1^2+\gamma_2^2$ verifies
\[
\frac{1}{2}\xi'=-\gamma_1 u_{x_1}-\gamma_2 u_{x_2}.
\]
By the Cauchy-Schwarz inequality and the Lipschitz continuity of~$u$ (found in Lemma~\ref{LLS}), there exists a constant~$C$ such that $|\xi'|\leq C(\xi+1)$. We deduce that $\xi$ is bounded and Lipschitz continuous and, consequently, that $\gamma_1$ and $\gamma_2$ are bounded and Lipschitz continuous. Using these properties and again the Lipschitz continuity of~$u$, by the third component of~\eqref{1802:rmk2_i}, we obtain that also $\gamma_3$ is bounded and Lipschitz continuous. Our claim is proved.\\
The Lipschitz continuity of~$\gamma$ and of~$u$ entail that the function $s\mapsto u(\gamma(s),s)$ is Lipschitz continuous. The rest of the proof follows the same arguments of the aforementioned papers.\\
$(b)$. By the same arguments as those in \cite[Lemma 3.5 (proof)]{MMMT} and \cite[Lemma 3.4 (proof)]{AMMT}, one can prove that $Du(x,t)$ exists if, and only if the set $\{\alpha(t),\ \alpha {\text { optimal for } }u(x,t)\}$ is a singleton
and that there holds $\alpha(t)=-Du(x,t) B^{\varepsilon}(x)$. In particular, if $Du(x,t)$ exists, the value of~$p(t)$ is uniquely determined by relations~\eqref{tagalphae} (here, $\varepsilon\ne0$ is needed); hence, system~\eqref{tage} becomes a system of differential equations with condition at time~$t$ and admits a unique solution by the Cauchy-Lipschitz theorem. The rest of the proof follows the same arguments of the aforementioned papers.
\end{proof}

\subsection{The continuity equation}\label{subsect:c_illi}

This section is devoted to study equation \eqref{eq:MFGe}-(ii), namely, to study
\begin{equation}\label{continuitye_illi}
\left\{
\begin{array}{ll} \partial_t m-
\diver(m\, Du B^{\varepsilon}(x)(B^{\varepsilon}(x))^T)=0
&\qquad \textrm{in }\re^3\times (0,T)\\
m(x,0)=m_0(x) &\qquad \textrm{on }\re^3
\end{array}\right.
\end{equation}
where $u$ is the unique bounded (viscosity) solution to problem
\begin{equation}\label{HJe_illi}
\left\{\begin{array}{ll}
-\partial_t u+\frac12 |DuB^{\varepsilon}(x)|^2=F[\overline{m}(t)](x)&\qquad \textrm{in }\re^3\times (0,T)\\
u(x,T)=G[\overline {m}(T)](x)&\qquad \textrm{on }\re^3
\end{array}\right.
\end{equation}
with~$\overline m\in C^{0}([0,T],\mathcal P_1(\re^3))$ (see Lemma~\ref{3.2} for the existence and uniqueness of~$u$) .
Since now on, throughout this section, $\varepsilon\in(0,1]$ and $\overline m$ are fixed.
We perform a vanishing viscosity approach with the Euclidean Laplacian in the whole system and a truncation argument only in the continuity equation. The vanishing viscosity approach permits to exploit the well posedness of uniformly parabolic equations while the truncation argument permits to overcome the issue of coefficients which grow ``too much'' as $x\to\infty$.\\
For $\sigma \in(0,1]$, we consider the problem
\begin{equation}\label{eq:MFGeN}
\left\{\begin{array}{lll}
(i)&\quad-\partial_t u-\sigma \Delta u+H^{\varepsilon}(x, Du)=F[\overline m(t)](x)&\quad \textrm{in }\re^3\times (0,T)\\
(ii)&\quad\partial_t m-\sigma\Delta m-\diver  (mDu B^{\varepsilon,N}(x)(B^{\varepsilon,N}(x))^T)=0&\quad \textrm{in }\re^3\times (0,T)\\
(iii)&\quad m(x,0)=m_0(x), u(x,T)=G[\overline m(T)](x)&\quad \textrm{on }\re^3,
\end{array}\right.
\end{equation}
where
\[
B^{\varepsilon,N}(x):=  \begin{pmatrix}
\!\!1& 0& 0\!\\
\!\!0&1& 0\!\\
\!\!-\psi_N(x_2)&\psi_N(x_1)&\varepsilon \!
\end{pmatrix}, \qquad \psi_N(\xi):=\left\{\begin{array}{ll}
\xi &\quad \textrm{if }|\xi|\leq N\\0 &\quad \textrm{if }|\xi|\geq 2 N
\end{array}\right.
\]
with $\psi_N\in C^2(\re)$, $\|\psi_N\|_{L^\infty}\leq 2N$,
$\|\psi_N'\|_{L^\infty}+\|\psi_N''\|_{L^\infty}\leq K$ ($K$ independent of $N$).

\begin{lemma}\label{1802:lemma1}
There exists a unique bounded classical solution $u_\sigma$ to equation~\eqref{eq:MFGeN}-(i) with terminal condition \eqref{eq:MFGeN}-(iii). Moreover there exists a positive constant~$C$ (independent of $\varepsilon$, $\sigma$, $N$ and $\overline m$) such that
\begin{itemize}
\item[a)] $\|u_\sigma\|_\infty\leq C$
\item[b)] $\|D u_\sigma\|_\infty\leq C$, $|\partial_t u_\sigma(t,x)|\leq C (1+|x_1|^2+|x_2|^2)$
\item[c)] $D^2 u_\sigma\leq C$.
\end{itemize}
\end{lemma}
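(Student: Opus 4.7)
My plan is to handle existence, uniqueness, and the three quantitative estimates separately, exploiting that for $\sigma>0$ the equation is uniformly parabolic and $F[\overline m(\cdot)],\,G[\overline m(T)]$ are $C^2$ with bounds from (H2').

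Uniqueness in the class of bounded solutions follows from the comparison principle~\cite[Theorem 2.1]{DL}, which handles the quadratic growth of $H^\varepsilon$ in $p$ together with the at most quadratic growth of $B^\varepsilon(B^\varepsilon)^T$ in~$x$. Existence of a bounded classical solution is obtained via the stochastic representation
\begin{equation*}
u_\sigma(x,t)=\inf_\alpha\mathbb E_{x,t}\!\left[\int_t^T\!\tfrac12|\alpha_s|^2+F[\overline m(s)](Y_s)\,ds+G[\overline m(T)](Y_T)\right],
\end{equation*}
subject to $dY_s=B^\varepsilon(Y_s)\alpha_s\,ds+\sqrt{2\sigma}\,dW_s$, $Y_t=x$, which yields a bounded viscosity solution; interior parabolic regularity then upgrades it to $C^{2,1}$. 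Bound~(a) is immediate by comparison with $w^\pm(x,t)=\pm[\|F\|_\infty(T-t)+\|G\|_\infty]$, which are super- and subsolutions since $H^\varepsilon\ge0$.

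For the spatial Lipschitz bound in~(b), testing the infimum against $\alpha\equiv0$ gives $\mathbb E\|\alpha^*\|_{L^2(t,T)}^2\le C$ for any near-minimizer $\alpha^*$, with $C$ depending only on~(H2'). Running the SDE from $x$ and from $y$ under the \emph{same} control and \emph{same} Brownian path, the additive noise cancels in the difference $Y^x-Y^y$ and the special structure of $B^\varepsilon$ (first two rows independent of $x$) yields $Y_i^x(s)-Y_i^y(s)=x_i-y_i$ for $i=1,2$ together with
\begin{equation*}
Y_3^x(s)-Y_3^y(s)=x_3-y_3-(x_2-y_2)\!\int_t^s\!\alpha_1^*\,d\tau+(x_1-y_1)\!\int_t^s\!\alpha_2^*\,d\tau,
\end{equation*}
whence $\mathbb E|Y^x-Y^y|\le C'|x-y|$ by Cauchy--Schwarz on $\alpha^*$. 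Combining with the Lipschitz bounds on $F,G$ produces $\|Du_\sigma\|_\infty\le C$. The semiconcavity~(c) is proved by the same device applied to $x_\lambda:=\lambda x+(1-\lambda)y$: the three trajectories from $x,y,x_\lambda$ under the common Brownian path and the optimal control for $u_\sigma(x_\lambda,t)$ satisfy the exact cancellations~\eqref{diff1}--\eqref{diff3}, and a second-order Taylor expansion of $F,G$ around $x_\lambda$ together with their uniform $C^2$ bounds yields $D^2u_\sigma\le CI$.

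For the time estimate in~(b) I would construct a supersolution of the form $v(x,t):=u_\sigma(x,t+h)+K_1 h(1+x_1^2+x_2^2)+K_2 h(T-t)$: the correction term is designed so that the cross contribution $Du_\sigma B^\varepsilon\cdot 2K_1h(x_1,x_2,0)$ appearing in $H^\varepsilon(x,Dv)$ (which a direct computation reduces to $H^\varepsilon(x,Du_\sigma)$ plus a quantity of order $h(1+x_1^2+x_2^2)$, since $(x_1,x_2,0)B^\varepsilon=(x_1,x_2,0)$) is absorbed by $K_2 h$; the comparison principle together with the symmetric subsolution then yields $|\partial_t u_\sigma|\le C(1+x_1^2+x_2^2)$. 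The main obstacle throughout is ensuring that every constant is independent of $\varepsilon,\sigma,N,\overline m$ simultaneously; this rests on the pathwise cancellation of the additive noise in every trajectory-difference estimate (eliminating $\sigma$) and on the fact that the linear-in-$x$ entries of $B^\varepsilon$ enter only through the third component of the dynamics, which is integrated against the $L^2$-bounded optimal control---without which the linear growth of $B^\varepsilon$ would spoil the uniformity.
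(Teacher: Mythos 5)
Your overall blueprint---uniqueness via the comparison principle of \cite{DL}, existence via the stochastic representation plus interior parabolic regularity, the $L^\infty$ bound via the constant super/subsolutions, the spatial Lipschitz bound and the semiconcavity via coupling trajectories under the same control and the same Brownian path---matches the paper's terse proof, which simply cites \cite{BU,Lie,DL} and \cite[Lemma 4.1]{AMMT}, \cite[Lemma 3.2]{MMMT}; those cited arguments are exactly the trajectory-coupling ones you reproduce, so parts (a), the spatial estimate in (b), and (c) are fine.

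The gap is in your supersolution construction for the time-derivative bound in (b). First, you never address the terminal-time comparison: to apply comparison on $[0,T-h]$ you must check $v(\cdot,T-h)\ge u_\sigma(\cdot,T-h)$, i.e.\ $G(x)+K_1h(1+x_1^2+x_2^2)+K_2h^2\ge u_\sigma(x,T-h)$, which is itself a nontrivial estimate on $u_\sigma$ near $t=T$. (In the periodic case the paper secures the analogous step via~\eqref{CP33}, $\sup_x|u^\sigma(\cdot,t)-G|\le C_1(T-t)$, using that $\tfrac12|D_\cH G|^2$ is bounded by periodicity; here $H^\varepsilon(x,DG)$ grows quadratically, so the preliminary estimate must itself carry a factor $(1+x_1^2+x_2^2)$ and needs a separate argument.) Second, your claim that the cross contribution ``is absorbed by $K_2h$'' cannot hold: the direct computation gives $(Du_\sigma B^\varepsilon)\cdot(x_1,x_2,0)=x_1\partial_{x_1}u_\sigma+x_2\partial_{x_2}u_\sigma$ (the $x_1x_2\,\partial_{x_3}u_\sigma$ terms cancel), so the cross term in $H^\varepsilon(x,Dv)$ equals $2K_1h(x_1\partial_{x_1}u_\sigma+x_2\partial_{x_2}u_\sigma)$, which is $O\bigl(h(1+|x_1|+|x_2|)\bigr)$ and can be \emph{negative and unbounded in $x$}; a constant $K_2h$ cannot dominate it. To rescue this line of argument one would need the quadratic correction to carry a factor $(T-t)$, so that $-\partial_t v$ produces the positive growing term $K_1h(1+x_1^2+x_2^2)$ that can absorb the linearly growing cross term via Young's inequality; even then the comparison closes only on a short terminal interval and must be iterated. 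Alternatively, follow the stochastic route the paper actually cites: use the dynamic programming principle together with the $L^\infty$ bound $\|\alpha^*\|_\infty\le C(1+|x_1|+|x_2|)$ and the semiconcavity of $u_\sigma$, noting that $\mathbb E[Y_s^*-x]$ carries only the drift (the martingale term averages out) and $\mathbb E|Y_s^*-x|^2=O(\sigma(s-t))$, which produces the claimed Lipschitz-in-time modulus with the quadratic spatial weight.
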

\begin{proof}
It is enough to invoke classical results on parabolic equations for the existence of a solution and the comparison principle (see for instance, \cite{BU,Lie} and also \cite{DL}) using super- and subsolutions of the form $w^\pm=\pm C(T+1-t)$. Moreover, since the coefficients of 
$B^{\varepsilon}$ have linear growth at infinity, the solution $u_\sigma$ of \eqref{eq:MFGeN}-(i) is the value function of an optimal control problem whose dynamics are given by the stochastic differential equation
\begin{equation}
\label{stocNP}
dY=\alpha(t) B^{\varepsilon}(Y_t)^Tdt +\sqrt{2\sigma} dW_{t}.
\end{equation}
Then estimates in $(b)$ and $(c)$ follow by the same arguments as those in \cite[Lemma 4.1]{AMMT} or \cite[Lemma 3.2]{MMMT}.  
\end{proof}
\begin{lemma}\label{1802:lemma3}
There exists a unique bounded classical solution $m_{\sigma,N}$ to problem \eqref{eq:MFGeN}-(ii) with initial condition \eqref{eq:MFGeN}-(iii). Moreover, there exists a constant~$C_N$ (independent of~$\varepsilon$, $\sigma$ and~$\overline m$) such that $0<m_{\sigma,N}\leq C_N$ in $\re^3\times (0,T)$.
\end{lemma}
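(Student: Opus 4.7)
\noindent\emph{Plan.} The truncation by $\psi_N$ makes $A(x):=B^{\varepsilon,N}(x)(B^{\varepsilon,N}(x))^T$ globally bounded and $C^2$, with a bound depending on $N$ only. Combined with the bounds on $Du_\sigma$ from Lemma~\ref{1802:lemma1} and the H\"older regularity of $u_\sigma$ that parabolic Schauder estimates yield for \eqref{eq:MFGeN}-(i) (applicable because $F[\overline m], G[\overline m]\in C^2$ uniformly by (H2')), the continuity equation in \eqref{eq:MFGeN}-(ii) rewrites in non-divergence form as
\[
\partial_t m - \sigma \Delta m - (Du_\sigma \, A)\cdot Dm - \mathrm{tr}(D^2u_\sigma\, A)\, m = 0,
\]
which is a linear uniformly parabolic equation on $\re^3$ with bounded H\"older-continuous coefficients (the bounds on the lower-order coefficients being allowed to depend on $N$). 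Classical theory for such equations with bounded initial datum then supplies a unique bounded classical solution $m_{\sigma,N}$, and the strong maximum principle applied to this linear equation, together with $m_0\ge 0$ compactly supported and not identically zero, yields $m_{\sigma,N}>0$ in $\re^3\times(0,T)$.

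\smallskip

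The non-trivial point is the $L^\infty$-bound $C_N$ \emph{independent} of $\varepsilon,\sigma$, and $\overline m$. The key observation is that $A(x)$ is symmetric and positive semi-definite, while Lemma~\ref{1802:lemma1}-(c) yields the one-sided estimate $D^2u_\sigma\le C\,I$ with $C$ independent of these parameters. Hence
\[
\mathrm{tr}\bigl(D^2u_\sigma(x,t)\,A(x)\bigr) \le C\,\mathrm{tr}(A(x)) = C\bigl(2 + \psi_N(x_1)^2 + \psi_N(x_2)^2 + \varepsilon^2\bigr) \le \widetilde C_N,
\]
where $|\psi_N|\le 2N$ and $\varepsilon\le 1$ have been used, so that $\widetilde C_N$ depends only on $N$. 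Setting $w(x,t):=e^{-\widetilde C_N t}m_{\sigma,N}(x,t)$, one checks that $w$ is a bounded subsolution of $\partial_t w - \sigma\Delta w - (Du_\sigma\, A)\cdot Dw \le 0$, so the comparison principle for bounded solutions of uniformly parabolic linear equations on $\re^3$ gives $w(\cdot,t)\le \|m_0\|_\infty$, that is $m_{\sigma,N}\le e^{\widetilde C_N T}\|m_0\|_\infty =: C_N$, uniformly in $\varepsilon,\sigma,\overline m$.

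\smallskip

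\noindent\textbf{Main obstacle.} The technical subtlety is precisely this uniformity: it is only achievable because the semiconcavity-type bound $D^2u_\sigma\le CI$ from Lemma~\ref{1802:lemma1}-(c) is a \emph{one-sided} matrix inequality independent of $\overline m, \sigma, \varepsilon$, and because contracting it against the positive semi-definite matrix $A$ still gives a one-sided scalar bound. A two-sided bound $|D^2u_\sigma|\le C$ would require much stronger regularity of $\overline m$ and would destroy the uniformity. Existence, uniqueness, and positivity are routine once this one-sided bound on the zeroth-order coefficient is established; it is the cut-off $\psi_N$ that decouples the $N$-dependence of $A$ from the vanishing-viscosity parameter $\sigma$, allowing the later passage $\sigma\to 0^+$ while keeping $N$ fixed.
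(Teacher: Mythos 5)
Your proof is correct and mirrors the paper's argument: both rewrite the equation in non-divergence form, identify the zeroth-order coefficient as $\diver(Du_\sigma\,A)=\tr(D^2u_\sigma\,A)$ (which you state implicitly; the paper verifies it by direct computation, exploiting that $\partial_j A_{ij}=0$ for all $i$), then contract the one-sided semiconcavity bound $D^2u_\sigma\le CI$ against the positive semidefinite $A$ to get the $N$-dependent, $(\sigma,\varepsilon,\overline m)$-uniform upper bound $C\,\tr(A)$, and finally conclude by a comparison/maximum-principle argument (which the paper delegates to Ikeda and to \cite[Lemmas 4.2--4.3]{AMMT}, while you carry out the exponential-weight comparison explicitly).
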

\begin{proof}
Since~$\sigma$ is fixed, for simplicity we write ``$u$'' instead of ``$u_\sigma$''. By the regularity of $u$ (see Lemma~\ref{1802:lemma1}), the equation~\eqref{eq:MFGeN}-(ii) can be written as
\[
\partial_t m+\sigma \Delta m-Dm\cdot(Du B^{\varepsilon,N}(B^{\varepsilon,N})^T)-m \diver(Du B^{\varepsilon,N}(B^{\varepsilon,N})^T)=0.
\]
Let us assume for the moment that there exists a positive constant~$k_N$ (independent of $\varepsilon$, $\sigma$ and $\overline m$) such that
\begin{equation}\label{1802:lemma2}
\diver(Du B^{\varepsilon,N}(B^{\varepsilon,N})^T)\leq k_N.
\end{equation}
On the other hand, by Lemma~\ref{1802:lemma1}-(b), we have $\|Du B^{\varepsilon,N}(B^{\varepsilon,N})^T\|_{\infty}\leq CN^2$. 
Hence we can invoke the results by Ikeda~\cite{Ik} for the existence of a solution. Moreover, arguing as in~\cite[Lemma 4.2 and Lemma 4.3]{AMMT}, one can easily obtain the bounds on $m_{\sigma,N}$.

It remains to prove~\eqref{1802:lemma2}. We denote by $I$ the left hand side. There holds
\begin{eqnarray*}
I&=&\partial_{11} u-2\psi_N(x_2)\partial_{13}u+\partial_{22}u+2\psi_N(x_1)\partial_{23}u+(\psi_N(x_2)^2+\psi_N(x_1)^2+\varepsilon^2)\partial_{33}u\\
&=& \xi_1 D^2u\xi_1^T+ \xi_2 D^2u\xi_2^T+ \xi_3 D^2u\xi_3^T
\end{eqnarray*}
where $\xi_1=(1,0,-\psi_N(x_2))$, $\xi_2=(0,1,\psi_N(x_1))$ and $\xi_3=(0,0,\varepsilon)$. By Lemma~\ref{1802:lemma1}-(c), we obtain
\[
I\leq C(2+2\|\psi_N\|_\infty^2 +\varepsilon^2)\leq C(3+8N^2)
\]
where the last inequality is due to our assumption on $\psi_N$. Choosing $k_N=C(3+8N^2)$ we accomplish the proof of~\eqref{1802:lemma2}.
\end{proof}

\begin{lemma}\label{1802:lemma4}
There exists a constant $K_N$, depending on~$N$ (but independent of~$\varepsilon$, $\sigma$ and $\overline m$) such that the function~$m_{\sigma,N}$ found in Lemma~\ref{1802:lemma3} verifies
\begin{itemize}
\item[a)] ${\bf d}_1(m_{\sigma,N}(t_1),m_{\sigma,N}(t_2))\leq K_N(t_2-t_1)^{1/2}$ for any $0\leq t_1<t_2\leq T$
\item[b)] $\int_{\re^3}|x|^2m_{\sigma,N}(x,t)dx\leq K_N(\int_{\re^3}|x|^2dm_0(x)+1)$.
\end{itemize}
\end{lemma}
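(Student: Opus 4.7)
The plan is to exploit a probabilistic representation of $m_{\sigma,N}$ analogous to the one used for the periodic case in Lemma~\ref{lm:etaMarkus}. Set $b(x,t):=-Du_\sigma(x,t)\,B^{\varepsilon,N}(x)(B^{\varepsilon,N}(x))^T$ and consider the SDE
\[
dY_t = b(Y_t,t)\,dt+\sqrt{2\sigma}\,dW_t,\qquad Y_0\sim m_0,
\]
where $W$ is a standard $3$-dimensional Brownian motion. Thanks to Lemma~\ref{1802:lemma1}-(b) and the inequality $\|B^{\varepsilon,N}\|_\infty\le C(1+N)$ (with $\varepsilon\le 1$), the drift $b$ is bounded by a constant $C_N$ depending only on $N$, and it is locally Lipschitz thanks to interior parabolic regularity of $u_\sigma$; hence the SDE has a unique strong, non-exploding solution. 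Itô's formula applied against test functions, combined with the uniqueness result of Lemma~\ref{1802:lemma3}, identifies $m_{\sigma,N}(\cdot,t)$ with the law of $Y_t$.

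For part~(a), the pair $(Y_{t_1},Y_{t_2})$ realised on a single path is a coupling between $m_{\sigma,N}(t_1)$ and $m_{\sigma,N}(t_2)$, hence
\[
{\bf d}_1(m_{\sigma,N}(t_1),m_{\sigma,N}(t_2))\le \mathbb{E}|Y_{t_2}-Y_{t_1}|\le C_N(t_2-t_1)+\sqrt{2\sigma}\,\mathbb{E}|W_{t_2}-W_{t_1}|\le K_N(t_2-t_1)^{1/2},
\]
where we used $\|b\|_\infty\le C_N$, $t_2-t_1\le T$, and the classical estimate $\mathbb{E}|W_{t_2}-W_{t_1}|\le C(t_2-t_1)^{1/2}$. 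The constant $K_N$ depends only on $N$ (and on $T$) because the bound on $Du_\sigma$ in Lemma~\ref{1802:lemma1} is uniform in $\varepsilon$, $\sigma\in(0,1]$ and $\overline m$.

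For part~(b), apply Itô's formula to $x\mapsto|x|^2$, with the customary localisation by the stopping times $\tau_R:=\inf\{t:|Y_t|\ge R\}$ (needed because $|x|^2$ is unbounded). Taking expectations kills the martingale term and yields
\[
\mathbb{E}|Y_{t\wedge\tau_R}|^2 = \mathbb{E}|Y_0|^2+\mathbb{E}\int_0^{t\wedge\tau_R}\!\bigl(2Y_s\cdot b(Y_s,s)+6\sigma\bigr)\,ds.
\]
Using $2|Y_s\cdot b|\le |Y_s|^2+\|b\|_\infty^2\le|Y_s|^2+C_N^2$ and $\sigma\le 1$, an application of Gr\"onwall's inequality produces an estimate on $\mathbb{E}|Y_{t\wedge\tau_R}|^2$ uniform in $R$; letting $R\to\infty$ by monotone convergence and recalling that $\mathbb{E}|Y_t|^2=\int_{\re^3}|x|^2 m_{\sigma,N}(x,t)\,dx$ and $\mathbb{E}|Y_0|^2=\int_{\re^3}|x|^2m_0(x)dx$, we obtain the estimate with $K_N:=e^T(1+C_N^2T+6T)$.

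The main technical subtlety is justifying the probabilistic representation itself: Lemma~\ref{1802:lemma1} gives only an $L^\infty$ bound on $Du_\sigma$ and a one-sided bound on $D^2u_\sigma$, so one must invoke interior $C^{2,\alpha}$ regularity for the semilinear parabolic equation~\eqref{eq:MFGeN}-(i) (which follows from $F[\overline m]\in C^2$ and the boundedness of $u_\sigma$) to obtain local Lipschitz continuity of $Du_\sigma$ and hence of the drift $b$. An alternative purely PDE proof works as well: approximate $|x|^2$ by a sequence of smooth compactly supported functions, test equation~\eqref{eq:MFGeN}-(ii) against them, integrate by parts (using the $L^\infty$ bound of Lemma~\ref{1802:lemma3} for the boundary control), and pass to the limit by monotone convergence --- this bypasses the SDE at the cost of the same cutoff bookkeeping.
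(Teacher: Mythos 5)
Your argument is correct and follows the same route as the paper, which identifies $m_{\sigma,N}$ with the law of the diffusion $dY_t = -Du_\sigma(Y_t,t)B^{\varepsilon,N}(Y_t)(B^{\varepsilon,N}(Y_t))^T\,dt + \sqrt{2\sigma}\,dW_t$ started from $m_0$ and then invokes the arguments of \cite[Lemma 4.3]{AMMT}; you simply carry out those arguments explicitly (coupling for the $\mathbf{d}_1$-estimate, It\^o with localisation and Gr\"onwall for the second moment). The regularity needed to make the representation rigorous is already supplied by the ``bounded classical solution'' in Lemma~\ref{1802:lemma1}, so your extra interior-regularity remark is sound but not strictly necessary.
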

\begin{proof}
Taking account that $m_{\sigma,N}$ can be interpreted as the law of the following stochastic process
\begin{equation*}
d Y^x_t= -Du (Y^x_t,t) B^{\varepsilon,N}(Y^x_t)(B^{\varepsilon,N}(Y^x_t))^T\, dt +\sqrt{2\sigma} d W_t,\qquad Y^x_0=x,
\end{equation*}
by the same arguments as those in \cite[Lemma 4.3 (proof)]{AMMT}, we obtain the result.
\end{proof}
\vskip .2cm
Now we let $\sigma\rightarrow 0$ and we consider the problem
\begin{equation}\label{continuityeN}
\left\{
\begin{array}{ll} \partial_t m-
\diver(m\, Du B^{\varepsilon,N}(x)(B^{\varepsilon,N}(x))^T)=0
&\qquad \textrm{in }\re^3\times (0,T)\\
m(x,0)=m_0(x) &\qquad \textrm{on }\re^3
\end{array}\right.
\end{equation}
where $u$ is the unique bounded solution to problem~\eqref{HJe_illi}.
\begin{lemma}\label{1802:thm1}
For $N$ sufficiently large, problem~\eqref{continuityeN} admits exactly one solution~$m_{N}$ in the space $C^{1/2}([0,T],\mathcal P_1(\re^3))\cap L^\infty(0,T;\mathcal P_2(\re^3))$.\\
Moreover, the solution~$m_{N}$ has a density in $L^\infty(\re^3\times(0,T))$ and it is the image of the initial distribution through the flow
\begin{equation}\label{1802:rmk2}
\dot\gamma(t)=-Du(\gamma(t),t) B^{\varepsilon}(\gamma(t))(B^{\varepsilon}(\gamma(t)))^T,\qquad \gamma(0)=x
\end{equation}
which is uniquely determined for $m_0$-a.e. $x\in\re^3$.
\end{lemma}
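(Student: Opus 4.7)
The plan is to build $m_N$ by vanishing viscosity on the system~\eqref{eq:MFGeN}, to identify it with the pushforward of $m_0$ through the flow \eqref{1802:rmk2} provided by Lemma~\ref{lemma:OSeps}-(b), and to use the superposition principle to upgrade this representation into a full uniqueness statement.

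\textbf{Existence and regularity.} Starting from the classical solutions $m_{\sigma,N}$ of Lemma~\ref{1802:lemma3}, the uniform-in-$\sigma$ bound $0<m_{\sigma,N}\leq C_N$, together with the ${\bf d_{1}}$-Hölder estimate and the second-moment bound of Lemma~\ref{1802:lemma4}, yield via Ascoli--Arzelà and Banach--Alaoglu a subsequential limit $m_N\in C^{1/2}([0,T];\mathcal P_1(\re^3))\cap L^\infty(0,T;\mathcal P_2(\re^3))$ admitting a density in $L^\infty(\re^3\times(0,T))$. Simultaneously $u_\sigma$ converges locally uniformly to the unique bounded viscosity solution $u$ of \eqref{HJe_illi} by Lemma~\ref{1802:lemma1}, and the uniform semiconcavity $D^2u_\sigma\leq C$ combined with \cite[Theorem~3.3.3]{CS} gives $Du_\sigma\to Du$ almost everywhere. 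Passing to the limit in the distributional formulation of \eqref{eq:MFGeN}-(ii) shows that $m_N$ solves \eqref{continuityeN}.

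\textbf{Stabilization of the flow and pushforward representation.} Let $B_R\supset\mathrm{supp}(m_0)$. Because $\|Du\|_\infty\leq C$ by Lemma~\ref{1802:lemma1}-(b) and the entries of $B^{\varepsilon}(B^{\varepsilon})^T$ grow only through $(x_1,x_2)$, a Grönwall estimate on $|\gamma_x(t)|^2$ shows that any solution $\gamma_x$ of \eqref{1802:rmk2} starting in $B_R$ remains in a fixed ball $B_{R'}$ throughout $[0,T]$, with $R'=R'(R,T,C)$ independent of $x$. Choosing $N>R'$, the truncated matrix $B^{\varepsilon,N}$ agrees with $B^{\varepsilon}$ on $B_{R'}$, so that \eqref{continuityeN} and the continuity equation driven by $B^{\varepsilon}$ coincide on the physically relevant region. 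Since $u(\cdot,t)$ is semiconcave by Lemma~\ref{LLS}-(3), it is differentiable almost everywhere, and as $m_0$ is a bounded $C^0$ density, Lebesgue-null sets are $m_0$-null; thus Lemma~\ref{lemma:OSeps}-(b) (crucially invoking $\varepsilon\neq 0$) supplies a unique solution $\gamma_x$ of \eqref{1802:rmk2} for $m_0$-a.e. $x$. The measure $\tilde m_N(t):=(\gamma_\cdot(t))_\# m_0$, supported in $B_{R'}$, is then a distributional solution of \eqref{continuityeN} by a direct test-function computation based on the chain rule.

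\textbf{Uniqueness and main obstacle.} To conclude, we apply the superposition principle \cite[Theorem~8.2.1]{AGS} to any solution $\mu\in C^{1/2}([0,T];\mathcal P_1(\re^3))\cap L^\infty(0,T;\mathcal P_2(\re^3))$ of \eqref{continuityeN}: the drift $b:=-Du\,B^{\varepsilon,N}(B^{\varepsilon,N})^T$ is bounded by a constant $C(N)$, so the integrability assumption of \cite{AGS} is met, and the principle yields $\eta\in\mathcal P(\Gamma)$ concentrated on integral curves of $\dot\gamma=b(\gamma,\cdot)$ with $\mu_t=(e_t)_\#\eta$. Since such curves are $m_0$-a.s. unique by the preceding step, $\eta$ is completely determined by the initial datum through the flow, whence $\mu=\tilde m_N$; taking $\mu=m_N$ gives both uniqueness and the desired representation. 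The main obstacle lies precisely in this last step: one must verify that the superposition principle applies (which is what forces the truncation $B^{\varepsilon,N}$, to secure a globally bounded drift and well-posedness of the characteristics in $\re^3$) while at the same time importing the a.e.\ uniqueness of characteristics from the optimal synthesis, for which the non-degeneracy $\varepsilon\neq 0$ is indispensable.
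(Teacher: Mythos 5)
Your proof follows essentially the same route as the paper's: vanishing viscosity (Lemmas~\ref{1802:lemma1}--\ref{1802:lemma4}) for existence and the $L^\infty$/moment estimates, then the superposition principle \cite[Theorem~8.2.1]{AGS}, which applies because the truncated drift is bounded, combined with the optimal-synthesis uniqueness of Lemma~\ref{lemma:OSeps}-(b) (crucially $\varepsilon\neq0$, differentiability a.e.\ of the semiconcave $u(\cdot,0)$, and $m_0\ll\mathrm{Leb}$) to pin down the characteristics and hence both the representation and uniqueness. The only substantive loose end is in your uniqueness step: the superposition principle produces $\eta$ concentrated on integral curves of the \emph{truncated} field $-Du\,B^{\varepsilon,N}(B^{\varepsilon,N})^T$, yet you invoke $m_0$-a.s.\ uniqueness established for the \emph{untruncated} flow~\eqref{1802:rmk2}; to close this you need the a priori $B_{R'}$-confinement to hold uniformly in $N$ for solutions of the truncated ODE itself (as the paper obtains ``by arguments similar to those in the proof of Lemma~\ref{lemma:OSeps}-(a)''), and only then conclude that for $N>R'$ the truncated and untruncated characteristics coincide, whereas your Grönwall bound is stated only for~\eqref{1802:rmk2}.
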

\begin{proof}
Fix $N$ sufficiently large (it will be suitably chosen later on). For any $\sigma\in(0,1]$, let $(u_\sigma,m_{\sigma,N})$ be the unique classical bounded solution to system~\eqref{eq:MFGeN} (see Lemma~\ref{1802:lemma1} and Lemma~\ref{1802:lemma3}).
Letting $\sigma\to0^+$, by the estimates in Lemma~\ref{1802:lemma1}, in Lemma~\ref{1802:lemma3} and in Lemma~\ref{1802:lemma4}, possibly passing to a subsequence that we still denote $(u_\sigma,m_{\sigma,N})$, we get that the functions~$u_\sigma$ converge locally uniformly to the unique solution~$u$ to~\eqref{HJe_illi} while~$m_{\sigma,N}$ converge to a function $m_{N} \in C^{1/2}([0,T],\mathcal P_1(\re^3))\cap L^\infty(0,T;\mathcal P_2(\re^3))$ in the $C^{0}([0,T],\mathcal P_1(\re^3))$-topology and in the weak-$*$ topology of $L^\infty(\re^3\times(0,T))$. By the same arguments as those in~\cite[Proposition 4.1 (proof)]{AMMT}, in particular the uniform semiconcavity, we obtain that $m_{N}$ is a solution to~\eqref{continuityeN}.\\
Let us now establish uniqueness and representation formula for the solution~$m_{N}$. We observe that, by Lemma~\ref{1802:lemma1}-(b), the drift verifies $\|Du B^{\varepsilon,N}(B^{\varepsilon,N})^T\|_{\infty}\leq CN^2$ and in particular condition~\cite[eq. (8.1.20)]{AGS} is fulfilled. Hence, we can apply the superposition principle in~\cite[Theorem 8.2.1]{AGS}: there exists a measure $\eta_N$ on $\re^3\times \Gamma$ such that
\begin{itemize}
\item[1)] $m_N(t)=e_t\#\eta_N$ for all $t\in(0,T)$ (recall: $e_t(x,\gamma)=\gamma(t)$)
\item[2)] $\eta_N =\int_{\re^3}(\eta_N)_x dm_0(x)$ where, for $m_0$-a.e. $x\in\re^3$, the measure $(\eta_N)_x$ is concentrated on the set of pairs $(x,\gamma)\in\re^3\times \Gamma$ where $\gamma$ solves
\begin{equation}\label{1802:rmk2N}
\dot\gamma(t)=-Du(\gamma(t),t) B^{\varepsilon,N}(\gamma(t))(B^{\varepsilon,N}(\gamma(t)))^T\quad\textrm{a.e. }t\in(0,T),\qquad \gamma(0)=x. 
\end{equation}
\end{itemize}
We now claim that, for $m_0$-a.e. $x\in\re^3$, the solutions to~\eqref{1802:rmk2N} coincide with those of~\eqref{1802:rmk2}. Indeed, since $m_0$ has compact support, by arguments similar to those in the proof of Lemma~\ref{lemma:OSeps}-(a), we obtain that any solution~$\gamma$ to~\eqref{1802:rmk2N} is bounded uniformly in~$N$; namely, there exists a positive constant~$k$ (independent of~$N$) such that, for $m_0$-a.e. $x\in\re^3$, any solution~$\gamma$ to~\eqref{1802:rmk2N} verifies $\gamma(t)\in [-k,k]^3$ for any $t\in(0,T)$. Hence, choosing $N\geq k$, we get that problem~\eqref{1802:rmk2N} coincides with~\eqref{1802:rmk2} if $x\in\textrm{supp}(m_0)$.\\
It remains only to prove that, for $m_0$-a.e. $x\in\re^3$, problem~\eqref{1802:rmk2} admits exactly one solution. To this end, it is enough to invoke~Lemma~\ref{lemma:OSeps} and taking into account that~$u(\cdot,0)$ is Lipschitz continuous.
\end{proof}

Now we let $N\rightarrow +\infty$ and we establish the existence of a solution to problem~\eqref{continuitye_illi} exploiting that $m_N$ have compact support independent on $N$ which in turn is due to compactness of~$\textrm{supp} (m_0)$.
\begin{lemma}\label{prp:cont_illi}
The problem~\eqref{continuitye_illi} has a solution~$m$ in the space $C^{1/2}([0,T],\mathcal P_1(\re^3))\cap L^\infty(0,T;\mathcal P_2(\re^3))$ with a density in~$L^\infty(\re^3\times(0,T))$. Moreover, $m$ fulfills~\eqref{ambrosioe}-\eqref{flowe} and there exists a constant~$K$ independent of~$\overline m$ and of~$\varepsilon$ such that
\[\|m\|_\infty\leq K, \qquad {\bf d}_1(m(t_1),m(t_2))\leq K|t_2-t_1|^{1/2}, \qquad \int_{\re^3}|x|^2m(x,t)dx\leq K.
\]
\end{lemma}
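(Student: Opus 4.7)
The plan is to set $m:=m_N$ for $N$ sufficiently large, where $m_N$ is the solution of the truncated problem constructed in Lemma~\ref{1802:thm1}, and to show that for such $N$ the truncation becomes inactive on the support of $m_N$ uniformly in $\varepsilon$ and $\overline m$. The compactness of $\textrm{supp}(m_0)$ together with the uniform Lipschitz and semiconcavity estimates for $u$ from Lemma~\ref{LLS} are the structural ingredients that make this uniformity possible.

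The key step is an a priori bound on the support of $m_N$ that is uniform in $N$, $\varepsilon$ and $\overline m$. Let $R_0:=\sup\{|x|:x\in\textrm{supp}(m_0)\}$, which is finite by~(H3'). By Lemma~\ref{1802:thm1}, $m_N$ is the image of $m_0$ through the flow~\eqref{1802:rmk2N}, and a bootstrap argument in the spirit of Lemma~\ref{lemma:OSeps}-(a) will produce a constant $k$, depending only on $R_0$, $T$ and $\|Du\|_\infty$, such that for every $x\in\textrm{supp}(m_0)$ and every $N\geq k$ the trajectory $\gamma(\cdot;x)$ stays in $[-k,k]^3$. Indeed, setting $\xi:=\gamma_1^2+\gamma_2^2$, as long as $|\gamma_1|,|\gamma_2|\leq N$ the $\psi_N$-contributions in the first two components of~\eqref{1802:rmk2N} cancel in $\xi'$, yielding $|\xi'|\leq C(\xi+1)$ and a Gronwall bound $\xi\leq R_1^2$; then $|\gamma_3|$ is controlled by integrating the third equation, and a continuity argument closes the bootstrap once $N\geq k$. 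With such $N$ fixed, $B^{\varepsilon,N}(\gamma(s;x))=B^{\varepsilon}(\gamma(s;x))$ along every such trajectory, so~\eqref{1802:rmk2N} coincides with~\eqref{flowe}. Plugging any $\phi\in C_c^\infty(\re^3\times[0,T))$ into the weak formulation of~\eqref{continuityeN} and using that $m_N$ is supported in $[-k,k]^3$, one can replace $B^{\varepsilon,N}$ by $B^{\varepsilon}$ in the integrand, which shows that $m:=m_N$ is a distributional solution of~\eqref{continuitye_illi}; formula~\eqref{ambrosioe} is the pushforward identity and holds by construction.

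Once $k$ is available, the quantitative estimates are routine and uniform in $\varepsilon,\overline m$. The second moment is directly bounded by $3k^2$ since $\textrm{supp}(m(t))\subset[-k,k]^3$; the drift $Du\,B^{\varepsilon}(B^{\varepsilon})^T$ is bounded on $[-k,k]^3$ by a constant depending only on $\|Du\|_\infty$ and $k$ (hence uniform by Lemma~\ref{LLS}-(1)), and this yields Lipschitz (and in particular $1/2$-H\"older) continuity of $t\mapsto m_t$ in~${\bf d}_1$ via the pushforward identity applied to $1$-Lipschitz test functions. For the $L^\infty$ bound on the density, one computes the divergence of the drift as in~\eqref{1802:lemma2}, namely $\textrm{div}(Du\,B^{\varepsilon}(B^{\varepsilon})^T)=\xi_1 D^2u\,\xi_1^T+\xi_2 D^2u\,\xi_2^T+\xi_3 D^2u\,\xi_3^T$ with $\xi_1=(1,0,-x_2)$, $\xi_2=(0,1,x_1)$, $\xi_3=(0,0,\varepsilon)$, and bounds it from above on $[-k,k]^3$ by $C(3+2k^2)$ thanks to the uniform semiconcavity $D^2u\leq CI$ from Lemma~\ref{LLS}-(3); the standard comparison argument for transport-type equations (or equivalently the Liouville identity $\frac{d}{dt}\det D_x\gamma=\textrm{div}(-Du\,B^\varepsilon(B^\varepsilon)^T)(\gamma,t)\cdot\det D_x\gamma$) then delivers $\|m\|_\infty\leq\|m_0\|_\infty e^{CT(3+2k^2)}$.

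The main obstacle is precisely the $N$-uniformity of the support bound $k$: without it, the $N$-dependent constants $C_N$, $K_N$ from Lemmas~\ref{1802:lemma3}--\ref{1802:lemma4} would propagate through the vanishing viscosity limit, and the subsequent step $\varepsilon\to 0^+$ in Theorem~\ref{thm:main_illi} could not be carried out. The cancellation of the $\psi_N$-terms in $\xi'$ as long as the trajectory remains small is the algebraic mechanism that allows the bootstrap to close, and the compactness of $\textrm{supp}(m_0)$ (assumption~(H3')) is what provides the starting point.
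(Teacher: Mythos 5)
Your proposal is correct and follows essentially the same route as the paper: the $N$-uniform (and $\varepsilon$-, $\overline m$-uniform) confinement of the trajectories to $[-k,k]^3$ via the cancellation in $\xi'=2(\gamma_1\gamma_1'+\gamma_2\gamma_2')$ is exactly the mechanism Lemma~\ref{1802:thm1} appeals to, and once $N$ is fixed at this value the constants $C_N$, $K_N$ of Lemmas~\ref{1802:lemma3}--\ref{1802:lemma4} become the desired uniform $K$. The only minor caveat is that the Liouville-identity version of the $L^\infty$ bound needs the flow to be differentiable, which is not available here; the paper instead obtains the density bound from the viscous approximation $m_{\sigma,N}$ of Lemma~\ref{1802:lemma3} and passes to the limit, which is also the ``standard comparison argument'' you mention first.
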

\begin{proof}
By Lemma~\ref{1802:thm1} all the solutions~$m_{N}$ to~\eqref{continuityeN} coincide if $N$ is sufficiently large. Hence, passing to the limit as $N\to\infty$, we obtain that problem~\eqref{continuitye_illi} admits a solution $m$ in the space $C^{1/2}([0,T],\mathcal P_1(\re^3))\cap L^\infty(0,T;\mathcal P_2(\re^3))$ with a density in~$L^\infty(\re^3\times(0,T))$. Finally, the estimates follow from the corresponding estimates in Lemma~\ref{1802:lemma4}.
\end{proof}

\subsection{Proof of Proposition \ref{mainproe}}\label{subsect:pr_illi}
\begin{proof}[Proposition \ref{mainproe}]
We achieve the proof through a fixed point argument as in~\cite[Theorem 1.1 (proof)]{MMMT} taking advantage of the results of Lemma~\ref{prp:cont_illi}. We refer the reader to the aforementioned paper for the details.
\end{proof}


\appendix

\section{$\cH$-differentials}\label{app:Hdiff}
In this appendix we introduce the notions of horizontal generalized differentials extending the Euclidean ones~\cite[section 3.1]{CS} (see also \cite[section 6.2]{MMMT} for the Grushin case). We need these notions to study the horizontal regularity of a function $u$. Still following the same arguments as those in 
 \cite{CS, MMMT} we get the proofs of the properties contained in this appendix. 

\begin{definition}\label{def:Hdiff}
A function $u:\re^3\to \re$ is $\cH$-differentiable at $x=(x_1,x_2,x_3)\in\re^3$ if there exists $p\in\re^2$ such that
\begin{equation*}
\lim_{\re^2\ni h=(h_1, h_2)\to 0}\frac{u(x_1+h_1,x_2+h_2, x_3-x_2h_1+x_1h_2)-u(x_1,x_2,x_3) -p\cdot h}{|h|}=0, 
\end{equation*}
and in this case we denote $p=D_\cH u(x)$.
We define the $\cH$-subdifferential and the lower $\cH$-Dini derivative in the direction~$\theta\in\re^2$ respectively as
\begin{eqnarray*}
D^-_\cH u(x)&=&\left\{p\in \re^2 \mid
\liminf_{\re^2\ni h\to 0}\frac{u(x_1+h_1,x_2+h_2, x_3-x_2h_1+x_1h_2)-u(x) -p\cdot h}{|h|}\geq 0
\right\}\\
\partial^-_\cH u(x,\theta)&=&\liminf_{h\to 0,\theta'\to \theta}\frac{u(x_1+h_1\theta'_1,x_2+h_2\theta'_2, x_3-x_2h_1\theta'_1+x_1h_2\theta'_2)-u(x)}{h}.
\end{eqnarray*}
We define the $\cH$-superdifferential  $D^+_\cH u$ and the upper $\cH$-Dini derivative 
$\partial^+_\cH u$ in a similar way.
\end{definition}
\begin{remark}
$D_\cH u(x)$ coincides with the horizontal gradient $(X_1u, X_2u)$ when $u$ is sufficiently regular.
\end{remark}
\begin{lemma}\label{lm:Hdiff}
\begin{itemize}
\item [i)] If $u$ is $\cH$-differentiable at~$x$ then $D_\cH u(x)$ is a singleton and $D_\cH^+ u(x)$ and $D_\cH^- u(x)$ are both nonempty.
\item [ii)] When $u$ is Lipschitz continuous in a neighbourhood of~$x$ the $\cH$-Dini lower derivative reduces to
\begin{equation*}  
\partial^-_\cH u(x,\theta)=\liminf_{h\to 0}\frac{u(x_1+h_1\theta_1,x_2+h_2\theta_2, x_3-x_2h_1\theta_1+x_1h_2\theta_2)-u(x)}{h}.
\end{equation*}
\item[iii)] For any $p= (p_1, p_2, p_3)$ in the Euclidean superdifferential $D^+u(x)$, the vector $pB(x)$ belongs to $D^+_\cH u(x)$.
\end{itemize}
\end{lemma}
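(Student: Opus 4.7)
The plan is to prove the three parts separately, each following by unravelling the definitions in Definition~\ref{def:Hdiff} and exploiting the explicit formula relating the ``horizontal'' displacement to the Euclidean one, namely $y - x = (h_1, h_2, -x_2 h_1 + x_1 h_2)$ when $y = (x_1 + h_1, x_2 + h_2, x_3 - x_2 h_1 + x_1 h_2)$.

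For part (i), I would first argue the uniqueness of $D_\cH u(x)$: if two vectors $p,p' \in \re^2$ both realized the $\cH$-differentiability limit, then $(p-p')\cdot h / |h| \to 0$ as $h \to 0$, which applied to $h = t e_1$ and $h = t e_2$ with $t \to 0$ forces $p = p'$. The fact that $D_\cH^\pm u(x)$ are nonempty is then immediate from the definition, since the same $p = D_\cH u(x)$ realizes both the limsup $\leq 0$ and the liminf $\geq 0$ in the defining inequalities of $D_\cH^+ u(x)$ and $D_\cH^- u(x)$.

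For part (ii), the point is that under local Lipschitz continuity the perturbation $\theta' \to \theta$ inside the liminf has a vanishing effect. Specifically, with $L$ the local Lipschitz constant of $u$ near $x$, the Euclidean distance between the two test points $x + (h\theta_1', h\theta_2', -x_2 h \theta_1' + x_1 h \theta_2')$ and $x + (h\theta_1, h\theta_2, -x_2 h \theta_1 + x_1 h \theta_2)$ is bounded by $h\, |\theta' - \theta|\sqrt{1 + x_1^2 + x_2^2}$, so after dividing by $h$ the error is $O(|\theta' - \theta|)$ and vanishes as $\theta' \to \theta$. Hence the double liminf collapses to a single one in $h$.

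Part (iii) is the key statement and the main point of the lemma. Given $p = (p_1, p_2, p_3) \in D^+ u(x)$, with $y = (x_1+h_1, x_2+h_2, x_3 - x_2 h_1 + x_1 h_2)$ the direct computation gives $p \cdot (y-x) = (p_1 - x_2 p_3) h_1 + (p_2 + x_1 p_3) h_2 = (pB(x))\cdot h$. Moreover $|y-x|^2 = h_1^2 + h_2^2 + (x_1 h_2 - x_2 h_1)^2 \leq (1 + x_1^2 + x_2^2)|h|^2$, so $|y-x|/|h|$ stays bounded and $y \to x$ as $h \to 0$. Given any $\varepsilon > 0$, the definition of $D^+ u(x)$ yields $u(y) - u(x) - p\cdot(y-x) \leq \varepsilon |y-x|$ for $|h|$ small, and dividing by $|h|$ gives
\[
\frac{u(y) - u(x) - (pB(x))\cdot h}{|h|} \leq \varepsilon \frac{|y-x|}{|h|} \leq \varepsilon \sqrt{1 + x_1^2 + x_2^2}.
\]
Letting $\varepsilon \to 0$ shows $pB(x) \in D^+_\cH u(x)$. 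I do not anticipate serious obstacles; the only subtlety is checking that the map $h \mapsto y - x$ is locally a bi-Lipschitz embedding near the origin with controlled constant $\sqrt{1 + x_1^2 + x_2^2}$, which is exactly what makes the inequality between the horizontal and Euclidean difference quotients work in one direction.
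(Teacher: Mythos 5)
Your proof is correct, and it is essentially the direct definition-unwinding argument that the paper defers to its cited references (the appendix says only ``following the same arguments as those in [CS, MMMT] we get the proofs''). The one small inaccuracy is at the end: you don't actually need the map $h\mapsto y(h)-x$ to be bi-Lipschitz — the one-sided bound $|y(h)-x|\le\sqrt{1+x_1^2+x_2^2}\,|h|$ is all that is used, since the Euclidean superdifferential inequality is applied at $y(h)$ and then divided by $|h|$, not by $|y(h)-x|$.
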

\begin{proposition}\label{app:prp3.1.5}
We have
\begin{eqnarray*}
D^+_\cH u(x)&=&\{p\in\re^2:\, \partial^+_\cH u(x,\theta)\leq p\cdot \theta,\quad \forall \theta\in\re^2\}\\
D^-_\cH u(x)&=&\{p\in\re^2:\, \partial^-_\cH u(x,\theta)\geq p\cdot \theta,\quad \forall \theta\in\re^2\}.
\end{eqnarray*}
Moreover, $D^+_\cH u(x)$ and $D^-_\cH u(x)$ are both nonempty if and only if $u$ is $\cH$-differentiable at $x$ and in this case they reduce to the singleton $D_\cH u(x)=D^-_\cH u(x)=D^+_\cH u(x)$.
\end{proposition}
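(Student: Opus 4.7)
The plan is to mimic the proof of the Euclidean counterpart (see Cannarsa--Sinestrari, Prop.~3.1.5) with minor adjustments to accommodate the horizontal shift in the $x_3$ coordinate. Since the perturbation $x \oplus_\cH h := (x_1+h_1, x_2+h_2, x_3 - x_2 h_1 + x_1 h_2)$ depends linearly on $h\in\re^2$, the standard one-scale arguments go through almost verbatim.

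First, I will establish the characterization of $D^-_\cH u(x)$ (the argument for $D^+_\cH u(x)$ being symmetric). For the inclusion ``$\subseteq$'', pick $p \in D^-_\cH u(x)$ and $\theta \in \re^2$; for any sequences $h_n \to 0^+$ and $\theta'_n \to \theta$, set $k_n := h_n \theta'_n \to 0$ in $\re^2$ and write
\begin{equation*}
\frac{u(x \oplus_\cH k_n) - u(x)}{h_n} = \frac{u(x \oplus_\cH k_n) - u(x) - p\cdot k_n}{|k_n|}\,|\theta'_n| + p\cdot \theta'_n
\end{equation*}
(handling $\theta'_n = 0$ trivially). The first factor has lim inf $\geq 0$ by hypothesis, $|\theta'_n| \to |\theta|$, and $p\cdot\theta'_n \to p\cdot\theta$; hence $\partial^-_\cH u(x,\theta) \geq p\cdot\theta$. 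For the reverse inclusion, given any sequence $k_n \to 0$ in $\re^2 \setminus \{0\}$, write $k_n = h_n \theta_n$ with $h_n := |k_n|$ and $\theta_n := k_n/|k_n| \in S^1$; by compactness of $S^1$, extract a subsequence $\theta_n \to \theta$ with $|\theta|=1$, and use the assumed directional inequality together with the identity
\begin{equation*}
\frac{u(x \oplus_\cH k_n) - u(x) - p\cdot k_n}{|k_n|} = \frac{u(x\oplus_\cH h_n\theta_n) - u(x)}{h_n} - p\cdot \theta_n
\end{equation*}
to conclude that the lim inf is $\geq \partial^-_\cH u(x,\theta) - p\cdot\theta \geq 0$; since this holds along every convergent subsequence of directions, the full lim inf is nonnegative, giving $p \in D^-_\cH u(x)$.

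Next, assuming both $D^-_\cH u(x)$ and $D^+_\cH u(x)$ are nonempty, I pick $p^- \in D^-_\cH u(x)$ and $p^+ \in D^+_\cH u(x)$; the already-proved characterizations yield
\begin{equation*}
p^-\cdot\theta \leq \partial^-_\cH u(x,\theta) \leq \partial^+_\cH u(x,\theta) \leq p^+\cdot\theta \qquad \forall \theta \in \re^2,
\end{equation*}
so $(p^+ - p^-)\cdot\theta \geq 0$ for every $\theta$, forcing $p^- = p^+ =: p$. The chain of inequalities then becomes an equality, hence $\partial^-_\cH u(x,\theta) = \partial^+_\cH u(x,\theta) = p\cdot\theta$ for all $\theta$. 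To upgrade this to $\cH$-differentiability, I take an arbitrary sequence $k_n \to 0$ in $\re^2\setminus\{0\}$, factor $k_n = |k_n|\theta_n$ with $\theta_n \in S^1$, extract a convergent subsequence $\theta_n \to \theta$, and invoke the equality $\partial^-_\cH u(x,\theta) = \partial^+_\cH u(x,\theta) = p\cdot\theta$ (which forces the limit over both variables $(h,\theta')\to(0,\theta)$ to exist and equal $p\cdot\theta$) to get
\begin{equation*}
\frac{u(x\oplus_\cH k_n) - u(x) - p\cdot k_n}{|k_n|} = \frac{u(x\oplus_\cH |k_n|\theta_n) - u(x)}{|k_n|} - p\cdot\theta_n \longrightarrow p\cdot\theta - p\cdot\theta = 0.
\end{equation*}
Since the limit is $0$ along every subsequential direction, the full limit is $0$, and $u$ is $\cH$-differentiable at $x$ with $D_\cH u(x) = p$. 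The converse implication (that $\cH$-differentiability at $x$ implies $D_\cH u(x) \in D^-_\cH u(x) \cap D^+_\cH u(x)$) is immediate from the definitions.

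The only subtlety I anticipate is the bookkeeping of the two-parameter lim inf in the definition of the $\cH$-Dini derivatives and the reduction to one-parameter sequences via compactness of $S^1$; once one is comfortable with this, the Heisenberg shift $-x_2 h_1 + x_1 h_2$ plays no active role because it is linear in $h$ and is encoded inside the argument of $u$, leaving the linear functional $p\cdot h$ on the two-dimensional horizontal space unchanged with respect to the Euclidean proof.
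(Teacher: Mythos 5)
Your proof is correct and follows essentially the same route the paper intends: the paper does not write out a proof but simply states that the appendix's properties follow "the same arguments as those in [CS, MMMT]," i.e., the Euclidean Cannarsa--Sinestrari Prop.~3.1.5, and your argument is precisely that adaptation, exploiting the compactness of $S^1$ to pass between the two-variable Dini lim-inf and the single-variable $\cH$-differential quotient. The only point worth flagging is that the paper's Definition~C.1 writes $\liminf_{h\to 0}$ and uses the ambiguous notation $h_1\theta'_1,h_2\theta'_2$ in the perturbation while dividing by a scalar $h$; you have (correctly) read this as scalar $h\to 0^+$ and perturbation $h\theta'$, which is the convention of [CS] and the only reading under which the inclusion $D^-_\cH u(x)\subseteq\{p:\partial^-_\cH u(x,\theta)\geq p\cdot\theta\}$ goes through (your factorization $\frac{|k_n|}{h_n}=|\theta'_n|$ silently uses $h_n>0$).
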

%
%
%
\section{On the uniqueness for second-order Fokker-Planck equation}\label{app:uniqueness}
In this appendix, for the sake of completeness, we collect some results on the uniqueness of the solution to the Cauchy problem for the second-order Fokker-Planck equation~\eqref{eq:FPv} with fixed $\sigma>0$ and dropping the periodicity assumption of the coefficients: for $\sigma>0$, we consider the Cauchy problem
\begin{equation} \label{besala}
\left\{
\begin{array}{ll}
\partial_t m-\sigma\Delta_\cH m+ b\cdot D_\cH m+ cm=0&\qquad \textrm{in }\He^1\times (0,T) \\
m(x,0)=m_0(x)&\qquad \textrm{on }\He^1.
\end{array}\right.
\end{equation}
Let us just underline that in Euclidean setting the above differential equation  becomes
\begin{equation*}
\partial_t m-\sigma\tr\left(D^2 m BB^T\right)+b\cdot (D mB) + c\,m=0
\end{equation*}
which is a degenerate second-order linear equation with unbounded coefficient: the one of the principal part has a quadratic growth while the one of the first-order part has a linear growth and fails to be globally Lipschitz continuous. 
Let us recall that in the paper \cite{CC} a similar result is obtained with a different approach under stronger assumptions on the coefficients.

We shall tackle two different settings: in the former the coefficients~$b$ and~$c$ are bounded and the solution is classical while in the latter the coefficients are possibly unbounded (but more regular) and the solution is weak.

For any domain $U\subset \He^1\times[0,T]$, any $k\in\N$ and any $\delta\in(0,1]$, we denote $C^{k+\delta}_\cH(U)$ (resp. $C^{k+\delta}_{\cH, loc}(U)$) the (resp. local) parabolic H\"older space adapted to the vector fields~$X_1$ and~$X_2$ (for instance, see~\cite[Section 4]{BB07} or~\cite[Definition 10.4]{BBLU}). For $\delta=0$ and $k=0$, we simply denote $C^{k}_\cH(U)$ and respectively $C^{\delta}_\cH(U)$.
%
%
\begin{proposition}\label{prp:!fpv_reg}
Assume that, in~\eqref{besala}, $b$ and $c$ are bounded continuous functions defined in $\He^1\times[0,T]$ and $b$ has a continuous and bounded horizontal gradient. For $i=1,2$, let $m_i\in C^{2}_\cH(\He^1\times(0,T])\cap C^0(\He^1\times[0,T])$ be two classical solution to~\eqref{besala} such that, for some positive constant~$\alpha$,
\begin{equation}\label{exponential}
\iint_{\He^1\times[0,T]} |m_i(x,t)|\exp\{-\alpha(\|x\|_\cH^2+1)\}dxdt<\infty.
\end{equation}
Then, $m_1=m_2$.
\end{proposition}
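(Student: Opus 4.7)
\begin{Proofc}{Proof proposal.}
The plan is a weighted $L^1$ energy argument applied to the difference $w := m_1 - m_2$, which solves $\partial_t w - \sigma \Delta_\cH w + b\cdot D_\cH w + cw = 0$ on $\He^1\times(0,T)$ with $w(\cdot,0)=0$ and the integrability inherited from \eqref{exponential}. Set $\rho(x) := 1 + \|x\|_\cH^2$ (in a mildly regularized sense near the origin) and introduce the time-dependent Gaussian weight $\phi(x,t) := \exp\{-\beta(t)\rho(x)\}$, where $\beta(t)$ will be chosen below. Multiplying the equation by $\sign_\epsilon(w)\,\chi_R^2\,\phi$, where $\sign_\epsilon$ is a smooth convex approximation of $\mathrm{sign}$ and $\chi_R$ is a cutoff with $|D_\cH\chi_R|,|\Delta_\cH\chi_R| \leq C/R$, performing a horizontal integration by parts (legitimate since $X_1,X_2$ are divergence-free with respect to Lebesgue measure), dropping the non-negative second-order contribution, and sending $\epsilon \to 0$, one obtains the differential inequality
\begin{equation*}
\frac{d}{dt}\int_{\He^1} |w|\chi_R^2\phi\, dx \leq \int_{\He^1} |w|\, L^*(\chi_R^2\phi)\, dx,
\end{equation*}
with $L^*\psi := \psi_t + \sigma\Delta_\cH\psi + b\cdot D_\cH\psi + C_0\psi$ and $C_0 := \|c\|_\infty + \|\diver_\cH b\|_\infty < \infty$ by hypothesis.

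The heart of the proof is the pointwise estimate $L^*\phi \leq C_*(t)\phi$. Lemma \ref{Xcalc} provides constants $A_1,A_2$ so that $|D_\cH\rho|^2 \leq A_1\rho$ and $|\Delta_\cH\rho| \leq A_2$, and a direct calculation yields
\begin{equation*}
L^*\phi = \phi\bigl[-\beta'(t)\rho + \sigma\beta^2|D_\cH\rho|^2 - \sigma\beta\Delta_\cH\rho - \beta\, b\cdot D_\cH\rho + C_0\bigr].
\end{equation*}
Young's inequality applied to $\beta|b\cdot D_\cH\rho|$ produces a $\tfrac{1}{2}\rho$-contribution plus a $O(\beta^2)$-remainder. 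Fixing $\beta_0 > \alpha$ and taking $\beta:[0,t^*)\to(0,\infty)$ as the maximal solution of the Riccati ODE $\beta'(t) = \sigma A_1\beta(t)^2 + 1$, $\beta(0)=\beta_0$ (which blows up at an explicit $t^* > 0$ depending only on $\sigma$, $A_1$, $\beta_0$), forces the coefficient of $\rho$ in the bracket to be $\leq -\tfrac{1}{2}$, so $L^*\phi \leq C_*(\beta(t))\phi$ with $C_*$ locally bounded on $[0,t^*)$. Moreover $\beta(t) \geq \beta_0 > \alpha$ guarantees $\phi \leq e^{-\alpha\rho}$, and \eqref{exponential} then ensures $\iint |w|\phi\, dx\,dt < \infty$, which makes the weighted energy meaningful.

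The cutoff remainder $L^*(\chi_R^2\phi) - \chi_R^2 L^*\phi$ is supported on the annulus $\{R \leq \|\cdot\|_\cH \leq 2R\}$ and is $O(1/R)$ there; its pairing against $|w|$ vanishes as $R\to\infty$ thanks to \eqref{exponential}. Passing to the limit and applying Gronwall yields $\int |w(\cdot,\tau)|\phi(\cdot,\tau)\, dx = 0$ for every $\tau \in [0,t^*)$, hence $w \equiv 0$ on $[0,t^*)$. Since the hypothesis \eqref{exponential} and the blow-up time $t^*$ are translation-invariant in time, one restarts the argument from $t=t^*/2$ with zero initial datum, and finitely many iterations cover $[0,T]$. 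The main obstacle is precisely the control of $L^*\phi$: the degenerate principal part produces a term $\sigma\beta^2|D_\cH\rho|^2\phi$ with quadratic growth in $\rho$ that cannot be dominated by any constant multiple of $\phi$; this is what forces the time-dependent weight governed by a Riccati equation (which itself blows up in finite time), thereby preventing a single global Gronwall step and necessitating the iteration. A secondary technical point is the rigorous justification of the horizontal integration by parts, which uses only $X_i$-derivatives of $m_i$ (available from the $C^2_\cH$ assumption) together with the exponential integrability to justify the $R\to\infty$ limit.
\end{Proofc}
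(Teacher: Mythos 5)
Your proposal is correct and follows essentially the same strategy as the paper's: a Besala--Ugowski--type uniqueness argument with exponential weight $\exp\{-\beta(t)(1+\|x\|_\cH^2)\}$, the key estimates on $D_\cH(\|x\|_\cH^2)$ and $\Delta_\cH(\|x\|_\cH^2)$ coming from Lemma~\ref{Xcalc}, a spatial cutoff passed to the limit via the integrability hypothesis~\eqref{exponential}, and a short-time Gronwall step. The paper packages the short-time obstruction differently (it takes a weight linear in $t$ governed by a large parameter $\beta$ and a short time window, and closes by contradiction via the first time of disagreement, whereas you parametrize the exponent with a Riccati ODE and iterate explicitly) and replaces your $\sign_\epsilon$ regularization by $\sqrt{m^2+\epsilon}$, but these are cosmetic variants of the same proof.
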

\begin{remark}
Estimate~\eqref{exponential} is verified by any function which is $Q_\cH$-periodic and belongs to~$\mathbb L^1(Q_\cH)$.
\end{remark}
\begin{proof}
Without any loss of generality, we assume $c\geq 0$; 
We shall adapt the techniques of \cite[Theorem 1]{BU}. To this end, we proceed by contradiction assuming that $m_1\ne m_2$. Let $\tau_0$ be the first time such that $m(\cdot,t)\ne m_2(\cdot,t)$, namely
\[
\tau_0:=\inf\{t\in[0,T]\mid m_1(\cdot,t)\ne m_2(\cdot,t)\}.
\]
By our assumption on the continuity of~$m_i$, $\tau_0$ belongs to~$[0,T)$. The initial condition of~\eqref{besala} (if $\tau_0=0$) and the continuity of $m_1$ and $m_2$ (if $\tau_0>0$) ensure that the function $m:=m_1-m_2$ solves
\[
\partial_t m-\sigma\Delta_\cH m+ b\cdot D_\cH m+ cm=0\quad \textrm{in }\He^1\times(\tau_0,T) ,\qquad m(x,\tau_0)=0\quad \textrm{on }\He^1.
\]
For any $\epsilon>0$, the function~$w:=\sqrt{m^2+\epsilon}$ verifies
\begin{equation*}
\begin{array}{l}
\partial_t w=\frac{m\partial_t m}{w},\, X_i w=\frac{m X_i m}{w},\, 
X_i^2 w=\epsilon\frac{(X_i m)^2}{w^3}+\frac{m}{w}X_i^2m,\, \Delta_\cH w=\epsilon\frac{|D_\cH m|^2}{w^3}+\frac{m}{w}\Delta_\cH m.
\end{array}
\end{equation*}
We multiply the differential equation by $m/w$ and, by these equalities and the sign of~$c$, we obtain
\begin{equation*}
\partial_t w=\sigma \Delta_\cH w -\sigma\epsilon \frac{|D_\cH m|^2}{w^3} -b\cdot D_\cH w - c \frac{m^2}{w}\leq \sigma \Delta_\cH w -b\cdot D_\cH w.
\end{equation*}
We deduce that, for any nonnegative test function $v\in C^{\infty}(\He^1\times[\tau_0,T])$ with bounded support in space and for every $t\in[\tau_0,T]$, there holds
\begin{equation*}
\int_{\He^1}w(x,t)v(x,t)dx-\int_{\He^1}w(x,\tau_0)v(x,\tau_0)dx\leq \iint_{\He^1\times[\tau_0,t]} w[\partial_tv+\sigma \Delta_\cH v+\diver_\cH(v b)]dx ds.
\end{equation*}
Since $w(\cdot,\tau_0)=\epsilon$, letting $\epsilon\to 0^+$, we deduce
\begin{equation}\label{besala1}
\int_{\He^1}|m(x,t)|v(x,t)dx\leq \iint_{\He^1\times[\tau_0,t]} |m|[\partial_tv+\sigma \Delta_\cH v+\diver_\cH(v b)]dx ds.
\end{equation}
Let us state the following technical Lemma whose proof is postponed after this proof. We recall that $\alpha$ is the constant of Proposition \ref{prp:!fpv_reg}.
%
%
\begin{lemma}\label{lm:besala}
For $\alpha_1>\alpha$, the function $\Phi(t,x):=\exp\{-[\alpha_1+\beta (t-\tau_0)](\|x\|_\cH^2+1)\}$ satisfies
\begin{equation*}
\begin{array}{rl}
i) & \quad  \partial_t \Phi+\sigma \Delta_\cH\Phi+ b\cdot D_\cH \Phi+ (\diver_\cH b) \Phi\leq 0\qquad \textrm{in }(\tau_0,\tau)\times\He^1\\
ii) &\quad \iint_{\He^1\times[\tau_0,\tau]} |m_i(x,t)|\Phi(x,t)dxdt<\infty,\qquad \iint_{\He^1\times[\tau_0,\tau]} |m_i(x,t) D_\cH\Phi(x,t)|dxdt<\infty
\end{array}
\end{equation*}
for suitable constants $\beta>0$ and $\tau\in (\tau_0,T]$.
\end{lemma}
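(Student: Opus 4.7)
The strategy is to evaluate the left-hand side of (i) directly by the chain rule and then control the horizontal derivatives of $N(x) := \|x\|_\cH^2 + 1$ via Lemma \ref{Xcalc}. Setting $A(t) := \alpha_1 + \beta(t-\tau_0)$ so that $\Phi = e^{-AN}$, a straightforward computation yields $\partial_t \Phi = -\beta N\Phi$, $D_\cH \Phi = -A\, D_\cH N\cdot \Phi$, and $\Delta_\cH \Phi = (-A\,\Delta_\cH N + A^2 |D_\cH N|^2)\Phi$, so that the left-hand side of (i) equals
\begin{equation*}
\Phi\,\bigl[-\beta N - \sigma A\,\Delta_\cH N + \sigma A^2 |D_\cH N|^2 - A\,b\cdot D_\cH N + \diver_\cH b\bigr].
\end{equation*}

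The key analytic input is the pair of pointwise estimates $|D_\cH N|^2 \leq 4(x_1^2+x_2^2) \leq 4N$ (from Lemma \ref{Xcalc}(ii), factoring out $(x_1^2+x_2^2)$ and using $4a+c\leq 4(a+c)$ with $a=(x_1^2+x_2^2)^2$, $c=x_3^2$) and $|\Delta_\cH N|\leq 9$ (from Lemma \ref{Xcalc}(iv), where both summands are uniformly bounded since $(x_1^2+x_2^2)\leq \|x\|_\cH^2$ and $|D_\cH N|^2/\|x\|_\cH^2\leq 4$). Setting $C_b := \|b\|_\infty + \|\diver_\cH b\|_\infty$ and using $|b\cdot D_\cH N| \leq 2\|b\|_\infty\sqrt{N} \leq 2C_b N$ (valid since $N\geq 1$), the bracket is bounded above by
\begin{equation*}
\bigl(-\beta + 4\sigma A(t)^2 + 2A(t) C_b\bigr) N + 9\sigma A(t) + C_b,
\end{equation*}
which, as an affine function of $N$ on $[1,\infty)$, is nonpositive as soon as $\beta \geq 4\sigma A(t)^2 + (2C_b+9\sigma)A(t) + C_b$ for every $t\in[\tau_0,\tau]$.

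We satisfy this by a two-step choice: first fix $\beta$ large enough that $\beta \geq 16\sigma\alpha_1^2 + 2\alpha_1(2C_b+9\sigma) + C_b$, and then set $\tau := \tau_0 + \min\{T-\tau_0,\ \alpha_1/\beta\}$, which ensures $A(t)\leq 2\alpha_1$ throughout $[\tau_0,\tau]$. The delicate point is precisely this feedback between the two parameters: boosting $\beta$ to dominate in (i) simultaneously inflates $A_{\max}$, yet the required lower bound on $\beta$ depends only on $\alpha_1$ (not on $\tau-\tau_0$), so the coupling closes.

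Assertion (ii) then follows from the choice $\alpha_1 > \alpha$: since $A(t)\geq\alpha_1$ on $[\tau_0,\tau]$, we have $\Phi \leq e^{-\alpha_1(\|x\|_\cH^2+1)} \leq e^{-\alpha(\|x\|_\cH^2+1)}$, and the first integral is finite by \eqref{exponential}. For the second, $|D_\cH \Phi|\leq 2A_{\max}\sqrt{N}\,\Phi$, and the polynomial prefactor $\sqrt{N}$ is absorbed into the strict additional decay $\sqrt{N}\,e^{-(\alpha_1-\alpha)N}\leq C$, again reducing integrability to \eqref{exponential}.
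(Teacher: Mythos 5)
Your proof is correct and follows the same overall strategy as the paper (direct chain-rule computation for $\Phi = e^{-AN}$, then the pointwise estimates for $D_\cH N$, $|D_\cH N|^2$ and $\Delta_\cH N$ furnished by Lemma~\ref{Xcalc}). Where you improve on the paper is in making the parameter choice airtight: the paper merely asserts ``choosing $\tau-\tau_0$ sufficiently small and $\beta$ sufficiently large,'' without addressing the apparent circularity that enlarging $\beta$ enlarges $\alpha_2=\alpha_1+\beta(\tau-\tau_0)$. Your two-step scheme—fix $\beta \geq 16\sigma\alpha_1^2+2\alpha_1(2C_b+9\sigma)+C_b$ first, then shrink $\tau-\tau_0\leq\alpha_1/\beta$ so that $A(t)\leq 2\alpha_1$, observing that the required lower bound on $\beta$ depends only on $\alpha_1$—closes that loop explicitly. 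Your argument for (ii), bounding $\Phi\leq e^{-\alpha_1 N}$ and absorbing the $\sqrt N$ prefactor from $D_\cH\Phi$ into $e^{-(\alpha_1-\alpha)N}$, matches the paper's ``easy consequence'' remark with the details filled in. (A cosmetic note: you write $D_\cH\Phi = -A\,D_\cH N\cdot\Phi$; the ``$\cdot$'' there should be read as scalar multiplication of the vector $D_\cH N$ by $\Phi$, not a dot product.)
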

We choose $t\in[\tau_0, \tau]$ and $v=\gamma_R \Phi$ where $\tau$ and~$\Phi$ are respectively the constant and the function introduced in Lemma~\ref{lm:besala} while $\gamma_R\in C^\infty(\He^1)$ is a cut-off function such that:
\begin{equation*}
\gamma_R(x)=1\quad \textrm{if }|x|\leq R,\qquad
\gamma_R(x)=0\quad \textrm{if }|x|\geq R+1,\qquad \|D\gamma_R\|_\infty+\|D^2\gamma_R\|_\infty\leq 2.
\end{equation*}
Hence, inequality~\eqref{besala1} becomes
\begin{multline*}
\int_{\He^1}|m(x,t)|\gamma_R(x)\Phi(x,t)dx\leq\\ \iint_{[B(0,R+1)\setminus B(0,R)]\times[\tau_0,t]} |m|\left[\Phi(\sigma \Delta_\cH \gamma_R+b\cdot D_\cH \gamma_R)+2\sigma D_\cH\gamma_R\cdot D_\cH\Phi\right] dx dt.
\end{multline*}
Letting $R\to+\infty$, since the dominated convergence theorem and Lemma~\ref{lm:besala}-(ii) ensure that the right-hand side tends to zero, last inequality yields
\begin{equation*}
\int_{\He^1}|m(x,t)|\Phi(x,t)dx\leq 0\qquad \forall t\in[\tau_0,\tau]
\end{equation*}
which entails $m=0$ in $\He^1\times(\tau_0,\tau)$ contradicting the definition of $\tau_0$.
\end{proof}

\begin{Proofc}{Proof of Lemma~\ref{lm:besala}}
The equalities in Lemma~\ref{Xcalc}-(i), (ii) and (iv) entail respectively that there hold
\begin{equation*}
|X_i(\|x\|_\cH^2)|^2\leq C_1\|x\|_\cH^2,\qquad
|D_\cH(\|x\|_\cH^2)|^2\leq C_1 \|x\|_\cH^2,\qquad
|\Delta_\cH(\|x\|_\cH^2)|^2\leq C_1
\end{equation*}
for a suitable positive constant $C_1$.
Taking into account these estimates, denoting by $\alpha_2:=\alpha_1+\beta (\tau-\tau_0)$, we have
\begin{equation*}
\begin{array}{l}
\partial_t \Phi+\sigma \Delta_\cH\Phi+b\cdot D_\cH\Phi+ (\diver_\cH b) \Phi\\
\quad= \Phi\left[-\beta(\|x\|_\cH^2+1) +\sigma \alpha_2^2|D_\cH(\|x\|_\cH^2)|^2 -\sigma \alpha_2\Delta_\cH(\|x\|_\cH^2)
-\alpha_2 b\cdot D_\cH(\|x\|_\cH^2)+\diver_\cH b\right]\\
\quad \leq \Phi \left[-\beta(\|x\|_\cH^2+1)+\sigma\alpha_2^2C_1\|x\|_\cH^2 +\sigma\alpha_2C_1+\|b\|_\infty\alpha_2C_1\|x\|_\cH+\|\diver_\cH b\|_\infty\right].
\end{array}
\end{equation*}
Choosing $\tau-\tau_0$ sufficiently small and $\beta$ sufficiently large, we accomplish the proof of point~$(i)$.\\ 
Point~$(ii)$ is an easy consequence of our choice of $\alpha_1$ and our assumption~\eqref{exponential}.
\end{Proofc}
%
%
Let us now establish a uniqueness result for weak solution to problem~\eqref{besala}. To this end, it is expedient to introduce the following family of test functions
\begin{equation}\label{def:K_t_beta}
{\mathcal K}_{t,\beta}:=\left\{\phi\in C^{2}(\He^1\times[0,t])\mid \exists C>0:\ 
\begin{array}{rl}
i)& |\phi|\leq C \exp\{\beta\|x\|_\cH^2\}\\
ii)& |{\mathcal A}^*\phi|\leq C\exp\{\beta\|x\|_\cH^2\}\\
\end{array}
\right\}
\end{equation}
where ${\mathcal A}^*\phi:=\partial_t \phi +\sigma \Delta_\cH\phi+\diver_\cH(b\phi)-c\phi$. 
\begin{example}
It is clear that $C^\infty_0(\He^1)\subset {\mathcal K}_{t,\beta}$ for any~$\beta\in\R$. For~$\beta$ non positive, the property~$(i)$ in~\eqref{def:K_t_beta} is satisfied by any $\phi\in\mathbb L^1(\He^1)$. For~$\beta$ negative, ${\mathcal K}_{t,\beta}$ contains all the bounded functions $\phi\in C^{2,1}_\cH$ with ${\mathcal A}^*\phi$ bounded.
\end{example}

%
%
\begin{proposition}\label{prp:!FPv_distr}
Assume that, for some $\delta\in(0,1]$ and some $\beta_0\in \R$, there hold
\begin{itemize}
\item[I)]  $b$, $c$ and their horizontal derivatives up to second order and respectively first order belong to the space $C^{\delta}_\cH(\He^1\times[0,T])$.
\item[II)]  
$\int_{\He^1}|m_0(x)|\exp\{\beta_0\|x\|_\cH^2\}dx<\infty$.
\end{itemize}
Furthermore assume also that, for some fixed constant $\beta\leq \beta_0$, for $i=1,2$ the functions $m_i:[0,T]\to {\mathcal M} (\He^1)$ verify
\begin{equation}\label{eq:FPv_distr}
\int_{\He^1}\phi(x,t)m_i(t)(dx)=\int_{\He^1}\phi(x,0)m_0(x)dx+
\iint_{\He^1\times[0,t]} ({\mathcal A}^*\phi) m_i(s)(dx)ds
\end{equation}
for every $t\in(0,T)$ and every $\phi\in {\mathcal K}_{t,\beta}$.
Then, $m_1=m_2$.
\end{proposition}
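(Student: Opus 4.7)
The plan is a duality argument. Setting $\bar m := m_1 - m_2$, the linearity of~\eqref{eq:FPv_distr} and the vanishing of the initial data give
\[
\int_{\He^1}\phi(x,t)\,\bar m(t)(dx) = \iint_{\He^1\times[0,t]}(\mathcal{A}^*\phi)\,\bar m(s)(dx)\,ds
\]
for every $t\in(0,T)$ and every $\phi\in\mathcal K_{t,\beta}$. It therefore suffices to show that for each $t^*\in(0,T)$ and each $\psi\in C_c^\infty(\He^1)$ one can construct $\phi\in\mathcal K_{t^*,\beta}$ with $\mathcal A^*\phi\equiv 0$ on $\He^1\times[0,t^*)$ and $\phi(\cdot,t^*)=\psi$: for such $\phi$ the identity above reads $\int\psi\,d\bar m(t^*)=0$, and letting $\psi$ vary in $C_c^\infty(\He^1)$ forces $\bar m(t^*)=0$, whence $m_1=m_2$ by the arbitrariness of~$t^*$.

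To construct such a $\phi$ I would reverse time via $\tau:=t^*-s$ and $\tilde\phi(x,\tau):=\phi(x,t^*-\tau)$, turning the backward adjoint Cauchy problem into a forward hypoelliptic parabolic Cauchy problem
\[
\partial_\tau\tilde\phi-\sigma\Delta_\cH\tilde\phi-\tilde b\cdot D_\cH\tilde\phi-(\diver_\cH\tilde b-\tilde c)\tilde\phi=0,\qquad \tilde\phi(\cdot,0)=\psi,
\]
where $\tilde b(x,\tau):=b(x,t^*-\tau)$, $\tilde c(x,\tau):=c(x,t^*-\tau)$, with smooth compactly supported initial datum. Under assumption~(I), the coefficients and their relevant horizontal derivatives lie in $C^\delta_\cH$, so the regularity theory of~\cite{BB07,BBLU} applies: solving first in the bounded cylinders $B_R\times[0,t^*]$ with zero Dirichlet data, then passing to the limit $R\to\infty$ by uniform interior Schauder estimates and a diagonal extraction, one obtains a classical solution $\tilde\phi\in C^{2+\delta}_{\cH,loc}(\He^1\times[0,t^*])$.

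The crucial step is then to verify that the corresponding $\phi$ lies in $\mathcal K_{t^*,\beta}$. Since $\mathcal A^*\phi\equiv 0$, the bound on $|\mathcal A^*\phi|$ is automatic, so only $|\phi(x,s)|\leq C\exp\{\beta\|x\|_\cH^2\}$ needs to be established. I would adapt the barrier construction of Lemma~\ref{lm:besala} to the forward adjoint equation, using the trial function
\[
W(x,\tau):=K\exp\{\rho(\tau)(\|x\|_\cH^2+1)\}.
\]
Computing the action of the forward adjoint operator on $W$ via the identities in Lemma~\ref{Xcalc} and using the H\"older control on $b$ and $c$ from assumption~(I), one finds that $W$ is a classical supersolution as soon as $\rho$ satisfies a Riccati-type ODE of the form $\rho'\geq\sigma C_1\rho^2+C_2\rho+C_3$. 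Choosing $\rho(0)>0$ small enough that $W(\cdot,0)\geq|\psi|$ (possible because $\psi$ has compact support), and iterating on a finite partition of~$[0,t^*]$ if needed so that the endpoint value $\rho(t^*)$ remains $\leq\beta$, a comparison argument on increasing cylinders $B_R\times[0,t^*]$ followed by $R\to\infty$ (in the spirit of the proof of Proposition~\ref{prp:!fpv_reg}) yields $|\tilde\phi|\leq W$, so $\phi\in\mathcal K_{t^*,\beta}$. The hardest part will be this combined barrier/comparison analysis, since it must balance the Riccati growth of $\rho$ against the prescribed terminal bound $\beta$, while accommodating the possibly unbounded coefficients and the degeneracy of $\Delta_\cH$ on the boundary of the approximating cylinders.
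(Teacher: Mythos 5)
Your overall strategy is the same as the paper's: a Holmgren duality argument in which one solves the backward Cauchy problem $\mathcal A^*\phi=0$, $\phi(\cdot,t^*)=\psi$ with $\psi\in C_c^\infty(\He^1)$, checks that $\phi\in\mathcal K_{t^*,\beta}$, and plugs $\phi$ into the weak identity to conclude $\int\psi\,dm_1(t^*)=\int\phi(\cdot,0)\,m_0\,dx=\int\psi\,dm_2(t^*)$ (followed by iteration on short time intervals). The main technical divergence is your treatment of the growth bound (i) in~\eqref{def:K_t_beta}: where the paper invokes the Gaussian upper bounds for the fundamental solution of~\cite[Theorem 10.7-(iv)]{BBLU} and calibrates $T_0$ against $\beta$, you propose a direct barrier $W(x,\tau)=K\exp\{\rho(\tau)(\|x\|_\cH^2+1)\}$ controlled by a Riccati ODE and a comparison argument on increasing cylinders. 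The barrier route is a legitimate alternative (it mirrors Lemma~\ref{lm:besala} in the companion uniqueness Proposition~\ref{prp:!fpv_reg}), though it buys essentially the same short-time restriction and requires some care near the characteristic lateral boundaries of the cylinders.

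There is, however, a genuine gap in your proposal. The test-class $\mathcal K_{t,\beta}$ in~\eqref{def:K_t_beta} requires $\phi\in C^{2}(\He^1\times[0,t])$ in the \emph{Euclidean} sense, whereas the Schauder theory of~\cite{BB07,BBLU} that you invoke (or obtain via cylinder exhaustion) only produces $\tilde\phi\in C^{2+\delta}_{\cH,\mathrm{loc}}$, i.e.\ intrinsic regularity adapted to $X_1,X_2$. This is strictly weaker: $C^{2+\delta}_{\cH}$ gives $X_i X_j\phi\in C^\delta_{\cH}$, and hence by the commutator relation $\partial_{x_3}\phi=-\tfrac12(X_1X_2-X_2X_1)\phi\in C^{\delta}_{\cH}$, but this says nothing about the horizontal derivatives of $\partial_{x_3}\phi$, which are needed to recover full Euclidean second derivatives $\partial_{x_i}\partial_{x_j}\phi$. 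Without Euclidean $C^2$, $\phi$ is not admissible in~\eqref{eq:FPv_distr} and the identity $\int\psi\,dm_i(t^*)=\int\phi(\cdot,0)\,m_0\,dx$ cannot be asserted. The paper fills this in by a bootstrap using assumption~(I) on the derivatives of $b,c$: it shows that $\Phi_3:=\partial_{x_3}\phi$, $\Phi_i:=X_i\phi$ and $\Phi_{ij}:=X_iX_j\phi$ each solve subelliptic parabolic Cauchy problems of the same type, and applies the Schauder theory again (and~\cite{RS}) to upgrade them, yielding $D^2\phi\in C^0$. You must include an analogous bootstrap step; the barrier estimate alone, no matter how carefully the Riccati growth is balanced, does not supply the required Euclidean regularity.
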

\begin{proof}
  We shall argue following a classical method going back to Holmgren (see \cite[pag.340]{Kr} 
and references therein). It suffices to show that, for every $\psi\in C^\infty_0(\He^1)$ with $\|\psi\|_\infty\leq 1$ and $t\in(0,T]$, there holds
\begin{equation*}
\int_{\He^1}\psi(x)m_1(t)(dx)=\int_{\He^1}\psi(x)m_2(t)(dx).
\end{equation*}
To this end, we fix such $\psi$ and $t\in(0,T_0]$, where $T_0$ will be suitably chosen later on and it will only depend on the coefficients $b$ and $c$, and consider the (backward) Cauchy problem
\begin{equation}\label{pbC_duale}
\left\{\begin{array}{ll}
{\mathcal A}^*\phi=\partial_t \phi +\sigma \Delta_\cH\phi+\diver_\cH(b\phi)-c\phi=0&\qquad\textrm{in }\He^1\times(0,T_0) \\
\phi(T_0,x)=\psi(x)&\qquad\textrm{on }\He^1.
\end{array}\right.
\end{equation}
Invoking~\cite[Theorem 10.7-(v)]{BBLU} and~\cite[Theorem 1.1]{BB07}, we obtain that there exists a function $\phi\in C^{2,\delta}_{\cH}$ which is a classical solution to problem~\eqref{pbC_duale}.

Assume for the moment that the function~$\phi$ belongs to ${\mathcal K}_{T_0,\beta}$; then $\int_{\He^1}\phi(x,0)m_0(x)dx$ is finite. Indeed, by point~$(i)$ of~\eqref{def:K_t_beta} and since $\beta\leq \beta_0$, we have
\begin{eqnarray*}
\int_{\He^1}|\phi(x,0)m_0(x)|dx&\leq&
C\int_{\He^1}|m_0(x)|\exp\{\beta_0\|x\|_\cH^2\}\exp\{(\beta-\beta_0)\|x\|_\cH^2\}dx\\
&\leq& C\int_{\He^1}|m_0(x)|\exp\{\beta_0\|x\|_\cH^2\}dx.
\end{eqnarray*}
Moreover, replacing~\eqref{pbC_duale} in~\eqref{eq:FPv_distr} with $i=1,2$  we obtain
\[
\int_{\He^1}\psi(x)m_1(t)(dx)=\int_{\He^1}\phi(x,0)m_0(x)dx=\int_{\He^1}\psi(x)m_2(t)(dx).
\]
By the arbitrariness of $\psi$ and $t$, we get $m_1=m_2$ in $\He^1\times[0,T_0]$. Iterating this argument on time intervals of length $T_0$, we accomplish the proof.

It remains to prove that the function~$\phi$ belongs to ${\mathcal K}_{T_0,\beta}$; in other words, we need to prove that: (a) $\phi$ verifies the bounds in points~(i) and~(ii) in definition~\eqref{def:K_t_beta}, (b) $\phi$ is a $C^{2,1}$ function.\\
(a). Let us prove point (i) in~\eqref{def:K_t_beta}. By~\cite[Theorem 10.7-(v)]{BBLU}, the function~$\phi$ can be written as
\[
\phi(s,x)=\int_{\He^1}h(t-s,x;0,\xi)\psi(\xi)d\xi
\]
for a suitable nonnegative  kernel~$h$.
The final datum $\psi$ in~\eqref{pbC_duale} belongs to $C^\infty_0(\He^1)$; hence  $\textrm{supp} \psi\subset B_\cH (0,K_\psi)$ for some positive constant~$K_\psi$.
Therefore, taking also advantage of the estimates in~\cite[Theorem 10.7-(iv)]{BBLU}, we deduce that, for some constants $C_1$ and $C_2$ (depending only on~$b$ and~$c$), there holds
\begin{eqnarray*}
|\phi(s,x)|&\leq&  C_1\int_{B_\cH (0,K_\psi)}\frac{1}{|B_\cH (x,C_2(t-s)^{1/2})|}\exp\left\{\frac{-d_\cH(x,\xi)^2}{C_2^2(t-s)}\right\}d\xi\\
&\leq&
 C_1\frac{|B_\cH (0,K_\psi)|}{|B_\cH (x, C_2(t-s)^{1/2})|}\exp\left\{-\frac{(\|x\|_\cH^2-R^2)\vee 0}{ C_2^2(t-s)}\right\}.
\end{eqnarray*}
We fix $T_0:=(C_2\beta)^{-1}$ and we obtain point (i) in~\eqref{def:K_t_beta}. The requirement (ii) in~\eqref{def:K_t_beta} can be obtained in a similar manner (taking advantage of the other estimates for~$h$ in~\cite[Theorem 10.7-(iv)]{BBLU}) so we shall omit its proof.

\noindent (ii). We already know: $\partial_t\phi ,X_i\phi, X_iX_j\phi\in C^{\delta}$ so, in particular, they are bounded continuous functions. We shall improve this regularity by a bootstrap argument. 
By equality \eqref{primo} we get that
$$X_1(\diver_\cH(b\phi)) = \diver_\cH(b X_1\phi) +  \diver_\cH(X_1b \phi)+2\partial_{x_3}(b_2 \phi)$$
$$X_2(\diver_\cH(b\phi)) = \diver_\cH(b X_2\phi) +  \diver_\cH(X_2b \phi)-2\partial_{x_3}(b_1 \phi)$$
Hence, taking account of \eqref{secondo} we get that the functions $\Phi_i:=X_i\phi$, $i=1,2$ are distributional solution in $\He^1\times(0,t)$ to
\begin{equation*}
\left\{\begin{array}{ll}
  \partial_t \Phi_1+\sigma\Delta_\cH\Phi_1=-\diver_\cH(b \Phi_1) - \diver_\cH((X_1b) \phi)-2\partial_{x_3}(b_2 \phi)- 4\sigma X_2(\partial_{x_3}\phi)-c\Phi_1-(X_1c)\phi,\\
    \partial_t \Phi_2+\sigma\Delta_\cH\Phi_2=-\diver_\cH(b \Phi_2) -  \diver_\cH((X_2b) \phi)+2\partial_{x_3}(b_1 \phi)+4\sigma X_1(\partial_{x_3}\phi)-c\Phi_2- (X_2c)\phi,\\
\Phi_i(t,x)=X_i\psi(x)\qquad\textrm{on }\He^1
\end{array}\right.
\end{equation*}
The equation satisfied by $\Phi_3:=\partial_{x_3}\phi$ is 
$$ \partial_t \Phi_3+\sigma\Delta_\cH\Phi_3=-\diver_\cH(b\Phi_3)-\diver_\cH(\partial_{x_3}b \phi) +c\Phi_3+(\partial_{x_3}c)\phi.$$
Arguing as in (ii) proof of Lemma 5.1 we get that $\partial_{x_3}\phi\in C_{\cH}^{2+\delta}$. 
Our assumptions and the above bounds for the kernel $h$ and its horizontal derivatives ensures that the right-hand side of the equations satisfied by $\Phi_i:=X_i\phi$, $i=1,2$
belong to $C^\delta$. Therefore, applying~\cite[Theorem 18]{RS} (see also \cite[Theorem 16-(b)]{RS}), we get $\Phi_i\in C^{1+\delta}$ and, consequently, $D^2\phi\in C^0$.
\end{proof}

\section{Probabilistic representation for the continuity equation}

This appendix is devoted to adapt the results in~\cite[Theorem 8.2.1]{AGS} to the case of a continuity equation expressed in terms of the vector fields generating the Heisenberg group and with a drift $D_\cH u$ which is bounded and $Q_\cH$-periodic in the sense of section~\ref{sub:periodicity}. As a matter of facts, in our case, the statement of \cite[Theorem 8.2.1]{AGS} does not apply because the sommability assumption \cite[equation (8.1.21)]{AGS} for the drift (which reads $D_{\cH}u\,B^T$ in Euclidean coordinates) does not hold. To get the probabilistic representation of the solution of the continuity equation
\eqref{eq:MFGintrin}
the key ingredient is a ``superposition principle'' (see \eqref{superA}) which allows to prove that there exists a probability measure concentrated on the solutions of the ODE associated to the optimal synthesis \eqref{OS}.
To get this superposition principle the key results are Lemma \ref{nostro819} and Lemma \ref{8110nostro} where we strongly use the properties of the distance associated to the Heisenberg group and of the pavage to represent $\mathbb{H}^{1}$. 

Throughout this section, we shall only study $Q_\cH$-periodic solution~$m$ to~\eqref{eq:MFGintrin}-(ii) and we shall write ``a.e.'' without specifying the measure when we intend ``a.e. with respect to the Lebesgue measure''.

We observe that $m$ is a $Q_\cH$-periodic solution of \eqref{eq:MFGintrin}-(ii) in the sense of distributions in $\mathbb{H}^{1}$ means
\begin{equation}\label{813nostra}
\int_0^T\int_{\mathbb{H}^{1}}(\partial_t\varphi- D_{\cH}u\cdot D_{\cH}\varphi)dm_t(x)dt=0\qquad \forall \varphi\in C_{c}^{\infty}(\mathbb{H}^{1}\times (0,T)).
\end{equation}
\\
Choosing $\varphi(t,x)=\eta(t)\zeta(x)$ with $\eta\in C_c^{\infty}(0,T)$, by density, we get the following equivalent formulation of  \eqref{813nostra}:
\begin{equation}\label{814nostra}
\frac{d}{dt} \int_{\mathbb{H}^{1}}\zeta(x)dm_t(x)=-\int_{\mathbb{H}^{1}}D_{\cH}u\cdot D_{\cH}\zeta(x)dm_t(x)
\end{equation}
 for any $\zeta\in C_{c}^{\infty}(\mathbb{H}^{1})$, in the sense of distribution in $(0,T)$.\\
Note that, by periodicity, $m$ is a solution of \eqref{eq:MFGintrin}-(ii) in the sense of distributions in $(0,T)$ also over $\Te$, i.e.
\begin{equation}\label{814nostraQH}
\frac{d}{dt} \int_{\Te}\zeta(x)dm_t(x)=-\int_{\Te}D_{\cH}u\cdot D_{\cH}\zeta(x)dm_t(x),\qquad  \forall \zeta\in C^{\infty}(\Te).
\end{equation}
The following lemma ensures that any $Q_\cH$-periodic distributional solution to~\eqref{eq:MFGintrin}-(ii) (or, equivalently to \eqref{814nostra} or to \eqref{814nostraQH}) has a representative in $C([0,T],{\mathcal P}_{per}(\He^1))$ which will be always called $m$.
\begin{lemma}\label{8.1.2}
(Continuous representative). Let $m_{t}$ be a Borel family of probability measures $Q_\cH$-periodic satisfying 
\eqref{814nostraQH}.
Then there exists a narrowly continuous curve $t \in[0, T] \mapsto \tilde{m}_{t} \in \mathcal{P}\left(\Te\right)$ such that $m_{t}=\tilde{m}_{t}$ for a.e. $t \in(0, T).$
Moreover, if $\varphi \in C_{{\cH}}^{1,1}\left(\Te\times[0, T]\right)$ and $t_{1} \leq t_{2} \in[0, T]$ we have
\begin{equation}\label{815nostra}
\int_{\Te} \varphi\left(x, t_{2}\right) d \tilde{m}_{t_{2}}(x) -\int_{\Te} \varphi\left(x, t_{1}\right) d \tilde{m}_{t_{1}}(x)
=\int_{t_{1}}^{t_{2}} \int_{\Te}\left(\partial_{t} \varphi+D_{\cH} \varphi\cdot D_{\cH}u\right) d m_{t}(x) d t.
\end{equation}
\end{lemma}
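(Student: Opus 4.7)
The plan is to adapt the proof of \cite[Lemma 8.1.2]{AGS} to the Heisenberg torus $\Te$, exploiting the boundedness of the drift $D_{\cH}u$ (which gives the needed Lipschitz-in-time estimate on each test-integrated measure) together with the compactness of $\Te$ and Riesz representation.

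First, I would work out the Lipschitz regularity in $t$ of the curve $m_t$ tested against smooth functions. For each $\zeta\in C^{\infty}(\Te)$, plugging $\varphi(x,t)=\eta(t)\zeta(x)$ with $\eta\in C^{\infty}_c(0,T)$ into the distributional formulation \eqref{814nostraQH} shows that the map $t\mapsto\mu_\zeta(t):=\int_{\Te}\zeta\,dm_t$ has a distributional derivative bounded in $L^{\infty}(0,T)$ by $\|D_{\cH}u\|_{\infty}\|D_{\cH}\zeta\|_{\infty}$. Hence $\mu_\zeta$ admits a (unique) Lipschitz continuous representative on $[0,T]$, still denoted $\mu_\zeta$, with
\[
|\mu_\zeta(t)-\mu_\zeta(s)|\leq \|D_{\cH}u\|_{\infty}\|D_{\cH}\zeta\|_{\infty}\,|t-s|.
\]

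Second, I would fix a countable subset $\mathcal D\subset C^{\infty}(\Te)$, dense in $C(\Te)$ for the uniform norm (obtained for instance by smoothing a countable dense family in $C(\Te)$ with the convolution kernel $\rho_\epsilon$ of Section~\ref{sec:conv}). Outside a common Lebesgue-null set $\mathcal N\subset(0,T)$, the identities $\int\zeta\,dm_t=\mu_\zeta(t)$ hold simultaneously for every $\zeta\in\mathcal D$. For each $t\in[0,T]$, the functional $\zeta\mapsto\mu_\zeta(t)$ is linear on $\mathcal D$, bounded by $\|\zeta\|_{\infty}$, and positive (take $t_n\notin\mathcal N$, $t_n\to t$, and use the positivity of $m_{t_n}$). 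It therefore extends uniquely to a positive continuous linear functional on $C(\Te)$ of unit norm, and by the Riesz representation theorem on the compact metric space $\Te$ it is represented by a unique probability measure $\tilde m_t\in\mathcal P(\Te)$. By construction $\tilde m_t=m_t$ for $t\in(0,T)\setminus\mathcal N$, and for any fixed $\zeta\in C(\Te)$ the function $t\mapsto\int\zeta\,d\tilde m_t$ is the uniform limit on $[0,T]$ of the continuous functions $\mu_{\zeta_n}$ for any $\zeta_n\in\mathcal D$ with $\zeta_n\to\zeta$ uniformly, so $t\mapsto\tilde m_t$ is narrowly continuous.

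Third, to derive the integral identity \eqref{815nostra} I would first establish it for product test functions $\varphi(x,t)=\eta(t)\zeta(x)$ with $\eta\in C^1([0,T])$ and $\zeta\in C^{\infty}(\Te)$: using the Lipschitz regularity of $\mu_\zeta$ and the distributional identity \eqref{814nostraQH}, an elementary integration by parts in $t$ together with the identification $\tilde m_t=m_t$ almost everywhere gives the claim at the endpoints $t_1,t_2$. The identity then extends by linearity to finite sums of such products, and finally to a generic $\varphi\in C^{1,1}_{\cH}(\Te\times[0,T])$ by uniform density (of the algebra generated by products) applied simultaneously to $\varphi$, $\partial_t\varphi$ and $D_{\cH}\varphi$.

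The main obstacle is the approximation step underpinning the second and third paragraphs: I must ensure the existence of a countable family $\mathcal D$ whose finite linear combinations approximate, together with their horizontal gradients and time derivatives, every admissible $\varphi$. This is not automatic because $\Te$ is not the Euclidean torus and the horizontal derivatives are non-standard; however, the Heisenberg convolution of Section~\ref{sec:conv} commutes with the vector fields $X_1,X_2$ (by Proposition~\ref{propconvH}-(v)), so regularizing via $\rho_\epsilon$ preserves $Q_{\cH}$-periodicity and controls $D_{\cH}$, and this technical issue is tractable. Every other step is a direct transposition of the classical AGS argument, the key structural input being merely the $L^{\infty}$ bound on the drift established in Lemma~\ref{lm:usigma}.
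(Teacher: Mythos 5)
Your proposal is correct and follows essentially the same route as the paper's proof, which establishes the $W^{1,1}(0,T)$ regularity of $t\mapsto\int_{\Te}\zeta\,dm_t$ with the uniform Lipschitz bound $\|D_{\cH}u\|_{\infty}\|D_{\cH}\zeta\|_{\infty}$ and then defers the remaining steps to \cite[Lemma 8.1.2]{AGS}, noting that the compactness of $\Te$ automatically supplies the tightness required there. You have simply written out in detail the AGS argument that the paper cites by reference, relying on the same structural inputs: boundedness of the drift $D_{\cH}u$, compactness of $\Te$, and Riesz representation, with the Heisenberg convolution of Proposition~\ref{propconvH} standing in for the Euclidean mollification in the density step.
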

\begin{proof}
From \eqref{814nostraQH} we get that, for any $\zeta \in C^{\infty}(\Te)$ 
$$
t \mapsto m_{t}(\zeta)=\int_{\Te} \zeta(x) d m_{t}(x) \in W^{1,1}(0, T)
$$
with distributional derivative
\begin{equation*}
\frac{d}{dt}{m}_{t}(\zeta)=-\int_{\Te} D_{\cH} \zeta(x)\cdot D_{\cH}u(x,t) d m_{t}(x) \quad \text { for  a.e. } t \in(0, T);
\end{equation*}
so, since $m_t$ is a measure on~$\Te$, by the boundedness of $D_{\cH} u$, we deduce
\begin{equation*}
\left|\frac{d}{dt}{m}_{t}(\zeta)\right| \leq \|D_{\cH} u\|_{\infty,\Te} \|D_{\cH} \zeta\|_{\infty,\Te}. 
\end{equation*}
Following the proof of \cite[lemma 8.1.2]{AGS}, we get that $m_t$ can be extended in a unique way to a continuous curve $\left\{\tilde{m}_{t}\right\}_{t \in[0, T]}$ in $\mathcal{P}\left(\Te\right)$ and also that  \eqref{815nostra} holds. 
Note that in our case the compactness of $\Te$ yields directly the tightness of the family $m_t$.
\end{proof}

\begin{lemma}\label{8.1.3}
Let $\mathrm{t}:s \in\left[0, T^{\prime}\right] \rightarrow \mathrm{t}(s) \in[0, T]$ be a strictly increasing absolutely continuous map with absolutely continuous inverse $\mathrm{s}:=\mathrm{t}^{-1}$.
Then $m_{t}$ is a distributional solution of \eqref{eq:MFGintrin}-(ii) with drift $D_{\cH}u$ if and only if $\hat{m}:=m \circ \mathrm{t}$,  is a distributional solution of \eqref{eq:MFGintrin}-(ii) on $\left(0, T^{\prime}\right)$ with drift $\mathrm{t}^{\prime} D_{\cH}u \circ \mathrm{t}$.
\end{lemma}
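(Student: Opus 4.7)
The plan is to prove the equivalence by a direct time-change in the distributional formulation; since $\mathrm{t}$ and $\mathrm{s}:=\mathrm{t}^{-1}$ play symmetric roles (both being strictly increasing and absolutely continuous), it suffices to establish one implication and the other follows by the same argument with the roles of $m$ and $\hat m$ swapped. I would start from the weak formulation in the form of~\eqref{813nostra}: for every test function $\psi\in C_c^\infty(\He^1\times(0,T'))$, I want to construct a companion test function $\varphi$ on $\He^1\times(0,T)$ obtained by the time-reparametrisation $\varphi(x,t):=\psi(x,\mathrm{s}(t))$, plug $\varphi$ into the equation solved by $m$, and then change the integration variable $t=\mathrm{t}(s)$.

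Formally, assuming we may test $m$ against $\varphi$, the relation $\partial_t\varphi(x,t)=\mathrm{s}'(t)\,\partial_s\psi(x,\mathrm{s}(t))$ combined with $D_\cH\varphi(x,t)=D_\cH\psi(x,\mathrm{s}(t))$ gives, after the substitution $t=\mathrm{t}(s)$ (which is legitimate since $\mathrm{t}$ is an absolutely continuous homeomorphism and $dt=\mathrm{t}'(s)\,ds$),
\begin{equation*}
0=\int_0^{T'}\!\!\int_{\He^1}\bigl[\mathrm{s}'(\mathrm{t}(s))\,\partial_s\psi-D_\cH u(x,\mathrm{t}(s))\cdot D_\cH\psi\bigr]\,\mathrm{t}'(s)\,d\hat m_s(x)\,ds.
\end{equation*}
Using $\mathrm{s}'(\mathrm{t}(s))\,\mathrm{t}'(s)=1$ for a.e.~$s$, the right-hand side reduces precisely to
\begin{equation*}
\int_0^{T'}\!\!\int_{\He^1}\bigl[\partial_s\psi-\mathrm{t}'(s)\,D_\cH u(\cdot,\mathrm{t}(s))\cdot D_\cH\psi\bigr]\,d\hat m_s(x)\,ds=0,
\end{equation*}
which is exactly the distributional continuity equation for $\hat m$ with drift $\mathrm{t}'\,D_\cH u\circ\mathrm{t}$.

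The main obstacle is that the candidate test function $\varphi(x,t)=\psi(x,\mathrm{s}(t))$ is in general only absolutely continuous (not $C^\infty$) in time, because $\mathrm{s}$ is merely absolutely continuous. To circumvent this, I would first enlarge the admissible class of test functions for~\eqref{813nostra}: invoking Lemma~\ref{8.1.2} (which gives a continuous representative and the validity of~\eqref{815nostra} for $C^{1,1}_\cH$-test functions) together with the boundedness of $D_\cH u$, the weak formulation extends by a standard density argument to test functions $\varphi\in C^0([0,T];C_c^\infty(\He^1))$ with $\partial_t\varphi\in L^1(0,T;C_c^0(\He^1))$. The function $\varphi(x,t)=\psi(x,\mathrm{s}(t))$ belongs to this larger class since $\mathrm{s}\in W^{1,1}(0,T)$, and its spatial support remains contained in a fixed compact set. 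Alternatively, one may regularise $\mathrm{s}$ by smooth strictly increasing maps $\mathrm{s}_\varepsilon$, write the identity above for $\varphi_\varepsilon=\psi\circ\mathrm{s}_\varepsilon$, and pass to the limit using dominated convergence (the integrands being dominated by $C\|\psi\|_{C^1}(1+\|\mathrm{s}'\|_{L^1})$ uniformly in~$\varepsilon$). Either route yields the desired identity, and the converse implication follows by applying the same argument to the pair $(\hat m,\mathrm{s})$ instead of $(m,\mathrm{t})$, noting that the drift transforms correctly since $\mathrm{s}'(\mathrm{t}(s))\mathrm{t}'(s)=1$.
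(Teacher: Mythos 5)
Your proposal is correct and follows the standard time-change argument, which is exactly what the paper delegates to \cite[Lemma~8.1.3]{AGS}: the change of variable $t=\mathrm{t}(s)$, the identity $D_\cH\varphi(x,t)=D_\cH\psi(x,\mathrm{s}(t))$, and the a.e.\ cancellation $\mathrm{s}'(\mathrm{t}(s))\,\mathrm{t}'(s)=1$ are precisely the ingredients of that proof. The one cosmetic simplification available is to run the time change on the scalar weak form~\eqref{814nostra}: since $D_\cH u$ is bounded and $m_t\in\mathcal P(\Te)$, the map $F(t)=\int_{\He^1}\zeta\,dm_t$ is Lipschitz, so $F\circ\mathrm{t}$ is absolutely continuous and the chain rule applies directly, avoiding the mollification of~$\mathrm{s}$ or the enlargement of the space-time test class.
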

\begin{proof}
The proof is analogous to that of \cite[Lemma 8.1.3]{AGS} by replacing $D\hat{\varphi}$ with $D_{\cH}\hat{\varphi}$, where $\hat{\varphi}\in C_{\cH, c}^{1,1}\left(\mathbb{H}^{1} \times\left(0, T^{\prime}\right)\right)$. 
\end{proof}
When the drift $v_{t}$ in equation \eqref{eq:MFGintrin}-(ii) satisfies 
\begin{equation}\label{818nostra}
\int_{0}^{T}\operatorname{Lip}\left(v_{t}, K\right)d t<+\infty
\end{equation}
where $K$ is any compact set of $\He^1$, we can obtain an explicit solution of \eqref{eq:MFGintrin}-(ii) by
the classical method of characteristics as proved in Proposition~\ref{818}. To obtain the needed regularity we approximate $v_t$ and $m_t$ with $v_t^{\epsilon}$ and $m_t^{\epsilon}$ by means of a family of mollifiers (see Section \ref{sec:conv}). For $m_t^{\epsilon}$ solution of the continuity equation \eqref{eq:MFGintrin}-(ii) with drift $v_t^{\epsilon}$,
we can get a representation formula. 
The following two Lemma provide the approximation with the needed regularity to obtain the explicit formula proved in Proposition \ref{818}.

\begin{lemma}\label{nostro819}(Approximation by regular curves)
Let $m_{t}$ be a time continuous solution of \eqref{eq:MFGintrin}-(ii). 
Let $\left(\rho_{\varepsilon}\right) \subset C^{\infty}(\mathbb{R}^{3})$ be the family of strictly positive mollifiers in the $x$ variable, defined in \ref{rhoeps} and set, by the convolution defined in \eqref{conH}-\eqref{rhoeps}
$$
m_{t}^{\varepsilon}:=m_{t} \ast \rho_{\varepsilon}, \quad E_{t}^{\varepsilon}:=\left(D_{\cH}u\, m_{t}\right) \ast\rho_{\varepsilon}, \quad v_{t}^{\varepsilon}:=\frac{E_{t}^{\varepsilon}}{m_{t}^{\varepsilon}}.
$$
Then $m_{t}^{\varepsilon}$, $E_{t}^{\varepsilon}$ and $v_{t}^{\varepsilon}$ are $Q_\cH$-periodic. Moreover $m_{t}^{\varepsilon}$ is a continuous solution of \eqref{eq:MFGintrin}-(ii) with drift $v_{t}^{\varepsilon}$: 
\begin{equation}\label{8128nostra} 
\partial_{t} m_{t}^{\varepsilon}-\diver_{\cH}(v_{t}^{\varepsilon}\, m_{t}^{\varepsilon})=0,\qquad in\ \mathbb{H}^{1} \times(0, T),
\end{equation}
where $v_{t}^{\varepsilon}$ fulfills the regularity property~\eqref{818nostra} and the uniform integrability bound
\begin{equation}\label{8126nostra}
\int_{\Te}\left|v_{t}^{\varepsilon}(x)\right|^{p} d m_{t}^{\varepsilon}(x) \leq C, \quad \forall t \in(0, T),\ p \geq 1.
\end{equation}
Moreover, as $\varepsilon\to0^+$, $E_{t}^{\varepsilon} \rightarrow v_{t} m_{t}$ narrowly and
\begin{equation}\label{8127nostra}
\lim _{\varepsilon \to 0}\left\|v_{t}^{\varepsilon}\right\|_{L^p\left(m_{t} ; \Te\right)}=\left\|D_{\cH}u(\cdot,t)\right\|_{L^p\left(m_{t} ; \Te\right)} \qquad \forall t \in(0, T).
\end{equation}
\end{lemma}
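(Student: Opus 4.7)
The plan is to translate the Euclidean argument of \cite[Lemma 8.1.9]{AGS} into the Heisenberg-periodic setting, using as the only genuinely new ingredients (1) the commutation of the horizontal vector fields $X_1,X_2$ with the Heisenberg convolution (Proposition~\ref{propconvH}-(v)), (2) the preservation of $Q_\cH$-periodicity under convolution (Proposition~\ref{propconvH}-(ii)), and (3) the strict positivity $m_t^\varepsilon>0$ given by Proposition~\ref{propconvH}-(vi). I shall establish, in order, periodicity and regularity of $m_t^\varepsilon, E_t^\varepsilon, v_t^\varepsilon$; the PDE \eqref{8128nostra}; the bounds \eqref{818nostra} and \eqref{8126nostra}; and finally the narrow convergence together with \eqref{8127nostra}.

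Periodicity will follow from Proposition~\ref{propconvH}-(ii), once I observe that the components of $D_\cH u\,m_t$ are $Q_\cH$-periodic: $m_t$ is periodic by assumption, and $D_\cH u$ is periodic because the left-invariance \eqref{LIVF} combined with the $Q_\cH$-periodicity of $u$ (Lemma~\ref{uper}) forces $X_i u$ to be periodic. Proposition~\ref{propconvH}-(vi) guarantees $m_t^\varepsilon(x)>0$ for every $x\in\He^1$, so $v_t^\varepsilon$ is well defined; part~(iii) of the same proposition makes both $m_t^\varepsilon$ and $E_t^\varepsilon$ smooth in $x$, so $v_t^\varepsilon$ is smooth in $x$ too, giving \eqref{818nostra}.

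To obtain \eqref{8128nostra} I will convolve the distributional identity \eqref{814nostra} with $\rho_\varepsilon$ in the $x$-variable; the commutation $X_i(\psi*\rho_\varepsilon)=(X_i\psi)*\rho_\varepsilon$ of Proposition~\ref{propconvH}-(v) then transforms $\bigl[\diver_\cH(D_\cH u\,m_t)\bigr]*\rho_\varepsilon$ into $\diver_\cH E_t^\varepsilon=\diver_\cH(v_t^\varepsilon m_t^\varepsilon)$, yielding the equation in the classical sense. The integrability bound \eqref{8126nostra} is the Jensen-type estimate: for each fixed $x$, the density $y\mapsto\rho_\varepsilon(x\ominus y)m_t(y)/m_t^\varepsilon(x)$ is a probability on $\He^1$ against the Lebesgue (Haar) measure, and $v_t^\varepsilon(x)$ is the expectation of $D_\cH u(y,t)$ against it, so convexity of $|\cdot|^p$ gives
\begin{equation*}
|v_t^\varepsilon(x)|^p m_t^\varepsilon(x)\leq (|D_\cH u|^p\,m_t)*\rho_\varepsilon(x).
\end{equation*}
Integrating over $\Te$ and exploiting that Heisenberg convolution preserves the $\Te$-integral of periodic functions (a consequence of the bi-invariance of the Lebesgue/Haar measure on $\He^1$ combined with the pavage property of Section~\ref{sub:periodicity}), I obtain $\int_\Te|v_t^\varepsilon|^p dm_t^\varepsilon\leq\int_\Te|D_\cH u|^p dm_t\leq\|D_\cH u\|_\infty^p$; the last quantity is finite because the Lipschitz continuity of $u$ (Lemma~\ref{L1}) together with the $Q_\cH$-periodicity of $X_i u$ implies that $D_\cH u$ is bounded uniformly on $\He^1$.

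For the narrow convergence $E_t^\varepsilon\rightharpoonup D_\cH u(\cdot,t)\,m_t$, I will test against $\varphi\in C(\Te;\R^2)$: a change of variable in the Heisenberg convolution gives $\int_\Te \varphi\cdot E_t^\varepsilon\,dx=\int_\Te (\varphi*\check\rho_\varepsilon)\cdot D_\cH u\,dm_t$ with $\check\rho_\varepsilon(x):=\rho_\varepsilon(x^{-1})$, and $\varphi*\check\rho_\varepsilon\to\varphi$ uniformly on $\Te$ thanks to Proposition~\ref{propconvH}-(iv) and the uniform continuity of $\varphi$ on the compact torus. Finally, \eqref{8127nostra} will be obtained by combining the Jensen estimate (which gives the $\limsup$ inequality $\limsup_\varepsilon\|v_t^\varepsilon\|_{L^p(m_t^\varepsilon)}\leq\|D_\cH u(\cdot,t)\|_{L^p(m_t)}$) with the joint lower semicontinuity of the Benamou--Brenier functional $(m,E)\mapsto\int|E|^p m^{1-p}$ under narrow convergence of $(m_t^\varepsilon, E_t^\varepsilon)\to(m_t, D_\cH u\,m_t)$ (which gives the matching $\liminf$ inequality). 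I expect the main bookkeeping difficulty to be precisely the justification that convolution by $\rho_\varepsilon$ preserves the $\Te$-integral of periodic integrands, so as to make Fubini rigorous in the Jensen step; this is a purely structural exercise, resting on the fact that the Haar measure on $\He^1$ coincides with the Lebesgue measure and is invariant under both left and right translations, together with the pavage decomposition of $\He^1$.
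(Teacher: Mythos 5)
Your plan matches the paper's proof in all essential respects: periodicity and strict positivity from Proposition~\ref{propconvH}-(ii) and (vi), the commutation $X_i(\cdot * \rho_\varepsilon)=(X_i\,\cdot)*\rho_\varepsilon$ to obtain \eqref{8128nostra} by mollifying the distributional identity, the Jensen estimate over the pavage (which is precisely the content of Lemma~\ref{8110nostro}) to get \eqref{8126nostra}, and the pairing of Jensen with joint lower semicontinuity of the Benamou--Brenier functional for \eqref{8127nostra}. The paper is a bit more explicit on one small technical point you gloss over — it computes $D\rho_\varepsilon$ in terms of $\|x\|_\cH$ to check that $Dm_t^\varepsilon$ and $DE_t^\varepsilon$ are bounded on $\Te$ by a constant depending on $\varepsilon$, which together with $m_t^\varepsilon>0$ gives the local Lipschitz property \eqref{818nostra} for $v_t^\varepsilon$ — but this is exactly the kind of bookkeeping you flag as remaining to do, so there is no genuine gap.
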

\begin{proof}
Note that, from Proposition~\ref{propconvH}-(i), $m_{t}^{\varepsilon}$, $E_{t}^{\varepsilon}$ and $v_{t}^{\varepsilon}$ are $Q_\cH$-periodic.
From Proposition~\ref{propconvH}-(v) and the continuity of $m_{t}^{\varepsilon}(x)$ w.r.t. $x$ and $t$, we get  
  $$m_{t}^{\varepsilon}(x)>0,\ {\text{ for any }} x\in \Te\ {\text{ and any }} t\in[0,T].$$
From the definition of $\rho_ {\varepsilon}$, since $m_{t}$ is bounded then $\left|m_{t}^{\varepsilon}\right|(t, \cdot)$ is bounded. From the definition of the $\He^1$-norm \eqref{norm}
we get that
$$D\rho_ {\varepsilon}(x)=C(\varepsilon)e^{-(\|\frac{x}{\epsilon}\|^4_\cH)}
\left(
\frac{4x_1(x_1^2+x_2^2)}{\varepsilon^4}, \frac{4x_2(x_1^2+x_2^2)}{\varepsilon^4}, \frac{2x_3}{\varepsilon^4}
\right).
$$
Hence, in $\Te$, the spatial gradient of $m_{t}^{\varepsilon}(t, \cdot)$ is bounded with a constant depending on $\varepsilon$.
Analogously, in $\Te$, $E^{\epsilon}(t, \cdot)$ and its spatial gradient are uniformly bounded in space by the product of $\left\|D_{\cH}u\right\|_{L^{1}\left(m_{t}\right)}$ with a constant depending on $\varepsilon$. 

Moreover, from the positivity of $m_{t}^{\varepsilon}$, 
the local regularity assumptions \eqref{818nostra} for $v_{t}^{\varepsilon}=E_{t}^{\varepsilon} / m_{t}^{\varepsilon}$ hold. 
Lemma \ref{8110nostro} shows that \eqref{8126nostra} holds.\\
From proposition~\eqref{propconvH}-(v), noting that
$X_i((m_{t} \,X_iu)\ast \rho_{\varepsilon})=X_i(m_{t} \,X_iu)\ast \rho_{\varepsilon}$, we get
$$\diver_{\cH}(v_{t} m_{t})\ast \rho_{\varepsilon}=\diver_{\cH}((v_{t}m_{t})\ast \rho_{\varepsilon})=
\diver_{\cH}E_{t}^{\varepsilon}=\diver_{\cH}(v_{t}^{\varepsilon} m_{t}^{\varepsilon}).$$
Since $m_{t}$ solves \eqref{eq:MFGintrin}-(ii), then
$$ \partial_t (m_{t} \ast \rho_{\varepsilon})+\diver_{\cH}(v_{t} m_{t})\ast \rho_{\varepsilon}=0.$$
Hence
$m_{t}^{\varepsilon}$ solves the continuity equation \eqref{8128nostra}.
Finally,
general lower semicontinuity results on integral functionals defined on measures of the form
$$
(E, m) \mapsto \int_{\Te}\left|\frac{E}{m}\right|^{p} d m
$$
and the following Lemma \ref{8110nostro}
give \eqref{8127nostra}.
\end{proof}

\begin{lemma}\label{8110nostro}
 Let $m, E \in \mathcal{P}(\Te)$, $E\in L^{\infty}(\Te)$ and absolutely continuous with respect to $m$.  Let $p \geq 1,$ Then
$$
\int_{\Te}\left|\frac{E * \rho}{m * \rho}\right|^{p} m * \rho\, d x \leq \int_{\Te}\left|\frac{E}{m}\right|^{p} d m
$$
for any positive convolution kernel $\rho$ (see Section \ref{sec:conv}).
\end{lemma}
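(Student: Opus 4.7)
The strategy is a pointwise application of Jensen's inequality, followed by Fubini's theorem, followed by a periodic unfolding that exploits the pavage property of $Q_\cH$.

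Let $v:=dE/dm\in L^\infty$ denote the Radon--Nikodym derivative; since $E$ and $m$ are both $Q_\cH$-periodic, so is $v$. Writing $\hat m$ for the periodic extension of $m$ to $\He^1$, we have
\[
(E*\rho)(x)=\int_{\He^1}v(y)\,\rho(x\ominus y)\,d\hat m(y),\qquad (m*\rho)(x)=\int_{\He^1}\rho(x\ominus y)\,d\hat m(y).
\]
For every $x$ with $(m*\rho)(x)>0$, the measure $\nu_x(dy):=\rho(x\ominus y)\,d\hat m(y)/(m*\rho)(x)$ has unit mass, so Jensen's inequality applied to the convex function $t\mapsto|t|^p$ yields $|(E*\rho)/(m*\rho)|^p\le \int|v|^p\,d\nu_x$. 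Multiplying by $(m*\rho)(x)$ gives the pointwise estimate
\[
\left|\frac{(E*\rho)(x)}{(m*\rho)(x)}\right|^{p}(m*\rho)(x)\le\int_{\He^1}|v(y)|^p\,\rho(x\ominus y)\,d\hat m(y).
\]

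Integrating this inequality over $Q_\cH$ and applying Fubini's theorem, the task reduces to
\[
\int_{\He^1}|v(y)|^{p}\Phi(y)\,d\hat m(y)\le\int_{\Te}|v|^p\,dm,\qquad \Phi(y):=\int_{Q_\cH}\rho(x\ominus y)\,dx.
\]
Unfolding the outer integral over the pavage $\He^1=\bigsqcup_{z\in\Z^3}z\oplus Q_\cH$ and using the $Q_\cH$-periodicity of $v$, the left-hand side equals $\int_{Q_\cH}|v(q)|^p\,\Psi(q)\,dm(q)$ where $\Psi(q):=\sum_{z\in\Z^3}\Phi(z\oplus q)$. Hence the entire proof is reduced to verifying the identity $\Psi(q)\equiv1$ on $Q_\cH$.

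For this identity, perform the substitution $y=x\ominus(z\oplus q)$, i.e.\ $x=y\oplus(z\oplus q)$; since the Heisenberg group is unimodular, right translation preserves Lebesgue measure, so the Jacobian is $1$ and $\Phi(z\oplus q)=\int_{Q_\cH\ominus(z\oplus q)}\rho(y)\,dy$. Summing over $z$ and using that $\int_{\He^1}\rho=1$, the identity $\Psi(q)=1$ is equivalent to the tiling claim
\[
\bigsqcup_{z\in\Z^3}\bigl(Q_\cH\ominus(z\oplus q)\bigr)=\He^1\qquad\text{up to Lebesgue-null sets},
\]
or equivalently: for a.e.\ $y\in\He^1$ there is a unique $z\in\Z^3$ such that $y\oplus z\oplus q\in Q_\cH$. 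This tiling assertion is the only point where the non-abelian structure of $\He^1$ enters non-trivially; unlike the Euclidean case, the pavage of Lemma~\ref{lemma:period} concerns only the left $\Z^3$-action and cannot be invoked directly. The plan is to verify the claim by direct inspection of the group law: the first two coordinates of $y\oplus z\oplus q$ are $y_i+z_i+q_i$ $(i=1,2)$, which for generic $y$ uniquely pin $z_1,z_2\in\Z$ by the constraint of lying in $[0,1)$; once $z_1,z_2$ are fixed, the third coordinate is an affine function of $z_3$ with leading coefficient $1$, and the same constraint uniquely determines $z_3$. This elementary but essential verification is the only obstacle; all other steps are routine Jensen--Fubini manipulations, and the inequality follows at once once $\Psi(q)\equiv 1$ is established.
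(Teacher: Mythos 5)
Your proof is correct and follows essentially the same route as the paper's: a pointwise Jensen estimate, a Fubini interchange, a periodic unfolding of the $y$-integral via the pavage, and finally the crucial tiling identity that makes the sum of integrals of $\rho$ collapse to $\int_{\He^1}\rho=1$. The only cosmetic divergence is in how you establish the tiling: the paper reduces it to the left-pavage decomposition of $y\ominus z$ (Lemma~\ref{lemma:period}) and then solves explicitly for the integer shift, whereas you verify the same statement by direct inspection of the group law coordinate-by-coordinate (pin $z_1,z_2$ from the first two components, then $z_3$ from the affine third component). These are equivalent elementary computations, and your explicit use of $x\ominus(z\oplus q)$ with the remark on right-invariance of Lebesgue measure actually disambiguates a chained-$\ominus$ notation that the paper leaves implicit. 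In short: a correct proof by the same method.
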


\begin{proof}
Arguing as in the proof of \cite[Lemma 8.1.10]{AGS}, in particular by the Jensen inequality, for any $x\in \He^1$ we get
\begin{eqnarray*}
\left|\frac{E \ast\rho(x)}{m \ast \rho(x)}\right|^{p} m \ast \rho(x) 
&\leq&\int_{\mathbb{H}^{1}}\left|\frac{E}{m}\right|^{p}(y) \rho(x\ominus y) d m(y)
=\sum_{n\in\Z^3}\int_{n\oplus Q_\cH}\left|\frac{E}{m}\right|^{p}(y) \rho(x\ominus y)dm(y)\\
&=&\sum_{n\in\Z^3}\int_{Q_\cH}\left|\frac{E}{m}\right|^{p}(n\oplus z) \rho(x\ominus n\ominus z)dm(z)\\
&=&\int_{Q_\cH}\left|\frac{E}{m}\right|^{p}(z) \sum_{n\in\Z^3}\rho(x\ominus n\ominus z)dm(z)
\end{eqnarray*}
where we used the $\Te-$periodicity of $m$ and of ${E}/{m}$.
Integrating with respect to $x$ in $\Te$ we get
\begin{eqnarray*}
&&\int_{\Te}\left|\frac{E \ast\rho(x)}{m \ast\rho(x)}\right|^{p} m\ast \rho(x) dx
\leq 
\int_{\Te}\int_{\Te}\left|\frac{E}{m}\right|^{p}(z) \sum_{n\in\Z^3}\rho(x\ominus n\ominus z)dm(z)\,dx\\
&&=\int_{\Te}\left|\frac{E}{m}\right|^{p}(z)\bigg(\sum_{n\in\Z^3}\int_{\Te}\rho(x\ominus n\ominus z)dx\bigg) dm(z)= \int_{\Te}\left|\frac{E}{m}\right|^{p}(z)dm(z).
\end{eqnarray*}
The last equality comes from  
$$\sum_{n\in\Z^3}\int_{\Te}\rho(x\ominus n\ominus z)dx=\int_{\He^1}\rho(y)dy=1$$
and this equality
is due to the fact that, fixed $z\in \Te$,
$$\mathbb{H}^{1}= \cup_{n\in\Z^3} \Te\ominus n\ominus z.$$
To prove it we have to show that for any $y\in \mathbb{H}^{1}$ there exists an unique $n\in\Z^3$ such that there exists $x\in \Te$ such that $y= x\ominus n\ominus z$. We recall that, from the property of pavage defined at the beginning of Section \ref{sub:periodicity}, for any $a\in \mathbb{H}^{1}$ we denote by 
$n_{\cH}(a)$ the unique $n\in\Z^3$ such that there exists a unique point $x_a\in \Te$ such that $a=n_{\cH}(a)\oplus x_a$.
Hence there exists an unique $(n_{\cH, 1}, n_{\cH, 2}, n_{\cH, 3}) = n_{\cH}(y\ominus z)\in \Z^3$ such that there exists 
$x=(x_1,x_2,x_3)\in\Te$ such that $y\ominus z= n_{\cH}(y\ominus z)\oplus x$, i.e. 
$y_1-z_1= n_{\cH, 1}+x_1$, $y_2-z_2= n_{\cH, 2}+x_2$, $y_3-z_3+ y_1z_2-y_2z_1= n_{\cH, 3}+x_3-
n_{\cH, 1}x_2+n_{\cH, 2}x_1$.
To find the unique $n=(n_1, n_2, n_3)\in\Z^3$  such that $y= x\ominus n\ominus z$
we take $n_1= -n_{\cH, 1}$, $n_2= -n_{\cH,2}$ and 
$n_3=-n_{\cH, 3}+2(n_2x_1-n_1x_2)$.
%
\end{proof}


%
Now using an elementary result of the theory of ODEs, we obtain a maximal existence and uniqueness result for the characteristic system associated to equation~\eqref{8128nostra}.

\begin{lemma}\label{8.1.4}
Let~$v^\varepsilon$ be the field introduced in Lemma~\ref{nostro819}. Then for any $x\in\He^1$ and $s\in[0,T]$, the ODE
\begin{equation}\label{819nostra}
\frac{d}{d t} Y_{t}(x, s)=v^{\epsilon}_{t}\left(Y_{t}(x, s)\right)\, B^T\left(Y_{t}(x, s)\right),\qquad Y_{s}(x, s)=x
\end{equation}
admits a unique maximal solution which is defined in~$[0,T]$.
\end{lemma}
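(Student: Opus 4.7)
The plan is to apply a Carathéodory–type existence and uniqueness theorem to the ODE~\eqref{819nostra} using the regularity of $v^\varepsilon$ established in Lemma~\ref{nostro819}, and then to show that the resulting maximal solution cannot leave any compact set, hence is defined on all of $[0,T]$. The structure of the Heisenberg drift $v^\varepsilon B^T$ will play the decisive role in ruling out blow-up.

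First, I would analyse the regularity of the field $f(x,t):=v_t^\varepsilon(x)B^T(x)$. Since $m_t^\varepsilon=m_t\ast\rho_\varepsilon$ and $E_t^\varepsilon=(D_\cH u\,m_t)\ast\rho_\varepsilon$ are $C^\infty$ in $x$ (Proposition~\ref{propconvH}-(iii)), continuous in $(t,x)$, and since $m_t^\varepsilon>0$ everywhere by Proposition~\ref{propconvH}-(vi), the quotient $v_t^\varepsilon=E_t^\varepsilon/m_t^\varepsilon$ is $C^\infty$ in $x$ and continuous in $(t,x)$. By $Q_\cH$-periodicity it suffices to control $v^\varepsilon$ on the compact set $\overline{Q_\cH}\times[0,T]$, where it is bounded and, for each $t$, Lipschitz in $x$ with the integrability property~\eqref{818nostra}. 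Writing the ODE component-wise yields
\begin{equation*}
Y_1'=v_1^\varepsilon(Y,t),\qquad Y_2'=v_2^\varepsilon(Y,t),\qquad Y_3'=-Y_2\,v_1^\varepsilon(Y,t)+Y_1\,v_2^\varepsilon(Y,t),
\end{equation*}
so $f$ is continuous, bounded on compact sets, and locally Lipschitz in space with time-Lipschitz constant in $L^1(0,T)$.

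Second, I would invoke the classical Carathéodory existence-uniqueness theorem for ODEs with a drift satisfying the above conditions (see e.g.\ \cite[Theorem 8.10]{Baldi} or the standard integrated-Lipschitz version): there exists a unique maximal solution $t\mapsto Y_t(x,s)$ defined on a maximal relatively open interval $J_{\max}\subset[0,T]$ containing $s$.

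Third, I would derive the a priori bound that prevents blow-up and hence forces $J_{\max}=[0,T]$. Set $M:=\|v^\varepsilon\|_{L^\infty(\He^1\times[0,T])}$, which is finite by the argument above. The first two equations give $|Y_i(t)-x_i|\leq M|t-s|\leq MT$ for $i=1,2$, so $Y_1,Y_2$ remain in a fixed compact set throughout $J_{\max}$. Plugging this into the third equation yields $|Y_3'|\leq M(|Y_1|+|Y_2|)\leq M(|x_1|+|x_2|+2MT)$, so $|Y_3(t)|$ is bounded on $J_{\max}$ as well. Thus $Y_\cdot(x,s)$ stays in a compact subset of $\He^1$, and the standard continuation criterion forces $J_{\max}=[0,T]$, completing the proof.

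The main obstacle is to make sure that $v^\varepsilon$ is globally bounded in $(t,x)$: this requires strict positivity of $m_t^\varepsilon$ uniformly on $\overline{Q_\cH}\times[0,T]$, which in turn uses the $Q_\cH$-periodicity to reduce to a compact set, joint continuity of $m^\varepsilon$ in $(t,x)$, and Proposition~\ref{propconvH}-(vi). Once this is granted, the linear-in-$(x_1,x_2)$ growth of the third component of the drift causes no difficulty because $Y_1,Y_2$ themselves stay bounded, so the a priori estimate propagates trivially to $Y_3$.
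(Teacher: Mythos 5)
Your proof is correct and follows essentially the same route as the paper: obtain a unique maximal solution from the local regularity of the drift (the paper invokes \cite[Lemma~8.1.4]{AGS} for both the local theory and the continuation criterion, while you cite a Carath\'eodory-type theorem, which is the right tool), then note that boundedness of $v^\varepsilon$ (by periodicity and strict positivity of $m^\varepsilon$ on the compact set $\Te\times[0,T]$) forces $Y_1,Y_2$ to stay bounded, which in turn bounds $Y_3'$ and hence $Y_3$, so no blow-up can occur and the maximal interval is all of $[0,T]$. The key structural observation — the triangular dependence of $Y_3'$ on the already-bounded components $Y_1,Y_2$ — is identical in both arguments.
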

\begin{proof} The results in~\cite[Lemma 8.1.4]{AGS} ensure that there exists a unique maximal solution to~\eqref{819nostra}, defined on some interval~$I$, relatively open in~$[0,T]$ and containing~$s$ as relatively internal point. Moreover,~\eqref{819nostra} reads
\[
Y'_{1,t}= v^{\epsilon}_{1,t},\qquad
Y'_{2,t}= v^{\epsilon}_{2,t},\qquad
Y'_{3,t}= -Y_{2,t}v^{\epsilon}_{1,t}+Y_{1,t}v^{\epsilon}_{2,t}.
\]
By the boundedness of $v^\varepsilon$, we get that the first two components of $Y^{\epsilon}_{t}(x, s)$ are bounded in~$I$ and, afterwards, we deduce the boundedness of the third component. Applying again~\cite[Lemma 8.1.4]{AGS}, we conclude that~$I$ coincides with the whole interval~$[0,T]$.
\end{proof}

For simplicity, we set $Y_{t}(x):=Y_{t}(x, 0)$ in the particular case $s=0$.

\begin{remark}\label{rk815}
Characteristics provide a useful representation formula for classical solutions of the equation which is formally the adjoint to \eqref{eq:MFGintrin}-(ii):
\begin{equation}\label{8112nostra}
\partial_{t} \varphi - v^{\epsilon}_t\cdot D_{\cH} \varphi=\psi \quad \text { in } \mathbb{H}^{1} \times(0, T), \quad \varphi(x, T)=\varphi_{T}(x) \quad x \in \mathbb{H}^{1}
\end{equation}
with $\psi \in C_{b, \cH}^{1}\left(\mathbb{H}^{1} \times(0, T)\right)$, $\varphi_{T} \in C_{b, \cH}^{1}\left(\mathbb{H}^{1}\right)$.
 A direct calculation shows that, if $Y^{\epsilon}_{s}(x, t)$ solves~\eqref{819nostra}, then
 \begin{equation}\label{8113nostra}
\varphi(x, t):=\varphi_{T}\left(Y^{\epsilon}_{T}(x, t)\right)-\int_{t}^{T} \psi\left(Y^{\epsilon}_{s}(x, t), s\right) d s
\end{equation}
solves \eqref{8112nostra}. Indeed $Y^{\epsilon}_{s}\left(Y^{\epsilon}_{t}(x, 0), t\right)=Y^{\epsilon}_{s}(x, 0)$ yields
$$
\varphi\left(Y^{\epsilon}_{t}(x, 0), t\right)=\varphi_{T}\left(Y^{\epsilon}_{T}(x, 0)\right)-\int_{t}^{T} \psi\left(Y^{\epsilon}_{s}(x, 0), s\right) d s
$$
and differentiating both sides with respect to $t$ we obtain
$$
\left[\frac{\partial \varphi}{\partial t}-  v^{\epsilon}_tB^T \cdot D \varphi\right] \left(Y^{\epsilon}_{t}(x, 0), t\right)=\psi\left(Y^{\epsilon}_{t}(x, 0), t\right).
$$
Noting $v^{\epsilon}_tB^T \cdot D \varphi = v^{\epsilon}_t \cdot D_{\cH}\varphi$, by the arbitrariness of $x$ (and then $Y_{t}(x, 0)$), we conclude that \eqref{8112nostra} is fulfilled. 
\end{remark}
Now we use characteristics to prove the existence, the uniqueness, and a representation formula of the solution of the continuity equation \eqref{8128nostra}. 

\begin{lemma}\label{8.1.6}
For any $m_{0} \in$ $\mathcal{P}_{per}(\He^1)$, let  $m_{0}^{\epsilon}$ denote $m_0*\rho_\epsilon$ where the kernel $\rho_\epsilon$ has been introduced in~\eqref{rhoeps}. 
Let $Y^{\epsilon}_{t}$ be the solution of~\eqref{819nostra} (corresponding to $s=0$).
Then $t \mapsto m^{\epsilon}_{t}:=Y^{\epsilon}_{t}\# m^{\epsilon}_{0}$ is a continuous (in the topology of $C([0,T],{\mathcal P}_{per}(\He^1))$)  solution of \eqref{8128nostra} in $[0, T]$.
\end{lemma}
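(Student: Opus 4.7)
My plan proceeds in four steps: I would first settle regularity and existence of the flow, then establish the periodicity of $m^{\epsilon}_t$ from an equivariance property of the flow $Y^{\epsilon}_t$, then show continuity in $t$ from the continuity of the flow, and finally verify that the distributional continuity equation \eqref{8128nostra} holds using the chain rule on the push-forward.

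First, Lemma~\ref{nostro819} guarantees that $v^{\epsilon}$ is $Q_{\cH}$-periodic, bounded, and smooth in~$x$; hence locally Lipschitz with sublinear growth. Lemma~\ref{8.1.4} then applies and provides the unique maximal solution $Y^{\epsilon}_t(x)$ of \eqref{819nostra} on the whole $[0,T]$, with $(x,t)\mapsto Y^{\epsilon}_t(x)$ jointly continuous and $C^1$ in~$t$. Moreover $m^{\epsilon}_0=m_0\ast\rho_\epsilon$ lies in $\mathcal{P}_{per}(\He^1)$ thanks to Proposition~\ref{propconvH}-(ii).

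The decisive point is the equivariance $Y^{\epsilon}_t(z\oplus x)=z\oplus Y^{\epsilon}_t(x)$ for every $z\in\Z^3$. To check it, I would fix $z\in\Z^3$, set $Z_t:=z\oplus Y^{\epsilon}_t(x)$, and differentiate component by component using the group law~\eqref{Group_Law}:
\begin{eqnarray*}
\dot Z_1&=&v^{\epsilon}_1(Y^{\epsilon}_t)=v^{\epsilon}_1(Z_t),\qquad \dot Z_2=v^{\epsilon}_2(Y^{\epsilon}_t)=v^{\epsilon}_2(Z_t),\\
\dot Z_3&=&\dot Y^{\epsilon}_{t,3}-z_2\dot Y^{\epsilon}_{t,1}+z_1\dot Y^{\epsilon}_{t,2}=-(Y^{\epsilon}_{t,2}+z_2)v^{\epsilon}_1(Z_t)+(Y^{\epsilon}_{t,1}+z_1)v^{\epsilon}_2(Z_t)\\
&=&-Z_{t,2}\,v^{\epsilon}_1(Z_t)+Z_{t,1}\,v^{\epsilon}_2(Z_t),
\end{eqnarray*}
where the $Q_{\cH}$-periodicity of $v^{\epsilon}$ (i.e.\ $v^{\epsilon}(z\oplus\cdot)=v^{\epsilon}(\cdot)$) is used repeatedly. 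So $Z_t$ solves the same ODE as $Y^{\epsilon}_t(z\oplus x)$ with identical initial condition, and uniqueness (Lemma~\ref{8.1.4}) yields the claim. Equivariance transfers to the push-forward: for every Borel $A\subset Q_{\cH}$ and $z\in\Z^3$,
\[
m^{\epsilon}_t(z\oplus A)=m^{\epsilon}_0((Y^{\epsilon}_t)^{-1}(z\oplus A))=m^{\epsilon}_0(z\oplus(Y^{\epsilon}_t)^{-1}(A))=m^{\epsilon}_0((Y^{\epsilon}_t)^{-1}(A))=m^{\epsilon}_t(A),
\]
using periodicity of $m^{\epsilon}_0$. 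Mass conservation on $\Te$ follows by testing a periodic~$\zeta\equiv 1$.

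For continuity, given $\zeta\in C(\Te)$ (bounded) and $t_n\to t$, $\zeta(Y^{\epsilon}_{t_n}(x))\to \zeta(Y^{\epsilon}_t(x))$ for every $x$; dominated convergence gives $\int \zeta\,dm^{\epsilon}_{t_n}\to \int \zeta\,dm^{\epsilon}_t$. Compactness of~$\Te$ makes narrow convergence coincide with convergence in~$\mathbf{d_1}$, so $t\mapsto m^{\epsilon}_t\in C([0,T],\mathcal{P}_{per}(\He^1))$. For \eqref{8128nostra}, I would test against $\zeta\in C^{\infty}(\Te)$, differentiate
\[
\int_{\Te}\zeta\,dm^{\epsilon}_t=\int_{\Te}\zeta(Y^{\epsilon}_t(x))\,dm^{\epsilon}_0(x),
\]
with respect to~$t$ (allowed because $v^{\epsilon}$ and $D\zeta$ are bounded on $\Te$, so Leibniz applies), and use $D\zeta\cdot v^{\epsilon}B^{T}=(D\zeta\,B)\cdot v^{\epsilon}=D_{\cH}\zeta\cdot v^{\epsilon}$:
\[
\frac{d}{dt}\int_{\Te}\zeta\,dm^{\epsilon}_t=\int_{\Te}D\zeta(Y^{\epsilon}_t)\cdot v^{\epsilon}_t(Y^{\epsilon}_t)B^{T}(Y^{\epsilon}_t)\,dm^{\epsilon}_0=\int_{\Te}D_{\cH}\zeta\cdot v^{\epsilon}_t\,dm^{\epsilon}_t.
\]
This is precisely the weak form of \eqref{8128nostra} (cf.~\eqref{814nostraQH}). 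Extending to test functions $\varphi(x,t)=\eta(t)\zeta(x)$ with $\eta\in C^\infty_c(0,T)$ and then by density to all $\varphi\in C^{\infty}_c(\He^1\times(0,T))$ completes the proof. The main obstacle is Step~2: the nontrivial interaction between $\oplus$ and the third component of the dynamics, which has to be unwound carefully to exploit $Q_{\cH}$-periodicity of~$v^{\epsilon}$; once this equivariance is established, the remaining steps are standard adaptations of the Euclidean argument.
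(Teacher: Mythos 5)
Your proof follows the same route as the paper's: define $m^{\varepsilon}_t:=Y^{\varepsilon}_t\#m^{\varepsilon}_0$, verify narrow continuity by dominated convergence, differentiate $t\mapsto\int\zeta(Y^{\varepsilon}_t(x))\,dm^{\varepsilon}_0(x)$, and use $D\zeta\cdot v^{\varepsilon}B^T=D_{\cH}\zeta\cdot v^{\varepsilon}$. The explicit equivariance $Y^{\varepsilon}_t(z\oplus x)=z\oplus Y^{\varepsilon}_t(x)$ that you verify in order to conclude $m^{\varepsilon}_t\in{\mathcal P}_{per}(\He^1)$ is a genuine tightening --- the paper's proof works over $\Te$ but never actually checks that the push-forward preserves periodicity --- and replacing the paper's Fubini-with-pavage-cover estimate by Leibniz's rule is legitimate since $v^{\varepsilon}$ and $D_{\cH}\zeta$ are globally bounded by periodicity.

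One caveat you should address. Taking \eqref{819nostra} as written, i.e.\ $\dot Y=v^{\varepsilon}_tB^T$ with no minus sign, your computation yields $\frac{d}{dt}\int_{\Te}\zeta\,dm^{\varepsilon}_t=+\int_{\Te}D_{\cH}\zeta\cdot v^{\varepsilon}_t\,dm^{\varepsilon}_t$, whereas the weak formulation \eqref{814nostraQH} of $\partial_t m-\diver_{\cH}(vm)=0$ carries a minus sign. The characteristics of that equation are $\dot Y=-v^{\varepsilon}B^T$, so \eqref{819nostra} is missing a minus sign; the paper's own proof of this lemma ends with the same sign discrepancy (it derives $\int\!\!\int(\partial_t\varphi+D_{\cH}\varphi\cdot v^{\varepsilon})\,dm^{\varepsilon}_tdt=0$ and identifies it with \eqref{8128nostra}, whose weak form per \eqref{813nostra} has the opposite sign). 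You are reproducing the paper's convention faithfully, but the assertion "this is precisely the weak form of \eqref{8128nostra} (cf.\ \eqref{814nostraQH})" is not literally true until that minus sign is reinstated throughout; doing so is a one-line change that leaves the structure of your argument intact.
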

\begin{proof}
Note that, from the boundedness of $D_{\cH}u$, the velocity field $v^{\epsilon}_{t}$ satisfies~\eqref{818nostra} and \eqref{8126nostra}.
The continuity of $m^{\epsilon}_{t}$ follows easily since $\lim _{s \rightarrow t} Y^{\epsilon}_{s}(x)=Y^{\epsilon}_{t}(x)$ for $m^{\epsilon}_{0}$-a.e.
$x \in \mathbb{H}^{1}$: 
thus for every continuous and bounded function $\zeta: \mathbb{H}^{1} \rightarrow \mathbb{R}$ the dominated convergence theorem gives
$$
\lim _{s \rightarrow t} \int_{\mathbb{H}^{1}} \zeta d m^{\epsilon}_{s}=\lim _{s \rightarrow t} \int_{\mathbb{H}^{1}} \zeta\left(Y^{\epsilon}_{s}(x)\right) d m^{\epsilon}_{0}(x)=\int_{\mathbb{H}^{1}} \zeta\left(Y^{\epsilon}_{t}(x)\right) d m^{\epsilon}_{0}(x)=\int_{\mathbb{H}^{1}} \zeta d m^{\epsilon}_{t}.
$$
For any $\varphi \in C_c^{\infty}\left(\Te\times(0, T)\right)$ and for $m^{\epsilon}_{0}$-a.e. $x \in \Te$ the maps $t \mapsto \phi_{t}(x):=$
$\varphi\left(Y^{\epsilon}_{t}(x), t\right)$ are absolutely continuous in $(0, T)$ and
\begin{eqnarray*}
&&\dot{\phi}_{t}(x)=\partial_{t} \varphi\left(Y^{\epsilon}_{t}(x), t\right)+\left\langle D\varphi\left(Y^{\epsilon}_{t}(x), t\right), v^{\epsilon}_{t}\left(Y^{\epsilon}_{t}(x)\right)B\left(Y^{\epsilon}_{t}(x)\right)\right\rangle=\\
&&\partial_{t} \varphi\left(Y^{\epsilon}_{t}(x), t\right)+\left\langle D_{\cH}\varphi\left(Y^{\epsilon}_{t}(x), t\right), v^{\epsilon}_{t}\left(Y^{\epsilon}_{t}(x)\right)\right\rangle=\Lambda(\cdot, t) \circ Y^{\epsilon}_{t}
\end{eqnarray*}
where $\Lambda(x, t):=\partial_{t} \varphi(x, t)+\left\langle D_{\cH} \varphi(x, t), v^{\epsilon}_{t}(x)\right\rangle .$ We thus have
$$
\begin{aligned}
\int_{0}^{T} \int_{\He^1}\left|\dot{\phi}_{t}(x)\right| d m^{\epsilon}_{0}(x) d t &=\int_{0}^{T} \int_{\He^1}\left|\Lambda\left(Y_{t}(x), t\right)\right| d m^{\epsilon}_{0}(x) d t \\
&=\int_{0}^{T} \int_{\He^1}|\Lambda(x, t)| d m^{\epsilon}_{t}(x) d t \\
& \leq \operatorname{Lip_{\cH}}(\varphi)\left(T+\int_{0}^{T} \int_{\He^1}\left|v^{\epsilon}_{t}(x)\right| d m^{\epsilon}_{t}(x) d t\right)<+\infty
\end{aligned}
$$
where the boundedness of the last integral comes from the fact that we can cover the compact support of $\varphi$  with a finite number of elements of the pavage where $|v^{\epsilon}_{t}|$ is bounded.
Therefore
$$
\begin{aligned}
0 &=\int_{\He^1} \varphi(x, T) d m^{\epsilon}_{T}(x)-\int_{\He^1} \varphi(x, 0) d m^{\epsilon}_{0}(x)=\int_{\He^1}\left(\varphi\left(Y^{\epsilon}_{T}(x), T\right)-\varphi(x, 0)\right) d m^{\epsilon}_{0}(x) \\
&=\int_{\He^1}\left(\int_{0}^T \dot{\phi}_{t}(x) d t\right) d m^{\epsilon}_{0}(x)=\int_{0}^T \int_{\He^1}\left(\partial_{t} \varphi+ D_{\cH} \varphi \cdot v^{\epsilon}_{t}\right) dm^{\epsilon}_{t} d t
\end{aligned}
$$
by a simple application of Fubini's theorem, i.e. \eqref{8128nostra} holds.
\end{proof}
We want to prove that any solution of \eqref{8128nostra} can be represented as in Lemma \ref{8.1.6}. 
\begin{proposition}\label{817} (Uniqueness and comparison for the continuity equation). Let
$\sigma_{t}$ be a narrowly continuous family of signed $\Te$-periodic measures solving $\partial_{t} \sigma_{t}+\diver_{\cH} \cdot\left(v^{\epsilon}_{t} \sigma_{t}\right)=0$ in $\He^1 \times(0, T),$ with $\sigma_{0} \leq 0$.
Then $\sigma_{t} \leq 0$ for any $t \in[0, T]$.
\end{proposition}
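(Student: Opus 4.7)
The strategy is a classical duality argument via characteristics: for every nonnegative $\zeta\in C^\infty(\Te)$, construct a $Q_\cH$-periodic nonnegative solution $\varphi$ of the adjoint equation $\partial_t\varphi+v_t^{\varepsilon}\cdot D_\cH\varphi=0$ with terminal datum $\zeta$, then pair it against~$\sigma_t$ via the integration-by-parts identity of Lemma~\ref{8.1.2} to deduce $\int_\Te\zeta\,d\sigma_t\le 0$. The arbitrariness of $\zeta\ge 0$ gives $\sigma_t\le 0$.

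First, Lemma~\ref{8.1.4} furnishes the flow $Y_s(x,t)$ of the ODE $\dot Y_s=v_s^{\varepsilon}(Y_s)B^T(Y_s)$ globally on $[0,T]$, with $Y_t(x,t)=x$. Since $v^{\varepsilon}$ is $Q_\cH$-periodic (Lemma~\ref{nostro819}) and $X_1,X_2$ are left-invariant on $\He^1$, the same calculation carried out in the proof of Lemma~\ref{lm:etaMarkus}-(i) (without the Brownian term) shows that the flow commutes with the left action of $\Z^3$:
\[
Y_s(z\oplus x,t)=z\oplus Y_s(x,t)\qquad\forall z\in\Z^3.
\]
Setting $\varphi(x,t):=\zeta(Y_T(x,t))$ therefore produces a $Q_\cH$-periodic, nonnegative function with $\varphi(\cdot,T)=\zeta$. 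The group property $Y_T(Y_s(x_0,t_0),s)=Y_T(x_0,t_0)$ implies that $s\mapsto\varphi(Y_s(x_0,t_0),s)$ is constant; differentiating at $s=t_0$ yields the adjoint equation
\[
\partial_t\varphi(x,t)+v_t^{\varepsilon}(x)\cdot D_\cH\varphi(x,t)=0\qquad \textrm{in }\He^1\times(0,T).
\]

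The conclusion then follows by applying the same distributional computation as in Lemma~\ref{8.1.2}, which extends verbatim from probability measures to signed $\Te$-periodic measures (or alternatively by Hahn decomposition). Taking account of the sign convention in the present continuity equation, this gives, for each $t\in[0,T]$,
\[
\int_\Te\varphi(x,t)\,d\sigma_t(x)-\int_\Te\varphi(x,0)\,d\sigma_0(x)=\int_0^{t}\!\!\int_\Te\bigl[\partial_t\varphi+v_s^{\varepsilon}\cdot D_\cH\varphi\bigr]\,d\sigma_s(x)\,ds=0,
\]
so that, using $\varphi(\cdot,0)\ge 0$ and $\sigma_0\le 0$,
\[
\int_\Te\zeta(x)\,d\sigma_t(x)=\int_\Te\varphi(x,0)\,d\sigma_0(x)\le 0.
\]
Since $\zeta\in C^\infty(\Te)$ with $\zeta\ge 0$ is arbitrary and $\Te$ is compact, this yields $\sigma_t\le 0$ for every $t\in[0,T]$.

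\textbf{Main obstacle.} The delicate point is to ensure that $\varphi$ has enough regularity to be used in the integration-by-parts formula \eqref{815nostra}. The smoothness in $x$ of the mollified field $v^{\varepsilon}$ (Lemma~\ref{nostro819}) and the local Lipschitz bound \eqref{818nostra} make the flow $x\mapsto Y_T(x,t)$ of class $C^1$ and locally Lipschitz in $t$, whence $\varphi\in C_\cH^{1,1}(\Te\times[0,T])$; the adjoint equation then supplies continuity of $\partial_t\varphi$ in a classical sense. Once this regularity is checked, the argument is essentially insensitive to the sign and reduces to the usual dual method of characteristics.
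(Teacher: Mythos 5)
Your proof takes essentially the same route as the paper's, which simply refers to the proof of \cite[Proposition~8.1.7]{AGS} and checks the two integrability conditions needed to invoke it. What you do differently is unfold that duality argument directly: you exploit the compactness of $\Te$ and the boundedness and $Q_\cH$-periodicity of $v^\varepsilon_t$ to drop the spatial cutoff step that \cite{AGS} performs in the Euclidean setting, and your use of the translation identity $Y_s(z\oplus x,t)=z\oplus Y_s(x,t)$ to keep $\varphi$ $Q_\cH$-periodic is exactly the right replacement for the cutoff. This buys a more self-contained argument at the price of a slightly longer write-up; both are valid. One small slip: you set $\varphi(x,s):=\zeta(Y_T(x,s))$, so $\varphi(\cdot,t)=\zeta(Y_T(\cdot,t))$, which equals $\zeta$ only when $t=T$; as written, your displayed chain $\int_\Te\zeta\,d\sigma_t=\int_\Te\varphi(\cdot,0)\,d\sigma_0\le 0$ therefore proves only $\sigma_T\le 0$. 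To get the conclusion for an arbitrary $t^*\in(0,T]$, fix $t^*$, define $\varphi(x,s):=\zeta(Y_{t^*}(x,s))$ for $s\in[0,t^*]$, and apply \eqref{815nostra} on $[0,t^*]$; then $\varphi(\cdot,t^*)=\zeta$ and the rest of the argument goes through unchanged. The regularity you invoke for $\varphi$ (needed in \eqref{815nostra}) is indeed what Lemma~\ref{nostro819} and Lemma~\ref{8.1.4} provide.
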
 
\begin{proof}
The proof is the same as the one for \cite[Proposition 8.1.7]{AGS} where we replace $\mathbb{R}^{d}$ with $\He^1$ and the Euclidean gradient $D$ with $D_{\cH}$. Observe that, from the boundedness of the field $v_t^{\epsilon}$, we have 
$\int_{0}^{T} \int_{\Te}\left|v^{\epsilon}_{t}\right| d\left|\sigma_{t}\right| d t<+\infty.$
Moreover covering any compact set $C$ with a finite number of elements of the pavage, we get
$$
\int_{0}^{T}\left(\left|\sigma_{t}\right|(C)+\sup _{C}\left|v^{\epsilon}_{t}\right|+\operatorname{Lip}\left(v^{\epsilon}_{t}, C\right)\right) d t<+\infty
$$
for any bounded closed set $C\subset \mathbb{H}^{1}$.
\end{proof}
\begin{proposition}\label{818} (Representation formula for the continuity equation). Let $m^{\epsilon}_{t}\in {\mathcal P}_{per}(\He^1)$, $t \in[0, T]$, be a family of narrowly continuous measures solving the continuity equation~\eqref{8128nostra}.
Then for $m_{0}$-a.e. $x \in \He^1$ the characteristic system \eqref{819nostra} admits a globally defined solution $Y^{\epsilon}_{t}(x)$ in $[0, T]$ and
\begin{equation}\label{8120nostra}
m^{\epsilon}_{t}=Y^{\epsilon}_{t}{\#} m^{\epsilon}_{0}, \quad \forall t \in[0, T].
\end{equation}
\end{proposition}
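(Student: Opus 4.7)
The plan is to chain together the three ingredients established earlier in this appendix: global existence and uniqueness of characteristics (Lemma \ref{8.1.4}), the fact that the push-forward of $m_0^\varepsilon$ by the characteristic flow solves the continuity equation (Lemma \ref{8.1.6}), and the comparison/uniqueness principle for signed solutions with vanishing initial datum (Proposition \ref{817}). All three ingredients are applicable because the regularized drift $v_t^\varepsilon$ constructed in Lemma \ref{nostro819} is bounded, $Q_\cH$-periodic, and sufficiently smooth in $x$.

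First, by Lemma \ref{8.1.4} the ODE \eqref{819nostra} (with $s=0$) admits a unique global solution $Y_t^\varepsilon(x)$ on $[0,T]$ for every $x \in \He^1$; in particular it is defined for $m_0^\varepsilon$-a.e. $x$, which settles the first assertion of the proposition. Next, define $\tilde m_t := Y_t^\varepsilon \# m_0^\varepsilon$. Lemma \ref{8.1.6} shows that $\{\tilde m_t\}_{t \in [0,T]}$ is a narrowly continuous solution of \eqref{8128nostra} with $\tilde m_0 = m_0^\varepsilon$. It is also $Q_\cH$-periodic: the periodicity of $v_t^\varepsilon$ (Lemma \ref{nostro819}) combined with the left-invariance of $X_1, X_2$ yields the equivariance $Y_t^\varepsilon(z \oplus x) = z \oplus Y_t^\varepsilon(x)$ for every $z \in \Z^3$ (by the same computation that underlies the translation formula \eqref{transY}), and this, together with the $Q_\cH$-periodicity of $m_0^\varepsilon$, transfers periodicity to $\tilde m_t$.

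Finally, set $\sigma_t := m_t^\varepsilon - \tilde m_t$. Then $\{\sigma_t\}$ is a narrowly continuous family of signed $\Te$-periodic measures solving the continuity equation \eqref{8128nostra} in the sense of distributions, with $\sigma_0 = 0$. Applying Proposition \ref{817} to $\sigma_t$ gives $\sigma_t \leq 0$ for every $t \in [0,T]$, and applying the same result to $-\sigma_t$ gives $\sigma_t \geq 0$. Hence $\sigma_t \equiv 0$ and $m_t^\varepsilon = Y_t^\varepsilon \# m_0^\varepsilon$ for every $t \in [0,T]$, which is the required identity \eqref{8120nostra}.

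The main technical point is the $Q_\cH$-periodicity of the push-forward $\tilde m_t$, needed to invoke the comparison principle of Proposition \ref{817} on the torus; this reduces to the $\Z^3$-equivariance of the characteristic flow, which is a direct consequence of the periodicity of $v_t^\varepsilon$ together with the left-invariance of the Heisenberg vector fields. Beyond this, the argument is a faithful transcription to the Heisenberg setting of the Euclidean scheme in \cite[Proposition 8.1.8]{AGS}, with Proposition \ref{817} and Lemma \ref{8.1.6} playing the roles of the corresponding Euclidean uniqueness and flow representation results.
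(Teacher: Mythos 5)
Your proposal is correct and follows essentially the same route as the paper: establish global existence of the characteristic flow via Lemma~\ref{8.1.4}, observe via Lemma~\ref{8.1.6} that the push-forward solves the continuity equation, and conclude by the comparison principle of Proposition~\ref{817}. You make explicit two points the paper leaves implicit (the $\Z^3$-equivariance of the flow giving $Q_\cH$-periodicity of the push-forward, and the application of Proposition~\ref{817} to both $\sigma_t$ and $-\sigma_t$), which is a useful clarification but not a different argument.
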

\begin{proof} 
Recall that
$v^{\epsilon}_{t}$ satisfies \eqref{818nostra}.
Moreover, by Lemma~\ref{8.1.4}, $Y^{\epsilon}_{t}$ is globally defined in $[0, T]$ for $m_{0}$-a.e. in $\mathbb{H}^{1}$. Applying Lemma \ref{8.1.6} and Proposition \ref{817} we obtain $\eqref{8120nostra}$.
\end{proof}
Now we want to extend Proposition \ref{818} to the continuity equation \eqref{eq:MFGintrin}-(ii), where the vector field $D_\cH u$ does not satisfy the local regularity assumptions \eqref{818nostra} but it is still bounded and $Q_\cH$-periodic. In this situation we consider suitable probability measures in the space $\Gamma$ of the absolutely continuous maps 
from $[0,T]$ to $\mathbb{H}^{1}$, see definition \eqref{gamma}.

Our representation formula for the periodic solutions $m_{t}^{\eta}$ of the continuity equation
\eqref{eq:MFGintrin}-(ii) is given by
\begin{equation}\label{821nostra}
\int_{\Te} \varphi d m_{t}^{\eta}:=\int_{\Te \times \Gamma} \varphi(\gamma(t)) d \eta(x, \gamma) \quad \forall \varphi \in C^{0}(\Te), t \in[0, T],
\end{equation}
where $\eta$ is a suitable probability measure in $\Te \times \Gamma$.
With a slight abuse of notations, we denote $e_t$ as in \eqref{eval} also the evaluation map $e_t:\Te\times \Gamma\rightarrow \Te$ with $e_t(x,\gamma)=\gamma(t)$. Hence,  \eqref{821nostra} can be written as
\begin{equation}\label{823nostra}
m_{t}^{\eta}=e_{t}{\#} \eta.
\end{equation}
\begin{theorem}\label{821} (Probabilistic representation). 
Let $m:[0, T] \rightarrow \mathcal{P}(\Te)$ be a narrowly continuous solution of the continuity equation \eqref{eq:MFGintrin}-(ii). Then there exists a probability measure $\eta$ in $\Te\times \Gamma$, 
 such that\\
(i) $\eta$ is concentrated on the set of pairs $(x, \gamma)$ such that $\gamma \in \Gamma$ is a solution of the differential equation 
\begin{equation}\label{ODE}
\dot{\gamma}(t)=-D_{\cH}u(\gamma(t), t)B^T(\gamma(t))\  for\ a.e.\ t \in(0, T),\ \gamma(0)=x.
\end{equation}
(ii) $m_{t}=m_{t}^{\eta}$ for any $t \in[0, T],$ with $m_{t}^{\eta}$ is defined in \eqref{821nostra}.\\
Conversely, any $\eta$ satisfying (i) induces via \eqref{821nostra} a solution of the continuity equation, with $m_{0}=e_0 \# \eta$.
\end{theorem}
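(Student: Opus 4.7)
My plan is to follow the classical superposition principle scheme of \cite[Theorem 8.2.1]{AGS}, adapted to the Heisenberg periodic setting by exploiting the approximations built in Lemma \ref{nostro819}, Lemma \ref{8.1.6} and Proposition \ref{818}. The converse direction is routine: if $\eta$ satisfies (i), then for $\varphi \in C^\infty_c(\He^1\times(0,T))$, differentiating $t\mapsto \varphi(\gamma(t),t)$ along a solution of \eqref{ODE}, applying Fubini with respect to $\eta$, and using $e_t\#\eta = m_t^\eta$ yields the distributional form of \eqref{eq:MFGintrin}-(ii); the initial condition $m_0 = e_0\#\eta$ is built into the definition. So the work is the direct implication.

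The approximation scheme runs as follows. Fix $\epsilon > 0$ and let $m_t^\epsilon, v_t^\epsilon$ be as in Lemma \ref{nostro819}. Since $D_\cH u$ is bounded, the identity $v_t^\epsilon m_t^\epsilon = (D_\cH u\, m_t)\ast \rho_\epsilon$ together with positivity of $\rho_\epsilon$ gives $\|v_t^\epsilon\|_\infty \leq \|D_\cH u\|_\infty$ pointwise, uniformly in $\epsilon$. By Lemma \ref{8.1.4} and Proposition \ref{818}, the characteristic flow $Y_t^\epsilon$ of $v_t^\epsilon$ is globally defined on $[0,T]$ and $m_t^\epsilon = Y_t^\epsilon\#m_0^\epsilon$. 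I would then define $\eta^\epsilon \in \mathcal P(\Te\times\Gamma)$ as the image measure of $m_0^\epsilon$ under $x\mapsto (q_\cH(x), Y_\cdot^\epsilon(x))$, so that $e_0\#\eta^\epsilon = m_0^\epsilon$ and $e_t\#\eta^\epsilon = m_t^\epsilon$ for every $t\in[0,T]$.

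Next I would prove tightness of $\{\eta^\epsilon\}$ on $\Te\times\Gamma$. Compactness of $\Te$ handles the first factor. For the $\Gamma$ factor, the uniform $L^\infty$ bound on $v_t^\epsilon$ together with the structure $\dot Y^\epsilon = v_t^\epsilon B^T(Y^\epsilon)$ and the periodicity argument from Lemma \ref{8.1.6} give a uniform Hölder estimate on the trajectories modulo a $\Z^3$ translation of the starting point, so Ascoli--Arzelà yields relative narrow compactness. Extracting a narrowly convergent subsequence $\eta^\epsilon \rightharpoonup \eta$ and using narrow continuity of $e_t$, we obtain $e_t\#\eta = \lim e_t\#\eta^\epsilon = m_t$ for every $t$, which is property (ii); moreover $e_0\#\eta = m_0$.

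The main obstacle is property (i): showing that $\eta$ is concentrated on solutions of the limit ODE $\dot\gamma(t) = -D_\cH u(\gamma(t),t)\,B^T(\gamma(t))$, since the drift $-D_\cH u$ is only bounded measurable, not continuous, so the usual continuous test functional
\[
\Psi(x,\gamma) := \int_0^T \Bigl(\bigl|\gamma(t) - x + \textstyle\int_0^t D_\cH u(\gamma(s),s) B^T(\gamma(s))\,ds\bigr|\wedge 1\Bigr)\,dt
\]
is not narrowly continuous. I plan to follow \cite[Theorem 8.2.1]{AGS}: first approximate $D_\cH u\, B^T$ by smooth vector fields $w^k$ so that $w^k \to -D_\cH u\,B^T$ in $L^1(m_t\,dt)$, using Lemma \ref{nostro819} and the $L^p$ convergence \eqref{8127nostra} of $v_t^{\epsilon'}$ together with the periodic version of Lemma \ref{8110nostro}. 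For each smooth $w^k$ the associated functional $\Psi^k$ is narrowly continuous, so $\int \Psi^k\,d\eta^\epsilon \to \int \Psi^k\,d\eta$. Sending $\epsilon\to 0$ along the sub-sequence and then $k\to\infty$, by the $\eta^\epsilon$-bound $\int \Psi^{(\epsilon)}\,d\eta^\epsilon = 0$ along characteristics of $v_t^\epsilon$ and lower semicontinuity of the convex integrals in $(E,m)\mapsto \int|E/m|^p\,dm$, one forces $\int \Psi\,d\eta = 0$, giving (i). The remaining initial condition $\gamma(0) = x$ for $\eta$-a.e. $(x,\gamma)$ follows from $e_0\#\eta = m_0$ and the construction of $\eta^\epsilon$.
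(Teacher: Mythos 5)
Your proposal follows essentially the same route as the paper's proof: converse by differentiation along solutions and Fubini; direct implication by the mollified approximation of Lemma \ref{nostro819}, global characteristic flow and representation formula of Lemma \ref{8.1.4}/Proposition \ref{818}, construction of $\eta^\epsilon$ as the image of $m_0^\epsilon$ under $x\mapsto(x,Y_\cdot^\epsilon(x))$, tightness from the uniform $L^\infty$ bound $\|v_t^\epsilon\|_\infty\leq\|D_\cH u\|_\infty$ (and hence a uniform Lipschitz bound on trajectories starting in the compact $Q_\cH$), narrow limit extraction, and identification of the limit's marginals. For the superposition property (i), your sketch is the paper's \eqref{41}--\eqref{superA}: sandwich the functional with a smooth $Q_\cH$-periodic drift $w$, use the periodic version of the Jensen-type estimate of Lemma \ref{8110nostro} and the $L^p$-convergence \eqref{8127nostra} to control $\int|v^\epsilon-w|\,dm^\epsilon_t$, pass $\epsilon\to0$ then $w\to D_\cH u$ in $L^1(m_t)$, and conclude. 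The only place you are slightly loose is the final step: the initial condition $\gamma(0)=x$ does not come merely from $e_0\#\eta=m_0$, but either from the superposition identity itself evaluated at $t=0$, or (as you also hint) from the fact that $\{(x,\gamma):\gamma(0)=x\}$ is closed in $\Te\times\Gamma$ and carries full $\eta^\epsilon$-mass; either argument closes the gap, and the rest of your outline matches the paper's proof.
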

\begin{proof}
We adapt the arguments of the proof of \cite[Theorem 8.2.1]{AGS}. We first prove the converse implication. Notice that due to (i), we have
$$
\dot{\gamma}(t)=D_{\cH}u(\gamma(t), t)B^T(\gamma(t)) \quad \eta-a.e., \text { for } \text {a.e. } t \in(0, T).
$$
From \eqref{821nostra} we deduce that $t \mapsto m_{t}^{\eta}$ is narrowly continuous; actually, for every $\varphi \in C^{0}(\Te)$ and $t \in[0, T]$, there holds
\begin{equation}\label{narrcont}
\int_{\Te} \varphi d m_{t}^{\eta}-\lim_{s\to t}\int_{\Te} \varphi d m_{s}^{\eta}
=\lim_{s\to t}\left(\int_{\Te \times \Gamma} \varphi(\gamma(t)) d \eta(x, \gamma) - \int_{\Te \times \Gamma} \varphi(\gamma(s)) d \eta(x, \gamma)\right)=0.
\end{equation}

Now we check that $t \mapsto \int \zeta d m_{t}^{\eta}$ is absolutely continuous for $\zeta \in C_{\cH}^{1}(\Te)$ bounded and with a bounded horizontal gradient $D_{\cH}\zeta$. Indeed, from \eqref{narrcont}, since $ D\zeta\cdot D_{\cH}uB^T= D_{\cH}\zeta\cdot D_{\cH}u$, for $s<t$ in $(0,T)$, we have
\begin{eqnarray*}
\left|\int_{\Te} \zeta d m_{s}^{\eta}-\int_{\Te} \zeta d m_{t}^{\eta}\right| &\leq& \int_{s}^{t} \int_{\Te  \times \Gamma}| D\zeta(\gamma(\tau))\cdot \dot{\gamma}(\tau)| d \eta\, d \tau \\
&=& \int_s^t\int_{\Te \times \Gamma}| D\zeta(\gamma(\tau))\cdot D_{\cH}u(\gamma(\tau), \tau)B^T(\gamma(\tau))| d \eta\, d \tau\\
&=&\int_s^t\int_{\Te \times \Gamma}| D_{\cH}\zeta(\gamma(\tau))\cdot D_{\cH}u(\gamma(\tau), \tau))| d \eta\, d \tau\\
&&\leq\|D_{\cH} \zeta\|_{\infty} \int_{s}^{t} \int_{\Te \times \Gamma}\left|D_{\cH}u(\gamma(\tau), \tau))\right| d \eta\, d \tau.
\end{eqnarray*}
Since $D_{\cH}u$ is bounded, the inequality gives the absolute continuity of the map. 
We have also
$$\frac{d}{d t} \int_{\Te} \zeta d m_{t}^{\eta}=\frac{d}{d t} \int_{\Te \times \Gamma} \zeta(\gamma(t)) d \eta 
=\int_{\Te\times \Gamma}  D \zeta(\gamma)\cdot \dot{\gamma}(t)\, d \eta=\int_{\Te} D_{\cH} \zeta\cdot D_{\cH}u \,d m_{t}^{\eta},
$$
for a.e. $t \in(0, T)$. Since this pointwise derivative is also a distributional one, this proves that $\eqref{814nostraQH}$ holds for test function $\varphi$ of the form $\zeta(x) \psi(t)$ and therefore for all test functions.

Conversely, for $m_{t}$ as in the statement, let us apply Lemma~\ref{nostro819} finding $Q_\cH$-periodic approximations $m_{t}^{\varepsilon}, v_{t}^{\varepsilon}$ satisfying the continuity equation~\eqref{8128nostra}.
Therefore, we can apply Proposition~\ref{818}, obtaining the representation formula $m_{t}^{\varepsilon}=Y_{t}^{\varepsilon}\# m^{\varepsilon}_{0},$ where $Y_{t}^{\varepsilon}$ is the flow of maximal solution of~\eqref{819nostra} with $s=0$. \\
Since $Y^\varepsilon$ induces naturally a map from~$\Te$ to~$\Gamma$, we define the measure $\eta^\varepsilon\in {\mathcal P}(\Te\times \Gamma)$ as $\eta^\varepsilon:= (i\times Y^\varepsilon)\#m_0^\varepsilon$ where $(i\times Y^\varepsilon):\Te\to \Te\times \Gamma$ with $(i\times Y^\varepsilon)(x):=(x,Y^\varepsilon_\cdot (x,0))$ where $Y^\varepsilon_\cdot (x,0)$ denotes the maximal solution to~\eqref{819nostra} with $Y^\varepsilon_0 (x,0)=x$. In other words, for any Borel function~$\phi$ defined in~$\Te\times\Gamma$, the measure~$\eta^\varepsilon$ verifies
\begin{equation}\label{i1}
\int_{\Te\times\Gamma}\phi(x,\gamma)d \eta^\varepsilon(x,\gamma)=\int_{\Te}\phi(x,Y^\varepsilon_\cdot (x,0))d m_0^\varepsilon(x).
\end{equation}
Now we claim that $(\eta^\varepsilon)$ is a relatively compact family of measures on $\Te\times\Gamma$. Indeed, we set
\[
C:=\{(x,\gamma)\in \Te\times\Gamma\mid \gamma(0)=0,\quad\|\gamma'\|_\infty\leq \beta\}
\]
where $\beta$ is a positive constant such that the solution to~\eqref{819nostra} with~$x\in\Te$ and $s=0$ satisfies $\|\dot Y_t^\varepsilon(x,0)\|_\infty\leq\beta$.
We observe that
\begin{eqnarray*}
\eta^\varepsilon(C)=\int_{\Te} \chi_C(x,Y^\varepsilon_\cdot(x,0))d m^\varepsilon_0(x)=\int_{\{x\in\Te\mid \ \|Y^\varepsilon_\cdot(x,0)\|\leq \beta\}}d m^\varepsilon_0(x)=1.
\end{eqnarray*}
Invoking Prokhorov theorem, there exists a subsequence of~$\{\eta^\varepsilon\}_{\varepsilon\in(0,1)}$ which narrowly converges. Hence our claim is completely proved.

Now, let $\eta$ be a narrow cluster point of~$\{\eta^{\varepsilon}\}_{\varepsilon}$. We claim $m_t=e_t\#\eta$ and that $m_0$ is the first marginal of~$\eta$. Indeed, by the definition of~$e_t$ (recall: $e_t:\Te\times\Gamma\to \Te$ with $e_t(x,\gamma)=\gamma(t)$) and~\eqref{i1}, for every $\phi\in C^0_{b}(\Te)$ and $t\in[0,T]$, there holds
\begin{eqnarray*}
\int_{\Te} \varphi(x) d(e_t\#\eta^\varepsilon)(x)&=& \int_{\Te\times\Gamma} \varphi(\gamma(t)) d\eta^\varepsilon(x,\gamma) = \int_{\Te} \varphi(Y^\varepsilon_t(x,0))d m^\varepsilon_0(x)\\
& =& \int_{\Te} \varphi(x)d m^\varepsilon_t(x)
\end{eqnarray*}
where the last equality is due to $m^\varepsilon_t=Y^\varepsilon_t\#m^\varepsilon_0$. Passing to the limit in the previous equality, we obtain $m_t=e_t\#\eta$ namely
\begin{equation}\label{eta}
\int_{\Te\times \Gamma} (\varphi\circ e_t) d \eta(x, \gamma)=\int_{\Te} \varphi(x) d m_{t}(x), \quad \forall \varphi \in C_b^{0}(\Te).
\end{equation}
Moreover, again by~\eqref{i1}, we have
\[
\int_{\Te\times\Gamma}\varphi(x)d \eta^\varepsilon(x,\gamma)=\int_{\Te}\varphi(x)d m^\varepsilon_0(x)
\]
and, passing to the limit as $\varepsilon\to0$, we get
\[
\int_{\Te\times\Gamma}\varphi(x)d \eta(x,\gamma)=\int_{\Te}\varphi(x)d m_0(x)
\]
namely $m_0$ is the first marginal of~$\eta$. So our claim is completely proved.\\
%
%
Now we have to show that $\eta$ is concentrated on solutions of the differential equation \eqref{ODE}.
We claim the following ``superposition principle''
\begin{equation}\label{superA}
\int_{\Te\times \Gamma}\left|\gamma(t)-x-\int_{0}^{t} D_{\cH}u(\gamma(\tau), \tau)\, B^T(\gamma(\tau))d \tau\right| d \eta(x, \gamma)=0 \quad \forall t \in[0, T].
\end{equation}
If the claim is true then we disintegrate $\eta$ with respect to its first marginal $m_0$ (see~\cite[pag 122]{AGS} or \cite[Theorem 8.5]{C}):
\begin{equation}\label{dis}
d \eta(x, \gamma)=d\eta_x(\gamma)\,dm_0(x)
\end{equation}
and from \eqref{superA} we get for $m_0$-a.e. $x\in \Te$, $\eta_x$-a.e. $\gamma$ is a solution of the \eqref{ODE}.\\
It remains to prove the claim~\eqref{superA}.
First of all we prove 
\begin{equation}\label{41}
\int_{\Te \times \Gamma}\left|\gamma(t)-x-\int_{0}^{t} w(\gamma(\tau), \tau)\, B^T(\gamma(\tau)) d \tau\right| d \eta(x, \gamma)\leq C\int_{0}^{T} \int_{\Te}\left|(D_{\cH}u-w)\right| d m_{t} d \tau,
\end{equation}
where $w(x,t)$ is a $Q_\cH$-periodic vector field, bounded and continuous w.r.t. $x$. We have 
\begin{eqnarray*}
&& \int_{\Te\times \Gamma}\left|\gamma(t)-x-\int_{0}^{t} w(\gamma(\tau), \tau)\, B^T(\gamma(\tau))d \tau\right| d \eta^{\varepsilon}(x, \gamma)\\
&&= \int_{\Te}\left|Y_{t}^{\varepsilon}(x)-x-\int_{0}^{t} w\left(Y_{\tau}^{\varepsilon}(x),\tau\right)\, B^T(Y_{\tau}^{\varepsilon}(x)) d \tau\right|d m_{0}^{\varepsilon}(x)\\ 
&&= \int_{\Te}\left|\int_{0}^{t}\left(v^{\varepsilon}-w\right)\left(Y_{\tau}^{\varepsilon}(x), \tau\right)\,B^T(Y_{\tau}^{\varepsilon}(x))d \tau\right|d m_{0}^{\varepsilon}(x)\\
&&\leq
 \int_{\Te}\int_{0}^{t}\left|\left(v^{\varepsilon}-w\right)\left(Y_{\tau}^{\varepsilon}(x), \tau\right)\,B^T(Y_{\tau}^{\varepsilon}(x))\right| d \tau d m_{0}^{\varepsilon}(x)\\
&&=\int_{0}^{t} \int_{\Te}\left|(v^{\varepsilon}-w)\,B^T\right| d m_{\tau}^{\varepsilon} d \tau, 
\end{eqnarray*}
where $Y_{t}^{\varepsilon}(x)$ is the solution of \eqref{819nostra}.
Setting $w^{\epsilon}:=\frac{\left(w m\right) * \rho_{\varepsilon}}{m^{\varepsilon}}$ we obtain
\begin{eqnarray*}
&&\int_{0}^{t} \int_{\Te}\left|(v^{\varepsilon}-w)\, B^T\right| d m_{\tau}^{\varepsilon} d \tau\\
&&\leq C\int_{0}^{t} \int_{\Te}\left|v^{\varepsilon}-w^{\varepsilon}\right| d m_{\tau}^{\varepsilon} d \tau+ C\int_{0}^{t} \int_{\Te}\left|w^{\varepsilon}-w\right|d m_{\tau}^{\varepsilon} d \tau \\
&&\leq C\int_{0}^{T} \int_{\Te}\left| D_{\cH}u-w\right| d m_{\tau} d \tau+C\int_{0}^{T} \int_{\Te} \int_{\He^1}\rho_{\varepsilon}(z)|w(x\oplus z)-w(x)| d z d \tau,
\end{eqnarray*}
where for the last inequality we used Lemma \ref{8110nostro} with $E= (D_{\cH}u-w)\, m$, $p=1$ and the definition of convolution \eqref{conH}. If $\epsilon \to 0$, from the continuity of $w$ we get \eqref{41}.
To complete the proof of the claim \eqref{superA}
 we just take a sequence $w_n$ of $\Te$-periodic functions, uniformly bounded continuous w.r.t. $x$ such that $w_n\to D_{\cH}u$ in $L^1(m_t,\Te)$.
 Applying \eqref{41} to $w_n$ and noting that 
 $m_{t}^{\eta}=m_{t}$ we get \eqref{superA}.
 \end{proof}

\noindent{\bf Acknowledgments.} 
The first and the second authors are members of GNAMPA-INdAM and were partially supported also by the research project of the University of Padova ``Mean-Field Games and Nonlinear PDEs'' and by the Fondazione CaRiPaRo Project ``Nonlinear Partial Differential Equations: Asymptotic Problems and Mean-Field Games''. The third author has been partially funded by the ANR project ANR-16-CE40-0015-01.

{\small{
 }

\end{document}